\definecolor{halfgray}
{gray}{0.55}
\definecolor{webgreen}
{rgb}{0,0.4,0}
\definecolor{webbrown}
{rgb}{.8,0.1,0.1}
\definecolor{red}
{rgb}{1,0,0}
\newcommand \R {{ \mathbb R}}
\def\C{{\mathbb C}}
\newcommand \Z {{ \mathbb Z}}
\newcommand \N {{ \mathbb N}}
\newcommand \T {{ \mathbb T}}
\newcommand \re {{%
\operatorname{Re}
}}
\newcommand \im {{%
\operatorname{Im}
}}
\newcommand{\<}{{\langle}} 
\renewcommand{\>}{{\rangle}}
\newtheorem{theorem}{Theorem}[section]
\newtheorem {lemma} [theorem]{Lemma}
\newtheorem{corollary}[theorem]{Corollary}
\newtheorem{remark}[theorem]{Remark}
\newtheorem{definition}[theorem]{Definition}
\title[Twisted Translation Flows and Effective Weak Mixing ]%
{Twisted Translation Flows \\ and Effective Weak Mixing}
  \author{Giovanni Forni}
\address{Department  of Mathematics\\
  University of Maryland \\
  College Park, MD USA}
\email
    {gforni@math.umd.edu}
\keywords
      {Translation flows, Teichm\"uller flow, effective weak mixing, twisted cohomology, Hodge theory.}
\subjclass[2010]
        {37A25, 37E35, 30F60, 32G15, 32G20, 55N25.}
\date{\today}
\begin{document}

\def\echo#1{\relax}
    
\begin{abstract}
  \begin{sloppypar}
  We introduce a twisted cohomology cocycle over the Teichm\"uller flow and prove a ``spectral
  gap'' for its Lyapunov spectrum with respect to the Masur--Veech measures. We then derive H\"older
  estimates on spectral measures and bounds on the speed of weak mixing for almost all translation flows
  in every stratum of Abelian differentials on Riemann surfaces, as well as bounds on the deviation of ergodic averages for product translation flows on the product of a translation surface with a circle.
   \end{sloppypar}
\end{abstract}

\date{\today}

 \maketitle
 \section{Introduction}

 Every holomorphic Abelian differential $h$ on a Riemann surface $M$ induces a {\it translation structure} on the surface $M$, that is, an equivalence class of atlases
  on $M\setminus\{h=0\}$ whose changes of coordinates are given by translations on $\R^2$. 
  
  It follows that the horizontal and vertical directions, as well as all directions, are well
  defined on the translation surface  $(M, h)$ and we can consider horizontal, vertical, or in general directional flows with normalized unit speed.  
  
  Such {\it translation flows} are only defined on the complement finitely many one-dimensional immersed sub-manifolds given by all trajectories  which end up in the finite set $\Sigma_h=\{h=0\}$ either forward or
  backward. However, since their domain of definitions has full area the ergodic theory of translation flows makes perfect sense and has been well-studied. 
 
 \smallskip
 We state below the main results in the ergodic theory of translation flows. 
 
 Let $\mathcal H(\kappa)$  denote the stratum of the moduli space of Abelian differentials of unit total area  with zeros of multiplicities~$\kappa:=(k_1, \dots, k_\sigma)$ with $\sum_{i=1}^\sigma k_i= 2g-2$.  There exists a natural action of the group $SL(2, \R)$ on $\mathcal H(\kappa)$ given by post-composition of translation charts with elements
 of the group (as linear maps on $\R^2$). 
 
 Each stratum $\mathcal H(\kappa)$ is endowed with a unique absolutely continuous $SL(2, \R)$-invariant probability measure $\mu_\kappa$, called a {\it Masur--Veech} measure.
 Since Masur--Veech measure are $SL(2, \R)$-invariant, hence rotation invariant, results for translation  flows, for the ``typical'' Abelian differential (with respect to a Masur--Veech measure), in the ``typical'' direction (with respect to the Lebesgue measure on the circle)  can equivalently be stated for the horizontal (or the vertical) translation flow alone.
 
 The following foundational theorem was proved independently by  H.~Masur \cite{Ma82} and  W.~Veech \cite{Ve82}.

 \begin{theorem}  \cite{Ma82}, \cite{Ve82} For almost all Abelian differential $h \in \mathcal H(\kappa)$ with respect to the Masur--Veech measure, the horizontal translation flow is uniquely ergodic, hence the directional translation flow in almost all directions $\theta \in \T$ with respect to the Lebesgue measure is also uniquely ergodic.
 \end{theorem}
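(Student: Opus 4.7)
The plan is to follow the original dynamical approach via the Teichm\"uller geodesic flow $\{g_t\}_{t \in \R}$ on $\mathcal H(\kappa)$, corresponding to the diagonal one-parameter subgroup $\mathrm{diag}(e^t, e^{-t}) \subset SL(2,\R)$, which expands horizontal lengths and contracts vertical ones. The argument rests on two conceptually independent pillars: a \emph{geometric criterion} (Masur's criterion) linking non-unique ergodicity of translation flows to divergence of the Teichm\"uller orbit in moduli space, and a \emph{recurrence statement} based on finiteness of the Masur--Veech measure $\mu_\kappa$.

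First I would prove Masur's criterion: if the vertical translation flow on $(M,h)$ is not uniquely ergodic, then the forward orbit $\{g_t \cdot h\}_{t \geq 0}$ leaves every compact subset of $\mathcal H(\kappa)$. The idea is that two mutually singular invariant probability measures for the vertical flow produce two non-proportional asymptotic cycles in $H_1(M; \R)$; under the exponential contraction of vertical lengths by $g_t$, suitable integer representatives of these cycles are realized by short saddle connections, forcing degeneration into the boundary of the stratum. By the rotation invariance of $\mu_\kappa$ (immediate from $SL(2,\R)$-invariance), the statement for the horizontal flow is obtained by applying the vertical version to the rotated surface $R_{\pi/2}h$.

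Next I would invoke finiteness of $\mu_\kappa$, a nontrivial result proved independently by Masur and Veech. Since $\mu_\kappa$ is $g_t$-invariant and finite, Poincar\'e recurrence applied to a compact exhaustion of $\mathcal H(\kappa)$ yields that, for $\mu_\kappa$-almost every $h$, the orbit $\{g_t\cdot h\}_{t\geq 0}$ returns to some compact set infinitely often. By the contrapositive of Masur's criterion, the vertical translation flow on such an $h$ is uniquely ergodic; by the rotation-invariance reduction above, so is the horizontal one. The directional assertion then follows by Fubini applied to the $SO(2)$-action on $\mathcal H(\kappa) \times \T$: the set of pairs $(h,\theta)$ for which the directional flow fails to be uniquely ergodic is contained in a measurable $SO(2)$-saturated null set, so for $\mu_\kappa$-almost every $h$ the directional flow is uniquely ergodic for Lebesgue-almost every $\theta$.

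The main obstacle is Masur's criterion itself: extracting, from the abstract hypothesis of two mutually singular invariant measures, an explicit sequence of homology classes whose Hodge norm (or combinatorial length in a Delaunay-type decomposition) tends to zero along $\{g_t \cdot h\}$ requires a careful analysis of the interplay between ergodic components of measured foliations, the intersection form on $H^1(M; \R)$, and the action of $g_t$ on period coordinates. The finiteness of $\mu_\kappa$ is similarly nontrivial, resting on volume estimates near the cusps of the stratum where a saddle connection becomes short. Once both pillars are available, the Poincar\'e-recurrence assembly above is essentially formal.
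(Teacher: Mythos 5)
The paper does not prove this theorem; it is quoted as a foundational result of Masur and Veech, so there is no internal argument to compare yours against. Your outline is the standard and correct strategy, and it is essentially Masur's route: the contrapositive of Masur's criterion (recurrence of $\{g_t\cdot h\}_{t\ge 0}$ to a compact subset of the stratum implies unique ergodicity of the contracted foliation), combined with finiteness and $g_t$-invariance of $\mu_\kappa$ and Poincar\'e recurrence along a compact exhaustion, followed by the rotation-invariance/Fubini reduction that the paper itself uses to pass between the horizontal statement and the almost-every-direction statement. Two remarks for accuracy. First, Veech's independent proof in \cite{Ve82} is genuinely different: it proceeds through the Rauzy--Veech induction on interval exchange transformations and the construction of an invariant measure for the induction, with no appeal to a divergence criterion; so attributing the two-pillar argument to both references is historically imprecise. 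Second, you correctly identify that the entire mathematical content sits in the two pillars you defer --- Masur's criterion (whose clean ``non-uniquely ergodic $\Rightarrow$ divergent'' formulation was distilled somewhat after 1982, though the 1982 argument is of the same nature) and the finiteness of $\mu_\kappa$ --- so as written this is a correct proof architecture rather than a proof; the assembly step via Poincar\'e recurrence and the $SO(2)$-Fubini argument is, as you say, formal and is carried out correctly.
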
 

 The above unique-ergodicity theorem was later refined by S.~Kerckhoff, H.~Masur and J.~Smillie in \cite{KMS86}:
 
 \begin{theorem} \cite{KMS86} For all $h \in \mathcal H(\kappa)$,  the  directional translation flow  is uniquely ergodic in almost all directions $\theta \in \T$
 with respect to the Lebesgue measure.
 In particular, for almost all $h\in \mathcal H(\kappa)$ with respect to any $SL(2, \R)$-invariant probability measure, the horizontal translation flow is uniquely ergodic.
  \end{theorem}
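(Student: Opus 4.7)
The strategy is to combine Masur's dynamical criterion for unique ergodicity with a measure estimate on the circle of directions. Masur's criterion states that if the forward Teichm\"uller geodesic $\{g_t \cdot h\}_{t \geq 0}$ is recurrent to a compact subset of $\mathcal H(\kappa)$, then the vertical translation flow of $h$ is uniquely ergodic. It therefore suffices to prove that, for each fixed $h \in \mathcal H(\kappa)$ and Lebesgue-a.e.~$\theta \in \T$, the rotated geodesic $\{g_t r_\theta h\}_{t \geq 0}$ is recurrent.

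The first step is to geometrize recurrence via a Mumford-type compactness theorem for strata (Masur--Smillie): a subset of $\mathcal H(\kappa)$ is precompact if and only if its systole, i.e.~the length of the shortest saddle connection, is bounded away from zero. Hence non-recurrence of the orbit of $r_\theta h$ is equivalent to the condition that for every $\epsilon > 0$ there exists $T$ such that, for each $t \geq T$, some saddle connection $\gamma$ of $h$ has its holonomy vector $v_\gamma \in \R^2$ mapped into the disk of radius $\epsilon$ by the matrix $g_t r_\theta$.

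The heart of the argument is a quantitative estimate: for a single holonomy vector $v_\gamma$ of Euclidean length $\ell$, an explicit computation with the matrices $g_t r_\theta \in \SL(2,\R)$ shows that the set of $\theta \in \T$ for which $\inf_{t \geq 0}|g_t r_\theta v_\gamma| < \epsilon$ has Lebesgue measure $O(\epsilon^2/\ell^2)$, concentrated in two narrow sectors near the directions rotating $v_\gamma$ onto the coordinate axes. Combining this per-connection bound with Masur's quadratic upper bound on the number of saddle connections of holonomy length at most $L$, and stratifying by the time scale at which a given saddle connection becomes critically short, one builds a Borel--Cantelli type argument concluding that the set of bad $\theta$ is Lebesgue-null.

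The main technical obstacle is the delicate summability of the per-saddle-connection estimates: the quadratic counting function matches the decay rate of each individual bound, so naive summation diverges logarithmically. The resolution is to decompose in dyadic length and time scales and to exploit the geometric constraint that on a single translation surface only finitely many homologically independent saddle connections can be simultaneously short. Once the a.e.-recurrence statement is established, the second assertion of the theorem follows immediately by Fubini: since every $\SL(2,\R)$-invariant probability measure on $\mathcal H(\kappa)$ is in particular rotation invariant, the a.e.~directional statement propagates to give unique ergodicity of the horizontal flow for almost every $h$.
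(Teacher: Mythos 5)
First, a remark on scope: the paper does not prove this statement. It is the Kerckhoff--Masur--Smillie theorem, quoted from \cite{KMS86} as foundational background, so there is no internal argument to compare yours against; I can only assess your sketch on its own terms. Its skeleton is the standard (essentially the original) route: Masur's criterion reduces unique ergodicity in direction $\theta$ to non-divergence of $g_t r_\theta h$, Mumford--Masur--Smillie compactness converts divergence into ``the systole tends to zero,'' and your per-vector estimate is correct --- for $v$ of length $\ell$ making angle $\phi$ with the vertical after rotation, $\min_{t\geq 0}|g_t r_\theta v|^2$ is comparable to $\ell^2|\sin 2\phi|$ on the relevant sectors, so the bad set of directions has measure $O(\epsilon^2/\ell^2)$. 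The deduction of the second assertion from the first by rotation invariance and Fubini is also exactly right, and is how the paper itself uses the theorem.

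The gap is in the step you correctly identify as the crux. The union bound over saddle connections diverges logarithmically, and the fix you propose --- that only finitely many homologically independent saddle connections can be simultaneously short --- does not restore summability: it controls how many connections are short at one instant, whereas divergence is a statement about all large times, and a single saddle connection can stay $\epsilon$-short over an arbitrarily long time interval before handing off to another. The actual mechanism in \cite{KMS86} is to follow the chain of \emph{changeovers} of the shortest saddle connection along the orbit: at each changeover two non-parallel connections are simultaneously short, the triangle (or cylinder) they bound has controlled area, and this forces a multiplicative contraction of the measure of surviving directions along every admissible chain, which is what makes the total sum converge as $\epsilon\to 0$. Without this combinatorial chain argument (or a substitute such as quantitative non-divergence of the circles $\{g_t r_\theta h\}_\theta$ in the sense of Minsky--Weiss), the Borel--Cantelli step does not close. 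A minor historical caveat: Masur's criterion in the form you invoke was isolated after \cite{KMS86}; this does not affect correctness, but your plan is the modern streamlining rather than a reconstruction of the original proof.
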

  
  It is known since the work of A.~Katok \cite{Ka80} that interval exchange transformations (IET's) and translation flows are never mixing. However, it was conjectured that the typical  IET and translation flow are weakly mixing. After partial results of several authors (see \cite{KS67}, \cite{Ve84}, \cite{GK88}, \cite{NR97}, \cite{Lu98}), the conjecture was proved by A.~Avila and the author: 
  
  \begin{theorem} \cite{AvF07}
  For almost all Abelian differential $h \in \mathcal H(\kappa)$, a stratum  of higher genus surfaces, with respect to the Masur--Veech measure $\mu_\kappa$ on 
  $\mathcal H(\kappa)$, the horizontal translation flow is weakly mixing.
  \end{theorem}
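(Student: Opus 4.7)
My strategy would follow Avila and the author: reduce weak mixing to a Diophantine non-approximation statement for the Kontsevich--Zorich (KZ) cocycle, and then exploit the hyperbolicity of that cocycle on the non-tautological part of $H^1(M,\R)$. Since a measure-preserving flow on a standard probability space is weakly mixing iff it has no non-constant measurable eigenfunctions, the task is to prove that for $\mu_\kappa$-a.e.\ $h$ and every $t\ne 0$ the equation $\phi\circ T^h_s=e^{2\pi i t s}\phi$ admits no non-zero measurable solution. Rather than arguing $t$ by $t$, I would prove the stronger statement that for $\mu_\kappa$-a.e.\ $h$ the set of measurable eigenvalues $t$ is Lebesgue-null, and conclude by Fubini on $\mathcal H(\kappa)\times\R$.

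Working in a zippered--rectangles (Rauzy--Veech) presentation of $(M,h)$, Veech's classical criterion translates the eigenvalue equation into a Diophantine condition: if $t$ is a measurable eigenvalue, then the orbit in $\R^d/\Z^d$ of the vector with coordinates $t\cdot h(\gamma_n^{(i)})$ --- where the $\gamma_n^{(i)}$ are the return cycles of the $n$-th renormalization step --- must accumulate at $0$ at a rate comparable to the expansion of the Rauzy--Veech matrices on their stable subspaces. Equivalently, the cohomology class $t\cdot[\im h]\in H^1(M,\R)$ would have to be too well approximated by integer classes under the action of the KZ cocycle.

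The plan is then to exploit the hyperbolicity of the KZ cocycle away from the tautological plane $\<\re h,\im h\>$. In genus $g\ge 2$ the cocycle has at least one non-tautological, non-zero Lyapunov exponent, so classes transverse to the tautological plane are exponentially expanded or contracted under renormalization. Combining this with the standard distortion estimates for Rauzy--Veech induction and a Borel--Cantelli argument --- bounding, at each renormalization scale, the Lebesgue measure of the set of $t$ that could satisfy the Diophantine condition --- would then show that the bad set of eigenvalues has measure zero.

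The most delicate point is the treatment of the neutral (tautological) plane, since the candidate eigenvalue class $t\cdot[\im h]$ lies exactly on it, so the a priori expansion on the complement gives no control in the direction that actually matters. One must extract a \emph{quantitative} lower bound on the component of the renormalization orbit transverse to the tautological plane: this is where the spectral gap of the KZ cocycle (in the form proved in \cite{AvF07}, or, in a cleaner and more quantitative form, via the twisted cohomology cocycle developed in the present paper) is used to show that $t\cdot[\im h]$ is not super-approximable modulo the tautological directions for a.e.\ $t$. This is where the higher-genus hypothesis is essential and where the bulk of the technical work would lie.
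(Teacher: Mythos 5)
Your overall architecture --- Veech's criterion reducing eigenvalues to a Diophantine approximation property of $t\cdot[\re(h)]$ (resp.\ $t\cdot[\im(h)]$ for the vertical flow) under the renormalization cocycle, followed by an elimination argument powered by the positivity of the second Kontsevich--Zorich exponent --- is indeed the strategy of \cite{AvF07}, which is what the paper cites for this theorem and whose elimination step is strengthened in section~\ref{sec:Tor_KZ}. But there is a genuine gap in the quantifier structure of your elimination. You propose to show that ``for $\mu_\kappa$-a.e.\ $h$ the set of measurable eigenvalues $t$ is Lebesgue-null'' and to conclude by Fubini on $\mathcal H(\kappa)\times\R$. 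That statement is vacuous: the point spectrum of any measure-preserving flow on a standard probability space is countable (eigenfunctions for distinct eigenvalues are orthogonal in the separable space $L^2$), hence always Lebesgue-null, and a flow with even one nonzero eigenvalue (e.g.\ a linear toral flow) fails to be weakly mixing. Likewise your Borel--Cantelli bound ``at each renormalization scale, on the Lebesgue measure of the set of $t$'' can only ever yield that almost every $t$ is not an eigenvalue, which proves nothing; the same objection applies to your closing phrase ``not super-approximable \dots for a.e.\ $t$.''

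What is actually needed is that for a.e.\ $h$ the set of eigenvalues is exactly $\{0\}$, i.e.\ the \emph{entire punctured line} $\R\setminus\{0\}$ times $[\re(h)]$ must avoid the weak stable lamination $W^s(h)$ of the zero section of $H^1_\kappa(M,\T)$. The correct bookkeeping is the ``linear elimination'': one bounds the \emph{Hausdorff dimension} (not the measure) of $W^s(h)$ intersected with unstable affine subspaces, via covering-number versus contraction-rate estimates as in Lemma~\ref{lemma:covering} and Theorem~\ref{thm:HD}, concluding that $\R W^s(h)$ has Hausdorff dimension at most $g+1$; one then runs the Fubini/transversality argument over the $2g$-dimensional space of choices of $[\re(h)]$ (as in Lemma~\ref{lemma:growth_2}). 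The inequality $g+1<2g$ is where the hypothesis $g\ge 2$ --- equivalently, the existence of at least two strictly positive exponents --- enters: one positive exponent is consumed by the free parameter $t$ along the line, and a second is needed to leave positive codimension in $h$. (Incidentally, the tautological plane is not ``neutral'': it carries the exponents $\pm 1$, and the difficulty is not a lack of expansion there but precisely this dimension count.) Your draft does not contain this step, and without it the argument does not close.
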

  
 As a consequence of this theorem and of the above-mentioned result of A.~Katok,  typical IET's and  translation flows are perhaps the simplest natural example of weakly mixing dynamical systems which are not mixing
 (the first examples, starting with the Chacon map~\cite{C69},  were constructed by cutting-and-stacking). 
 These examples are perhaps not surprising in view of the Halmos-Rokhlin Theorem which asserts that (with respect to the weak topology) weak mixing is a generic property, while mixing is meager. 
  
  \medskip
  An effective version of Kerckhoff--Masur--Smillie  unique ergodicity theorem, establishing a polynomial (power-law) speed of convergence of ergodic averages, was later proved by J.~Athreya and the author.  
  
  For every holomorphic Abelian differential $h$ on $M$,  let $(\phi^\theta_t)$ denote the directional translation flow on~$M$ in the direction $\theta\in \T$, that is, a the horizontal translation flow of the Abelian differential $e^{2\pi \imath \theta} h$. Let $\omega_h$ denote the area-form of the translation surface $(M,h)$, which is invariant under all translation flows $(\phi^\theta_t)$ on $(M,h)$. 
  
  Let $H^1(M)$ denote the Sobolev space of square-integrable functions with square-integrable weak first derivative of the compact surface $M$, and for any function $f \in H^1(M)$ let $\vert f \vert_1$ denote its Sobolev norm in the space $H^1(M)$.
  
 \begin{theorem} \cite{AtF08}
 \label{thm:AtF08}
 There exists a real number $\alpha_\kappa >0$ and, for all $h \in \mathcal H(\kappa)$, there is a measurable function $C_h: \T \to \R^+$ such that for Lebesgue almost all $\theta \in \T$,  for all functions $f\in H^1(M)$ and for all $(x,\mathcal T) \in M\times \R^+$, we have
 $$
\vert  \int_0^{\mathcal T}  f \circ \phi^S_t (x) dt - {\mathcal T}\int_M f d\omega_h \vert \leq C_h(\theta) \vert f\vert_1 {\mathcal T}^{1-\alpha_\kappa}\,.
 $$
  \end{theorem}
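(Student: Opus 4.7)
The plan is Teichm\"uller renormalization. For any direction $\theta$, the directional flow $(\phi^\theta_t)$ on $(M,h)$ is conjugate to the horizontal flow on the rotated differential $r_\theta h$, and under the Teichm\"uller geodesic flow $g_t$ on $\mathcal H(\kappa)$ a time-$\mathcal T$ horizontal orbit on $r_\theta h$ corresponds to a time-$O(1)$ orbit on the renormalized surface $g_{\log\mathcal T} r_\theta h$ (with the appropriate sign convention for $g_t$). Hence setting $t=\log\mathcal T$ reduces estimates on long ergodic integrals on $(M,h)$ to uniform bounds on short orbits along the forward Teichm\"uller orbit $\{g_t r_\theta h\}_{t\ge 0}$, weighted by a cocycle that records how test functions and their ergodic integrals transform under the renormalization.

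The relevant cocycle is the Kontsevich--Zorich cocycle on the real Hodge bundle $H^1(M,\R)$ over $\mathcal H(\kappa)$. Using the analysis of the cohomological equation via basic currents, for mean-zero $f\in H^1(M)$ the ergodic integral $\int_0^{\mathcal T}f\circ\phi^\theta_t(x)\,dt$ decomposes into a cohomological main term, which evolves as the KZ cocycle acts on a vector in $H^1(M,\R)$ of norm controlled by $|f|_1$, plus a lower-order boundary term controlled by transversal Sobolev traces of $f$. The top KZ exponent on any Masur--Veech measure is $\lambda_1=1$, carried by the tautological direction; by the non-uniform hyperbolicity theorem for the KZ cocycle the second exponent satisfies $\lambda_2<1$, and one takes $\alpha_\kappa$ to be any positive number strictly smaller than $1-\lambda_2$.

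To convert this almost-sure Oseledec bound into a pointwise estimate valid at \emph{every} $h$ and Lebesgue-a.e.\ $\theta$, I would combine two ingredients: a semi-uniform Hodge-theoretic contraction of the KZ cocycle on an arbitrary compact subset $K\subset\mathcal H(\kappa)$, and Athreya's quantitative recurrence theorem for the Teichm\"uller flow. Kerckhoff--Masur--Smillie ensures that for every $h$, Lebesgue-a.e.\ $\theta$ has $\{g_t r_\theta h\}$ recurrent to such a $K$; Athreya's logarithm law sharpens this to tempered first-return times. Concatenating the contractions along these returns yields cocycle norm bounded by $e^{(1-\alpha_\kappa)t}$, modulated by a finite multiplicative envelope that one packages as $C_h(\theta)$, absorbing the cusp excursions, the initial distance of $r_\theta h$ from $K$, and the Sobolev constants from the transversal trace estimates.

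The main obstacle is precisely this passage from an almost-sure spectral gap to a pointwise bound holding at every $h$ and a.e.\ $\theta$. One must (i) make the Hodge-theoretic contraction quantitative and uniform on the chosen compact set $K$ so that it can be iterated along any recurrent orbit rather than only along a $\mu_\kappa$-typical one, and (ii) show that the tail bounds on excursion times into the cusps are strong enough that the accumulated multiplicative error remains finite for Lebesgue-a.e.\ $\theta$. Both steps are delicate: uniform spectral gap estimates on $K$ rest on variation-of-Hodge-structure computations, and the needed tail bounds ultimately rely on Eskin--Masur-style quantitative non-divergence estimates for the $SL(2,\R)$-action on the stratum.
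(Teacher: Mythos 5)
Your outline reproduces the argument of the cited source \cite{AtF08} (which this paper does not reprove, but whose scheme it generalizes to the twisted setting in its later sections): renormalization by the Teichm\"uller flow, control of the orbit current's distance from the closed currents via the Sobolev trace theorem, a semi-uniform ``spectral gap'' for the Kontsevich--Zorich cocycle on a compact set $K$ coming from the first variation of the Hodge norm, Athreya's quantitative recurrence valid for every $h$ and Lebesgue-a.e.\ $\theta$, and the decomposition of a long orbit into best-return segments as in Lemma~\ref{lemma:chop}. The only imprecision is your opening identification $\alpha_\kappa<1-\lambda_2$, which is merely the measure-typical heuristic --- the exponent actually produced by this argument is the weaker compact-set constant --- but you correctly replace the Oseledets bound by the semi-uniform contraction in the final step, so this does not affect the validity of the proof.
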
  
A more complete picture of the finer behavior of ergodic integrals for almost all translation flows, 
which include lower bounds of the ergodic integrals along subsequences of times for almost all $x\in M$ was proposed conjecturally in the work of A. Zorich~and M.~Kontsevich \cite{Zo97}, \cite{Ko97}. A proof of a substantial part of the Kontsevich--Zorich conjectures was given by the author in~\cite{F02}, and later completed 
by the result of A.~Avila and M.~Viana~\cite{AV07} who proved the simplicity of the 
Kontsevich--Zorich spectrum.  

\bigskip 
In this paper we prove effective unique ergodicity results for typical product translation flows on the product translation $3$-manifold $M\times \T$, analogous to the above mentioned result by Athreya and the author. 
It is a standard result of ergodic theory that the ergodicity of a product of ergodic flows follows from
 the weak mixing property of one of the factors.

Let $\Phi^{S,\lambda}_t$ denote the flow  $\phi^S_t \times R^\lambda_t$ on $M\times \T$, product of the translation flow $(\phi^S_t)$ and of the linear flow with speed $\lambda \in \R\setminus \{0\}$ on $\T$, which is generated by the vector field $S + \lambda \frac{\partial}{\partial \theta}$ on $M\times \T$. 
  
 We recall that, by basic ergodic theory, the product flow  $\Phi^{S,\lambda}_t$ is ergodic whenever the flow~$\phi^S_t$ is weakly mixing. The latter property holds for almost all holomorphic differential $h$ in any stratum of the moduli space by the result of A.~Avila and the author~\cite{AvF07}. In fact, it is proved in~\cite{AvF07}, by a ``linear elimination'' argument and by a weak mixing criterion of Veech~\cite{Ve84}, that the set of holomorphic differentials with non weakly mixing horizontal translation flows has Hausdorff codimension  (at least) $g-1$ in every stratum of translation surfaces of genus $g\geq 2$.  A well-known argument by Furstenberg implies that every ergodic product flow $\Phi^{S,\lambda}_t$ such that $\phi^S_t$ is uniquely ergodic is also uniquely ergodic. 
 
 \smallskip
Our goal is to prove the following results. For any $s>0$, let $H^s(\T, H^1(M))$ denote the 
Sobolev space of square-integrable functions with square integrable first derivatives in the directions
tangent to $M$ and square-integrable derivatives up to order $s>0$ in the circle direction.

\begin{theorem} \label{thm:Effect_Erg} There exists  a real number  $\alpha''_\kappa >0$ and, for almost all Abelian differentials $h\in \mathcal H(\kappa)$ with respect to the 
Masur--Veech measure and  for all $\lambda \in \R\setminus \{0\}$,  there exists a constant $C_{\lambda}(h) >0$ such that, for all functions $F\in H^s(\T, H^1(M))$, with $s>s_\kappa$ (for some $s_\kappa>1$), and for all $(x,\theta, {\mathcal T}) \in M\times \T \times \R^+$, we have
 $$
\vert  \int_0^{\mathcal T}  F\circ \Phi^{S, \lambda}_t (x,\theta) dt - {\mathcal T}\int_{M\times \T} F d\omega_h d\theta 
\vert \leq C_{ \lambda}(h) \Vert F \Vert_{ H^s(\T, H^1(M)) }  {\mathcal T}^{1-\alpha''_\kappa}\,.
 $$
 \end{theorem}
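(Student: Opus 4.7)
The plan is to Fourier-decompose $F$ along the circle factor, reducing the ergodic integral for $\Phi^{S,\lambda}_t$ to a sum of \emph{twisted} ergodic integrals for the base translation flow $\phi^S_t$. Writing
$$
F(x,\theta)=\sum_{n\in\Z} f_n(x)\, e^{2\pi \imath n\theta}, \qquad f_n\in H^1(M),
$$
and using $\Phi^{S,\lambda}_t(x,\theta)=(\phi^S_t(x),\theta+\lambda t)$, the ergodic integral splits as
$$
\int_0^{\mathcal T} F\circ \Phi^{S,\lambda}_t(x,\theta)\,dt \;=\; \sum_{n\in\Z} e^{2\pi \imath n\theta}\, I_n(\mathcal T,x), \quad I_n(\mathcal T,x):=\int_0^{\mathcal T} f_n(\phi^S_t(x))\, e^{2\pi \imath n\lambda t}\,dt.
$$
Only the $n=0$ mode contributes to the space average $\int_{M\times\T}F\,d\omega_h d\theta=\int_M f_0\,d\omega_h$; for $n\neq 0$ the constant part $\int_M f_n\,d\omega_h$ of $f_n$ is integrated against a pure oscillation and contributes at most $(\pi|n\lambda|)^{-1}\|f_n\|_1$ uniformly in $\mathcal T$.

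For $n=0$ the estimate is exactly Theorem~\ref{thm:AtF08} of Athreya and the author: the deviation is at most $C_h(S)\|f_0\|_1\,\mathcal T^{1-\alpha_\kappa}$. For $n\neq 0$, the quantities $I_n(\mathcal T,x)$ are twisted ergodic integrals of $\phi^S_t$ with frequency $\nu=n\lambda\in\R\setminus\{0\}$, and these are the very objects whose decay is controlled by the twisted cohomology cocycle over the Teichm\"uller flow introduced in this paper. The spectral gap for the twisted Lyapunov spectrum, together with the H\"older bounds on spectral measures it produces, yields, for almost every $h\in\mathcal H(\kappa)$, an estimate of the form
$$
|I_n(\mathcal T,x)| \;\leq\; K_h(n\lambda)\,\|f_n\|_1\,\mathcal T^{1-\alpha'_\kappa},
$$
where $K_h(\nu)$ grows at most polynomially, say $K_h(\nu)\lesssim (1+|\nu|)^{\beta_\kappa}$, as $|\nu|\to\infty$ and is controlled by a negative power of $|\nu|$ as $|\nu|\to 0$ (this low-frequency regime is the source of the dependence of $C_\lambda(h)$ on $\lambda$, since the smallest relevant frequency is $|\lambda|$ itself).

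Summation in $n$ is then a routine application of Cauchy--Schwarz against the $H^s(\T)$-weights,
$$
\sum_{n\neq 0}\|f_n\|_1\,K_h(n\lambda) \;\leq\; \|F\|_{H^s(\T,H^1(M))}\Bigl(\sum_{n\neq 0}(1+n^2)^{-s}K_h(n\lambda)^2\Bigr)^{1/2},
$$
which is finite as soon as $s>s_\kappa:=\beta_\kappa+\tfrac12$; with $\alpha''_\kappa:=\min(\alpha_\kappa,\alpha'_\kappa)$ this yields the stated bound. The main obstacle is not this combinatorial step but the twisted estimate itself: obtaining $K_h(\nu)$ with \emph{explicit} polynomial dependence on the frequency $\nu$, both at infinity and near zero, and for almost every translation surface. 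Translating the spectral-gap information for the twisted Kontsevich--Zorich-type cocycle into pointwise bounds on $I_n(\mathcal T,x)$ requires a renormalization scheme along the Teichm\"uller flow in the spirit of \cite{AtF08}, but with coefficients in the family of flat unitary line bundles over $M$ parametrized by $\nu\in\R$; this is precisely what the twisted cohomology machinery developed in the body of the paper is designed to furnish, and where the main technical work lies.
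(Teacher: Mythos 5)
Your proposal is correct and follows essentially the same route as the paper: Fourier decomposition along the circle factor, Theorem~\ref{thm:AtF08} for the zero mode, Theorem~\ref{thm:Twist_Erg} with frequency $n\lambda$ for the nonzero modes (absorbing the $|\lambda|^{-\beta_\kappa}$ small-frequency factor into $C_\lambda(h)$ and the polynomial growth in $n$ into the $H^s(\T,H^1(M))$ weights via Cauchy--Schwarz), and $\alpha''_\kappa=\min(\alpha_\kappa,\alpha'_\kappa)$. Your explicit treatment of the constant part of $f_n$ for $n\neq 0$ is a minor point the paper leaves implicit (its twisted bound is stated for zero-average functions), but otherwise the arguments coincide.
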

 \begin{remark} It follows from the argument that the exponent power saving $\alpha''_\kappa$ in Theorem  \ref{thm:Effect_Erg} can be taken to be the minimum of the 
 exponent power savings $\alpha_\kappa$ of Theorem~\ref{thm:AtF08} and $\alpha'_\kappa$ of Theorem~\ref{thm:Twist_Erg}  below.
  \end{remark}

\begin{remark} The higher differentiability assumption in the above Theorem~\ref{thm:Effect_Erg}, as well as in Corollary~\ref{cor:Eff_WM} below, follows from the fact that 
the proofs of these results require better estimates, with respect to the phase parameter $\lambda \in \R$, than those of Theorem~\ref{thm:twist_integral_bound} below. 
These sharper bounds are derived from those of Theorem~\ref{thm:twist_integral_bound} by sufficiently many integration by parts (with respect to time) to counter the
possible polynomial blow up of the constant with respect to the phase parameter. As a consequence, the differentiability threshold $s_\kappa>1$ is at least as large as the 
difference $N_\kappa-\beta_\kappa$ of the constants of Theorem~\ref{thm:twist_integral_bound}  (see \S~\ref{sec:Proofs} for details).
\end{remark}  
 
 A.~Bufetov and B.~Solomyak  \cite{BS18b} have derived  from uniform estimates on twisted ergodic integrals for suspension flows over substitution systems (or a self-similar translation flow) an interesting result on the speed of ergodicity for  ergodic flows which are product of such a flow with a general ergodic flow. Their result is a generalization of the above theorem (since the twisted flow is defined as a product with a rotation flow on a circle). We do not know whether it is possible to generalize their result to almost all translation flows, or equivalently, our result above to general ergodic transformations. 

\smallskip
The above theorem is derived from the following effective result on twisted ergodic integrals for translation flows:

 \begin{theorem} \label{thm:Twist_Erg} There exist real numbers $\alpha_\kappa' >0$, $\beta_\kappa>0$ and 
 $N_\kappa>0$  and, for almost all Abelian differentials $h\in \mathcal H(\kappa)$ with respect to the Masur--Veech measure, there exists a constant $C_\kappa(h) >0$ such that, for all $\lambda \in \R\setminus \{0\}$, for all zero average functions $f\in  H^1(M)$ and for all $(x,\mathcal T) \in M \times \R^+$, we have
 $$
\vert  \int_0^{\mathcal T} e^{2\pi \imath \lambda t}  f\circ \phi^{S}_t (x) dt  
\vert \leq C_\kappa(h)  \frac{(1+  \lambda^2)^{\frac{N_\kappa}{2}}}{\vert \lambda\vert^{\beta_\kappa}}  \vert f \vert_{ H^1(M) } {\mathcal T}^{1-\alpha'_\kappa}\,.
 $$
 \end{theorem}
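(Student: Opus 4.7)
The plan is to derive the bound from the spectral gap for the twisted cohomology cocycle — the main dynamical input of the paper — by a Teichm\"uller renormalization argument modeled on the proof of Theorem~\ref{thm:AtF08} in~\cite{AtF08}. I would first interpret the twisted ergodic integral cohomologically: for fixed $\lambda \in \R \setminus \{0\}$, consider the flat rank-one complex line bundle $\mathcal L_\lambda \to M$ whose monodromy along the horizontal flow $\phi^S$ for time $t$ is $e^{2\pi i \lambda t}$. The twisted ergodic integral $\int_0^{\mathcal T} e^{2\pi i \lambda t} f \circ \phi^S_t(x)\,dt$ is then the pairing of $f$ with an orbit segment viewed in twisted (co)homology, and its absolute value is controlled by a twisted Hodge norm on $H^1(M, \mathcal L_\lambda)$.

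I would then renormalize via the Teichm\"uller geodesic flow $g_s$. Because the horizontal vector field rescales as $S \mapsto e^{\pm s} S$, the twisted integral over time $\mathcal T$ with phase $\lambda$ on $(M,h)$ is naturally identified with a twisted integral over time $e^{-s}\mathcal T$ and phase $e^s \lambda$ on $(M, g_s\cdot h)$, and this identification is implemented by the twisted cohomology cocycle acting on the bundle of twisted cohomologies $\{H^1(M, \mathcal L_\lambda)\}$. Applying the unit-scale bound of Theorem~\ref{thm:twist_integral_bound} on the renormalized surface furnishes the $\lambda$-dependent pre-factor $(1+\lambda^2)^{N_\kappa/2}/|\lambda|^{\beta_\kappa}\, |f|_{H^1(M)}$, while the spectral gap on the Lyapunov spectrum of the twisted cocycle constrains its operator norm along the orbit to grow no faster than $e^{(1-\alpha'_\kappa)s}$; calibrating $s$ as an appropriate function of $\mathcal T$ (or iterating along Birkhoff returns to a compact set in the stratum) converts this into the power saving $\mathcal T^{1-\alpha'_\kappa}$.

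The main obstacle will be the careful bookkeeping of the $\lambda$-dependence through the renormalization. Under $g_s$ the phase is multiplied by $e^s$, so the unit-time bound on the rescaled surface contributes $(1+e^{2s}\lambda^2)^{N_\kappa/2}/(e^s|\lambda|)^{\beta_\kappa}$, which for large $e^s|\lambda|$ produces a factor $e^{(N_\kappa-\beta_\kappa)s}$ that would overwhelm the spectral-gap decay $e^{(1-\alpha'_\kappa)s}$ if renormalization were naively pushed all the way to $s = \log\mathcal T$. The correct argument must therefore stop the renormalization at a scale where the rescaled phase has order one, and treat the spectral gap as a decorrelation statement on the twisted cocycle rather than a substitution into the unit-time bound; combined with integrability of the return time and a dyadic sum over intermediate scales this should leave $(1+\lambda^2)^{N_\kappa/2}/|\lambda|^{\beta_\kappa}$ as the residual $\lambda$-dependence. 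A secondary issue is the log-integrability, uniformly in $\lambda$, of the unit-time twisted cocycle norm with respect to the Masur--Veech measure, which is required to apply Oseledets' multiplicative ergodic theorem to the $\lambda$-family of twisted cohomology bundles.
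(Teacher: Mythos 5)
Your overall architecture --- interpret the twisted integral cohomologically, renormalize by the Teichm\"uller flow so that the phase rescales to $e^s\lambda$, and convert the spectral gap of the twisted cocycle into a power saving by iterating along returns to a compact set --- is indeed the paper's strategy. But there are two genuine gaps. First, the claim that the twisted integral ``is controlled by a twisted Hodge norm on $H^1(M,\mathcal L_\lambda)$'' skips the technical core. The orbit segment defines a current that is \emph{not} $d_{h,\lambda}$-closed, and even after closing it up its class does not control it: a $d_{h,\lambda}$-closed current decomposes into a twisted harmonic part (controlled by the Hodge norm and the cocycle) plus a twisted \emph{exact} part, which the cohomological spectral gap says nothing about. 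The paper handles this by (i) a twisted Poincar\'e inequality and Sobolev trace estimates showing the orbit current $A^{\#}_{h,\lambda}(x,\mathcal T)$ stays at distance $O(\delta(h)^{-2})$ from the space of $d_{h,\lambda}$-closed currents at every return to a compact set (Lemma~\ref{lemma:dist_closed}), and (ii) extending the cocycle to a transfer cocycle on $1$-currents and exhibiting an \emph{invariant} norm $\mathcal L_\kappa$ on twisted exact currents (Lemma~\ref{lemma:L_exact}), so that the exact part has zero Lyapunov exponent; the iterative estimate of Lemma~\ref{lemma:iterativest} then propagates the bound backwards from the renormalized unit-length segment. Without some version of this, the argument does not close.

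Second, your $\lambda$-bookkeeping is built on a circularity (applying the theorem to itself at unit scale) and consequently worries about a phantom loss $e^{(N_\kappa-\beta_\kappa)s}$ per scale. In the paper the unit-scale input is just the $\lambda$-free Sobolev trace bound $\vert g_{t_n}(A^{\#})\vert_{-1}\le C_K$. The entire $\lambda$-dependence enters elsewhere, and as a single multiplicative constant rather than per scale: the factor $(1+\lambda^2)^{N_\kappa/2}$ arises because the spectral gap along $g_t[h,\lambda\re(h)]$ only becomes effective once $\vert\lambda\vert\ge e^{-\alpha'_\kappa t_n}$ (near the zero section of $H^1_\kappa(M,\T)$ the expansion rate $\Lambda_\kappa$ can be arbitrarily close to $1$, so one pays $e^{t}$ up to time $\sim\vert\log\vert\lambda\vert\vert$), and the factor $\vert\lambda\vert^{-\beta_\kappa}$ comes from the complementary trivial estimate when $e^{t_n}\le\vert\lambda\vert^{-1/\alpha'_\kappa}$. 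Relatedly, no Oseledets theorem is applied to the $\lambda$-family of twisted bundles, so your uniform log-integrability concern is moot; the growth is controlled pointwise by the first-variation functional $\Lambda_\kappa\le 1$ together with Birkhoff genericity. The issue you \emph{should} flag instead is that the gap must hold for the specific, non-generic classes $\lambda[\re(h)]$ for all $\lambda$ simultaneously, which is exactly what the ``linear elimination''/Hausdorff-dimension argument of Section~\ref{sec:Tor_KZ} (Lemma~\ref{lemma:growth_3}) supplies.
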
  
 We remark that Theorem~\ref{thm:Effect_Erg} and Theorem~\ref{thm:Twist_Erg} are in fact almost equivalent.
 In Theorem~\ref{thm:Twist_Erg} we have additional control on the twisted integral for small frequencies, which
 is important in the proof of the effective weak mixing result stated below. 
 In the paper we prove Theorem~\ref{thm:Twist_Erg} and derive Theorem~\ref{thm:Effect_Erg} from it.
 In case of self-similar translation flows (related to substitutions)  and for the Masur--Veech measures 
 on the strata ${\mathcal H}(2)$ and ${\mathcal H}(1,1)$ in genus $2$ this result has been proved by A.~Bufetov and B.~Solomyak~\cite{BS14}, \cite{BS18a}, \cite{BS18c}.   After our paper was completed\footnote{A complete draft of the present paper was sent  by the author to B.~Solomyak on May 22, 2019.}, Bufetov and Solomyak~\cite{BS19} were able to extend their symbolic approach, based on a twisted version of the Rauzy--Veech cocycle, to all genera (and to all $SL(2, \mathbb R)$-invariant orbifolds of rank higher than one),  drawing  in part on our refinement of the key ``linear elimination'' argument of~\cite{AvF07}, Appendix A.

 A similar result on twisted integrals of horocycle flows was proved by L.~Flaminio, the author and J.~Tanis~\cite{FFT16}, improving on earlier result by A.~Venkatesh \cite{V10} and J.~Tanis and P.~Vishe~\cite{TV15}. Twisted ergodic integrals of nilflows are ergodic integrals of product nilflows, hence they are covered by results on deviation of ergodic averages of nilflows. The Heisenberg (and the general step $2$) nilflow case are better understood, by renormalization methods (see for instance \cite{FlaFo06}), while the higher step case is not renormalizable, hence harder (see for instance \cite{GT12}, \cite{FlaFo14}).  We remark that  the nilpotency class is unchanged by taking the product of a nilmanifold with a circle.
 
 \smallskip
 Theorem~\ref{thm:Twist_Erg} is related to H\"older estimates on spectral measures.
 In particular we derive the following result.
 
  \begin{corollary}  \label{cor:spectral}
  There exist real numbers $\alpha_\kappa' >0$, $\beta_\kappa>0$ and  $N_\kappa>0$ and, for almost all Abelian differentials $h\in \mathcal H(\kappa)$ with respect to the 
  Masur--Veech measure, there exists a constant $C_h>0$ such that the spectral measure $\sigma_f$ of any function $f \in H^1(M)$  satisfies the bound
 $$
 \sigma_f ([\lambda-r, \lambda +r]) \leq  C_h \frac{(1+ \vert \lambda\vert)^{N_\kappa}}{ \vert \lambda\vert^{\beta_\kappa} }  \vert f \vert_{H^1(M)}  r^{2\alpha'_\kappa} \,, \quad  \text { for all }
 \lambda \in \R \text{ and }  r >0\,.
 $$
 In particular, the lower local dimension $\underline{d}_f(\lambda)$ of the spectral measure $\sigma_f$ satisfies  the inequality 
  $$
  \underline{d}_f( \lambda):= \underline{\lim}_{r\to 0^+}  \frac{\log \sigma_f ([\lambda-r, \lambda +r]) }{\log r}   \,\, \geq \,\, 2 \alpha'_\kappa \,, \quad \text{ for all } \lambda \in \R\,.
  $$
\end{corollary}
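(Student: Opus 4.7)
The plan is to derive the H\"older continuity of the spectral measure $\sigma_f$ from the pointwise twisted integral estimate of Theorem~\ref{thm:Twist_Erg} via the standard spectral identity that relates the squared $L^2$-norm of a twisted ergodic integral to the convolution of $\sigma_f$ against a Fej\'er kernel. This is the Strichartz--Last approach to spectral dimensions, and Theorem~\ref{thm:Twist_Erg} supplies exactly the input it requires.

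First, I would invoke the spectral theorem for the unitary group $U_t g := g\circ \phi^S_t$ acting on the subspace of zero-average functions in $L^2(M,\omega_h)$. Writing $\langle U_t f, f \rangle = \int_\R e^{2\pi i \mu t}\, d\sigma_f(\mu)$, expansion gives the identity
$$
\left\|  \int_0^T e^{2\pi i \lambda t}\, U_t f\, dt \right\|_{L^2(M)}^{\,2} \;=\; \int_\R \frac{\sin^2\!\bigl(\pi T(\mu+\lambda)\bigr)}{\pi^2 (\mu+\lambda)^2}\, d\sigma_f(\mu).
$$
Using the elementary inequality $\sin^2(x) \geq (4/\pi^2)\, x^2$ for $|x|\leq \pi/2$, the kernel is bounded below by $4T^2/\pi^2$ on the interval of radius $1/(2T)$ centered at $-\lambda$. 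Setting $r = 1/(2T)$, this yields
$$
\sigma_f\bigl([-\lambda - r,\, -\lambda + r]\bigr) \;\leq\; \frac{\pi^2}{4 T^{2}}\, \left\|  \int_0^T e^{2\pi i \lambda t}\, U_t f\, dt \right\|_{L^2(M)}^{\,2}.
$$

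Second, I would plug in Theorem~\ref{thm:Twist_Erg}: since $\omega_h$ is a probability measure, the $L^2(M)$-norm above is controlled by the $L^\infty$ bound, giving
$$
\left\| \int_0^T e^{2\pi i \lambda t}\, U_t f\, dt\right\|_{L^2(M)}^{\,2} \;\leq\; C_\kappa(h)^2\, \frac{(1+\lambda^2)^{N_\kappa}}{|\lambda|^{2\beta_\kappa}}\, |f|_{H^1(M)}^{\,2}\, T^{2(1-\alpha'_\kappa)}.
$$
Substituting $T = 1/(2r)$ into the preceding inequality and exploiting the symmetry $\lambda \mapsto -\lambda$ of the right-hand side produces the claimed H\"older estimate on $\sigma_f([\lambda-r,\lambda+r])$, up to a harmless relabelling of the constants $C_h$, $N_\kappa$, $\beta_\kappa$ (the natural computation actually delivers $|f|^2_{H^1(M)}$ and doubled exponents, which are equivalent to the stated form after renaming).

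Finally, the lower bound on $\underline{d}_f(\lambda)$ is a formal consequence: taking logarithms of the H\"older estimate and dividing by $\log r$, which is negative for small $r$, reverses the inequality, and the term $\log C(h,f,\lambda)/\log r$ tends to $0$ as $r \to 0^+$, so $\underline{d}_f(\lambda) \geq 2\alpha'_\kappa$. I do not expect any serious obstacle: the entire derivation is routine once Theorem~\ref{thm:Twist_Erg} is in hand. The only minor nuisance is the behaviour near $\lambda = 0$, where the $|\lambda|^{-\beta_\kappa}$ factor renders the bound trivial; this is irrelevant for the local dimension statement, which is fixed at each $\lambda$.
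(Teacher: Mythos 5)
Your argument is correct and coincides with the paper's own proof: the paper proves exactly the upper-bound half of your computation as Lemma~\ref{lemma:spectral_1} (the spectral identity $\Vert\int_0^{\mathcal T}e^{-2\pi\imath\lambda t}f\circ\phi_t\,dt\Vert^2_{L^2}={\mathcal T}^2\int_\R\chi(({\lambda-\xi}){\mathcal T})\,d\sigma_f(\xi)$ together with the lower bound $\chi\geq c$ on $[-1/2,1/2]$ and the choice $r=1/(2{\mathcal T})$), and then cites Theorem~\ref{thm:Twist_Erg} and the trivial $\log$ computation for the local dimension, just as you do. The only cosmetic differences are your use of $\sin^2$ in place of $|e^{2\pi\imath x}-1|^2$ and your explicit remark that the pointwise bound of Theorem~\ref{thm:Twist_Erg} dominates the $L^2$ norm, both of which are harmless.
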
 
 \begin{remark} The  positive exponents  $\alpha_\kappa'0$, $\beta_\kappa$ and  $N_\kappa$  of Corollary~\ref{cor:spectral} are indeed the same as the exponents 
 of Theorem~\ref{thm:Twist_Erg} above.
  \end{remark}

Finally, {\it uniform}  H\"older estimates on spectral measures are known to imply power-law quantitative weak mixing estimates (see for instance ~\cite{Kn98}, Corollary~3.8). However we do not know {\it a priori}  whether uniform 
H\"older estimates on spectral measures hold for almost all translation flows.  

We are nevertheless able to
derive the following effective weak mixing result directly from the bounds on twisted integrals of~Theorem~\ref{thm:Twist_Erg}.   

\begin{corollary}  \label{cor:Eff_WM}
 There exist a real number $\alpha'''_\kappa>0$ and, for almost all Abelian differentials $h\in \mathcal H(\kappa)$ with respect to the Masur--Veech measure, there exists a constant $C_h>0$ such that, for any zero-average functions  $f  \in H^{s}(M)$ with $s>s'_\kappa$ (for some $s'_\kappa>1$) and $g\in L^2_h(M)$,  and for all $\mathcal T>0$ we have
 $$
 \frac{1}{{\mathcal T}} \int_0^{\mathcal T}  \left\vert \< f\circ \phi^S_t, g\>_{L^2_h(M)} \right\vert^2  dt  \leq C_h \vert f \vert^2_{H^s(M)} \vert g \vert^2_{L^2_h(M)}  
 {\mathcal T}^{-\alpha'''_\kappa}\,.
 $$
 \end{corollary}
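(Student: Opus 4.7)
The plan is to deduce the corollary from the spectral Hölder estimate of Corollary~\ref{cor:spectral} by a Knill-type argument. Let $U_t\varphi := \varphi\circ\phi^S_t$ denote the unitary group on $L^2_h(M)$ and $\sigma_{f,g}$ the complex spectral measure of the pair $(f,g)$, so that $c(t):=\langle f\circ\phi^S_t,g\rangle_{L^2_h}=\int_\R e^{2\pi i\lambda t}\,d\sigma_{f,g}(\lambda)$. A direct computation then gives
\begin{equation*}
\frac{1}{\mathcal T}\int_0^{\mathcal T}|c(t)|^2\,dt \;=\; \iint_{\R^2}\hat K_{\mathcal T}(\lambda-\mu)\,d\sigma_{f,g}(\lambda)\,d\overline{\sigma_{f,g}}(\mu) \;\leq\; \int_\R\bigl(\Phi_{\mathcal T}\ast d|\sigma_{f,g}|\bigr)(\mu)\,d|\sigma_{f,g}|(\mu),
\end{equation*}
where $\hat K_{\mathcal T}(\nu):=\mathcal T^{-1}\int_0^{\mathcal T}e^{2\pi i\nu t}\,dt$ satisfies $|\hat K_{\mathcal T}(\nu)|\leq\Phi_{\mathcal T}(\nu):=\min\bigl(1,1/(\pi|\nu|\mathcal T)\bigr)$.

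For each $\mu\in\R$ I would split the convolution $\Phi_{\mathcal T}\ast d|\sigma_{f,g}|$ dyadically on annuli $\{2^{k-1}/\mathcal T\leq|\lambda-\mu|\leq 2^k/\mathcal T\}$, and bound $|\sigma_{f,g}|$ on each shell by the polarization $|\sigma_{f,g}|(B_r(\mu))\leq\sqrt{\sigma_f(B_r(\mu))\,\sigma_g(B_r(\mu))}\leq\|g\|_{L^2}\sqrt{\sigma_f(B_r(\mu))}$. Corollary~\ref{cor:spectral} supplies the needed Hölder bound for $\sigma_f$ with coefficient $B(\mu):=C_h(1+|\mu|)^{N_\kappa}/|\mu|^{\beta_\kappa}$ and exponent $2\alpha'_\kappa$ whenever $\mu\neq 0$; the singularity at $\mu=0$ is patched by the companion estimate $\sigma_f([-r,r])\leq C_h\,\vert f\vert^2_{H^1(M)}\,r^{2\alpha_\kappa}$, which follows from Theorem~\ref{thm:AtF08} by applying the spectral theorem to the identity $\|\int_0^{1/r}U_tf\,dt\|_{L^2}^2=\int|\hat K_{1/r}(\lambda)|^2\,d\sigma_f(\lambda)$ and noting that $|\hat K_{1/r}|^2$ is bounded below by a positive multiple of $r^{-2}$ on $[-r,r]$. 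Summing the resulting geometric series, then integrating the pointwise bound against $d|\sigma_{f,g}|$ and invoking Cauchy--Schwarz once more, produces
\begin{equation*}
\frac{1}{\mathcal T}\int_0^{\mathcal T}|c(t)|^2\,dt \;\leq\; C_h\,\|g\|_{L^2}^2\, \mathcal T^{-\alpha}\sqrt{\,\int_\R B(\mu)\,d\sigma_f(\mu)\,},
\end{equation*}
with $\alpha:=\min(\alpha_\kappa,\alpha'_\kappa)$. The remaining integral is finite provided $s>s'_\kappa$ is large enough to both control $\int(1+|\mu|)^{N_\kappa}\,d\sigma_f\lesssim\vert f\vert^2_{H^s(M)}$ via the Sobolev-moment identities $\int\lambda^{2k}\,d\sigma_f=\|S^kf\|_{L^2}^2$ and to render the factor $|\mu|^{-\beta_\kappa}$ integrable against $d\sigma_f$ near $\mu=0$ by means of the companion Hölder estimate above; this yields the corollary with $\alpha'''_\kappa=\alpha$.

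The principal obstacle is the non-uniformity of Corollary~\ref{cor:spectral}: the Hölder coefficient $B(\mu)$ both blows up at the origin and grows polynomially at infinity. Overcoming the first forces one to graft in the Athreya--Forni-based estimate at $\mu=0$ (and thus implicitly to exploit dimension inequalities among the Masur--Veech exponents so that $|\mu|^{-\beta_\kappa}$ is integrated against the $r^{2\alpha_\kappa}$ bound on $\sigma_f$), while overcoming the second fixes the threshold $s'_\kappa$ to be roughly $N_\kappa/2+1$, matching the heuristic in the remark following Theorem~\ref{thm:Twist_Erg} that sufficient $t$-regularity is needed to counter the polynomial loss in the phase parameter.
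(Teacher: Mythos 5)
Your strategy (pass to the spectral measure, dominate the Fej\'er-type kernel by $\Phi_{\mathcal T}$, feed in the H\"older bound of Corollary~\ref{cor:spectral} dyadically, and control high frequencies by Sobolev moments of $f$ --- the frequency-domain counterpart of the paper's iterated integration by parts) is close in spirit to the paper's own route through Lemma~\ref{lemma:Eff_WM} and Lemma~\ref{lemma:spectral_1}. But there is a genuine gap at the low-frequency singularity. Your final estimate requires the convergence of $\int_\R B(\mu)\,d\sigma_f(\mu)$ with $B(\mu)=C_h(1+|\mu|)^{N_\kappa}|\mu|^{-\beta_\kappa}$, and you claim the contribution near $\mu=0$ is tamed by the companion estimate $\sigma_f([-r,r])\leq C_h\vert f\vert^2_{H^1}\,r^{2\alpha_\kappa}$. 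A dyadic computation shows that this estimate makes $|\mu|^{-\beta_\kappa}$ integrable against $d\sigma_f$ only when $\beta_\kappa<2\alpha_\kappa$, and nothing guarantees that: tracing the constants in the proof of Theorem~\ref{thm:twist_integral_bound}, one has $\beta_\kappa=\alpha_\kappa/\alpha'_\kappa$ with $\alpha'_\kappa$ a small exponent coming from Lemma~\ref{lemma:growth_3}, so in general $\beta_\kappa$ is much larger than $2\alpha_\kappa$. The same blow-up already contaminates your pointwise bound $(\Phi_{\mathcal T}\ast d|\sigma_{f,g}|)(\mu)\lesssim \Vert g\Vert\,\sqrt{B(\mu)}\,\mathcal T^{-\alpha'_\kappa}$ as $\mu\to 0$. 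As written, this step fails.

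The repair is the one the paper implements in Lemma~\ref{lemma:Eff_WM}: do not integrate the singular weight over a fixed neighborhood of $0$, but excise the $\mathcal T$-dependent window $|\mu|\leq\mathcal T^{-\eta}$, with $\eta>0$ chosen so small that the loss $\mathcal T^{\eta\beta_\kappa}$ incurred on the complement (where $B(\mu)\leq C\,\mathcal T^{\eta\beta_\kappa}(1+|\mu|)^{N_\kappa}$) is strictly dominated by the gain $\mathcal T^{-\alpha'_\kappa}$. On the excised window one uses only the trivial bound on the convolution together with the mass estimate $|\sigma_{f,g}|(\{|\mu|\leq\mathcal T^{-\eta}\})\leq\Vert g\Vert\,\sigma_f([-\mathcal T^{-\eta},\mathcal T^{-\eta}])^{1/2}\lesssim\Vert g\Vert\,\mathcal T^{-\eta\alpha_\kappa}$; that is, your companion estimate must be used multiplicatively as a decay of mass near $0$, not as an integrability statement for $|\mu|^{-\beta_\kappa}$. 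With this modification --- and with your Sobolev-moment control of the factor $(1+|\mu|)^{N_\kappa}$, which plays the role of the paper's iterated integration by parts and produces the threshold $s'_\kappa$ --- the argument closes and yields the corollary with an exponent $\alpha'''_\kappa>0$ determined by $\alpha_\kappa$, $\alpha'_\kappa$ and $\beta_\kappa$.
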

 
 From the effective weak mixing result, we can then derive {\it a posteriori} uniform estimates on twisted ergodic integrals of sufficiently differentiable functions (see Corollary~\ref{cor:Twist_Erg}), which in turn imply uniform H\"older estimates on spectral measures, following a suggestion 
of O.~Khalil.   In fact, by following a general argument of A.~Venkatesh (see Lemma~3.1 in~\cite{V10}),  for sufficiently smooth functions  it is possible to upgrade the upper bound on twisted ergodic integrals of Theorem~\ref{thm:Twist_Erg} to an upper bound uniform with respect to  the phase constant $\lambda\in \R$,  as a consequence of the effective ergodicity result of Theorem~\ref{thm:AtF08} and the effective weak mixing result of Corollary~\ref {cor:Eff_WM}.  We can therefore state (without proof) the following result:
 
\begin{corollary}  \label{cor:Twist_Erg}
 There exist a real number $\alpha^{(iv)}_\kappa>0$ and, for almost all Abelian differentials $h\in \mathcal H(\kappa)$ with respect to the Masur--Veech measure, there exists a constant $C_h>0$ such that,  for all $\lambda \in \R\setminus \{0\}$, for all zero-average functions  $f  \in H^{s}(M)$ with $s>s_\kappa$ (for some $s_\kappa>1$)  and for all $(x,\mathcal T) \in M \times \R^+$, we have
 $$
\vert  \int_0^{\mathcal T} e^{2\pi \imath \lambda t}  f\circ \phi^{S}_t (x) dt  
\vert \leq C_\kappa(h)  \vert f \vert_{ H^s(M) } {\mathcal T}^{1-\alpha^{(iv)}_\kappa}\,.
 $$

 \end{corollary}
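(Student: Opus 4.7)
The strategy is to upgrade Theorem~\ref{thm:Twist_Erg} to a bound uniform in $\lambda$, following the Venkatesh-style argument of Lemma~3.1 in \cite{V10} sketched by the author. I would split the $\lambda$-axis into three regimes and treat each separately, then take the worst of the resulting power savings as $\alpha^{(iv)}_\kappa$.

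In the large-phase regime $|\lambda|\geq 1$, I would integrate by parts in $t$ to convert the polynomial blow-up $(1+\lambda^2)^{N_\kappa/2}/|\lambda|^{\beta_\kappa}$ of Theorem~\ref{thm:Twist_Erg} into extra Sobolev derivatives of $f$. Writing $S$ for the generator of $(\phi^S_t)$ and using $\partial_t(f\circ\phi^S_t)=(Sf)\circ\phi^S_t$, one step of integration by parts gives
\begin{equation*}
\int_0^{\mathcal T} e^{2\pi i\lambda t} f\circ\phi^S_t(x)\,dt = \frac{1}{2\pi i\lambda}\bigl[e^{2\pi i\lambda t}f\circ\phi^S_t(x)\bigr]_0^{\mathcal T} - \frac{1}{2\pi i\lambda}\int_0^{\mathcal T} e^{2\pi i\lambda t}(Sf)\circ\phi^S_t(x)\,dt,
\end{equation*}
and iterating $k$ times produces boundary terms of total size $O(|\lambda|^{-1}\|f\|_{C^{k-1}(M)})$ plus a remainder $(2\pi i\lambda)^{-k}$ times the twisted integral of $S^k f$. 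Applying Theorem~\ref{thm:Twist_Erg} to this remainder, the combined prefactor is $|\lambda|^{-k}(1+\lambda^2)^{N_\kappa/2}/|\lambda|^{\beta_\kappa}$, which is uniformly bounded on $|\lambda|\geq 1$ as soon as $k\geq N_\kappa$. Sobolev embedding absorbs the $C^{k-1}$ norm into $|f|_{H^s(M)}$ for $s$ large, which is the source of the threshold $s_\kappa$ alluded to in the remark after Theorem~\ref{thm:Effect_Erg}.

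For $0<|\lambda|\leq 1$, Theorem~\ref{thm:Twist_Erg} is useless because of its $|\lambda|^{-\beta_\kappa}$ singularity. To handle the subrange $|\lambda|\leq \mathcal T^{-\gamma}$ for some small $\gamma>0$, I would exploit that the phase is essentially constant on long subintervals: fix $\delta\in(0,1)$ and partition $[0,\mathcal T]$ into subintervals $I_j$ of length $L=\min(\mathcal T,\delta/|\lambda|)$, so that $|e^{2\pi i\lambda t}-e^{2\pi i\lambda jL}|\leq 2\pi\delta$ on $I_j$. Each piece of the twisted integral then differs by $O(\delta L\|f\|_\infty)$ from an untwisted ergodic integral of $f$ along $I_j$ starting from $\phi^S_{jL}(x)$, and Theorem~\ref{thm:AtF08} bounds the latter by $C_h|f|_{H^1(M)} L^{1-\alpha_\kappa}$. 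Summing over the $\lceil \mathcal T/L\rceil$ subintervals yields
\begin{equation*}
\Bigl|\int_0^{\mathcal T} e^{2\pi i\lambda t} f\circ\phi^S_t(x)\,dt\Bigr| \leq C_h\,|f|_{H^1(M)}\,\mathcal T\,(|\lambda|/\delta)^{\alpha_\kappa} + 2\pi\delta\,\mathcal T\,\|f\|_\infty,
\end{equation*}
and choosing $\delta=\mathcal T^{-\varepsilon}$ with $\varepsilon,\gamma>0$ small produces the desired power saving. The residual intermediate range $\mathcal T^{-\gamma}\leq|\lambda|\leq 1$ is where Corollary~\ref{cor:Eff_WM} enters: following Venkatesh, I would write $|I_{\mathcal T}(\lambda)|^2$ as the Fourier transform at $\lambda$ of the truncated autocorrelation $\Gamma_{\mathcal T}(u,x)=\int f\circ\phi^S_{s+u}(x)\,\overline{f\circ\phi^S_s(x)}\,ds$, dominate $\sup_\lambda |I_{\mathcal T}(\lambda)|^2\leq \|\Gamma_{\mathcal T}(\cdot,x)\|_{L^1(-\mathcal T,\mathcal T)}$, and deduce pointwise-in-$x$ control of the $L^1$ norm from the $L^2$-in-$t$ correlation decay of Corollary~\ref{cor:Eff_WM}, via Sobolev embedding in the orbit variable for $s>s_\kappa$.

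The principal obstacle is the coordinated choice of parameters: the integration-by-parts order $k$, the approximation scales $\delta$ and $\gamma$, and the smoothness threshold $s_\kappa$ must be tuned jointly so that all three regimes collapse to a single bound $C_h|f|_{H^s(M)}\,\mathcal T^{1-\alpha^{(iv)}_\kappa}$ with $\alpha^{(iv)}_\kappa>0$. The most delicate step is the last one, namely transferring the $L^2$-in-$t$ correlation bound of Corollary~\ref{cor:Eff_WM} to pointwise-in-$x$ control of $\Gamma_{\mathcal T}$, which appears to be the ultimate source of the differentiability loss $s_\kappa>1$ and is what forces the statement of Corollary~\ref{cor:Twist_Erg} to require higher Sobolev regularity than Theorem~\ref{thm:Twist_Erg} itself.
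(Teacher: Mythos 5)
The paper gives no proof of Corollary~\ref{cor:Twist_Erg}: it is stated explicitly ``without proof'' and its proof is ``left to the reader'' in the outline, so there is nothing in the text to compare your argument against line by line; the author only prescribes the strategy, namely Venkatesh's Lemma~3.1 combining Theorem~\ref{thm:AtF08} with Corollary~\ref{cor:Eff_WM}. Your large-phase regime (integration by parts, $k\geq N_\kappa$ times, with $S^k f$ automatically of zero average since $S$ preserves $\omega_h$, and the boundary terms absorbed into $\vert f\vert_{H^s}$ by Sobolev embedding) and your tiny-phase regime (freezing the phase on blocks of length $\delta/\vert\lambda\vert$ and applying Theorem~\ref{thm:AtF08} blockwise) are both sound. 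You should also notice that your intermediate regime $\mathcal T^{-\gamma}\leq\vert\lambda\vert\leq 1$ is already covered by Theorem~\ref{thm:Twist_Erg} itself: there $(1+\lambda^2)^{N_\kappa/2}$ is bounded and $\vert\lambda\vert^{-\beta_\kappa}\leq \mathcal T^{\gamma\beta_\kappa}$, so choosing $\gamma<\alpha'_\kappa/(2\beta_\kappa)$ still leaves a power saving $\mathcal T^{1-\alpha'_\kappa/2}$. Assembled this way, your two outer regimes plus the theorem already give a complete --- and more elementary --- proof that bypasses Corollary~\ref{cor:Eff_WM} entirely.

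As written, however, your treatment of the intermediate regime contains a genuine gap, and the mechanism you invoke to close it would not work. Corollary~\ref{cor:Eff_WM} controls the space-averaged quantity $\frac1{\mathcal T}\int_0^{\mathcal T}\vert\langle f\circ\phi^S_t,g\rangle_{L^2_h}\vert^2\,dt$, while $\Gamma_{\mathcal T}(u,x)$ is an autocorrelation along the single orbit of $x$; ``Sobolev embedding in the orbit variable'' cannot convert the former into pointwise-in-$x$ control of the latter, since the two quantities live on different spaces. The correct bridge --- and the actual content of Venkatesh's lemma --- is effective unique ergodicity: $\Gamma_{\mathcal T}(u,x)$ is itself an ergodic integral of length comparable to $\mathcal T$ of the function $(f\circ\phi^S_u)\cdot\bar f$, so Theorem~\ref{thm:AtF08} yields $\Gamma_{\mathcal T}(u,x)=\mathcal T\,\langle f\circ\phi^S_u,f\rangle_{L^2_h(M)}+O\bigl(\mathcal T^{1-\alpha_\kappa}\vert(f\circ\phi^S_u)\bar f\vert_{H^1}\bigr)$ uniformly in $u$ and $x$. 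Here $\vert(f\circ\phi^S_u)\bar f\vert_{H^1}\lesssim\vert f\vert_{H^s}^2$ for $s>1$, because $S$ and $T$ commute with $\phi^S_u$ (so $\vert f\circ\phi^S_u\vert_{H^1_h}=\vert f\vert_{H^1_h}$) and $H^s\subset L^\infty$; this replacement step, not the final one, is the true source of the loss of derivatives. Only after it does Cauchy--Schwarz reduce $\frac1{\mathcal T}\int_{-\mathcal T}^{\mathcal T}\vert\langle f\circ\phi^S_u,f\rangle\vert\,du$ to the quantity bounded by Corollary~\ref{cor:Eff_WM}. Carried out in full, this version of the argument is uniform over all $\lambda\in\R$ simultaneously, so the three-way splitting, while harmless, is also unnecessary.
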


 \begin{remark}  \label{remark} It is not difficult to extend all of the above results to almost everywhere statements with respect to absolutely continuous $SL(2, \R)$-invariant measures on any $SL(2, \R)$-invariant orbifold $\mathcal M$ of rank at least $2$. In fact, the ``linear elimination'' argument of section~\ref{sec:Tor_KZ}, which is a strengthened version of the argument given in the Appendix of \cite{AvF07}, is based on the condition that  the restriction of the Kontsevich--Zorich cocycle to the projection $p(T\mathcal M)$ of the tangent space $T\mathcal M$ has at least $2$ strictly positive exponents. It is known from the work of S.~Filip (see \cite{Fi17} , Corollary 1.3) that in fact all the Kontsevich--Zorich exponents on $p(T\mathcal M)$ are non-zero (this conclusion can also be derived from the cylinder deformation theorem of A.~Wright (see Theorem 1.10  of \cite{Wri15}) and the criterion of \cite{F11}). Since the (cylinder) rank $r$ of  $\mathcal M$ is by definition (see \cite{Wri15}, Definition 1.11) equal to half of the complex dimension of  $p(T\mathcal M)$ the conclusion follows.  In particular, the Hausdorff dimension bound of Lemma \ref{lemma:growth_2} holds for any sub-orbifold $\mathcal M$ with the genus $g\geq 2$ replaced by the rank $r\geq 1$, and  for rank at least $2$  it  follows that the results hold almost everywhere on $\mathcal M$ since  we have $r+1 < 2r =\text{\rm dim}_\C (T\mathcal M)$. 
 \end{remark}

\smallskip
The paper is organized as follows. We recall definitions and basic facts about translation surfaces and flows
in section~\ref{sec:TFlows}. In section~\ref{sec:Twist_Int} we establish relations between twisted integrals of translation flows and ergodic integrals of the twisted flow on the product $3$-dimensional translation manifold, and we describe them in terms of $1$-dimensional (closed) currents. In section~\ref{sec:Twist_Co} we introduce the twisted cohomology space and the twisted cocycle over the Teichm\"uller flow, which is in fact a cocycle over the toral quotient of the Kontsevich--Zorich cocycle. The core of our approach comes in section~\ref{sec:Var_For} 
where we prove a first variation formula for the Hodge norm of the twisted cocycle.  In section~\ref{sec:Tor_KZ} we prove a result about a generalized weak stable space of the toral Kontsevich--Zorich cocycle, inspired by the
``linear elimination'' argument of~\cite{AvF07}.  Finally, in section \ref{sec:Proofs} we prove all the main results and corollaries stated above in this Introduction (with the only exception of Corollary~\ref{cor:Twist_Erg} whose proof we leave to the reader). Rather standard facts on the relations between bounds on twisted ergodic integrals, local dimension of spectral measures and effective weak mixing are postponed to section~\ref{sec:spectral_dim} at the end of the paper.

\section*{Acknowledgements} The motivation for this work came from conversations with Corinna Ulcigrai and
Selim Ghazouani about Bufetov's and Solomyak's work~\cite{BS14},~\cite{BS18a} on twisted cohomology and twisted ergodic integrals of translation flows.  We wish to thank Simion Filip and Carlos Matheus for  enlightening conversations which inspired the final form of the twisted cocycle. We are grateful to W.~Goldman for explaining to us his argument from~\cite{G84} given in our proof of Lemma~\ref{lemma:cohom_dim}. Finally, Jenny Rustad read a draft of the paper and with her criticism helped improve the exposition and correct mistakes.  This research was supported by the NSF grant DMS 1600687 and by a Research Chair of  the Fondation Sciences Math\'ematiques de Paris (FSMP). The author is grateful to the Institut Math\'ematiques de Jussieu (IMJ)
for its hospitality.

 \section{Translation Surfaces and Flows}
 \label{sec:TFlows}
 
Let $\Sigma_{h}:=\{p_{1},\dots,p_{\sigma}\}\subset M_h$ be the set of zeros of the holomorphic Abelian differential~$h$ on a Riemann surface $M$, of orders $(k_{1},\dots,k_{\sigma}) \in (\N \setminus \{0\})^\sigma$ respectively with~$k_{1} + \dots + k_{\sigma}=2g-2$.  Let  $R_{h}:=\vert h\vert$ be the flat metric with cone singularities at $\Sigma_{h}$ induced by the Abelian  differential $h$ on $M$ and let $\omega_h$ denote its area form.  With respect to a holomorphic local coordinate $z=x+\imath y$ at a regular point, the Abelian differential 
$h$ has the form $h=\phi(z)dz$, where $\phi$ is a locally defined holomorphic function, and, consequently, 
\begin{equation}
\label{eq:metric}
 R_h= |\phi(z)| (dx^2 +dy^2)^{1/2}\,,\quad  \omega_h=|\phi(z)|^2\,dx\wedge dy\,.
\end{equation}
\noindent The metric $R_{h}$ is flat, degenerate at the finite set $\Sigma_{h}$ of zeroes of $h$ 
and has trivial holonomy, hence $h$ induces on $M$ the structure of a {\it translation 
surface}. 

 \smallskip
  \noindent The weighted $L^{2}$ space is the standard space $L^{2}_{h}(M):= L^{2}(M,\omega_{h})$  with respect to the area element $\omega_{h}$  of the metric $R_{h}$. Hence the weighted $L^{2}$  norm $\vert \cdot\vert_{0}$ (the dependence on the Abelian differential is suppressed in the notation) is induced by the hermitian product $\<\cdot, \cdot\>_{h}$ defined as follows: for all functions $u$,$v\in L^{2}_{h}(M)$,
 \begin{equation}
 \label{eq:0norm}
 \< u ,v\>_{h} :=  \int _{M} u\,\bar v \, \omega_{h}\,\,.
 \end{equation}
 Let $\mathcal  F_{ \im (h)}$ be the {\it horizontal foliation},  $\mathcal  F_{\re (h)}$ be the {\it vertical foliation} for
the holomorphic Abelian differential $h$ on $M$. The foliations $\mathcal  F_{\im (h)}$ and $\mathcal  F_{\re(h)}$
are measured foliations (in the sense of Thurston):  $\mathcal  F_{\im(h)}$ is the foliation given by the equation $\im (h)=0$ endowed with the invariant transverse measure 
$\vert \im (h) \vert$,  $\mathcal  F_{\re(h)}$ is the foliation given  by the equation $\re (h)=0$ endowed with the 
invariant transverse measure $\vert \re (h) \vert$.  Since the metric $R_h$ is flat with trivial holonomy,  there exist  commuting vector fields $S_h$
and $T_h$ on $M\setminus \Sigma_{h}$ such that 
\begin{enumerate}
\item The frame $\{S_h,T_h\}$ is a parallel  orthonormal frame with respect to the metric $R_{h}$ for the restriction of the tangent bundle $TM$ to the complement $M\setminus \Sigma_{h}$  of the set of cone points;
\item the vector field $S_{h}$ is tangent to the horizontal foliation $\mathcal  F_{\im(h)}$, the vector field $T_{h}$
 is tangent to the vertical foliation $\mathcal  F_{\re(h)}$ on $M\setminus \Sigma_{h}$ \cite{F97}, \cite{F07}. 
 \end{enumerate}
 In the following  we will often drop the dependence of the vector fields $S_{h}$, $T_{h}$ on the Abelian differential in order to simplify the notation. The symbols
$\mathcal  L _{S}$, $\mathcal  L _{T}$ denote the Lie derivatives, and $\imath_S$, $\imath_T$ the contraction operators with respect to the vector field  $S$, $T$ on $M\setminus \Sigma_h$. We have:
\begin{enumerate}
\item $\mathcal  L _{S} \omega_{h} = \mathcal  L_{T}\omega_{h} =0$ on $M\setminus \Sigma_{h}$ , that is, the 
area form $\omega_{h}$ is invariant with respect to the flows generated by $S$ and $T$;
\item $\imath_{S} \omega_{h}= \im (h)$ and $\imath_{T} \omega_{h}= -\re (h)$, hence
the $1$-forms $\eta_{S} :=\imath_{S} \omega_{h}$,  $\eta_{T} :=-\imath_{T} \omega_{h}$ are smooth 
and closed on $M$ and $\omega_{h}= \eta_{T}\wedge \eta_{S}$.
\end{enumerate}
It follows from the area-preserving property $(1)$ that the vector field $S$, $T$ are anti-symmetric
as densely defined operators on $L^{2}_{h}(M)$, that is, for all functions $u$, $v \in C_0^{\infty} (M\setminus\Sigma_h)$,  (see \cite{F97}, $(2.5)$),
\begin{equation}
\label{eq:antisymm}
\< Su ,v\>_{h} = -\< u ,Sv\>_{h}\,\,, \quad \text{ respectively } \,\, \< Tu ,v\>_{h} =-\< u ,Tv\>_{h} \,\,.
\end{equation}
In fact, by Nelson's criterion~\cite{Ne59}, Lemma 3.10, the anti-symmetric operators $S$, $T$ are {\it essentially skew-adjoint} on the Hilbert space $L^{2}_{h}(M)$.

\smallskip
\noindent The {\it weighted Sobolev norms} $\vert \cdot\vert_{k}$, with integer exponent $k>0$, are the euclidean norms, introduced in \cite{F97}, induced by the hermitian product defined as follows: for all 
functions $u$, $v\in L^{2}_{h}(M)$,
\begin{equation}
 \label{eq:knorm}
 \< u,v \>_{k} :=   \frac{1}{2}\sum_{i+j\leq k}\<S^{i}T^{j}u, S^{i}T^{j}v\>_{h} + 
 \<T^{i}S^{j}u, T^{i}S^{j}v\>_{h}\,.
 \end{equation}
  The  {\it weighted Sobolev norms } $\vert \cdot\vert_{-k}$, with integer exponent $-k<0$ are defined to be the dual norms of  the norms $\vert \cdot\vert_{k}$ on the  maximal {\it common invariant domain}
 \begin{equation}
 \label{eq:cid}
 H^{\infty}_h(M):= \bigcap_{i,j\in \N}  \text{\rm Dom}( \bar S^i \bar T^j) \cap \text{\rm Dom}( \bar T^i \bar S^j)
 \end{equation}
 of the closures $\bar S$, $\bar T$ of the essentially skew-adjoint operators $S$,  $T$ on $L^2_h(M)$.
 
The {\it weighted Sobolev space }$H^{k}_h(M)$, with integer exponent $k\in\Z$, is the 
Hilbert space obtained as the completion with respect to the norm $\vert \cdot  \vert _{k}$ of the 
space $  H^{\infty}_h(M)$ endowed with the norm $\vert \cdot\vert_{k}$.
The weighted Sobolev space $H^{-k}_h(M)$ is isomorphic to  the dual space of the Hilbert space $H^{k}_h(M)$, for all $k\in \Z$. 

The weighted Sobolev norms can be extended to differential forms as follows. 
Let  $\Omega^1 H^\infty_h (M)$ denote the space of $1$-forms
$$
\Omega^1 H^\infty_h (M):= \{ \alpha_T \eta_T + \alpha_S \eta_S \vert (\alpha_T, \alpha_S) \in 
H^{\infty}_h(M)^2 \}\,.
$$
Since the space $\Omega^1 H^\infty_h (M)$ is by definition identified with the square $H^{\infty}_h(M)^2$, it
is possible to define, for all~$k\in \N$,  the Sobolev norms $\vert \cdot \vert_k$  on $\Omega^1 H^\infty_h (M)$ 
as follows: for all $\alpha = \alpha_T \eta_T + \alpha_S \eta_S\in \Omega^1 H^\infty_h (M)$ we let
$$
\vert \alpha \vert_{k}  = \left (  \vert \alpha_T \vert^2_k + \vert \alpha_S \vert_k^2 \right)^{1/2} \,.
$$
The {\it weighted Sobolev space }$\Omega^1 H^{k}_h(M)$, with integer exponent $k\in \N$, is the 
Hilbert space obtained as the completion with respect to the norm $\vert \cdot  \vert _{k}$ of the 
space $\Omega^1 H^{\infty}_h(M)$ endowed with the norm $\vert \cdot\vert_{k}$.

The {\it weighted Sobolev space }$\Omega^1 H^{-k}_h(M)$, with negative integer exponent $-k$, is the 
Hilbert space obtained as the completion with respect to the norm $\vert \cdot  \vert _{-k}$ of the 
space $  \Omega^1 H^{\infty}_h(M)$ endowed with the norm $\vert \cdot\vert_{-k}$.  Since $M$ has 
dimension $2$, it is isomorphic to the space of currents of degree $1$ (and dimension $1$) dual to the 
Hilbert space $\Omega^1 H^{k}_h(M)$ of $1$-forms.

The {\it weighted Sobolev space }$\Omega^2 H^{k}_h(M)$, with integer exponent $k\in \N$,
of differential $2$-forms is defined by identification of the Sobolev space of functions  $H^k_h(M)$ with the space of 
$2$-forms given by multiplication times the area form $\omega_h$.  The {\it weighted Sobolev space }$\Omega^2 H^{-k}_h(M)$, 
with negative exponent $-k$, is defined as the dual of the Hilbert space $\Omega^2 H^{k}_h(M)$. It is a space of currents of
degree $0$ and dimension $2$.

Finally, the weighted Sobolev spaces $\Omega^* H^{s}_h(M)$ of differential forms, with arbitrary real exponent $s >0$, can be defined by interpolation,
and the weighted Sobolev space $\Omega^* H^{-s}_h(M)$, with negative real exponent $-s$, is defined as the dual Hilbert space of the
space $\Omega^* H^{s}_h(M)$. It is by definition a space of currents. 
 
 \medskip
 \noindent We recall below some basic results of analysis on translation (flat) surfaces which will be relevant in the following.

 Let $H^{s}(M)$, $s\in \R$, denote a family of standard Sobolev spaces on the compact 
manifold $M$ (defined with respect to a Riemannian metric).  

\begin{lemma}  (\cite{F07}, Lemma 2.11)
\label{lemma:comparison}
For any Abelian differential $h\in {\mathcal H}(\kappa)$, the following continuous embedding and isomorphisms of Banach spaces hold:
\begin{enumerate}
\item $ \,\, H^{s}(M) \,\, \subset   \,\, H_{h}^{s}(M)  \,,  
\quad\text{for }0\leq s<1$;
\item $\,\,H^{s}(M) \,\, \equiv   \,\, H_{h}^{s}(M) \,,
\quad\text{for }s=1$;
\item $\,\,H_{q}^{s}(M) \,\, \subset  \,\,  H^{s}(M)\,, 
\quad\text{for }s >1$.
\end{enumerate}
For $s \in [0,1]$, the space $H^{s}(M)$ is dense in $H_{h}^{s}(M)$ and, for $s >1$, the closure 
of $H_{h}^{s}(M)$ in $H^s(M)$ has finite codimension.   
\end{lemma}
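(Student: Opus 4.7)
The plan is to reduce the assertions to a purely local analysis near each zero $p_i\in\Sigma_h$. On any precompact open set $U\subset M\setminus\Sigma_h$, the flat metric $R_h$ is a smooth Riemannian metric and the orthonormal frame $\{S_h,T_h\}$ is smooth and non-degenerate, so that the weighted Sobolev norms $|\cdot|_s$ and the standard Sobolev norms are equivalent on $C^\infty_c(U)$; hence all the nontrivial content of the lemma comes from the behavior at the finitely many cone points. A partition of unity subordinate to a cover of $M$ by such a $U$ together with small disks $D_i$ around each $p_i$ therefore reduces the proof to comparing the two Sobolev scales on each $D_i$ separately.

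Near $p_i$ of order $k_i$, I would choose a holomorphic coordinate $z=x+\imath y$ in which $h=z^{k_i}\,dz$. Then $\omega_h=|z|^{2k_i}\,dx\wedge dy$, and an orthonormal frame for the flat metric $|z|^{2k_i}(dx^2+dy^2)$ is given by $|z|^{-k_i}$ times a rotation of the standard Euclidean frame. From this one reads off, for $u\in C^\infty_c(D_i)$, the pointwise asymptotic
\[
|S^{i}T^{j} u(z)|^2\,\omega_h \;\asymp\; |z|^{2k_i(1-(i+j))}\,|\partial^{i+j}u(z)|^2\,dx\,dy \quad\text{as }z\to 0.
\]

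The three cases of the lemma now follow by inspecting the weight $|z|^{2k_i(1-(i+j))}$. At order $i+j=0$ the weight $|z|^{2k_i}$ is bounded, giving the embedding $L^2(M)\subset L^2_h(M)$. At order $i+j=1$ the weight is identically $1$, so the first-order parts of the two Sobolev norms coincide on $C^\infty$, which combined with the zero-order comparison yields the equivalence $H^1(M)\equiv H^1_h(M)$; here a Hardy-type inequality in $\R^2$ is needed to recover the missing direction $|u|_{L^2}\lesssim |u|_{H^1_h}$. At order $i+j\geq 2$ the weight $|z|^{-2k_i(i+j-1)}$ is singular, so finiteness of the weighted norm forces $u$ to vanish at $p_i$ to an appropriate order. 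Interpolating between integer orders (or arguing directly with fractional Sobolev norms near $0\in\C$ as in \cite{F97}) handles non-integer $s$ and gives the continuous embeddings (1)–(3).

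For the density statements, $C^\infty(M)$ is dense in both $H^s(M)$ and $H^s_h(M)$ for $s\in[0,1]$, and the local comparisons above show that $H^s(M)$ is in fact norm-dense in $H^s_h(M)$ in that range. For $s>1$, the same local analysis shows that a smooth function lies in $H^s_h(M)$ iff its Taylor jet at each $p_i$ satisfies a finite number of linear vanishing conditions (dictated by the orders $k_i$ and by $\lfloor s\rfloor$); hence the closure of $H^s_h(M)$ inside $H^s(M)$ is cut out by finitely many linear constraints, yielding the finite-codimension claim. The main obstacle I expect is the rigorous identification of this finite-dimensional cokernel and the Hardy-type inequality at $s=1$; once those are in place, the rest reduces to careful bookkeeping of the weights near each cone point.
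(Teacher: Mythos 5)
The paper itself offers no proof of this lemma: it is imported verbatim from \cite{F07}, Lemma~2.11, so there is no in-paper argument to compare against. Your strategy --- localize at the cone points via a partition of unity, pass to the model $h=z^{k_i}\,dz$, and read off the embeddings from the weight carried by $S^iT^j$ relative to the Euclidean derivatives --- is exactly the mechanism of the cited reference, and the weight bookkeeping you describe does yield (1)--(3) and the $s=1$ equivalence. Two points in your sketch deserve more care. First, the displayed relation $|S^iT^ju|^2\,\omega_h\asymp |z|^{2k_i(1-(i+j))}|\partial^{i+j}u|^2\,dx\,dy$ is only the top-order term: iterating $S_h\sim |z|^{-k_i}\times(\text{rotated Euclidean frame})$ in the $z$-coordinate produces lower-order derivatives of $u$ with strictly more singular coefficients (e.g.\ $|z|^{-2k_i-2}|\partial u|^2\,dx\,dy$ at order two). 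These extra terms do not affect the embedding (3), since every weight is bounded below near $z=0$, but they do enter the identification of the vanishing conditions that cut out the closure of $H^s_h(M)$ in $H^s(M)$, so the finite-codimension claim needs them tracked (the clean way is to work in the natural coordinate $w=z^{k_i+1}/(k_i+1)$, where $S^iT^j$ becomes a constant-coefficient operator, and then change variables). Second, the density of $H^s(M)$ in $H^s_h(M)$ for $s\in[0,1]$ is not automatic from the local comparison: one approximates by functions in $C^\infty_c(M\setminus\Sigma_h)$, and this uses that a finite set has zero $H^1$-capacity in dimension two, so that cutoffs concentrated near $\Sigma_h$ have small $H^1_h$-cost. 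Both points are standard, and with them supplied your outline is a correct reconstruction of the argument.
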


\noindent {\bf Notation} : {\it In the following the symbols $C_\star$, $C'_\star$ or $C(\star)$ will denote positive constants generally independent of the 
Abelian differential and depending on the quantities $\star$.}

\medskip
For any Abelian differential $h\in {\mathcal H}(\kappa)$, let $\delta(h)$ denote the length of the shortest saddle connection on the translation
surface $(M,h)$. 

 \begin{lemma}[Sobolev embedding theorem] 
 \label{lemma:Sob_embed}  For any $s>1$ there exists a constant $C_s>0$ such that the following holds. 
 For any stratum $\mathcal H(\kappa)$, for any Abelian differential $h\in \mathcal H(\kappa)$ and for any function $u\in H^s_h(M)$, we have
 $$
 \max_{x\in M}  \vert u(x) \vert  \leq  \frac{C_s}{\delta(h)}  \vert  u \vert_{H^s_h(M)} \,.
 $$
 \end{lemma}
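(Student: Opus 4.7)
The plan is to reduce the statement to the standard Euclidean Sobolev embedding $H^s(D_1) \hookrightarrow C^0(D_1)$, valid for $s>1$ since $\dim M = 2$, via a rescaling argument carried out on embedded flat disks. Since $\{S_h, T_h\}$ is a parallel orthonormal frame for the flat metric $R_h$ on $M\setminus \Sigma_h$ and the two vector fields commute, the mixed symmetric sums in the definition of $|\cdot|_{H^s_h}$ collapse on any embedded flat disk to the usual Euclidean Sobolev norm (up to universal constants). Thus the only substantive ingredient is the existence of a uniformly large embedded flat disk around every regular point.

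The first step is a geometric lemma: there is an absolute constant $c>0$ (one may take $c=1/2$) such that for every $x \in M\setminus \Sigma_h$ the flat exponential map at $x$ is an isometry of the Euclidean disk of radius $c\,\delta(h)$ onto an embedded flat disk $D(x, c\,\delta(h))\subset M \setminus \Sigma_h$. Indeed, if this failed at some radius $r<\delta(h)/2$, then either (i) one of the radial flat geodesics from $x$ would hit a zero of $h$, producing a saddle connection of length at most $2r$ after concatenation with the ray in the opposite direction, or (ii) two distinct radial geodesics would meet in the regular part, giving a closed geodesic loop based at $x$ of length at most $2r$; straightening this loop in its free homotopy class within $M\setminus \Sigma_h$ yields a saddle connection of length at most $2r<\delta(h)$, contradicting the definition of $\delta(h)$.

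The second step is the rescaling. Given $u\in H^s_h(M)$ and $x\in M\setminus \Sigma_h$, set $r := c\,\delta(h)$, and normalize so that $r \leq 1$ (the complementary case $\delta(h)$ bounded below is trivial by compactness of any fixed stratum-neighborhood). Using the flat chart from the first step, define $v(y) := u(ry)$ on $D_1\subset \R^2$. A direct computation gives $\Vert v\Vert_{H^s(D_1)}\leq r^{-1}\Vert u\Vert_{H^s(D(x,r))}$, the worst factor $r^{-1}$ arising from the $L^2$ term in dimension two. Applying the standard Euclidean embedding to $v$ yields
\begin{equation*}
|u(x)| \,\leq\, \Vert v\Vert_{C^0(D_1)} \,\leq\, C_s\,\Vert v\Vert_{H^s(D_1)} \,\leq\, \frac{C_s}{c\,\delta(h)}\, |u|_{H^s_h(D(x,r))} \,\leq\, \frac{C'_s}{\delta(h)}\, |u|_{H^s_h(M)}\,.
\end{equation*}
Taking the supremum over $x\in M\setminus \Sigma_h$, on which $u$ is continuous by the local embedding on each flat disk, gives the required inequality.

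The main obstacle is the geometric lemma of the first step: the nonexistence of a short flat geodesic loop based at a regular point is folklore in translation surface geometry, but arguing rigorously that any such loop shrinks (within its free homotopy class in $M\setminus \Sigma_h$) to a saddle connection of comparable length requires care. The standard approach is a compactness and length-minimization argument exploiting the absence of conjugate points in the flat metric on $M\setminus \Sigma_h$: a minimizer in the homotopy class exists and, being a local flat geodesic not smoothly closed, must have its endpoint at $\Sigma_h$. Once this geometric input is in hand, the remainder of the proof is the elementary rescaling above.
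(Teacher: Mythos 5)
There is a genuine gap in your first step, and it is exactly where the difficulty of the lemma lies. The claim that every regular point $x$ is the \emph{center} of an embedded flat disk of radius $c\,\delta(h)$ is false: a point $x$ at distance $\epsilon \ll \delta(h)$ from a zero $p$ of $h$ admits no such disk, since an isometric image of a Euclidean disk contains no cone point, and the radial geodesic from $x$ toward $p$ terminates at $p$ after time $\epsilon$. Your case (i) does not produce the contradiction you assert: a single radial geodesic hitting a zero gives a segment with only one endpoint on $\Sigma_h$ (the opposite ray need not hit a zero), hence no saddle connection and no conflict with the definition of $\delta(h)$. What is true, and what the paper uses, is that every $x\in M$ is \emph{contained in} --- but not necessarily the center of --- an embedded open flat rectangle $R(x)$ with edges of length at least $\delta(h)/4$; the rectangle can be slid so as to avoid $\Sigma_h$ even when $x$ is close to (or equal to) a singularity. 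To exploit this one needs the scaled Sobolev embedding in the form $\max_{y\in R_{a,b}}\vert u(y)\vert \leq C'_s (ab)^{-1/2}\vert u\vert_{H^s(R_{a,b})}$, i.e.\ a bound on the supremum over the \emph{whole} rectangle, not merely an evaluation at the center as in your disk argument.

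Your second step (the rescaling, with the worst factor $r^{-1}$ coming from the zeroth-order term in dimension two, and the identification of the weighted norm with the Euclidean Sobolev norm in a flat chart via the frame $\{S_h, T_h\}$) is correct and is essentially the same computation the paper performs on rectangles. So the proof is repaired by replacing your centered-disk lemma with the ``containing rectangle'' statement and the corresponding uniform-over-the-rectangle embedding estimate; as written, however, the geometric lemma you rely on is false near $\Sigma_h$, and the length-minimization argument you sketch as the ``main obstacle'' addresses case (ii) but not the actual failure mode in case (i).
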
 
 \begin{proof} It is proved in \cite{F07}, Lemma 2.1, that for $s>1$ the weighted Sobolev space $H^s_h(M)$ embeds into the standard Sobolev space $H^s(M)$ of the
 compact surface $M$ (defined by local charts or by any given Riemannian metric). It follows by the Sobolev embedding theorem (see for instance
Th. 5. 4 in \cite{Ad}) that  $H^s_h(M)$ embeds into the space $C^0(M)$ of continuous functions on $M$ endowed with the uniform norm. 
It remains to analyze the dependence of the norm of the embedding on the Abelian differential $h\in {\mathcal H}(\kappa)$.  

For any $x \in M$, there exists an embedded open flat rectangle $R (x) \subset M_h$ with edges of length at least $\delta(h)/4$ such that $x\in R(x)$. 
The Sobolev embedding theorem for an arbitrary flat rectangle can be derived by scaling of the variables  from  the result in the case of a unit square  or 
by Fourier series expansion: let $R_{a,b}= [0, a] \times [0,b]$ denote a closed flat rectangle with edges of length $a, b \in (0, 1)$, and  for all $s\in \R$ let 
$H^s(R_{a,b})$ denote the Sobolev space of the domain $R_{a,b}$.  For any $s>1$, there exists a constant $C'_s>0$ such that, for any function 
$u \in H^s(R_{a,b})$  we have
$$
\max_{x\in R_{a,b}} \vert u(x) \vert   \leq    \frac{C'_s}{(ab)^{1/2} } \vert u \vert_{H^s(R_{a,b})} \,.
$$
The result follows by applying the above statement to the embedded flat rectangle $R(x)$ whose edges have by construction length $a, b \geq \delta(h)/4$.

\end{proof} 
 
 \begin{lemma}[Sobolev trace theorem] 
 \label{lemma:Sob_trace}  For any stratum ${\mathcal H}(\kappa)$ and any $s>1/2$ there exists a constant $C_{\kappa,s}>0$ such that the following holds. 
 For any Abelian differential $h\in \mathcal H(\kappa)$, any regular geodetic segment $\gamma\subset M$
 of finite $R_h$-length $L_h(\gamma)$ defines by integration a current of degree $1$ (and dimension $1$) $\gamma \in \Omega^{1} H^{-s}_h(M)$  such that
 $$
  \vert \gamma \vert_{\Omega^{1} H^{-s}_h(M)}  \leq C_{\kappa,s}(1+  \frac{L_h( \gamma)}{\delta(h)})  \,.
 $$
 \end{lemma}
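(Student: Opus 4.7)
The plan is to realize $\gamma$ as the trace current along an orbit arc of the directional flow $\phi^\theta_t$ of $h$ and to reduce the $\Omega^1 H^{-s}_h(M)$-bound to a Sobolev trace estimate on a suitable embedded flat rectangle cover of $\gamma$. Pairing $\gamma$ with a test $1$-form $\alpha = \alpha_T \eta_T + \alpha_S \eta_S \in \Omega^1 H^s_h(M)$ gives
$$
\int_\gamma \alpha \;=\; \int_0^{L_h(\gamma)} u \circ \phi^\theta_t(x_0)\,dt\,,
$$
where $u$ is a bounded linear combination of the components $\alpha_T,\alpha_S$ satisfying $|u|_{H^s_h(M)} \leq |\alpha|_{H^s_h(M)}$; it therefore suffices, for $s > 1/2$, to bound the right-hand side by $C_{\kappa,s}(1+L_h(\gamma)/\delta(h))\,|u|_{H^s_h(M)}$.

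I would subdivide $\gamma$ into $N \lesssim 1 + L_h(\gamma)/\delta(h)$ consecutive subarcs $\gamma_i$ of length at most $\delta(h)/4$ and construct, around each $\gamma_i$, an embedded flat rectangle $R_i \subset M \setminus \Sigma_h$ of dimensions $a_i \times b_i$ with one side containing $\gamma_i$, the family $\{R_i\}$ having bounded overlap. The systole bound is exactly what rules out self-intersection of such small rectangles: any flat rectangle of total size at most $\delta(h)/4 \times \delta(h)/4$ that failed to embed would produce a closed geodesic or a saddle connection of length below $\delta(h)$. On each $R_i$ the classical Sobolev trace theorem on the unit square (proved by Fourier series), rescaled, yields for $s > 1/2$ the estimate
$$
\bigl\vert \int_{\gamma_i} u \bigr\vert \;\leq\; C_s\,(a_i/b_i)^{1/2}\, |u|_{H^s(R_i)}\,,
$$
and Lemma~\ref{lemma:comparison} transfers the right-hand side to a bound by $|u|_{H^s_h(R_i)}$. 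Summing over $i$ via the triangle inequality and Cauchy--Schwarz, together with the bounded-overlap property of the cover, then produces the claimed inequality.

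The main obstacle I expect is the geometric construction of the cover with uniformly controlled aspect ratio: singularities of $h$ may lie very close to $\gamma$ in the transverse direction, so a naive choice of $R_i$ can have $b_i \ll a_i$ and a divergent trace constant. The resolution should be a Whitney-type refinement of the cover adapted to the flat geometry, exploiting the stratum-dependent bounds on the number of singularities and on the directions carrying short saddle connections, so that only a uniformly bounded number of additional ``defect'' rectangles enter the count $N$ and the final constant retains the stated form $C_{\kappa,s}(1+L_h(\gamma)/\delta(h))$.
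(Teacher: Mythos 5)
Your proposal follows essentially the same route as the paper's proof: subdivide $\gamma$ into $O(1+L_h(\gamma)/\delta(h))$ subarcs of length comparable to $\delta(h)$, realize each subarc on a side of an embedded flat rectangle of dimensions comparable to $\delta(h)\times\delta(h)$, apply the classical (rescaled) Sobolev trace theorem on rectangles for $s>1/2$, and sum. The reduction of the pairing $\int_\gamma\alpha$ to the trace of a scalar $u$ with $\vert u\vert_{H^s_h}\lesssim\vert\alpha\vert_{\Omega^1H^s_h}$ is also how the paper proceeds (via rotation of the differential), and your count of subarcs gives the stated constant.

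The one place where you and the paper diverge is the point you yourself flag: whether the rectangles around the subarcs actually embed when a zero of $h$ lies transversally close to $\gamma$. The paper simply asserts that the rectangle $[0,L_h(\gamma_i)]\times(-\delta(h)/3,\delta(h)/3)$ centered on $\gamma_i$ embeds, which is not literally true in that situation; so your concern is legitimate. However, your proposed resolution (a Whitney-type refinement with extra ``defect'' rectangles, invoking bounds on directions of short saddle connections) is not the right mechanism and, as sketched, does not obviously preserve the count $N\lesssim 1+L_h(\gamma)/\delta(h)$, since a single zero can approach a long winding arc many times. The correct and much simpler fix is a one-sided rectangle together with the triangle inequality: if a subarc $\gamma_i$ has length at most $\delta(h)/4$ and perpendicular geodesics from points of $\gamma_i$ reach singularities at distance less than $\delta(h)/4$ on \emph{both} sides, then concatenating the two perpendicular segments with the piece of $\gamma_i$ between their feet yields a path joining singular points of length less than $3\delta(h)/4<\delta(h)$, contradicting the definition of the systole. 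Hence for each $\gamma_i$ at least one side carries a singularity-free immersed rectangle of width $\delta(h)/4$, which is embedded because its diameter is below $\delta(h)$, and the one-sided trace theorem on that rectangle gives the same estimate with no additional rectangles in the count. With that replacement your argument closes, and the rest is as in the paper.
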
 
 \begin{proof}  The result was proved in \cite{F02}, Lemma~9.2, for the case of a horizontal or vertical regular arc (and for $s=1$). It can be generalized to any regular arc by rotation
 of the Abelian differential (and to any $s >1/2$ by following the argument and invoking the general Sobolev trace theorem for rectangles).  
 
 We outline the argument below.
 
 The regular arc $\gamma$ can be decomposed as union $\gamma = \cup_{i=0}^{N} \gamma_i$ of consecutive sub-arcs such that the following properties hold:
 \begin{enumerate}
 \item the length $L_h( \gamma_i)$ of the arcs $\gamma_i$, with respect to the flat metric $R_h$ induced by the Abelian differential $h$, satisfy the bounds
$$
 \begin{aligned}
  \delta(h)/3 \leq L_h( \gamma_i)  &\leq 2 \delta(h) /3 \,, \text{ for all } i\in \{1, \dots, N-1\} \,,  \\
   L_h( \gamma_0) \,, L_h( \gamma_N)  &\leq  2 \delta(h) /3 ;
 \end{aligned} 
$$
 \item the rectangle $R_i = [0, L_h(\gamma_i)] \times (-\delta(h)/3, \delta(h)/3) \subset \R^2$ embeds isometrically in the flat surface $(M \setminus \Sigma_h, R_h)$, so that the arc $ \bar \gamma_i :=[0, L_h(\gamma_i)] \times \{0\}$ has image equal to $\gamma_i \subset M$, for all $i\in \{0, \dots, N\}$.
 \end{enumerate}
The statement then follows from the Sobolev trace theorem (in $\R^2$) applied to each arc $\bar \gamma_i \subset R_i \subset \R^2$, for $i \in \{0, \dots, N\}$.  
Let 
$$
R_{a,b}:=\{(x,y)\in {\R}^2\,|\,0<x<a\,,\,\,-b<y<b\}\,.
$$ 
By a rescaling argument, that  is, by reducing to the case of $a=b=1$ by an affine change of coordinates, 
and by the Sobolev trace theorem~(see for instance \cite{Ad}, Th. 5.4 (5)), for every $s>1/2$ there exists a constant $K_s >0$,
$$
\Bigm\vert\int_0^a f(x,0)dx\,\Bigm\vert\leq K_s\,\Bigl(\frac{a} {b}\Bigr)^{1/2} 
\max\{a,b,1\}^s\,|f|_{ H^s (R_{a,b})}\,\,,
$$
hence (by taking into account that the systole function is uniformly bounded above on each stratum, see~\cite{MS91}, Corollary 5.6), there exists a constant 
$C_{\kappa,s}>0$ such that 
$$
\vert \gamma_i \vert_{\Omega^{1} H^{-s}_h(M)}  \leq   C_{\kappa,s} \,, \quad \text{ for all } i\in \{0, \dots, N\}\,.
$$
 The estimate in the statement then follows by taking into account the inequality $N-1\leq 3 L_h(\gamma) /\delta(h)$, which is an immediate consequence of the above lower bounds
on the lengths of the sub-arcs $\gamma_i$ for $i\in \{1, \dots, N-1\}$.
 
 \end{proof} 
 
For every $h \in {\mathcal H}(\kappa)$, let ${\mathcal Q}_h: H^1_h(M) \times H^1_h(M) \to \C$ denote the hermitian form
 \begin{equation}
 {\mathcal Q}_h (u, v) := \<S_h u, S_h v \>_h + \<T_h u, T_h v \>_h \,, \quad \text{ for every} u,v \in H^1_h(M).
 \end{equation}

  \section{Twisted Integrals}
  \label{sec:Twist_Int}
  For every holomorphic Abelian differential $h$ on $M$,  let $(\phi^S_t)$ denote the horizontal directional translation flows on $M$, that is, a flow with generator a vector field $S$ on $M\setminus \Sigma_h$.  We are interested in bounds on twisted ergodic integrals for the flow $(\phi^S_t)$, that is, for all $\lambda \in \R$ and for all  $f\in H^1(M)$, the integrals
  $$
  \int_0^{\mathcal T}    e^{2\pi \imath \lambda t}   f\circ \phi^S_t (x) dt  \,, \quad \text{ for all } {\mathcal T}>0\,.
  $$ 
  These integrals can be viewed as ergodic integrals for a product flow as follows. Let $\Phi^{S, \lambda}_t$ denote the (translation) flow with generator the vector field 
  $S_\lambda:= S + \lambda \frac{\partial}{\partial \theta}$ on $M \times \T$, that is, the product flow $(\phi^S_t) \times  (R^\lambda_t)$ of the horizontal translation flow $\phi^S_t$
  times the linear flow $(R^\lambda_t)$ on $\T$.   There is an immediate Fourier decomposition of $L^2(M\times \T)$ into eigenspaces of the circle action on $M\times \T$
  with generator $\Theta:= \frac{\partial}{\partial \theta}$ on $\T$: for all $f \in L^2(M\times \T)$, 
  $$
  f (x, \theta) = \sum_{n\in \Z}   {\bar f }_n(x) e^{2\pi \imath n \theta}\,, \quad \text{ with }   {\bar f}_n (x) := \int_\T  f(x,\theta) e^{-2\pi \imath n \theta} d\theta  \in L^2_h(M)\,.
  $$
  Let $f_n (x, \theta) =  {\bar f }_n(x) e^{2\pi \imath n \theta}$. We have
  $$
  \int_0^{\mathcal T}  f_n \circ \Phi^{S, \lambda}_t (x,\theta) dt =   e^{2\pi \imath n \theta}  \int_0^{\mathcal T} e^{2\pi \imath n \lambda t}  {\bar  f}_n \circ \phi^S_t (x) dt \,.
  $$  
  Ergodic integrals on $M\times \T$ can be extended as  linear functionals on $1$-forms, that is, as currents of dimension $1$ and degree $2$. Since any orbit can
  be decomposed as a union of arcs which can then be closed by the addition of uniformly bounded (transverse) arcs, we are especially interested in {\it closed currents }of
  degree $2$.  
  
  \smallskip
 For any vector bundle $V$ over $M\times\T$, let $\mathcal E (M\times \T, V):= C^\infty(M\times \T, V)$ denote the space of infinitely
 differentiable sections of $V$ over $M\times \T$, and let $\mathcal E' (M\times \T, V)$ denote the dual 
 space of currents. Let $\Omega^2(M\times \T):= C^\infty(M\times \T, \wedge^2 T^\ast (M\times \T))$ be the space of smooth $2$-forms on $M\times \T$.   Since $T^\ast (M\times \T)$ has a splitting 
 $$
 T^\ast (M\times \T) = T^\ast M \oplus  \R \, d\theta
 $$
 (with the natural identification of $T^\ast M$ and $T^\ast \T=\R d\theta$ to subspaces of $T^\ast (M\times \T)$
 via the canonical projections $M\times \T \to M$ and $M\times \T \to \T$), there exists
  a direct splitting of the space $\Omega^2(M\times \T)$ and a dual splitting of the space $\Omega^2(M\times \T)'$ 
  of currents of degree $2$ (and dimension $1$):
  $$
  \begin{aligned}
  \Omega^2(M\times \T) &=  C^\infty (M\times \T,  T^*M) \wedge d\theta \oplus C^\infty (M\times \T, \wedge^2 T^\ast M) \,, \\
    \Omega^2(M\times \T)' &\equiv \mathcal E' (M\times \T,  T^*M)  \oplus   \mathcal E' (M\times \T, \wedge^2 T^\ast M) \,.
  \end{aligned}
  $$

  As a consequence, {\it any} current $C$ of degree $2$ (and dimension $1$) on $M\times \T$ is of the form
  \begin{equation}
  \label{eq:CAB}
  C =   A    +   \imath_\Theta B \,,
  \end{equation}
  with $A\in  \mathcal E' (M\times \T,  T^*M) $ a current of degree $2$ (and dimension $1$),  and 
  $B$ a current of degree $3$ (and dimension $0$), a distribution, on $M\times \T$ (the symbol $\imath_\Theta$
  denotes the contraction operator on currents, with respect to the vector field $\Theta$ on $M\times \T$).
  
  It is also  possible to decompose any current 
  on $M\times \T$ into a sum of Fourier components with respect to the circle action:
  \begin{equation}
  \label{eq:CAN_n}
  C = \sum_{n\in \Z}   C_n =   \sum_{n\in \Z}   A_n   +  \imath_\Theta B_n   \,.
  \end{equation}
  Let $d_M$ denote the exterior derivative on currents on $M$. 
  
  \begin{lemma} \label{lemma:closed} A current $C$ of degree $2$ (and dimension $1$) on $M\times \T$ is closed if and only if $d_MA_0= 0$ and, for all $n\in \Z\setminus\{ 0\}$, 
  $$
  d_M A_n  +2\pi \imath n   B_n =0 \,.
  $$
  \end{lemma}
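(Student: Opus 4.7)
The plan is to translate closedness into an integrability condition against test functions, and then separate Fourier modes in the circle direction. Since $C$ has degree $2$ on the $3$-manifold $M\times \T$, the current $dC$ has degree $3$ and dimension $0$, i.e.\ is a distribution on $M\times \T$, so it is determined by pairing against test functions. Hence $C$ is closed if and only if $\langle C, dg\rangle = 0$ for every $g\in C^\infty(M\times \T)$, where we fix once and for all the standard sign convention by which the exterior derivative on currents is the formal adjoint of the exterior derivative on forms. Using the bundle splitting $T^\ast(M\times \T)=T^\ast M\oplus \R\,d\theta$, we decompose $dg = d_M g + (\partial_\theta g)\,d\theta$.

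Next I would split the pairing using $C = A + \imath_\Theta B$. By construction, $A$ only sees the $T^\ast M$-component of a $1$-form, so $\langle A, dg\rangle = \langle A, d_M g\rangle = \pm \langle d_M A, g\rangle$, by the very definition of $d_M$ on currents on $M\times \T$. Using $\langle \imath_\Theta B, \alpha\rangle = \langle B, \imath_\Theta \alpha\rangle$ together with $\imath_\Theta\, d\theta = 1$ and $\imath_\Theta\alpha = 0$ for any $T^\ast M$-valued $1$-form $\alpha$, the current $\imath_\Theta B$ pairs only with the $d\theta$-component and contributes $\langle B, \partial_\theta g\rangle$. Thus closedness of $C$ is equivalent to
$$
\langle d_M A, g\rangle + \langle B, \partial_\theta g\rangle = 0\,, \qquad \text{for every } g\in C^\infty(M\times \T)\,.
$$

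To conclude, I would test this identity on the Fourier basis $g(x,\theta) = \phi(x)\, e^{2\pi \imath n \theta}$ with $\phi \in C^\infty(M)$ and $n\in \Z$. Since both $d_M g$ and $\partial_\theta g$ are pure $n$-th Fourier modes, the pairings isolate exactly the $n$-th Fourier components $A_n$ and $B_n$, and yield
$$
\langle d_M A_n + 2\pi \imath n\, B_n,\ \phi\rangle = 0\,, \qquad \text{for all }\phi \in C^\infty(M)\text{ and all } n \in \Z\,.
$$
By density of smooth functions this is equivalent to the equality of currents on $M$, $d_M A_n + 2\pi \imath n\, B_n = 0$ for every $n\in \Z$, which reduces to $d_M A_0 = 0$ when $n=0$ and is the stated identity when $n\neq 0$. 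The converse direction is immediate: re-summing the Fourier identities and re-assembling an arbitrary test function $g$ recovers $\langle C, dg\rangle = 0$. The only bookkeeping is to ensure that the Fourier decomposition $C = \sum_n C_n$ converges in the space of currents on $M\times \T$, so that the pairings commute with the sum; this is standard from the regularity of the circle action by rotation on $M\times \T$, and constitutes no real obstacle.
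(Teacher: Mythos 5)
Your argument is correct and is essentially the paper's proof read through the duality pairing: the paper computes $dC=\sum_n d_MA_n+\mathcal L_\Theta B_n=\sum_n d_MA_n+2\pi\imath n B_n$ directly (using that $d\,\imath_\Theta B=\mathcal L_\Theta B$ since $B$ has top degree) and invokes orthogonality of the Fourier modes, while you obtain the same mode-by-mode identity by testing $\langle C,dg\rangle$ against $g=\phi(x)e^{2\pi\imath n\theta}$. The two computations are adjoint to one another, so there is nothing substantively different here.
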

  \begin{proof} By a straightforward calculation, for any closed current $C$ on $M\times \T$ we have
  $$
  dC =   \sum_{n\in \Z}   d_M A_n     +  \mathcal L_\Theta  B_n =  \sum_{n\in \Z}   d_M A_n     +  2\pi \imath n  B_n  =0\,,
  $$
  hence the statement follows by the orthogonality of the Fourier decomposition.  \end{proof}
  
The $1$-form $\lambda \eta_T - d\theta$ has kernel the vector field $S +   \lambda \Theta= S+ \lambda \frac{\partial}{\partial \theta}$, hence
the current of integration along an orbit of the flow $\Phi^{S, \lambda}_t$ (which is generated by $S +   \lambda \Theta$)
has zero wedge product with~$\lambda \eta_T - d\theta$.

\smallskip  
 Let $K_{h,\lambda}(M\times \T)$ denote the space of all currents of degree $2$ (and dimension $1$)  which have zero wedge product with the $1$-form $\lambda \eta_T - d\theta$.
 \begin{lemma} \label{lemma:kernel} A current $C= A+ \imath_\Theta B$ of degree $2$  (and dimension $1$) on $M\times \T$, as in formula~\eqref{eq:CAB},  belongs to  the subspace $K_{h,\lambda}(M\times \T)$ of currents in the perpendicular of the $1$-form $\lambda \eta_T - d\theta$ if and only if  
 $$
 C = A - \lambda \imath_\Theta (A \wedge \eta_T)\,.
 $$
 \end{lemma}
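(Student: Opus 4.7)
The lemma gives an explicit parametrization of the kernel of the map $C \mapsto C \wedge (\lambda \eta_T - d\theta)$ on currents of degree $2$ on $M\times\T$. My plan is a direct computation using the splitting $C = A + \imath_\Theta B$ from \eqref{eq:CAB}, combined with two elementary identities: since $\eta_T = -\imath_T \omega_h$ is a horizontal $1$-form (lying in $T^*M \subset T^*(M\times\T)$), we have $\imath_\Theta \eta_T = 0$; and $\imath_\Theta d\theta = 1$.

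I would first expand the wedge product by bilinearity:
$$
C \wedge (\lambda \eta_T - d\theta) \;=\; \lambda\, A \wedge \eta_T \;-\; A \wedge d\theta \;+\; \lambda\,(\imath_\Theta B) \wedge \eta_T \;-\; (\imath_\Theta B) \wedge d\theta,
$$
and analyze the four summands separately. Two of them vanish: the mixed term $(\imath_\Theta B) \wedge \eta_T$ disappears because contraction by $\Theta$ interacts only with the $d\theta$-component of a form, and $\eta_T$ has none, so a Leibniz-type identity for $\imath_\Theta$ applied to a wedge of a current with a form reduces it to a multiple of $\imath_\Theta \eta_T = 0$; and the term $A \wedge d\theta$ vanishes because $A$, as an element of $\mathcal{E}'(M\times\T, T^*M)$, is by its very definition only sensitive to the $T^*M$-component of a test form. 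The surviving identity is
$$
C \wedge (\lambda \eta_T - d\theta) \;=\; \lambda\, A \wedge \eta_T \;-\; (\imath_\Theta B) \wedge d\theta,
$$
and the second term on the right coincides, up to a sign dictated by the chosen convention for contraction of currents, with the distribution $B$ itself via $\imath_\Theta d\theta = 1$.

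Thus $C \wedge (\lambda \eta_T - d\theta) = 0$ is equivalent to the single distributional equation $B = \lambda\, A \wedge \eta_T$ (with the appropriate sign), and applying $\imath_\Theta$ to both sides and adding $A$ yields $C = A - \lambda\, \imath_\Theta(A \wedge \eta_T)$, matching the claim. The converse direction is obtained by running the same chain of equivalences backwards. The argument is essentially bookkeeping and I do not foresee any substantive obstacle; the only point requiring care is the sign-tracking in the contraction and wedge identities for currents, which depends on the adopted convention for $\imath_\Theta$ on currents.
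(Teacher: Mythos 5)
Your argument is correct and is essentially the paper's own proof: the paper likewise expands $C\wedge(\lambda\eta_T-d\theta)$ by bilinearity, discards $A\wedge d\theta$ and $(\imath_\Theta B)\wedge\eta_T$ for exactly the reasons you give (namely that $A\in\mathcal E'(M\times\T,T^*M)$ only sees the $T^*M$-component and that $\imath_\Theta\eta_T=0$), and reduces the condition to $\imath_\Theta B=-\lambda\,\imath_\Theta(A\wedge\eta_T)$. The only caveat you raise, the sign bookkeeping in the contraction--wedge identities for currents, is handled just as implicitly in the paper, so nothing is missing.
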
 
 \begin{proof} Since  $C= A  + \imath_\Theta B$ we have
 $$
 C\wedge (\lambda \eta_T - d\theta)=  \lambda A \wedge \eta_T    -\imath_\Theta B \wedge d\theta  =0
 \quad\Leftrightarrow \quad  \imath_\Theta B=-\lambda \imath_\Theta(A\wedge \eta_T)\,.
 $$
 \end{proof}
 Finally we have a characterization of the subspace  of closed currents
 $$ZK_{h,\lambda} (M\times \T) := \mathcal Z (M\times \T) \cap K_{h,\lambda}(M\times\T) \subset K_{h,\lambda}(M\times \T)\,.$$
 \begin {lemma} \label{lemma:kernel_closed} A current $C= \sum_{n\in \Z} A_n + \imath_\Theta B_n$, as in formula~\eqref{eq:CAN_n}, belongs to the subspace $ZK_{h,\lambda} (M\times \T) $ of closed currents in $K_{h,\lambda} (M\times \T)$  if and only if
 $$
 d_M A_n + 2\pi \imath \lambda n \, \eta_T \wedge A_n =0 \,, \quad  \text{ for all } n\in \Z\,.
 $$
  \end{lemma}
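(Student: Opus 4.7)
The plan is to derive the stated characterization by combining Lemma~\ref{lemma:closed} and Lemma~\ref{lemma:kernel} Fourier-component by Fourier-component with respect to the circle action on $M\times \T$. Writing $C = \sum_{n\in\Z}(A_n + \imath_\Theta B_n)$ as in~\eqref{eq:CAN_n}, Lemma~\ref{lemma:kernel} decouples across Fourier modes to yield a scalar relation between $\imath_\Theta B_n$ and $\imath_\Theta(A_n\wedge \eta_T)$ for every $n\in \Z$. Since every $3$-current $\Omega$ on the oriented $3$-manifold $M\times \T$ satisfies the elementary identity $\Omega = \imath_\Theta \Omega \wedge d\theta$ (as one checks by taking a local representation $\Omega = g\, dx\wedge dy\wedge d\theta$ and applying the graded Leibniz rule for $\imath_\Theta$), this relation for $\imath_\Theta B_n$ lifts uniquely to an identity of the form $B_n = c\, \lambda\, A_n\wedge \eta_T$ in the space of $3$-currents, with overall sign $c$ pinned down by the graded Leibniz rule.

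Next, I apply Lemma~\ref{lemma:closed} to convert the condition $dC=0$ into the Fourier-mode identities $d_M A_n + 2\pi\imath n\, B_n = 0$ for every $n\in\Z$ (the case $n=0$ reducing to $d_M A_0 = 0$). Substituting the expression for $B_n$ from the previous step turns each of these into an equation involving only $A_n$ and $A_n\wedge \eta_T$. At this point I invoke the graded-commutativity identity $A_n\wedge \eta_T = \eta_T\wedge A_n$: this holds because $A_n$ has the form $\alpha_n\wedge d\theta$ with $\alpha_n$ a $1$-form pulled back from $M$, so that permuting $\eta_T$ past $\alpha_n$ and past $d\theta$ produces two sign flips that cancel. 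With the sign~$c$ handled correctly, the resulting condition is exactly $d_M A_n + 2\pi\imath \lambda n\, \eta_T\wedge A_n = 0$, as claimed.

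The converse direction follows by reversing these algebraic manipulations, each of which is a bi-implication. I do not anticipate any substantive obstacle; the statement is essentially a bookkeeping consequence of the two preceding lemmas, and the only delicate point---as alluded to above---is the careful tracking of signs through the graded Leibniz rule for $\imath_\Theta$, through the wedge commutations in the identity $A_n\wedge \eta_T = \eta_T\wedge A_n$, and through the identification of the $3$-current $B_n$ with a scalar multiple of $A_n\wedge \eta_T$.
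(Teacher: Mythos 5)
Your strategy is the same as the paper's: the proof is exactly the mode-by-mode combination of Lemma~\ref{lemma:closed} and Lemma~\ref{lemma:kernel}, followed by substitution of $B_n$ in terms of $A_n\wedge\eta_T$, so the plan is sound and there is no missing idea. The one place where you commit to a concrete sign, however, is the claim $A_n\wedge\eta_T=\eta_T\wedge A_n$, justified by writing $A_n=\alpha_n\wedge d\theta$, and this is the opposite of the convention the paper's derivation requires. In the paper $A_n$ is an element of $\mathcal E'(M\times\T,T^\ast M)$, i.e.\ a distributional $1$-form on $M$ (an odd-degree object), so $A_n\wedge\eta_T=-\eta_T\wedge A_n$; it is precisely this minus sign that converts Lemma~\ref{lemma:kernel}'s relation $B_n=-\lambda\,(A_n\wedge\eta_T)$, inserted into $d_MA_n+2\pi\imath n\,B_n=0$, into the stated $+2\pi\imath\lambda n\,\eta_T\wedge A_n$ term. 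If you instead insist on the even-degree model $A_n=\alpha_n\wedge d\theta$, then a direct computation of the kernel condition forces $c=+1$ rather than the $c=-1$ of Lemma~\ref{lemma:kernel} as stated, and the two sign flips again cancel. Either convention works, but as written your proposal mixes them ($c=-1$ from Lemma~\ref{lemma:kernel} together with $A_n\wedge\eta_T=\eta_T\wedge A_n$), which would produce $d_MA_n-2\pi\imath\lambda n\,\eta_T\wedge A_n=0$, off by a sign from the statement; fix a single convention for the parity of $A_n$ and carry it through both steps and the argument closes.
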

 \begin {proof} By Lemma~\ref{lemma:closed} we have 
 $$
 d_M A_n + 2\pi \imath n  B_n =0,  \quad \text{ for all } n\in \Z\,,
 $$
 and by Lemma~\ref{lemma:kernel} 
 $$
 \imath_\Theta  B= -\lambda \imath_\Theta(A \wedge \eta_T)\,, \quad \text{ or, equivalently,} \quad
 B= -\lambda (A \wedge \eta_T) \,,
 $$
 hence for all $n\in \Z$ we have $B_n= -\lambda (A_n \wedge \eta_T)$, so that
 $$
 d_M A_n -  2\pi \imath \lambda n (A_n \wedge \eta_T) = d_M A_n + 2\pi \imath n B_n =0\,.
 $$
 
 \end{proof} 
 
 Bounds on currents in the subspace $ZK_{h,\lambda} (M\times \T)$ are therefore reduced to bounds on currents  of degree~$1$ (and dimension~$1$) 
 on the surface~$M$, which are closed with respect to the  twisted exterior derivatives $d_{h,\lambda}$,  defined as follows:
 $$
d_{h,\lambda} \alpha :=  d_M \alpha +  2\pi \imath  \lambda \eta_T\wedge \alpha\,, \quad \text{ for all } \alpha \in \Omega^1(M)  \,.
 $$
 
 For any  $\lambda \in \R$  and $(x, \theta, {\mathcal T}) \in M\times \T\times  \R$, we can define the current 
 $C_{h,\lambda}(x, \theta, {\mathcal T})$ of degree $2$ (and dimension $1$) on $M\times \T$  as follows: for every $1$-form $\hat \alpha$ on $M\times \T$,
 \begin{equation}
 \label{eq:Ccurrent}
 C_{h,\lambda}(x, \theta, {\mathcal T}) (\hat \alpha) = \int_0^{\mathcal T}   \imath_{S_\lambda} \hat \alpha \circ  \Phi^{S, \lambda} _t (x,\theta) dt 
 \end{equation} 
 (the symbol $\imath_{S_\lambda}$ in the above formula denotes the contraction operator on forms, with respect to the vector field $S_\lambda= S + \lambda \Theta$
 on $M\times \T$).

Since the current $C_{h,\lambda}(x, \theta, {\mathcal T})$ belongs to the subspace $K_{h,\lambda}(M\times \T)$ (that is,  the subspace of all currents of degree $2$ and dimension $1$  which have zero wedge product with the $1$-form $\lambda \eta_T - d\theta$),  by Lemma~\ref{lemma:kernel}  there exists a current  $A_{h,\lambda}(x,\theta, {\mathcal T})$ of degree $2$ (and dimension $1$) such that
 $$
 C_{h,\lambda}(x,\theta, {\mathcal T}) =  A_{h,\lambda}(x,\theta, {\mathcal T})  -  \lambda  \imath_\Theta \left(A_{h,\lambda}(x,\theta, {\mathcal T}) \wedge \eta_T\right)\,.
 $$
 There exists a Fourier decomposition 
 $$
  A_{h,\lambda}(x,\theta, {\mathcal T})  = \sum_{n\in \Z}  e^{-2\pi \imath n \theta}  A^{(n)}_{h,\lambda}(x,{\mathcal T}) \,.
 $$
 
 \begin{lemma}
 For every $n\in \N$, the current $A^{(n)}_{h,\lambda}(x,{\mathcal T}) $ is given, for all $1$-forms $\alpha$ on~$M$, by the formula
  \begin{equation}
 \label{eq:Acurrent}
 A^{(n)}_{h,\lambda}(x,{\mathcal T}) (\alpha) = \int_0^{\mathcal T}  e^{2\pi \imath \lambda n t} \imath_S \alpha \circ \phi^S_t (x) dt 
 \end{equation}
 \end{lemma}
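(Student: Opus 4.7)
The plan is to extract $A^{(n)}_{h,\lambda}(x,\mathcal T)(\alpha)$ as the $n$-th Fourier coefficient, in the angle parameter $\theta$, of the pairing of the parent current $A_{h,\lambda}(x,\theta,\mathcal T)$ with a pure Fourier-mode test 1-form on $M\times\T$. The crucial observation is that in the decomposition
$$C_{h,\lambda}(x,\theta,\mathcal T) = A_{h,\lambda}(x,\theta,\mathcal T) - \lambda\,\imath_\Theta\bigl(A_{h,\lambda}(x,\theta,\mathcal T)\wedge \eta_T\bigr)$$
from Lemma~\ref{lemma:kernel}, the ``correction'' $\imath_\Theta(A\wedge\eta_T)$ feels only the $d\theta$-component of its argument: by the duality $(\imath_\Theta D)(\hat\alpha)=D(\imath_\Theta\hat\alpha)$ for any current $D$, the correction annihilates every 1-form $\hat\alpha$ on $M\times\T$ with $\imath_\Theta\hat\alpha=0$. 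Consequently $A$ agrees with $C$ on such forms, which reduces the computation to the explicit orbital integral for $C$.

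To carry this out, fix the 1-form $\alpha$ on $M$ and pull it back to $M\times\T$ (it still has no $d\theta$ component). For each $n\in\Z$ set $\hat\alpha_n := e^{2\pi i n\theta}\alpha$; this is still a 1-form of the type above, so
$$A_{h,\lambda}(x,\theta,\mathcal T)(\hat\alpha_n) = C_{h,\lambda}(x,\theta,\mathcal T)(\hat\alpha_n).$$
Using $\imath_{S+\lambda\Theta}\hat\alpha_n = e^{2\pi i n\theta}\imath_S\alpha$ (since $\imath_\Theta\alpha=0$) and the explicit description $\Phi^{S,\lambda}_t(x,\theta)=(\phi^S_t(x),\theta+\lambda t)$, definition~\eqref{eq:Ccurrent} yields
$$A_{h,\lambda}(x,\theta,\mathcal T)(\hat\alpha_n) = e^{2\pi i n\theta}\int_0^{\mathcal T} e^{2\pi i \lambda n t}\,\imath_S\alpha(\phi^S_t(x))\,dt.$$

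Now the Fourier expansion $A_{h,\lambda}(x,\theta,\mathcal T)=\sum_m e^{-2\pi i m\theta}A^{(m)}_{h,\lambda}(x,\mathcal T)$ dictates that any test form producing a pure-exponential $\theta$-dependence in the pairing isolates exactly one mode in the series. Matching the coefficient of $e^{2\pi i n\theta}$ (equivalently, the $(-n)$-th index against $e^{-2\pi i m\theta}$) and using uniqueness of Fourier coefficients identifies the pairing above with $A^{(n)}_{h,\lambda}(x,\mathcal T)(\alpha)$, giving the claimed formula.

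I do not foresee any substantive obstacle: the argument is a direct calculation combining the explicit product form of $\Phi^{S,\lambda}_t$, the integral representation of the orbit current from~\eqref{eq:Ccurrent}, and the vanishing of the $\imath_\Theta B$ piece on $d\theta$-free test forms. The only subtlety is the Fourier bookkeeping — correctly matching the sign and index convention between the basepoint $\theta$ and the Fourier mode of the test form — which falls out of the computation but must be tracked carefully.
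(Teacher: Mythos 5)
Your proposal is correct and follows essentially the same route as the paper: test against the pure mode $\alpha^{(n)}=e^{2\pi\imath n\theta}\alpha$, observe that $A$ and $C$ agree on $1$-forms with no $d\theta$ component, and evaluate the orbital integral using the product form of $\Phi^{S,\lambda}_t$. You are in fact more explicit than the paper about why the correction term $\imath_\Theta B$ annihilates such test forms, which is a welcome clarification of a step the paper leaves implicit.
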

 \begin{proof} For every $1$-form $\alpha$ on $M$, let $\alpha^{(n)} = e^{2\pi \imath n\theta} \alpha$.
 We have
 \begin{equation}
 \begin{aligned}
 A^{(n)}_{h,\lambda}(x,{\mathcal T}) (\alpha) &= e^{-2\pi \imath n\theta}  A^{(n)}_{h,\lambda}(x,{\mathcal T}) (\alpha^{(n)}) \\ &= 
 e^{-2\pi \imath n\theta}  C_{h,\lambda}(x,\theta, {\mathcal T}) (\alpha)
 = \int_0^{\mathcal T} e^{2\pi \imath \lambda n t}  \imath_S  \alpha  \circ \phi^S_t(x) dt \,.
\end{aligned}
 \end{equation}
 \end{proof} 

The analysis is therefore reduced to bounds on currents  of degree $1$ (and dimension $1$) on $M$ of the form
$$
A_{h,\lambda}(x,{\mathcal T}) (\alpha) = \int_0^{\mathcal T}  e^{2\pi \imath \lambda t} \imath_S \alpha \circ \phi^S_t (x) dt 
$$
In fact, for any $1$-form $\alpha$ such that $\imath_S \alpha \in \C$ is constant, we can readily compute
$$
A_{h,\lambda}(x,{\mathcal T}) (\alpha) =  \frac{ e^{2\pi\imath \lambda {\mathcal T}} -1}{2\pi \imath \lambda}  (\imath_S \alpha)
=  A_{h,\lambda}(x,{\mathcal T})(\eta_T) \int_M \alpha \wedge \eta_S  \,.
$$
It follows that it is enough to prove bounds for currents of the form
\begin{equation}
\label{eq:A_diesis}
A^{\#}_{h,\lambda}(x,{\mathcal T})   = A_{h,\lambda}(x,{\mathcal T})   + A_{h,\lambda}(x,{\mathcal T}) (\eta_T) \eta_S \,.
\end{equation}
given for any $1$-form $\alpha \in \Omega^1 H^1(M)$ by the formula
$$
A^{\#}_{h,\lambda}(x,{\mathcal T})(\alpha) =  \int_0^{\mathcal T}  e^{2\pi \imath \lambda t} \imath_S \alpha \circ \phi^S_t (x) dt 
-  \frac{ e^{2\pi\imath \lambda {\mathcal T}} -1}{2\pi \imath \lambda} \int_M \imath_S \alpha \,\omega_h\,.
$$
We estimate below the distance of such currents $A^{\#}_{h,\lambda}(x,{\mathcal T})$ from the subspace of $d_{h,\lambda}$-closed
currents and prove that it is uniformly bounded (even as the phase parameter $\lambda\in \R$ degenerates to zero). The argument will be based  
on a ``twisted''  version of the Poincar\'e inequality, which we now state.  Let $\delta(h)$ denote, as above, the length of the shortest 
saddle connection on the translation surface $(M,h)$. 

\begin{lemma}[Twisted Poincar\'e inequality] \label{lemma:Poincare_twisted}  For every $s>1/2$ and for every stratum ${\mathcal H}(\kappa)$ of Abelian differentials,
 there exists a constant $C_{\kappa,s} >0$ such that, for every $h \in {\mathcal H}(\kappa)$ and for any $\lambda
 \in \R$, we have the following a priori bounds. For  every function $u  \in H^s_h(M)$, we have
$$
\max_{x\in M}  \vert u (x) \vert  \leq  \frac{1}{2\pi \vert \lambda \vert} \vert  d_{h, \lambda} u  \vert_{ \Omega^1 L^2_h(M)}+      \frac{C_{\kappa,s}}{\delta(h)^2}  \vert  d_{h, \lambda} u  \vert_{ \Omega^1 H^s_h(M)}\,,
$$ 
and  we also have
$$
\max_{x\in M}  \vert u (x) -\int_M u \omega_h \vert  \leq     \frac{C_{\kappa,s}}{\delta(h)^2}  \vert  d_{h, \lambda} u  \vert_{ \Omega^1 H^s_h(M)}\,,
$$
\end{lemma}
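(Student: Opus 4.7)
\medskip
\noindent\emph{Proof plan.} I first reduce the first inequality to the second. Contracting the defining identity $d_{h,\lambda} u = d_M u + 2\pi \imath \lambda u\, \eta_T$ with the horizontal vector field $S$, which satisfies $\eta_T(S) = 1$ and $\eta_S(S) = 0$, yields the pointwise algebraic identity
$$
2\pi \imath \lambda\, u \;=\; \imath_S \left( d_{h,\lambda} u \right) \,-\, S u\,.
$$
Integrating this against the area form $\omega_h$ and applying the anti-symmetry formula (\ref{eq:antisymm}) together with $S\cdot 1 \equiv 0$, we have $\int_M S u \,\omega_h = \<Su, 1\>_h = -\<u, S\cdot 1\>_h = 0$; by Cauchy--Schwarz this gives the uniform mean-bound $|\bar u| \le (2\pi|\lambda|)^{-1} |d_{h,\lambda} u|_{\Omega^1 L^2_h(M)}$, where $\bar u := \int_M u\,\omega_h$. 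Combining with the second inequality via the triangle inequality $\max_M |u| \le |\bar u| + \max_M |u - \bar u|$ then produces the first bound.

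\medskip
\noindent For the second inequality, the plan is to apply the Sobolev embedding of Lemma~\ref{lemma:Sob_embed} (which yields one factor of $1/\delta(h)$) and to reduce the pointwise estimate of $u - \bar u$ to a quantitative twisted Poincar\'e-type estimate of the form
$$
|u - \bar u|_{H^s_h(M)} \;\le\; \frac{C'_{\kappa,s}}{\delta(h)} \,|d_{h,\lambda} u|_{\Omega^1 H^s_h(M)}\,,
$$
which contributes the second factor $1/\delta(h)$. To prove this Poincar\'e-type bound I would construct a primitive by integration along trajectories of the flat structure: any two regular points of $M$ are joined by a broken path of horizontal and vertical segments of total $R_h$-length of order $1/\delta(h)$; on each vertical segment the ODE $Tu = \alpha_S := \imath_T d_{h,\lambda} u$ directly expresses the variation of $u$ in terms of $d_{h,\lambda} u$, while on horizontal segments one solves the twisted ODE $Su + 2\pi \imath \lambda u = \alpha_T := \imath_S d_{h,\lambda} u$, obtaining
$u(\phi^S_\ell(x)) = e^{-2\pi \imath \lambda \ell} u(x) + e^{-2\pi \imath \lambda \ell} \int_0^\ell e^{2\pi \imath \lambda s} \alpha_T(\phi^S_s(x))\,ds$. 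The Sobolev trace theorem (Lemma~\ref{lemma:Sob_trace}) then bounds each such line integral in the $H^s_h$ norm with constant proportional to $L_h(\gamma)/\delta(h)$.

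\medskip
\noindent The main obstacle is the uniformity of this construction in the parameter $\lambda\in\R$: as $\lambda \to 0$ the kernel of $d_{h,\lambda}$ collapses to the constants and one recovers the classical quantitative Poincar\'e inequality on the flat surface, while for $|\lambda|$ large the oscillating factor $e^{-2\pi \imath \lambda \ell}$ across the path segments must be carefully controlled against the twist term. This difficulty is handled by always subtracting the mean $\bar u$, which is independently controlled by the first-paragraph argument, so that only the zero-mean part $u - \bar u$ must be estimated uniformly in $\lambda$, and by exploiting the fact that the standard Poincar\'e inequality for $d_M$ differs from the twisted one only by the lower-order term $2\pi \imath \lambda u \eta_T$, which is absorbed via interpolation between the two cases.
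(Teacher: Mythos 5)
Your reduction of the first inequality to the second is fine (and in fact cleaner than the paper's, which locates a point $x_{min}$ with $|u(x_{min})|$ small): the identity $2\pi\imath\lambda\,\bar u=\int_M \imath_S(d_{h,\lambda}u)\,\omega_h$ together with $\int_M Su\,\omega_h=0$ gives $|\bar u|\le (2\pi|\lambda|)^{-1}|d_{h,\lambda}u|_{\Omega^1L^2_h(M)}$, and the triangle inequality does the rest. The gaps are all in your plan for the second inequality. First, routing the pointwise bound through Lemma~\ref{lemma:Sob_embed} costs you the stated range of exponents: that embedding is a two-dimensional Sobolev embedding into $C^0$ and requires $s>1$, whereas the lemma is claimed for all $s>1/2$. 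The paper never Sobolev-embeds $u$ at all; it evaluates $u$ pointwise through the exact identity
$$
e^{2\pi\imath\lambda L_h(\gamma_{x,y})}\,u(y)-u(x)=\int_{\gamma_{x,y}} d_{h,\lambda}u\,,
$$
so that only the $1$-form $d_{h,\lambda}u$ is traced onto a curve via the Sobolev trace theorem (Lemma~\ref{lemma:Sob_trace}); this is exactly why $s>1/2$ suffices. Second, your intermediate estimate $|u-\bar u|_{H^s_h(M)}\le C\,\delta(h)^{-1}|d_{h,\lambda}u|_{\Omega^1H^s_h(M)}$ is not something the path-integration construction can deliver: integrating the ODEs $Tu=\imath_T d_{h,\lambda}u$ and $Su+2\pi\imath\lambda u=\imath_S d_{h,\lambda}u$ along broken paths produces sup-norm oscillation bounds, not control of the Sobolev norm of $u$, and recovering derivatives of $u$ from these ODEs reintroduces the term $2\pi\imath\lambda u$, so the constant cannot be made uniform in $\lambda$ by this route.

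The more serious missing ingredient is the small-$\lambda$ regime. Your only quantitative handle on the constant mode is $|\bar u|\le(2\pi|\lambda|)^{-1}|d_{h,\lambda}u|_{\Omega^1L^2_h(M)}$, which degenerates as $\lambda\to 0$, so ``always subtracting the mean'' does not make the second inequality uniform near $\lambda=0$, and ``absorbed via interpolation between the two cases'' is not an argument. The paper splits at the threshold $|\lambda|\ge c_\kappa\delta(h)^2$: above it the term $(2\pi|\lambda|)^{-1}$ is dominated by $C\,\delta(h)^{-2}$, so the first inequality implies the second; below it one writes
$$
|u(y)-u(x)|\;\le\;\Bigl\vert\int_{\gamma_{x,y}}d_{h,\lambda}u\Bigr\vert+2\pi|\lambda|\,L_h(\gamma_{x,y})\,\max_M|u|
$$
and absorbs the last term into the left-hand side, the constant $c_\kappa$ being chosen (using the upper bound for the systole on the stratum) so that $2\pi|\lambda|\,\mathrm{diam}(M,R_h)<1/2$; the zero-average hypothesis is what supplies a base point where $|u|$ is controlled without dividing by $\lambda$. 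You need to supply this bootstrap explicitly: without it the second inequality is not established for $|\lambda|$ small, which is precisely the case needed in Lemma~\ref{lemma:dist_closed}, where the bound must hold uniformly as $\lambda\to 0$.
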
 
\begin{proof} Let $\gamma_{x,y}$ denote any regular geodesic segment on $M$ for the flat metric $R_h$ of the Abelian differential, of length
$L_h(\gamma_{x,y})>0$, 
possibly with endpoints at the set $\Sigma_h$ of conical points, of endpoints $x, y\in M$, oriented from $x$ to $y$. By integrating along the 
path $\gamma_{x,y}$, for any smooth function $u$ on $M$ we have
$$
e^{2\pi \imath \lambda L_h (\gamma_{x,y})} u(y) - u(x) =   \int_{\gamma_{x,y}} d_{h, \lambda} u\,.
$$
For any $x, y\in M$ there exists a piece-wise regular  minimizing geodesic with endpoints  at $x, y$, which is a union of at
most $\text{\rm card}(\Sigma_h) +1$ regular segments. Let $d_h(x,y)$ denote the distance of $x, y\in M$ with respect to the flat metric $R_h$.
By the Sobolev trace theorem (see Lemma~\ref{lemma:Sob_trace}), we then derive that, for any $s>1/2$, there exists a constant $C_{\kappa,s} >0$ 
such that 
\begin{equation}
\label{eq:dtwist_bound}
\vert e^{2\pi \imath \lambda d_h (x,y)} u(y) - u(x)  \vert \leq  C_{\kappa,s} \left (1+ \frac{d_h(x,y)}{\delta(h)} \right)\vert d_{h, \lambda} u \vert_{\Omega^1 H^s(M)} \,.
\end{equation}
By definition of the twisted differential $d_{h, \lambda}$ we also have
\begin{equation}
\label{eq:average}
2\pi \imath \lambda \int_M  u \omega_h  =  \int_M   d_{h, \lambda}u \wedge \im (h) \,, 
\end{equation}
hence there exists $x_{min} \in M$ such that 
$$
\vert u(x_{min}) \vert \leq     \frac{1}{2\pi \vert \lambda \vert }  \vert  d_{h, \lambda}u \vert_{\Omega^1L^2_h(M)} \,.
$$
As a consequence, since on each stratum there exists a constant $C_\kappa>1$ such that $\text{\rm diam}(M, R_h)  \leq C_\kappa/ \delta(h)$, 
we conclude that there exists a constant $C'_{\kappa,s}>0$ such that, for all $x\in M$, we have
\begin{equation}
\label{eq:away_0}
\begin{aligned}
\vert  u(x) \vert &\leq  \vert  e^{2\pi \imath \lambda d_h (x,x_{min})} u(x) - u(x_{min}) \vert   + \vert u(x_{min}) \vert \\ & \leq \frac{1}{2\pi \vert \lambda \vert }  \vert  d_{h, \lambda} u \vert_{\Omega^1L^2_h(M)} + 
\frac{C'_{\kappa,s}}{\delta(h)^2}  \vert d_{h, \lambda} u \vert_{\Omega^1 H^s(M)} \,.
\end{aligned}
\end{equation}
The first inequality in the statement is thus proved. The second inequality follows from the first  if there exists a constant $c_\kappa>0$ such that 
$\vert \lambda \vert \geq  c_\kappa \delta(h)^2$. 

We then consider the case when $\vert \lambda \vert \leq  c_\kappa \delta(h)^2$ for some constant $c_\kappa>0$ to be fixed below. For $\lambda =0$, by integration we have
$$
\vert u(y) - u(x)\vert  = \vert \int_{\gamma_{x,y}} d u \vert  =  \vert \int_{\gamma_{x,y}} d_{h, \lambda}  u \vert  +  2\pi \vert \lambda \vert L_h(\gamma_{x,y}) \max_{x\in M} \vert u(x)\vert\,,
$$
hence, under the hypothesis that $\int_M u\omega_h=0$,  we derive  the estimate
$$
 \max_{x\in M} \vert u(x)  \vert  \leq   \frac{C'_{\kappa,s}}{\delta^2(h)} \vert d_{h, \lambda}  u \vert_{\Omega^1 H^s(M)}  + \frac{2 \pi  C_\kappa}{\delta(h)}  \vert \lambda\vert  \max_{x\in M} \vert u(x)\vert   \,.
$$
Let $c_\kappa>0$ be a constant such that $ 4 \pi  C_\kappa c_\kappa \delta(h) <1$ for all $h\in \mathcal H(\kappa)$. Such a constant exists since the systole function is bounded
above on every stratum. For  $\vert \lambda \vert \leq  c_\kappa \delta(h)^2$, we then have $4 \pi  C_\kappa \vert \lambda\vert < \delta(h)$, hence by bootstrap
$$
\max_{x\in M} \vert u(x) \vert \leq  \frac{2C'_{\kappa,s}}{\delta^2(h)} \vert d_{h, \lambda}  u \vert_{\Omega^1 H^s(M)} \,,
$$
which concludes the argument in the case of zero average functions.  For a general function we have
$$
\max_{x\in M} \vert u(x) - \int_M u\omega_h  \vert \leq  \frac{2C'_{\kappa,s}}{\delta^2(h)} \left ( \vert d_{h, \lambda}  u \vert_{\Omega^1 H^s(M)} 
+  \vert 2 \pi \lambda \int_M u\omega_h \vert  \right)\,.
$$
hence the estimate follows from the identity in formula~\eqref{eq:average}.
\end{proof}

Let $Z^{-1}_{h, \lambda} (M)$ denote the space of $d_{h,\lambda}$-closed $1$-dimensional
currents which belong to the Sobolev space $\Omega^1 H^{-1}_h (M)$ (that is, currents which are continuous functionals on the space of $1$-forms with coefficients in the Sobolev space $H^1_h(M)$, with respect to
the product norm. See section~\ref{sec:TFlows}).
 
\begin{lemma} \label{lemma:dist_closed} There exists a constant $C_\kappa>0$ (depending only on the stratum
${\mathcal H}(\kappa)$)  such that for any Abelian differential $h\in {\mathcal H}(\kappa)$, for any $\lambda \in \R$ and $(x,{\mathcal T}) \in M\times \R$, the current 
$A^{\#}_{h,\lambda}(x,{\mathcal T}) \in \Omega^1 H^{-1}_h (M)$ (defined in formula \eqref{eq:A_diesis}) has uniformly bounded distance from the
closed subspace $Z^{-1}_{h, \lambda} (M)$ of $d_{h,\lambda}$-closed $1$-currents:  
$$
\inf_{Z\in Z^{-1}_{h, \lambda} (M)}  \vert   A^{\#}_{h,\lambda}(x,{\mathcal T}) -  Z\vert_{\Omega^1 H^{-1}_h(M)} \leq  \frac{C_\kappa}{\delta(h)^2} \,.
$$
\end{lemma}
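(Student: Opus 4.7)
The plan is to rephrase the distance to the closed subspace $Z^{-1}_{h,\lambda}(M)$ as a dual supremum and then bound that supremum via the twisted Poincar\'e inequality of Lemma~\ref{lemma:Poincare_twisted}. Since $\Omega^1 H^{-1}_h(M)$ is the topological dual of $\Omega^1 H^1_h(M)$ and the subspace $Z^{-1}_{h,\lambda}(M)$ is (weakly) closed, the bipolar theorem identifies its annihilator in $\Omega^1 H^1_h(M)$ with the closure of the range $d_{h,\lambda}(H^2_h(M))$ of the twisted differential. Consequently
$$
\inf_{Z\in Z^{-1}_{h, \lambda} (M)}\vert A^\#_{h,\lambda}(x,\mathcal T)-Z\vert_{\Omega^1 H^{-1}_h(M)} \,=\, \sup_{u} \frac{\vert A^\#_{h,\lambda}(x,\mathcal T)(d_{h,\lambda}u)\vert}{\vert d_{h,\lambda}u\vert_{\Omega^1 H^1_h(M)}}\,,
$$
the supremum being taken over smooth functions $u$ on $M$ with $d_{h,\lambda}u\neq 0$.

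To compute the numerator explicitly I would integrate by parts. Using the identity $\imath_S(d_{h,\lambda}u)=Su+2\pi\imath\lambda u$ and the chain rule $\frac{d}{dt}(e^{2\pi\imath\lambda t}u\circ\phi^S_t(x))=e^{2\pi\imath\lambda t}(Su+2\pi\imath\lambda u)\circ\phi^S_t(x)$, the ergodic-integral piece of $A^\#_{h,\lambda}(x,\mathcal T)(d_{h,\lambda}u)$ telescopes to $e^{2\pi\imath\lambda\mathcal T}u(\phi^S_\mathcal T(x))-u(x)$. In the average-correction piece, the invariance $\mathcal L_S\omega_h=0$ (together with the anti-symmetry~\eqref{eq:antisymm} applied with $v\equiv 1$) yields $\int_M (Su)\,\omega_h=0$ and hence $\int_M \imath_S(d_{h,\lambda}u)\,\omega_h=2\pi\imath\lambda\int_M u\,\omega_h$, so this piece becomes $-(e^{2\pi\imath\lambda\mathcal T}-1)\int_M u\,\omega_h$---precisely the role for which the $\eta_S$-term in the definition~\eqref{eq:A_diesis} of $A^\#$ was designed. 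Adding the two pieces,
$$
A^\#_{h,\lambda}(x,\mathcal T)(d_{h,\lambda}u) \,=\, e^{2\pi\imath\lambda\mathcal T}\Bigl[u(\phi^S_\mathcal T(x))-\int_M u\,\omega_h\Bigr]-\Bigl[u(x)-\int_M u\,\omega_h\Bigr]\,.
$$

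The final step is to invoke the second inequality of Lemma~\ref{lemma:Poincare_twisted} with $s=1>1/2$, which bounds $\max_{y\in M}\vert u(y)-\int_M u\,\omega_h\vert$ by $C_{\kappa,1}\delta(h)^{-2}\vert d_{h,\lambda}u\vert_{\Omega^1 H^1_h(M)}$ \emph{uniformly in} $\lambda\in\R$. Evaluating this estimate at both $y=\phi^S_\mathcal T(x)$ and $y=x$ and applying the triangle inequality then yields the lemma with $C_\kappa:=2C_{\kappa,1}$. The most delicate point---and the reason for the specific form of $A^\#$---is that the bound must be uniform as $\lambda\to 0$; this forces the cancellation of the mean contribution of $u$ achieved by the average-correction term, so that the final estimate depends only on the oscillation of $u$ and is therefore controlled by the uniform-in-$\lambda$ twisted Poincar\'e inequality.
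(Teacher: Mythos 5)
Your proposal is correct and coincides with the paper's own (first) argument: the paper likewise reduces the distance to the norm of the restriction of $A^{\#}_{h,\lambda}(x,\mathcal T)$ to the closure of $d_{h,\lambda}[H^2_h(M)]$ in $\Omega^1 H^1_h(M)$ (via an explicit orthogonal decomposition rather than your dual-supremum phrasing, but these are the same), computes $A^{\#}_{h,\lambda}(x,\mathcal T)(d_{h,\lambda}u)=e^{2\pi\imath\lambda\mathcal T}\bar u(\phi^S_{\mathcal T}(x))-\bar u(x)$ with $\bar u = u-\int_M u\,\omega_h$, and concludes with the second inequality of Lemma~\ref{lemma:Poincare_twisted}. (The paper also records an alternative, more geometric second proof via closing the orbit on $M\times\T$ and the Sobolev trace theorem, but your route is exactly its first argument.)
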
  
\begin{proof} We give two arguments. 

\smallskip
{\it First argument}. 
Let  ${E}^1_{h,\lambda} (M)$ be the closure in $\Omega^1 H^1_h(M)$ of the subspace  $d_{h, \lambda} [H^2_h(M)]$ of exact forms 
(in fact, it can be proved that the subspace $d_{h, \lambda} [H^2_h(M)]$ is closed in $\Omega^1 H^1_h(M)$ since the exterior derivative is 
an elliptic operator).  By Hilbert space theory there exists an orthogonal decomposition
$$
\Omega^1 H^1_h(M) = {E}^1_{h, \lambda} (M) \oplus  {E}^1_{h, \lambda}(M)^\perp\,.
$$
Let $C\in  \Omega^1 H^{-1}_h(M)$ be the current defined on ${E}^1_{h, \lambda}(M)$ as
$$
C(\alpha) := A^{\#}_{h,\lambda}(x,{\mathcal T}) (\alpha)\,, \quad    \text{ for all }  \alpha \in {E}^1_{h, \lambda} (M)\,,
$$
extended so that $C \vert  {E}^1_{h,\lambda}(M)^\perp =0$.  By definition we have that
$$
d_{h,\lambda} C = d_{h,\lambda}  A^{\#}_{h,\lambda}(x,{\mathcal T})\,,
$$
hence the current $Z:= C- A^{\#}_{h,\lambda}(x,{\mathcal T})$ is $d_{h,\lambda}$-closed.  We finally estimate the Sobolev norm
of the current $C\in \Omega^1 H^{-1}_h (M)$.   We claim that there exists a constant $C_\kappa>0$ such that
$$
\vert C(\alpha) \vert \leq \frac{C_\kappa}{\delta^2(h)} \vert  \alpha \vert_{\Omega H^1_h(M)}\,, \quad \text{for all }  \alpha \in \Omega H^1_h(M) \,.
$$
 For any  $u\in H^2_h(M)$, let $\bar u:= u - \int_M u \omega_h$. By a direct calculation we have
$$
\begin{aligned}
A^{\#}_{h,\lambda}(x,{\mathcal T}) ( d_{h,\lambda} u) &=  \int_0^{\mathcal T} e^{2\pi \imath \lambda t} (\imath_S d_{h,\lambda} \bar u) \circ \phi^S_t (x) dt
   \\ &= 
\int_0^{\mathcal T}\frac{d}{dt} ( e^{2\pi \imath \lambda t}  \bar u \circ \phi^S_t (x)  ) dt = e^{2\pi \imath \lambda {\mathcal T}} \bar u(\phi^S_{\mathcal T}(x))
-  \bar u(x)\,.
\end{aligned}
$$
hence from the Poincar\'e inequality of Lemma~\ref{lemma:Poincare_twisted}  and from the definition of the current $C$, it follows that
$$
\vert C(d_{h, \lambda} u) \vert = \vert  A^{\#}_{h,\lambda}(x,{\mathcal T}) (d_{h, \lambda} u)  \vert \leq \frac{C_\kappa}{\delta^2(h)} \vert d_{h, \lambda} u  \vert_{\Omega H^1_h(M)}\,,
 \quad \text{for all }  u \in H^2_h(M) \,.
$$
By continuity it follows that 
$$
\vert C(\alpha) \vert \leq  \frac{C_\kappa}{\delta^2(h)} \vert  \alpha \vert_{\Omega H^1_h(M)} \,, \quad \text{ for all } \alpha \in {E}^1_{h,\lambda} (M)\,,
$$
hence the claim follows, since $C$ is extended as zero on ${E}^1_{h,\lambda} (M)^\perp$. The first argument is thus completed.

\smallskip
{\it Second argument}.  For all $s, t  \geq 0$, let  $\Omega^1 H^{s,t}_h(M \times \T)$ denote the Sobolev space 
of  $1$-forms endowed with the following Hilbert norm: for any $1$-form 
$\alpha = \sum_{n\in \Z} e^{2\pi \imath n \theta} \alpha_n$ on $M\times \T$, let 
$$
\Vert  \alpha \Vert_{s,t} :=  \left ( \sum_{n\in \Z} (1+n^2)^{t/2}  \vert \alpha_n \vert^2_{\Omega^1 H^s_h(M)} \right)^{1/2} 
$$
and let $\Omega^1 H^{-s,-t}_h(M \times \T)$ denote the dual space. 

\smallskip
Let $C_{h, \lambda}(x,{\mathcal T})$ be  the current of integration, defined in formula~\eqref{eq:Ccurrent}, along an orbit of the
flow $\Phi^{S,\lambda}_t$ on $M\times \T$. 
It follows by the Sobolev trace theorem (see Lemma~\ref{lemma:Sob_trace})  that the current $C_{h, \lambda}(x,{\mathcal T}) \in \Omega^1 H^{-s, -t}_h(M \times \T)$ 
for $s, t >1/2$.  

By definition of the current $C_{h, \lambda}(x,{\mathcal T})$ there exists a geodesic arc $\gamma$ in $M\times \T$, of length bounded above by the diameter of 
$M\times \T$ with respect to the flat product metric, such that
$ C_{h, \lambda}(x,{\mathcal T}) + \gamma$ is a closed current on $M\times \T$. Let $\bar \gamma$ denote the projection, along the subspace $\mathcal Z^{-s, -t}_h (M\times \T)$
of closed currents, of the current of integration along the arc $\gamma$ on the closed subspace $K^{-s, -t}_{h,\lambda} (M\times \T)$, defined as 
$$
K^{-s, -t}_{h,\lambda} (M\times \T):= K_{h,\lambda} (M\times \T) \cap
\Omega^1 H^{-s, -t}_h(M \times \T)\,.
$$
The current $\bar \gamma$ is given by the following formula:
$$
\bar \gamma  =  \gamma  +   \gamma (\eta_T - \lambda^{-1} d\theta)  \eta_S\,.
$$
In fact, the form $\eta_S$ is closed and by definition 
$$
\bar \gamma (\lambda\eta_T -d\theta)  =0\,, \quad \text{ hence }  \,\, \bar \gamma \in  K_{h,\lambda} (M\times \T) \,.
$$
Since $C_{h, \lambda}(x,{\mathcal T}) \in K^{-s, -t}_{h,\lambda} (M\times \T)$ and $ C_{h, \lambda}(x,{\mathcal T}) + \gamma$ is closed, it follows that 
$$
C_{h, \lambda}(x,{\mathcal T}) + \bar \gamma \in ZK_{h,\lambda} (M\times \T)\,.
$$
It then follows from the definitions that, on $\Omega^1 H^1_h(M)  \subset  \Omega^1 H^1_h(M\times \T)$, 
$$
A^{\#}_{h,\lambda}(x,{\mathcal T})  = C^{\#}_{h, \lambda}(x,{\mathcal T}) :=  C_{h, \lambda}(x,{\mathcal T})  +  C_{h, \lambda}(x,{\mathcal T}) (\eta_T) \eta_S\,.
$$
We can now write
$$
C^{\#}_{h, \lambda}(x,{\mathcal T}) =  (C_{h, \lambda}(x,{\mathcal T}) + \bar \gamma)^{\#} - \bar \gamma^{\#} 
$$
and since the current $(C_{h, \lambda}(x,{\mathcal T}) + \bar \gamma)^{\#}$ is closed, it is enough to prove a bound on the current
$\bar \gamma^{\#}$. By the definition of the current $\bar\gamma$ we have
$$
\bar \gamma^{\#}= \bar \gamma + \bar \gamma (\eta_T) \eta_S =    \gamma +  \gamma (\eta_T) \eta_S = \gamma^{\#}
$$
and from Lemma~\ref{lemma:Sob_trace}  it follows that 
$$
\inf_{z\in Z^{-1}_{h, \lambda} (M)}  \vert   A^{\#}_{h,\lambda}(x,{\mathcal T}) -  z\vert_{\Omega^1 H^{-1}_h(M)} \leq  
\vert   \gamma^{\#}   \vert_{\Omega H^{-1}_h(M)} \leq   \vert \gamma \vert_{\Omega^1 H^{-1}_h(M)}\,.
$$
It follows from the Sobolev trace theorem (see Lemma \ref{lemma:Sob_trace})  and from the bound on the diameter of a translation surface in terms 
of the systolic length that
$$
\vert \gamma \vert_{\Omega H^{-1}_h(M)} \leq \frac{C'_\kappa}{\delta(h)}  \text{\rm diam} (M, h)   \leq \frac{C_\kappa}{\delta^2(h)} \,.
$$
The second argument is therefore completed.

\end{proof}

\section{The twisted cocycle}
\label{sec:Twist_Co}

For any smooth closed $1$-form $\eta$ on $M$, we introduce the twisted differential
\begin{equation}
\label{eq:twisted_diff}
d_\eta:= d + 2\pi \imath \eta \wedge \,,
\end{equation} 
which is a linear operator defined on the space $\Omega^\ast(M)$ of differential forms on $M$, and
maps the subspace $\Omega^k(M)$  of $k$-forms into the subspace  $\Omega^{k+1}(M)$ of $(k+1)$-forms,
for all $k\in \N$.  The twisted differential $d_{h, \lambda}$ introduced above corresponds to a special 
case: 
$$
d_{h, \lambda}  = d_{\eta} \,,   \quad \text{ for }  \eta= \lambda \re (h) \,.
$$
The twisted differential $d_\eta: d + 2\pi\imath \eta \wedge$, introduced in formula~\eqref{eq:twisted_diff},  defines a connection on the trivial bundle $M\times \C$ 
(\cite{We80}, Chap II, \S 1). It is flat since, for all  complex-valued form $\alpha\in \Omega^*(M)$
$$
d_\eta^2 \alpha= (d + 2\pi \imath \eta\wedge) (d\alpha + 2\pi \imath \eta \wedge \alpha) = 
d^2\alpha + 2\pi \imath d\eta \wedge \alpha =0 \,.$$
By the above flatness condition the operators 
$$
d_\eta: \Omega^k (M) \to   \Omega^{k+1}(M)
$$
define a {\it complex}, which is {\it elliptic} since the principal symbols of the twisted differentials
are the same as those of the standard exterior derivative elliptic complex (see \cite{We80}, Chap. IV, \S 2). 
For $k\in \{0, 1, 2\}$, we let $H^k_\eta (M, \C)$ be the corresponding cohomology, which is called {\it twisted 
cohomology}.  The  twisted cohomology $H^\ast_\eta (M, \C)$ in the  particular case when $\eta= \lambda \re (h)$ with be denote 
by $H^\ast_{h,\lambda} (M, \C)$.

\begin{lemma} 
\label{lemma:invariants} The cohomology space $H^0_\eta(M, \C)$ (which is isomorphic to $H^2_\eta(M, \C)$ 
by Poincar\'e duality) is non-trivial if and only if $[\eta] \in H^1(M, \Z) \subset H^1(M, \R)$ and in that case it has complex dimension equal to $1$.
\end{lemma}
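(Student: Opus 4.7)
The plan is to reduce the equation $d_\eta f = df + 2\pi\imath f\eta = 0$ for $f \in C^\infty(M,\C)$ to a first-order linear ODE along paths, and to detect the condition $[\eta]\in H^1(M,\Z)$ through the monodromy of its solutions on the universal cover $\pi\colon \widetilde M \to M$.

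First I would prove uniqueness. Pulled back by any smooth curve $\gamma\colon[0,1]\to M$, the equation becomes the scalar linear ODE $(f\circ\gamma)'(t) = -2\pi\imath\,\eta(\dot\gamma(t))(f\circ\gamma)(t)$. Hence a nontrivial smooth solution must be nowhere zero (vanishing at a single point would propagate along every curve through it, and $M$ is connected), and for two such solutions $f_1, f_2$ a direct computation gives $d(f_1/f_2)=0$, so the ratio is a constant. This already yields $\dim_\C H^0_\eta(M,\C)\leq 1$, with equality iff a nowhere-vanishing global solution exists.

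Second, I would establish the equivalence with integrality. The closed form $\pi^*\eta$ on $\widetilde M$ is exact by the Poincar\'e lemma, so write $\pi^*\eta = dG$ with $G \in C^\infty(\widetilde M,\R)$. Then $F := e^{-2\pi\imath G}$ satisfies $d_{\pi^*\eta}F = 0$ on $\widetilde M$. For every deck transformation $\gamma\in \pi_1(M)$ the (constant) difference $G\circ \gamma - G$ equals the period $\oint_\gamma \eta$, so $F$ is $\pi_1(M)$-invariant (and hence descends to a smooth solution on $M$) precisely when all such periods lie in $\Z$, i.e.\ when $[\eta]\in H^1(M,\Z)$. The uniqueness step applied on $\widetilde M$ shows conversely that any nonzero $f \in H^0_\eta(M,\C)$ lifts to a nonzero constant multiple of $F$, and single-valuedness on $M$ forces the same integrality condition.

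For the Poincar\'e duality isomorphism $H^0_\eta(M,\C)\cong H^2_\eta(M,\C)$, I would invoke the nondegenerate integration pairing
\[
H^0_{-\eta}(M,\C)\otimes H^2_\eta(M,\C) \longrightarrow \C, \qquad (f,\beta)\mapsto \int_M f\,\beta,
\]
coming from the fact that $d_\eta$ and $d_{-\eta}$ are mutually dual connections on the trivial complex line bundle over the closed oriented surface $M$. This identifies $H^2_\eta(M,\C)$ with the dual of $H^0_{-\eta}(M,\C)$, and since the integrality condition $[\eta]\in H^1(M,\Z)$ is preserved under $\eta\mapsto -\eta$, the degree-$0$ dimension count transfers verbatim to degree $2$. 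I do not expect any substantial obstacle: the statement is essentially the holonomy classification of flat unitary line bundles on $M$ with connection form $2\pi\imath\,\eta$, and the only mild care-point is the smoothness of the primitive $G$ on $\widetilde M$, which is guaranteed by the smooth Poincar\'e lemma on a simply connected base.
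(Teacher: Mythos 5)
Your proof is correct and follows essentially the same route as the paper: both arguments identify a nonzero element of $H^0_\eta(M,\C)$ with a constant multiple of the exponential of ($-2\pi\imath$ times) a primitive of $\eta$, detect the condition $[\eta]\in H^1(M,\Z)$ through the integrality of the periods, and get $\dim_\C H^0_\eta(M,\C)\le 1$ from the fact that the ratio of two solutions is $d$-closed. The only differences are cosmetic — you phrase the multivaluedness via monodromy on the universal cover where the paper uses a circle-valued phase $\theta\colon M\to\R/\Z$ with $d\theta=\eta$ — plus the fact that you also sketch the duality pairing behind the parenthetical $H^0_\eta\cong H^2_\eta$, which the paper simply asserts.
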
 
\begin{proof}  Let us assume that there exists a non-zero function $f \in C^\infty(M)$ such that 
$$
d_\eta f= df + 2\pi \imath \eta f =0 \,.
$$
If follows from the above equation that the function $f$ is constant along each leaf of the measured foliation
$\mathcal F_\eta= \{ \eta=0\}$, hence all the non-singular leaves of $\mathcal F_\eta$ are compact. We have
$$
d (f \bar f) =  (df) {\bar f}  + (\overline {df}) f = - 2\pi \imath \eta f \bar f +  2\pi \imath \eta  f \bar f =0\,,
$$
hence there exists $c_f\in \C\setminus \{0\}$ such that $f/c_f: M \to U(1)= \{z \in \C \vert \vert z\vert =1\}$ and there exists a real-valued function $\theta: M\to \R/\Z$ such that
$$
f(x) = c_f \exp \left(-2\pi \imath  \theta(x)\right) \,, \quad \text{ for all } x\in M\,.
$$
By definition we have  $df = -2\pi \imath f d\theta$,  and since by assumption $f\in Z^0_\eta(M, \C)$, the space of $d_\eta$-closed $0$-forms, that is, complex valued functions, 
and $f(x) \not=0$ for all $x\in M$, it follows that $d\theta = \eta$. Since $\theta: M\to \R/\Z$, we conclude that  $\eta \in H^1(M, \Z)$. 

\smallskip
Conversely, let us assume that $[\eta]\in H^1(M, \Z)$. Given any point $p\in M$, the function
$$
f_p (x) = \exp \left(-2\pi \imath \int_p^x \eta \right)  \,, \quad \text{ for all }\, x\in M, 
$$
is a well-defined, non-zero element of $Z^0_\eta(M, \C)$ since
$$
df_p = -2\pi \imath  f_p \eta \,.
$$
In addition, given any $g \in Z^0_\eta(M, \C)$ we have
$$
d (\bar {f_p} g) = (\overline{ df_p}) g + \bar{ f_p } (dg) = 2\pi \imath  {\bar {f_p}} g  \eta  - 2\pi \imath  \bar {f_p} g \eta =0\,,
$$
hence $\bar {f_p} g$ is a constant, which implies that $H^0_\eta(M, \C)$ has dimension equal to $1$.

\end{proof}

Since the complex is elliptic, after endowing the vector spaces $\Omega^k(M)$ of $k$-forms with the 
Hodge $L^2$ hermitian product associated to a holomorphic $1$-form $h$ on the Riemann surface $M$, 
by standard Hodge theory it is possible to represent every cohomology class by a twisted harmonic form.  
In fact, there exists a decomposition
$$
d_\eta =  d^{1,0} +2\pi \imath \eta^{1,0}  +   d^{0,1} + 2\pi \imath \eta^{0,1} 
$$
such that  $d^{1,0}_\eta:= d^{1,0} +2\pi \imath \eta^{1,0}$ and $d^{0,1}_\eta:= d^{0,1} +2\pi \imath \eta^{0,1}$
are maps 
$$
d^{1,0}_\eta :\Omega^{p,q}(M) \to \Omega^{p+1,q}(M) \quad \text{ and } \quad \quad 
d^{0,1}_\eta :\Omega^{p,q}(M) \to \Omega^{p,q+1}(M)\,,
$$
so that by the Hodge-Dolbeault theory (\cite{We80}, Chap. IV, \S 5)
$$
H^1_\eta (M, \C) =  H^{1,0}_\eta (M, \C) \oplus   H^{0,1}_\eta (M, \C) \,.
$$

\begin{lemma}  The twisted cohomology $H^1_\eta (M, \C)$ only depends, up to Hodge unitary equivalence, on the cohomology class $[\eta] \in H^1(M, \R)$ and  in fact only on the equivalence class $[[\eta]] \in H^1(M, \R) / H^1(M,\Z)$. The Hodge unitary equivalence is not unique as it depends on the choice 
of a base point. A change of base point induces a unitary automorphism of the twisted cohomology given by
the multiplication times a constant of unit modulus.
\end{lemma}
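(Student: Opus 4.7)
The plan is to realize the claimed Hodge-unitary equivalence between $H^1_{\eta'}(M,\C)$ and $H^1_\eta(M,\C)$, when $[[\eta']] = [[\eta]]$ in $H^1(M,\R)/H^1(M,\Z)$, as pointwise multiplication by an explicit $U(1)$-valued function on $M$. The underlying algebraic fact, which I would verify first by a direct computation using only the Leibniz rule for $d$, is the intertwining identity: for any smooth $g \colon M \to \C^*$ and any $\alpha \in \Omega^k(M)$,
\[
d_\eta(g\alpha) \;=\; g\,\Bigl(d\alpha + 2\pi\imath\bigl(\eta + (2\pi\imath)^{-1} g^{-1} dg\bigr)\wedge \alpha\Bigr).
\]
Consequently, if $g = e^{2\pi\imath\varphi}$ with $d\varphi = \beta$ a closed $1$-form, then multiplication $m_g$ by $g$ is a chain map $m_g \colon (\Omega^*(M), d_{\eta+\beta}) \to (\Omega^*(M), d_\eta)$.

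I would then globalize this construction. Setting $\beta := \eta' - \eta$, the hypothesis $[[\eta']]=[[\eta]]$ means $\beta$ has integer periods on every $1$-cycle, so after fixing a base point $p\in M$ the function
\[
g_p(x) \;:=\; \exp\!\left(2\pi\imath \int_p^x (\eta' - \eta)\right)
\]
is well-defined as a smooth map $M \to U(1)$: the ambiguity in $\int_p^x\beta$ along different paths lies in $\Z$ and is absorbed by the exponential. By the intertwining identity applied to any local primitive $\varphi$ of $\beta$, $m_{g_p}$ is a chain map from the $d_{\eta'}$-complex to the $d_\eta$-complex. Since $|g_p|\equiv 1$ pointwise, $m_{g_p}$ is an isometry for the Hodge $L^2$ inner product on $\Omega^*(M)$ induced by the Abelian differential $h$, commutes with the $L^2$-orthogonal Hodge--Dolbeault projections, and thus descends to the required Hodge-unitary isomorphism $H^*_{\eta'}(M,\C) \to H^*_\eta(M,\C)$; its inverse is given by $m_{\overline{g_p}}$.

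For the base-point dependence, replacing $p$ by $p'$ changes $g_p$ into $c_{p,p'} \cdot g_p$ with
\[
c_{p,p'} \;:=\; \exp\!\left(-2\pi\imath \int_p^{p'}(\eta'-\eta)\right) \;\in\; U(1),
\]
the value of the integral being again path-independent modulo $\Z$. Hence the two identifications differ by scalar multiplication by $c_{p,p'}$, which is manifestly a unitary automorphism of $H^*_\eta(M,\C)$ of the stated form.

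The argument is entirely explicit, so no serious obstacle is expected. The single conceptual point worth underlining is the well-definedness of $g_p$ as a genuinely single-valued function $M \to U(1)$, rather than as a branch of a multivalued one: this is precisely what the hypothesis $[[\beta]] = 0$ in $H^1(M,\R)/H^1(M,\Z)$ guarantees, and it is why the equivalence fails to descend further past the integer lattice.
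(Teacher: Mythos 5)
Your proposal is correct and follows essentially the same route as the paper: both realize the equivalence as multiplication by $\exp\bigl(2\pi\imath\int_p^x(\eta'-\eta)\bigr)$, verified via the intertwining identity $d_\eta\circ m_g = m_g\circ d_{\eta'}$, with the base-point dependence accounted for by a unimodular constant. The only cosmetic difference is that the paper splits the argument into the exact case ($\eta'-\eta=df$) and the integer-period case, whereas you treat both at once via the single well-defined $U(1)$-valued function $g_p$.
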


\begin{proof}  For any closed smooth $1$-form~$\eta$, let $Z^1_\eta(M, \C)$ and $B^1_\eta (M, \C)$ denote the
kernel of the twisted exterior differential $d_\eta : \Omega^1(M, \C) \to \Omega^2(M, \C)$ and the range
of the  twisted exterior differential $d_\eta : \Omega^0(M, \C) \to \Omega^1(M, \C)$. By definition of 
twisted cohomology we have 
$$
H^1_\eta(M, \C) :=  Z^1_\eta(M,\C) / B^1_\eta(M,\C)\,.
$$
Let $\eta$ and $\eta'$  be closed smooth $1$-forms in the same real cohomology class. By definition there exists a smooth function $f$ on $M$ such that $\eta'-\eta =df$.  

Let $U_f: \Omega^k(M, \C) \to \Omega^k(M, \C)$
denote the linear multiplication operator
$$
U_f (\alpha) =  e^{2\pi \imath f} \alpha\,.
$$
By a direct calculation we have
$$
(d_\eta \circ U_f )(\alpha) = e^{2\pi \imath f} ( d_\eta \alpha  + 2\pi \imath \, df \wedge \alpha) = (U_f \circ d_{ \eta'}) (\alpha)\,.
$$
It follows that the restrictions of $U_f$ to linear operators  $Z^1_{\eta'}(M,\C) \to Z^1_{\eta}(M,\C)$ and $B^1_{\eta'}(M,\C) \to B^1_{\eta}(M,\C)$ are isomorphisms.  In addition, since by definition $U_f$ is an operator of multiplication times a function of constant unit modulus, the projected operator $U_f: H^1_{\eta'}(M,\C)\to H^1_{\eta}(M,\C)$ is unitary with respect to the $L^2$ norm on forms, hence with respect to the Hodge norm.

Similarly, let us assume that $[\eta'-\eta] \in H^1(M, \Z)$. Given $p\in M$, the formula
$$
F_p^{\eta,\eta'} (x) :=   \int_p ^x  \eta'-\eta
$$
gives a well-defined function on $M$ with values in $\R/\Z$ such that $dF_p^{\eta,\eta'} = \eta' -\eta$. 
It follows that the function $\exp (2\pi \imath F_p^{\eta,\eta'})$ is well-defined on $M$. We define the
operator 
$$
U_p^{\eta',\eta}(\alpha) = \exp (2\pi \imath F_p^{\eta,\eta'})  \alpha
$$
and compute that
$$
(d_\eta \circ U_p^{\eta',\eta})(\alpha) = e^{ 2\pi \imath F_p^{\eta,\eta'} } ( d_\eta \alpha  + 2\pi \imath\, d F_p^{\eta,\eta'}  \wedge \alpha) = 
(U_p^{\eta',\eta} \circ d_{ \eta'}) (\alpha)\,.
$$
By the latter formula there is an induced isomorphism, unitary with respect to the Hodge norm,
$$
U_p^{\eta',\eta} :  H^1_{\eta'} (M,\C) \to H^1_{\eta} (M, \C)\,.
$$
Finally a change of the base point induces a unitary isomorphism given by multiplication times a 
constant of unit modulus. 
\end{proof}

\begin{lemma}  \label{lemma:cohom_dim} The dimension of the first twisted cohomology $H^1_\eta(M, \C)$ is given by the following formula:
$$
\text{ \rm dim}_\C \,H^1_\eta (M, \C) = \begin{cases} 2g\,,  \quad  &\text{ if }\, [\eta] \in H^1(M, \Z)\,; \\
2g-2\,,  \quad   &\text{ if }\, [\eta] \not\in H^1(M, \Z)\,.
\end{cases} 
$$
\end{lemma}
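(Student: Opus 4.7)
The plan is to combine an Euler-characteristic computation for the twisted elliptic complex with the description of $H^0_\eta$ provided by Lemma~\ref{lemma:invariants}. First I would observe that since $d_\eta = d + 2\pi\imath\,\eta\wedge$ differs from the ordinary exterior derivative by the zeroth-order multiplication operator $2\pi\imath\,\eta\wedge$, the complex $(\Omega^*(M), d_\eta)$ is elliptic with exactly the same principal symbol as the ordinary de Rham complex. The index of an elliptic complex is a topological invariant depending only on its principal symbol, so the Euler characteristic of its cohomology satisfies
$$
\dim_\C H^0_\eta(M,\C) - \dim_\C H^1_\eta(M,\C) + \dim_\C H^2_\eta(M,\C) \;=\; \chi(M) \;=\; 2-2g,
$$
independently of $\eta$. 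Equivalently, via the de Rham identification of $H^*_\eta(M,\C)$ with the cohomology of the flat line bundle on $M$ determined by the unitary character $\gamma \mapsto \exp(2\pi\imath \int_\gamma \eta)$ of $\pi_1(M)$, this is the standard statement that the Euler characteristic of a rank-one local coefficient system on $M$ equals $\chi(M)$, which is immediate from a cellular computation. I would prefer this second, more elementary, formulation, as it is presumably closer to Goldman's viewpoint in~\cite{G84}.

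Next I would invoke Lemma~\ref{lemma:invariants} to pin down the remaining two dimensions: it gives $\dim_\C H^0_\eta = 1$ when $[\eta] \in H^1(M, \Z)$ and $\dim_\C H^0_\eta = 0$ otherwise, and the Poincar\'e duality isomorphism $H^0_\eta \cong H^2_\eta$ asserted in the same lemma produces identical dimensions for $H^2_\eta$ in both cases.

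Solving the Euler-characteristic identity case by case then yields the claimed formula: in the integral case $1 - \dim_\C H^1_\eta + 1 = 2-2g$, hence $\dim_\C H^1_\eta = 2g$; in the non-integral case $0 - \dim_\C H^1_\eta + 0 = 2-2g$, hence $\dim_\C H^1_\eta = 2g-2$. The only substantive step is the constancy of the Euler characteristic across all smooth closed $\eta$; once this is granted, via either ellipticity and invariance of the index under lower-order perturbations, or the cellular/local-system identification indicated above, the rest of the argument is pure bookkeeping from Lemma~\ref{lemma:invariants}.
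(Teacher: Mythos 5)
Your proposal is correct and follows essentially the same route as the paper: identify $H^*_\eta$ with the cohomology of the rank-one unitary local system, compute the Euler characteristic $2-2g$ by a finite cellular/simplicial argument (Goldman's observation), and then combine Lemma~\ref{lemma:invariants} with Poincar\'e duality to solve for $\dim_\C H^1_\eta$ in the two cases. The only cosmetic difference is that the paper, in the integral case, also exhibits the explicit unitary isomorphism $U_f(\alpha)=\bar f\alpha$ between $H^1_\eta(M,\C)$ and $H^1(M,\C)$, which gives the answer $2g$ directly; your Euler-characteristic bookkeeping reaches the same conclusion.
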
 
\begin{proof}
The cohomology $H_\eta^1(M, \C)$ is isomorphic to the cohomology of the local system $\mathcal L_\eta$ defined by the representation $\rho_\eta: \pi_1 (M, \ast) \to U(1)$ defined as
$$
\rho_\eta (\gamma ) = \exp (2\pi \imath \int_\gamma \eta ) \,, \quad \text{ for all } \gamma \in  \pi_1 (M, \ast)\,.
$$
In fact, let $p: \hat M \to M$ denote the universal cover. The form $p^*(\eta)$ is closed, hence exact on
$\hat M$, so that there exists a function $F :\hat M \to \R$ such that $p^*(\eta)= dF$. We have that
$$
p^* (d_\eta \alpha ) =  \exp (-2\pi \imath F)  d ( p^*(\alpha) \exp (2\pi \imath F))  \,, \quad \text{ for all }\alpha \in \Omega^*(M)\,.
$$
Let then $\mathcal L_\eta$ denote the local system on $M$ defined as the sub-bundle of the space 
$\Omega^*(\hat M, \C)$ of complex-valued forms $\hat \alpha$ on $\hat M$ such that
$$
\gamma^* (\hat \alpha)  =  \exp (2\pi \imath \int_\gamma \eta ) \hat \alpha \,, \quad \text{ for all } \gamma \in \pi_1 (M, \ast)\,.
$$
The twisted cohomology $H^*_\eta(M, \C)$, defined as the cohomology of the complex of the twisted differential
$d_\eta$ on complex-valued forms $\Omega^*(M,\C)$, is therefore isomorphic to the cohomology $H^*_{\rho_\eta} (M, U(1)):= H^*(M, {\mathcal L}_\eta)$, defined as the cohomology of the complex of the exterior differential 
$d$ on $\mathcal L_\eta$-valued forms $\Omega^*(M, \mathcal L_\eta)$. 

The computation of the dimension of the cohomology $H^1_\rho(M, G)$ has been carried out by W.~Goldman
in \cite{G84}, section 1.5, for a general reductive group $G$. We~reproduce the argument in our case for the convenience of the reader.  
For any representation $\rho: \pi_1(M, \ast) \to U(1)$, the  cohomology $H^1_\rho(M, U(1)) \equiv H^1(M, \mathcal L_\rho)$, defined as the de Rham cohomology of the corresponding local system $\mathcal L_\rho$, can be identified with other cohomologies such as the singular, \v{C}ech, simplicial, cohomologies with local coefficients in the local system $\mathcal L_\rho$. By working  in simplicial cohomology, we note that  the (finite-dimensional) cochain complex is independent of the flat connection, so its Euler characteristic equals $2-2g$, since the  local system $\mathcal L_\rho$ has rank equal to $1$.  Now the Euler characteristic is invariant under taking the cohomology of the complex so the Euler characteristic of the graded cohomology space also equals $2-2g$.  

In the  case $H^0(M, \mathcal L_\rho) = 0$, since $M$ is a closed orientable surface, by Poincar\'e duality 
$H^2 (M, \mathcal L_\rho) = 0$. By definition of Euler characteristic of a complex, we have  
$$  0 - \text{ \rm dim}_\C\, H^1 (M,\mathcal L_\rho)  + 0 = 2- 2g\,,$$
so that  $\text{ \rm dim}_\C H^1 (M,\mathcal L_\rho) = 2g-2$ as stated.

In the case $H^0(M, \mathcal L_\rho) \equiv H^0_\eta (M, \C) \not =0$, by definition of the twisted cohomology there exists a non-zero function $f \in C^\infty(M)$ such that  $d_\eta f=0$.  The linear map $U_f$ defined as 
$$
U_f (\alpha) = {\bar f} \alpha \,, \quad  \text{ for all } \alpha \in \Omega^*(M, \C)\,,
$$
has the property that 
$$
d \circ U_f =  U_f \circ d_\eta\,, 
$$
hence it establishes a (unitary) isomorphism between $H^1_\eta(M, \C)$ and $H^1(M, \C)$. By Lemma~\ref{lemma:invariants}, we have that $H^0_\eta(M, \C) \not =0$ if and 
only if  $[\eta] \in H^1(M, \Z)$, and in that case $H^0_\eta(M, \C)$ has complex dimension $1$. It then follows by the formula for the Euler characteristic
that the dimension of $H^1_\eta(M, \C)$ is equal to $2g$. 

\end{proof}

The Teichm\"uller geodesic flow lifts by parallel transport to the Kontsevich--Zorich  cocycle on the bundle with fiber $H^1(M, \R)$  over the moduli space of Abelian differentials. The Kontsevich--Zorich cocycle projects onto
a flow on the bundle with fiber the real de Rham moduli space $H^1(M, \R) / H^1(M, \Z)$. We define a twisted cohomology bundle over the latter space.  The linear model for our construction is given by the bundle of
cohomologies of flat connections over the de Rham moduli space in the case of purely imaginary connections
(see \cite{GX08}, section 2.2).

\smallskip
The mapping class group $\Gamma_g$ acts on the stratum $\hat{ \mathcal H} (\kappa)$ in the Teichm\"uller space of Abelian differentials,
and it also acts by pull-back on the cohomology $H^1(M, \R)$ and on the quotient $H^1(M, \R)/ H^1(M, \Z)$.
We consider the space
$$
H^1_\kappa(M,\T) =  \left (\hat{ \mathcal H} (\kappa) \times H^1(M, \R)/ H^1(M, \Z) \right)/\Gamma_g\,
$$
and the bundle  with fiber $H^1_\eta (M, \C)$ at each point $[(h, \eta)] \in  H^1_\kappa(M,\T)$, that is,
$$
{\mathcal T}^1_\kappa(M, \C) := \{ (h, \eta, \alpha)\vert  [(h, \eta)] \in  H^1_\kappa(M,\T)  \text{ and } \alpha \in H^1_\eta (M, \C)\} / \Gamma_g\,.
$$
We remark that strictly speaking the elements  of this bundle are only defined up to equivalence relation
given by a unitary action of $H^1(M, \Z)$ on the twisted cohomology bundle and up to the multiplicative action 
of the group of complex numbers of modulus one.  In other terms, we can define
the real Hodge bundle
$$
H^1_\kappa(M,\R) =  \left (\hat{ \mathcal H}(\kappa) \times H^1(M, \R) \right)/\Gamma_g\,
$$
and the twisted cohomology bundle over the Hodge bundle 
$$
\hat {\mathcal T}^1_\kappa(M, \C) := \{ (h, \eta, \alpha)\vert  [(h, \eta)]\in  H^1_\kappa(M,\R)  \text{ and } \alpha \in H^1_\eta (M, \C)\} / \Gamma_g\,.
$$
In the above formula the symbol $[(h, \eta)]$ denotes the equivalence class of the pair $(h, \eta)$ with respect to
the action of the mapping class group $\Gamma_g$ by pull-back on the toral Hodge bundle over the lift $\hat{ \mathcal H} (\kappa)$ of the stratum ${ \mathcal H}(\kappa)$  to the Teichm\"uller space. 

\smallskip
The elements of the bundle $\hat {\mathcal T}^1_\kappa(M, \C) $ are defined up to the multiplicative action 
of the group complex numbers of modulus one. The subgroup $H^1(M, \Z)$ acts linearly on the bundle
$\hat {\mathcal T}^1_\kappa(M, \C)$ by unitary transformations and by definition we have
$$
{\mathcal T}^1_\kappa(M, \C) =  \hat {\mathcal T}^1_\kappa(M, \C)/ H^1(M, \Z) \,.
$$
The Teichm\"uller flow lifts to the bundle $H^1_\kappa(M,\T)$, then to the bundle ${\mathcal T}^1_\kappa(M, \C)$
by parallel transport. In other terms the action is given by the formulas 
$$
g_t [(h, \eta, \alpha)]=   [(g_t(h), \eta, \alpha)] \,, \quad \text{ for all }  \, [(h , \eta,\alpha)] \in {\mathcal T}^1_\kappa(M, \C)   \,.
$$
We remark that this action comes from an action of $SL(2, \R)$:  for all $g\in SL(2, \R)$ we define
$$
g [(h, \eta, \alpha)] =   [(g (h), \eta, \alpha)] \,, \quad \text{ for all }  \, [(h , \eta,\alpha)] \in {\mathcal T}^1_\kappa(M, \C)   \,.
$$
In the above formulas the symbol $[(h, \eta, \alpha)]$ denotes the equivalence class of the triple $(h, \eta,\alpha)$ with respect to the action of the mapping class group $\Gamma_g$ by pull-back on the twisted cohomology bundle over the lift $\hat{ \mathcal H} (\kappa) \times H^1(M, \T)$ of the toral Hodge bundle  $H^1_\kappa(M,\T)$  to the Teichm\"uller space. 

\section{First Variational Formulas}
\label{sec:Var_For}
We compute below variational formulas for the Hodge norm of real classes in
$$
H^1_{\eta} (M, \C) \oplus H^1_{-\eta} (M, \C) \,.
$$
Let $h\in \mathcal H(\kappa)$ be any Abelian differential. Since $h$ determines a complex structure 
on the surface, we can write  $\eta= \eta^{1,0} + \eta^{0,1}$, according to the Hodge decomposition, so that there exists a smooth function $f_\eta$  on $M$ such that 
$$
\eta^{1,0}  = f_\eta h   \quad \text{ and } \quad  \eta^{0,1} = \bar {f_\eta} \bar h \,.
$$
We can therefore introduce the Hodge decomposition 
$$
d_\eta = d^{1,0}_\eta + d^{0,1}_\eta =  d^{1,0} + 2\pi \imath \eta^{1,0} +  d^{0,1} + 2\pi \imath \eta^{0,1}
$$
and the twisted Cauchy-Riemann operators
$$
\partial^+_{h,\eta} = \partial^+_h +  2\pi \imath  \bar {f_\eta} \quad \text{ and } \quad  \partial^-_{h,\eta} = 
\partial^-_h +  2\pi \imath f_\eta\,.
$$
In fact, writing $\eta= a \re (h)+ b \im (h)$, we have
$$
\eta = a \frac{h +\bar h}{2} -\imath b \frac{h -\bar h}{2} =  \frac{ a-\imath b} {2} h +   \frac{ a+\imath b} {2} \bar h\,,
$$
hence in particular $f_\eta = \frac{ a-\imath b} {2}$ and we have
$$
\begin{aligned}
\partial^+_{h,\eta} = (S+\imath T) + \pi \imath (a +\imath b) = (S+ \pi \imath a) + \imath (T +\pi \imath b) \,, \\
\partial^-_{h,\eta} = (S-\imath T) + \pi \imath (a -\imath b) = (S+ \pi \imath a) - \imath (T +\pi \imath b) \,.
\end{aligned}
$$
Let us now consider the Teichm\"uller deformation  $g_t (h, \eta) = (h_t, \eta)$ with
$$
\re (h_t) = e^{-t} \re (h ) \quad \text{ and } \quad   \im (h_t) = e^t \im (h)\,.
$$
We have $\eta = a_t \re (h_t)+ b_t \im (h_t)$ with 
$$
a_t = e^t a \quad \text{ and } \quad b_t = e^{-t} b\,,
$$
hence
$$
\begin{aligned}
\partial^+_{h_t,\eta} = (e^tS+\imath  e^{-t} T) + \pi \imath (e^t a +\imath e^{-t} b) = 
e^t (S+ \pi \imath a) + \imath e^{-t} (T +\pi \imath b) \,, \\
\partial^-_{h_t,\eta} = (e^tS-\imath e^{-t}T) +  \pi \imath (e^t a -\imath e^{-t}b) = e^t (S+ \pi \imath a) - \imath e^{-t}(T +\pi \imath b)
\end{aligned}
$$
From these formula we derive the basic fact that
$$
\frac{d}{dt}  ( \partial^+_{h_t, \eta} ) = \partial^- _{h_t, \eta}  \quad \text{ and  } \quad  
\frac{d}{dt}  ( \partial^-_{h_t, \eta} ) = \partial^+ _{h_t, \eta}   \,.
$$

Let $\mathcal M^\pm_{h,\eta} \subset L^2_h(M)$ denote the kernels of the Cauchy-Riemann operators 
$\partial^\pm_{h,\eta}$ and, for simplicity of notation, let  $\mathcal M^\pm_{\eta, t} =
\mathcal M^\pm_{h_t,\eta}$ denote the kernels of the Cauchy-Riemann operators
$$\partial^\pm_{\eta, t} = \partial^\pm_{h_t, \eta}$$ along the orbit $g_t(h, \eta) = 
(h_t, \eta)$.  Any real class $c$ in the direct sum above can be represented as in the form
$$
c = \re ( [ m_{\eta, t} h_t ] +  [m_{ -\eta, t} h_t] )\,.
$$
with functions $m_{\eta, t} \in \mathcal M^+_{\eta, t}$ and $m_{-\eta, t} \in \mathcal M^+_{-\eta, t}$.  

\begin{definition} The Hodge norm of the real  twisted cohomology class  $c  \in  H^1_\eta (M, \C)$ represented
as $c= \re ( [ m_{\eta} h ] +  [m_{ -\eta} h] )$ with $m_{\pm \eta} \in {\mathcal M}^+_{h,\pm \eta}$ is defined as 
$$
\Vert (h, \eta, c) \Vert := \left(  \vert m_\eta  \vert_0^2 +  \vert m_{-\eta}  \vert_0^2 \right)^{1/2} \,.
$$
\end{definition} 

\begin{lemma} The variation of the Hodge norm is given by the formula
$$
\begin{aligned}
\frac{d}{dt} (\vert m_{\eta, t} \vert_0^2 + \vert m_{-\eta, t}\vert_0^2) &= 2 \re ( \<\overline{m}_{\eta, t}, m_{-\eta, t}  \> + \<\overline{m}_{-\eta, t}, m_{\eta, t}\>)  \\ &=  4\re \<\overline{m}_{\eta, t}, m_{-\eta,t}   \> \,.
\end{aligned}
$$
\end{lemma}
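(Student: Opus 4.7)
The plan is to exploit the fact that the underlying real class $c$ is fixed as the Teichm\"uller time varies, while its Hodge representatives with respect to the complex structure defined by $h_t$ evolve. First I would observe that the $\eta$-component $c_\eta \in H^1_\eta(M,\C)$ of $c$ does not depend on $t$, and that under the Hodge decomposition at time $t$ its harmonic representative is the form $m_{\eta,t}\,h_t + \overline{m_{-\eta,t}}\,\bar h_t$: the first summand lies in $H^{1,0}_{\eta,t}$ since $\partial^+_{\eta,t}m_{\eta,t}=0$, while the second is the complex conjugate of $m_{-\eta,t}h_t \in H^{1,0}_{-\eta,t}$ and therefore lies in $H^{0,1}_{\eta,t}$. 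The $t$-invariance of $c_\eta$ then yields a smooth family of functions $F_{\eta,t}$ with $F_{\eta,0}=0$ such that
\[
m_{\eta,t}\,h_t + \overline{m_{-\eta,t}}\,\bar h_t \;=\; m_{\eta,0}\,h_0 + \overline{m_{-\eta,0}}\,\bar h_0 \,+\, d_\eta F_{\eta,t}\,.
\]

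Next I would compute the variation of $h_t$ in the $\{h_t,\bar h_t\}$-frame. From $\re(h_t)=e^{-t}\re(h)$ and $\im(h_t)=e^{t}\im(h)$ one obtains $h_t=\cosh(t)\,h-\sinh(t)\,\bar h$; inverting to express $h$ and $\bar h$ in the basis $\{h_t,\bar h_t\}$ and substituting gives the clean identity $\dot h_t=-\bar h_t$ at every time $t$. Differentiating the cohomological equation above in $t$ then produces
\[
(\dot m_{\eta,t}-\overline{m_{-\eta,t}})\,h_t \;+\; \Bigl(\tfrac{d}{dt}\overline{m_{-\eta,t}} - m_{\eta,t}\Bigr)\,\bar h_t \;=\; d_\eta\dot F_{\eta,t}\,.
\]
Splitting $d_\eta\dot F_{\eta,t}$ into its $(1,0)$ and $(0,1)$ parts with respect to the $h_t$-complex structure identifies the coefficients of $h_t$ and $\bar h_t$ on the right-hand side as scalar multiples of $\partial^-_{\eta,t}\dot F_{\eta,t}$ and $\partial^+_{\eta,t}\dot F_{\eta,t}$ respectively; in particular, comparison of $(1,0)$-parts gives
\[
\dot m_{\eta,t} \;=\; \overline{m_{-\eta,t}} \,+\, A\,\partial^-_{\eta,t}(\dot F_{\eta,t})
\]
for a convention-dependent constant $A$.

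The decisive step is the cancellation obtained by pairing against $m_{\eta,t}$ and integrating by parts. From the essential skew-adjointness of $S$ and $T$ on $L^2_h(M)$, combined with the fact that the adjoint of multiplication by a function $\phi$ is multiplication by $\bar\phi$, one derives the adjoint identity $(\partial^-_{h,\eta})^{\!*} = -\partial^+_{h,\eta}$. Since by construction $\partial^+_{\eta,t}m_{\eta,t}=0$, the pairing
\[
\langle \partial^-_{\eta,t}\dot F_{\eta,t},\,m_{\eta,t}\rangle_h \;=\; -\langle \dot F_{\eta,t},\,\partial^+_{\eta,t}m_{\eta,t}\rangle_h \;=\; 0
\]
vanishes, leaving $\tfrac{d}{dt}|m_{\eta,t}|_0^2 = 2\re\,\langle \overline{m_{-\eta,t}},\,m_{\eta,t}\rangle_h$. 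An entirely parallel argument, starting from the invariance of the $-\eta$-component of $c$, yields $\tfrac{d}{dt}|m_{-\eta,t}|_0^2 = 2\re\,\langle \overline{m_{\eta,t}},\,m_{-\eta,t}\rangle_h$, and summing gives the first equality in the statement. The second equality follows by observing that the two inner products in the sum coincide (both equal $\overline{\int_M m_{\eta,t}m_{-\eta,t}\,\omega_h}$), so their real parts add to produce the factor $4$.

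I expect the main obstacle to be the careful bookkeeping of Hodge and Cauchy--Riemann conventions: identifying the $(0,1)$-component of the harmonic representative as $\overline{m_{-\eta,t}}\bar h_t$ and matching the twisted operators appearing in the decomposition of $d_\eta\dot F_{\eta,t}$ to the operators $\partial^\pm_{\eta,t}$ that annihilate $m_{\pm\eta,t}$. Once the conventions are aligned so that the operator $\partial^+_{\eta,t}$ that appears in the $(0,1)$-splitting is the same one that kills $m_{\eta,t}$, the cancellation via integration by parts is automatic and the formula follows.
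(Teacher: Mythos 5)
Your proposal is correct and follows essentially the same route as the paper: differentiate the harmonic representative using $\dot h_t=-\bar h_t$ and the parallel-transport invariance of the twisted class, then kill the twisted-exact correction by pairing against $m_{\pm\eta,t}$ via the adjoint relation $(\partial^-_{h,\eta})^{\ast}=-\partial^+_{h,\eta}$ and the harmonicity conditions $\partial^+_{\pm\eta,t}m_{\pm\eta,t}=0$. Your single primitive $F_{\eta,t}$ is just a repackaging of the paper's auxiliary functions $v_t,\bar w_t$ and projections $\pi^-_{\pm\eta,t}$, and the final factor-of-$4$ identification of the two cross terms is the same.
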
 
\begin{proof}
Let $\pi^\pm_{\eta,t}:L^2_h(M) \to \mathcal M^\pm_{\eta,t}$
denote the orthogonal projections. By the condition that
$m_{\eta, t} \in \mathcal M^+_{\eta, t}$ and $m_{-\eta, t} \in \mathcal M^+_{-\eta, t}$, for all $t\in \R$, we claim that there exist
$(v_t), (w_t) \subset H^1(M)$, and 
$\phi_{\eta, t} \in \mathcal M^+_{\eta, t}$ and  $\psi_{-\eta, t} \in \mathcal M^+_{-\eta, t}$ such that
\begin{equation}
\label{eq:var}
\begin{cases}  m_{\eta, t} &=  \partial^+_{\eta,t} v_t  +  \pi^-_{\eta,t} (m_{\eta, t})  \\  
                        \frac{d }{dt} m_{\eta, t} &= -\partial^-_{\eta,t} v_t  + \phi_{\eta, t}   \end{cases}   \, \text{and } \,
                        \begin{cases}  m_{-\eta, t} &=  \partial^+_{-\eta,t} w_t  +  \pi^-_{-\eta,t} (m_{-\eta, t})  \\  
                        \frac{d }{dt} m_{-\eta, t}&= -\partial^-_{-\eta,t} w_t  + \psi _{-\eta, t}\,.\end{cases} 
\end{equation}
The proof of the above formulas follows the argument in the untwisted case given in \cite{F02}, Lemma 2.1.
For the reader's convenience we give the argument below.

Since $m_{\eta, t} \in  \mathcal M^+_{\eta, t}$ and $m_{-\eta, t} \in  \mathcal M^+_{-\eta, t}$, it follows from 
the definitions  that $\partial^+_{\eta, t} m_{\eta, t} = \partial^+_{-\eta, t} m_{-\eta, t}=0$, for all $t\in \R$, hence
 by a straightforward calculation we have
 $$
\begin{aligned}
 &\partial^-_{\eta, t} m_{\eta, t} +  \partial^+_{\eta, t} (\frac{d m_{\eta, t}}{dt} )  = \frac{d }{dt} ( \partial^+_{\eta, t} m_{\eta, t})  = 0 \,; \\
& \partial^-_{-\eta, t}  m_{-\eta, t}   +  \partial^+_{-\eta, t} (\frac{dm_{-\eta, t}}{dt} )  =  \frac{d }{dt} ( \partial^+_{-\eta, t} m_{-\eta, t}) = 0 \,.
\end{aligned}
$$
Moreover, by the definition of the cocycle, since the action of the Teichm\"uller flow on the twisted cohomology bundle is by parallel transport, the $d_\eta$-cohomology
class of the real  $d_{\eta}$-closed $1$-form $\re (m_{\eta, t} h_t + m_{-\eta, t} h_t )$ is constant with respect to $t\in \R$, hence  there exists a one-parameter family of smooth functions $(f_t)$ such that
$$
\frac{d}{dt} \re (m_{\eta, t} h_t + m_{-\eta, t} h_t )  =  d_{\eta} f_t  +   d_{-\eta} \bar{f_t}\,.
$$
Since $\frac{d h_t}{dt} = -\overline{h_t}$ we have
$$
\begin{aligned}
\frac{d m_{\eta, t}}{dt} + \frac{d m_{-\eta, t}}{dt}  - ( \overline{m}_{\eta, t} +\overline{m}_{-\eta, t}) &=  -\partial^-_{\eta,t} (v_t + \bar{w_t})  -\partial^-_{-\eta,t}  (\bar{v_t } +w_t)   
 \\ 
&+ \phi_{\eta, t}  + \psi_{-\eta, t} -\overline{ \pi^-_{\eta,t} (m_{\eta, t})} - \overline{ \pi^-_{-\eta,t} (m_{-\eta, t})}\,,
\end{aligned}
$$
which implies that $f_t = -(v_t + \bar{w_t})$,  and
\begin{equation}
\label{eq:phipsi}
\phi_{\eta, t} = \overline{ \pi^-_{-\eta,t} (m_{-\eta, t})}  \quad \text{ and } \quad  \psi_{-\eta, t}= \overline{ \pi^-_{\eta,t} (m_{\eta, t})}\,.
\end{equation}
The formulas claimed above are therefore proven.

The variation of the Hodge norm is then given by the formula
$$
\begin{aligned}
\frac{d}{dt} (\vert m_{\eta, t} \vert_0^2 + \vert m_{-\eta, t}\vert_0^2) &=   2 \re ( \<m_{\eta, t}, \frac{d m_{\eta, t}}{dt}\> + \<m_{-\eta, t}, \frac{dm_{-\eta, t}}{dt}\>) \\
&= 2 \re ( \<m_{\eta, t},  \overline{ \pi^-_{-\eta,t} (m_{-\eta, t})}   \> + \<m_{-\eta, t},\overline{ \pi^-_{\eta,t} (m_{\eta, t})}\>) \\
&= 2 \re ( \<\overline{m}_{\eta, t},  \pi^-_{-\eta,t} (m_{-\eta, t})   \> + \<\overline{m}_{-\eta, t},\pi^-_{\eta,t} (m_{\eta, t})\>) \\
&= 2 \re ( \<\overline{m}_{\eta, t}, m_{-\eta, t}  \> + \<\overline{m}_{-\eta, t}, m_{\eta, t}\>) = 
4\re \<\overline{m}_{\eta, t}, m_{-\eta, t}   \> \,.
\end{aligned}
$$
In the above chain of identities, we argue as follows. The first identity is given by the general formula for the derivative of the square of a Hilbert norm.  The second follows from
formulas~\eqref{eq:var} and \eqref{eq:phipsi}, since the conditions that $m_{ \eta, t} \in {\mathcal M}^+_{\eta,t}$ and  $m_{ -\eta, t} \in {\mathcal M}^+_{-\eta,t}$ 
imply respectively that
$$
\begin{cases}
\<m_{\eta, t},   \partial^-_{\eta,t} v_t     \> =  - \<m_{\eta, t},  ( \partial^+_{\eta,t})^\ast v_t     \> =0 \,, \\
\<m_{-\eta, t}, \partial^-_{-\eta,t} w_t    \> =   - \<m_{\eta, t},  ( \partial^+_{\eta,t})^\ast w_t     \> =  0\,.
\end{cases}
$$
The third identity holds since the real part of a complex number equals the real part of its conjugate. Finally, the fourth identity follows from formula \eqref{eq:var}
since, as a consequence of the fact that $\overline{m}_{\eta, t} \in {\mathcal M}^-_{-\eta,t}$ and $\overline{m}_{-\eta, t} \in {\mathcal M}^-_{\eta,t}$, we have respectively
$$
\begin{cases}
\<\overline{m}_{\eta, t},   \partial^+_{-\eta,t} w_t     \> =  - \<\overline{m}_{\eta, t},  ( \partial^-_{-\eta,t})^\ast w_t     \> =0 \,, \\
\<\overline{m}_{-\eta, t}, \partial^+_{\eta,t} v_t    \> =   - \<\overline{m}_{\eta, t},  ( \partial^-_{\eta,t})^\ast v_t     \> =  0\,,
\end{cases}
$$
The fifth and last identity again follows by taking complex conjugation inside the (second) real part.  The argument is thus complete.

\end{proof}
Let $\Lambda_\kappa : H^1_\kappa (M, \T) \to \R^+ \cup \{0\}$ be the function defined as
\begin{equation}
\label{eq:Lambda_kappa}
\Lambda_\kappa(h, [\eta]) :=  \sup \left\{  \frac{2 \vert \<\overline{m}_{\eta}, m_{-\eta}   \> \vert}{ \vert m_{\eta} \vert_0^2 + \vert m_{-\eta}\vert_0^2 } \, : \, ( m_\eta,  m_{-\eta}) \in \mathcal M^+_\eta\times \mathcal M^+_{-\eta}
\setminus \{(0,0)\} \right \} \,.
\end{equation}

As an immediate consequence of the first variational formulas, we derive an upper bound for the growth
of the Hodge norm of twisted cohomology classes under the twisted cocycle.

\begin{lemma}  \label{lemma:growth_1} Let $c \in H^1_\eta(M, \C)$.  We have
$$
\Vert  g_t ([h,\eta, c]) \Vert \leq  \Vert [h, \eta, c] \Vert  \exp \left(  \int_0^t  \Lambda_\kappa(g_s([h, \eta])  ) ds  \right)\,.
$$
\end{lemma}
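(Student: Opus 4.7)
The plan is to integrate the first variational formula proved in the preceding lemma and combine it with the definition of $\Lambda_\kappa$ to obtain a standard Gronwall-type bound on the squared Hodge norm. First I would fix the cohomology class $c \in H^1_\eta(M,\C)$ and, at each time $t$, represent $g_t([h,\eta,c]) = [(g_t(h),\eta,c)]$ (recall that the Teichm\"uller action is by parallel transport, so $c$ itself is fixed while its Hodge representative evolves) by functions $m_{\eta,t} \in {\mathcal M}^+_{\eta,t}$ and $m_{-\eta,t} \in {\mathcal M}^+_{-\eta,t}$, so that by the definition of the Hodge norm
$$
\Vert g_t([h,\eta,c])\Vert^{2} \;=\; |m_{\eta,t}|_{0}^{2} + |m_{-\eta,t}|_{0}^{2}.
$$
Here $|\cdot|_0$ denotes the $L^2$ norm with respect to the area form, which is $SL(2,\R)$-invariant and in particular independent of $t$, so no subtlety arises from letting the underlying complex structure vary.

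Next I would invoke the variational identity established in the previous lemma, namely
$$
\frac{d}{dt}\bigl(|m_{\eta,t}|_{0}^{2} + |m_{-\eta,t}|_{0}^{2}\bigr) \;=\; 4\,\re \<\overline{m}_{\eta,t},\, m_{-\eta,t}\>,
$$
and apply the definition of $\Lambda_\kappa$ from \eqref{eq:Lambda_kappa}, which yields the pointwise inequality
$$
2\,\bigl|\<\overline{m}_{\eta,t},\, m_{-\eta,t}\>\bigr| \;\leq\; \Lambda_\kappa\bigl(g_t([h,\eta])\bigr)\bigl(|m_{\eta,t}|_{0}^{2} + |m_{-\eta,t}|_{0}^{2}\bigr),
$$
since $(m_{\eta,t}, m_{-\eta,t}) \in {\mathcal M}^+_{\eta,t}\times {\mathcal M}^+_{-\eta,t}$ is an admissible pair in the supremum defining $\Lambda_\kappa$ at the point $g_t([h,\eta])$. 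Combining these two displays gives the differential inequality
$$
\frac{d}{dt}\Vert g_t([h,\eta,c])\Vert^{2} \;\leq\; 2\,\Lambda_\kappa\bigl(g_t([h,\eta])\bigr)\,\Vert g_t([h,\eta,c])\Vert^{2}.
$$

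Finally, assuming $c \ne 0$ (the statement being trivial otherwise), I would divide by $\Vert g_t([h,\eta,c])\Vert^{2}$ to rewrite the inequality as
$$
\frac{d}{dt}\log \Vert g_t([h,\eta,c])\Vert^{2} \;\leq\; 2\,\Lambda_\kappa\bigl(g_t([h,\eta])\bigr),
$$
and integrate from $0$ to $t$, then exponentiate and take the square root to deduce the stated bound. There is essentially no obstacle beyond the bookkeeping outlined above: the argument is a direct Gronwall consequence of the variational formula, with the only points requiring care being that $c$ (and hence the relevant quadratic form) is transported without change along the orbit, and that the $L^2$ pairing used in the definition of $\Lambda_\kappa$ matches the one differentiated in the variational identity.
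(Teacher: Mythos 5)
Your argument is correct and is exactly the derivation the paper has in mind: the lemma is stated there as an ``immediate consequence of the first variational formulas,'' and your Gronwall integration of $\frac{d}{dt}\Vert\cdot\Vert^2 = 4\re\<\overline{m}_{\eta,t},m_{-\eta,t}\> \le 2\Lambda_\kappa(g_t([h,\eta]))\Vert\cdot\Vert^2$, using the definition \eqref{eq:Lambda_kappa} of $\Lambda_\kappa$ as a supremum over admissible pairs, is precisely that consequence. The only cosmetic remark is that one can avoid dividing by the norm (and hence the side discussion of $c\ne 0$) by observing that the differential inequality directly gives $\frac{d}{dt}\bigl(\Vert g_t([h,\eta,c])\Vert^2 e^{-2\int_0^t\Lambda_\kappa}\bigr)\le 0$.
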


We finally prove that the function $\Lambda_\kappa \leq 1$ everywhere and $\Lambda_\kappa<1$  outside
of any neighborhood of the zero section $H^1(M, \Z)$ of $H^1(M, \T)$.

\begin{lemma} \label{lemma:Lambda_gap}  The function $\Lambda_\kappa$ is continuous with values in $[0,1]$ and
$$
\Lambda_\kappa(h, [\eta]) <1\,, \quad   \text{ \rm for all } (h,[\eta])\,\, \text{\rm such that }\,\, [\eta] \not \in H^1(M, \Z) \,.
$$
\end{lemma}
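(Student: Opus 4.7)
The plan is to split the lemma into three assertions and attack them in turn: the universal bound $\Lambda_\kappa\le 1$, the strict inequality away from integer classes, and the continuity.

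For the bound, I would start from the pair $(m_\eta,m_{-\eta})\in \mathcal M^+_\eta\times \mathcal M^+_{-\eta}$ and apply Cauchy--Schwarz followed by AM--GM:
\[
2\,|\langle\overline{m}_\eta,m_{-\eta}\rangle|\;\le\;2\,|m_\eta|_0\,|m_{-\eta}|_0\;\le\;|m_\eta|_0^2+|m_{-\eta}|_0^2\,.
\]
Hence the fraction defining $\Lambda_\kappa$ is bounded by $1$ over the whole admissible set. Since the kernels $\mathcal M^+_{\pm\eta}$ are finite-dimensional (their dimensions being controlled by Hodge theory and Lemma~\ref{lemma:cohom_dim}), the supremum is attained on the unit sphere of $\mathcal M^+_\eta\times\mathcal M^+_{-\eta}$.

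For the strict inequality, I would run the equality case of the two inequalities above. Cauchy--Schwarz forces $m_{-\eta}=c\,\overline{m}_\eta$ with some $c\in\mathbb C$, and AM--GM then forces $|c|=1$; absorbing $c$ I may assume $m_{-\eta}=\overline{m}_\eta$. Conjugating the equation $\partial^+_{h,\eta}m_\eta=0$ with the formula $\partial^+_{h,\eta}=(S+\pi i a)+i(T+\pi i b)$ shows that $u:=\overline{m}_\eta$ automatically satisfies $\partial^-_{h,-\eta}u=0$. But by construction $u=m_{-\eta}\in\mathcal M^+_{-\eta}$ too, so $u$ lies in the joint kernel $\mathcal M^+_{-\eta}\cap \mathcal M^-_{-\eta}$. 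Adding and subtracting the two kernel equations written out through the explicit formulas for $\partial^\pm_{h,-\eta}$ gives $Su=\pi i a\,u$ and $Tu=\pi i b\,u$, i.e.\ $du=\pi i \eta\, u$, which is precisely the condition $d_{-\eta/2}u=0$. A nonzero $u$ therefore defines a nontrivial element of $H^0_{-\eta/2}(M,\mathbb C)$, and by Lemma~\ref{lemma:invariants} this forces $[-\eta/2]\in H^1(M,\mathbb Z)$, that is $[\eta]\in 2H^1(M,\mathbb Z)\subset H^1(M,\mathbb Z)$. Contrapositively, if $[\eta]\notin H^1(M,\mathbb Z)$ then $\Lambda_\kappa(h,[\eta])<1$.

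For the continuity, the kernels $\mathcal M^+_{\pm\eta}$ form holomorphic subbundles of constant rank $g-1$ over the open complement of the integer section (by Hodge theory together with Lemma~\ref{lemma:cohom_dim}), so the pairing $(m_\eta,m_{-\eta})\mapsto 2|\langle\overline{m}_\eta,m_{-\eta}\rangle|/(|m_\eta|_0^2+|m_{-\eta}|_0^2)$ is a continuous function of $(h,[\eta])$ and its argument, and the supremum over the compact unit sphere varies continuously by a standard upper/lower semicontinuity argument for suprema of continuous functions on continuous families of compacta. At an integer class $[\eta_0]$ the rank jumps from $g-1$ to $g$ as a constant-like element enters the kernel, and one must check that $\Lambda_\kappa([\eta])\to 1$ as $[\eta]\to[\eta_0]$. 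I would do this by elliptic perturbation: the resolvent of $\partial^+_{h,\eta}$ depends continuously on $\eta$, so among the $g-1$ kernel elements for nearby $[\eta]$ one can find a unit-norm vector $L^2$-close to a constant in $\mathcal M^+_{\eta_0}$, pair it with its conjugate in $\mathcal M^+_{-\eta}$, and thereby realize a ratio tending to $1$.

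The main obstacle I expect is precisely the continuity at the integer section. Everywhere else the argument reduces to elementary inequalities plus the finite-dimensional Hodge theory packaged in Lemmas~\ref{lemma:invariants} and \ref{lemma:cohom_dim}, but at integer classes the kernel undergoes a rank jump and one has to exploit the smooth dependence of the spectral projectors of the elliptic family $\partial^+_{h,\eta}$ to produce approximately maximizing pairs; this is the only place where a genuine perturbative argument enters rather than a direct linear-algebraic computation.
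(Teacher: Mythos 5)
Your proposal follows essentially the same route as the paper on the two substantive points: the bound $\Lambda_\kappa\le 1$ via Cauchy--Schwarz and AM--GM, and the strict inequality via the equality case, which forces a joint kernel element of $\partial^{\pm}$ for a (multiple of) $\eta$, hence a nontrivial $H^0$ of a twisted complex, hence integrality of the class by Lemma~\ref{lemma:invariants}. (Your conclusion $[\eta]\in 2H^1(M,\Z)$ rather than $H^1(M,\Z)$ reflects a normalization factor of $2$ already present in the paper's explicit formulas for $\partial^{\pm}_{h,\eta}$; since $2H^1(M,\Z)\subset H^1(M,\Z)$ the contrapositive needed for the lemma is unaffected.) The continuity argument away from the integer section is also the paper's: ellipticity plus Rellich gives finite-dimensionality, attainment of the supremum, and continuous dependence of the kernels where their dimension is locally constant (equal to $g-1$ by Lemma~\ref{lemma:cohom_dim} and Hodge theory).

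The one place you diverge is at the integer section itself, and there your argument has a gap: the claim that among the $g-1$ kernel elements of $\partial^+_{h,\eta}$ for $[\eta]$ near an integer class one can find a unit vector $L^2$-close to the ``constant-like'' element of the limiting $g$-dimensional kernel is asserted, not proved. Upper semicontinuity of kernels of elliptic families only tells you that the limits of $\mathcal M^+_{\eta}$ land \emph{inside} $\mathcal M^+_{\eta_0}$; nothing a priori forces the distinguished element to lie in that limit, so lower semicontinuity of $\Lambda_\kappa$ at integer classes (i.e.\ $\Lambda_\kappa\to 1$) would require a genuine perturbative computation that you only sketch. To be fair, the paper's own proof simply asserts continuous dependence of the kernels and does not address the rank jump at all, so your treatment is no less complete than the original; moreover, every downstream use of this lemma (Lemmas~\ref{lemma:growth_1}--\ref{lemma:growth_3} and Lemma~\ref{lemma:iterativest}) only requires the compactness statement $\sup\{\Lambda_\kappa(h,[\eta]) : h\in K,\ [\eta]\notin U\}<1$ for $U$ a neighborhood of the zero section, which follows from continuity and strict inequality \emph{away} from the integer classes. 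So the unproved step affects only the literal wording ``$\Lambda_\kappa$ is continuous'' and not the applications; if you want a complete proof of the statement as written, you must either carry out the perturbation argument or weaken ``continuous'' to ``upper semicontinuous at the zero section and continuous elsewhere''.
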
 
\begin{proof} The holomorphic and anti-holomorphic part, $d^{1,0}$ and $d^{0,1}$  of the 
exterior differential $d$ are elliptic, in the sense that for any $1$-form $\alpha$ on $M$ we have
\begin{equation}
\label{eq:d_elliptic}
\vert \alpha \vert_{\Omega^1 H^1_h(M)} \leq   \vert \alpha \vert_{\Omega^1 L^2_h(M)} + \min\{ \vert d^{1,0} \alpha \vert_{\Omega^2 L^2_h(M)},   \vert d^{0,1} \alpha \vert_{\Omega^2 L^2_h(M)} \}\,.
\end{equation}
We claim that by Rellich compact embedding theorem (see for instance~\cite{Ad}, Th. 6.2), for all $r>s>1$, the  unit ball in the space
$$
\{ (\eta, \alpha_{\eta} , \alpha_{-\eta}) \in \Omega^1 H^r (M) \times \text{ \rm Ker } (d^{0,1}_\eta) \times \text{ \rm Ker } (d^{0,1}_{-\eta} ) \}
$$
that is, the set of   $(\eta, \alpha_{\eta}, \alpha_{-\eta})$  such that 
\begin{equation}
\label{eq:unit_ball}
\vert \eta  \vert^2_{\Omega^1H^r (M) }+ \vert \alpha_{\eta} \vert_{\Omega^1 L^2_h(M)} ^2 +\vert \alpha_{-\eta} \vert_{\Omega^1 L^2_h(M)} ^2  \leq 1 
\end{equation}
is compact in the space $\Omega^1 H^s (M) \times [\Omega^1 L^2_h(M)]^2$.  

In fact, by Rellich embedding theorem the embedding $\Omega^1 H^r (M) \to \Omega^1 H^s (M)$ is compact for any $s>r$. In addition, by Sobolev
embedding theorem, for $r>1$ the space $\Omega^1 H^r (M)$ embeds continuously into the space of $1$-forms with continuous coefficients.  It then
follows from the bound in formula~\eqref{eq:d_elliptic} that whenever $\eta$, $\alpha_{\eta}$ and  $\alpha_{-\eta}$ belong to the set described in 
formula~\eqref{eq:unit_ball}, then  $\alpha_{\eta}$ and  $\alpha_{-\eta}$ belong to a bounded set in $\Omega^1 H^1_h(M)$, hence by Rellich embedding
theorem, to a compact subset of $\Omega^1 L^2_h(M)$.
It follows, in particular that for each $\eta \in  \Omega^1 H^r (M)$ the subspaces $\text{ \rm Ker } (d^{0,1}_{\pm\eta})$, hence also the subspaces 
$\text{ \rm Ker } (d^{1,0}_{\pm\eta})$,  are finite dimensional, and that
the supremum in the definition of the function $\Lambda_\kappa$ is achieved. We observe that by Hodge theory the complex dimension of 
$\mathcal M^\pm_{h, \pm \eta}$ equals half the complex dimension of the twisted cohomology, which we have computed in
Lemma~\ref{lemma:cohom_dim}. 

By the ellipticity of the operators $d^{1,0}$ and $d^{0,1}$, that is, from formula \eqref{eq:d_elliptic}, it also follows that the spaces 
$\text{ \rm Ker } (d^{1,0}_{\pm \eta})$ and $\text{ \rm Ker } (d^{0,1}_{\pm \eta})$ depend continuously, as subspaces of $\Omega^1L^2_h(M)$, on the closed 
$1$-form $\eta \in \Omega^1 H^r (M) $, hence the spaces $\mathcal M^{\pm}_{h,\pm \eta}$ depend continuously on the pair 
$(h, \eta)\in \mathcal H (\kappa) \times H^1(M, \T)$. Thus we conclude that the function $\Lambda_\kappa$ is continuous. 

\smallskip
By the Schwarz inequality we have
$$
\vert \<\overline{m}_{\eta}, m_{-\eta}  \> \vert \leq   \vert m_{\eta} \vert_0  \vert m_{-\eta} \vert_0 \leq  \frac{1}{2} ( \vert m_{\eta} \vert_0^2 + \vert m_{-\eta}\vert_0^2)\,,
$$  
with equality only if there exists a non-zero constant $c\in \C$ (of modulus one) such that $m_{\eta}= c 
\overline{m}_{-\eta}$. From this condition, it follows 
that $m_{\eta} \in \mathcal M^+_{\eta} \cap \mathcal M^-_{\eta}$, that is, $\partial^+_{\eta} m_{\eta} = \partial^-_{\eta} m_{\eta}=0$
hence in particular 
$$
(d + 2\pi \imath \eta) m_{\eta}=0\,.
$$
It follows that $H^0_\eta(M, \C)$ is non-trivial, and Lemma~\ref{lemma:invariants}  implies that $[\eta ]\in H^1(M, \Z)$.  

\medskip
\noindent A direct alternative argument goes as follows. Let $(X, Y)$ be a frame such that $\imath _X \eta =0$ and $\imath _Y \eta =-1$. We then have
$$
X m_\eta =\imath_X (d + 2\pi \imath \eta) m_\eta =0 \quad \text{ and } \quad (Y-2\pi \imath )m_\eta = \imath_Y (d + 2\pi \imath \eta) m_\eta =0\,.
$$
The first condition implies that $\eta$ defines a completely periodic foliation $\mathcal F_\eta$.  The second
condition that $M/ \mathcal F_\eta$ endowed with the transverse measure covers a circle of unit length, hence 
$[\eta]\in H^1(M, \Z)$ (as all periods are integers).  
\end{proof}

\medskip

We conclude that if the Teichm\"uller orbit of $(h, \eta)$ visits the complement of any given neighborhood
of the zero section $H^1(M, \Z)$ with positive frequency, then there exist constants $C>0$ and  $\Lambda<1$ such that, for all $c\in H^1_\eta(M, \C)$  we have
$$
\Vert  g_t ([h,\eta, c]) \Vert \leq   C \Vert [h, \eta, c]\Vert  e^{\Lambda t}\,,  \quad \text{ for all } \, t>0 \,.
$$
In the next section we investigate the dynamics of the lift of the Teichm\"uller flow to the toral bundle 
$H^1_\kappa(M,\T)$ over the stratum $\mathcal H (\kappa)$ of the moduli space of Abelian 
differential, with fiber $H^1_h(M,\T):= H^1(M,\R)/H^1(M, \Z)$ at any $h\in \mathcal H(\kappa)$,  with particular attention to the set of trajectories which asymptotically ``spend all their time'' in any neighborhood of the zero section $H^1(M,\Z)$ of the bundle.

\section{The toral Kontsevich--Zorich cocycle}
\label{sec:Tor_KZ}

The projection of the Kontsevich--Zorich cocycle to the quotient toral bundle $H^1_\kappa(M, \T):=
H^1_\kappa(M, \R) / H^1_\kappa(M, \Z)$  is the 
key dynamical system behind the proof of generic  weak mixing for translation flows, for interval
exchange transformations \cite{AvF07}, generic translation flows on non-arithmetic Veech surfaces 
\cite{AD16}, \cite{AL}.  We remark that the bundle $H^1_\kappa(M, \T)$ is isomorphic to the {\it character
variety bundle} introduced in \cite{FG}  for the compact group~$U(1)$. In fact, elements of the character variety
for a group $G$ are homomorphisms $\rho: \pi_1(M, \ast) \to G$. For any Abelian group, homomorphisms 
of $\pi_1(M, \ast)$ to $G$ factor through the integral homology $H_1(M, \Z)$.  Every homomorphism
of $H_1(M, \Z)$ to $U(1)\equiv  \R/ \Z$  lifts to a homomorphism from $H_1(M, \Z)$ to $\R$, which is an element
of $H^1(M, \R)$. It follows that the character variety for $G=U(1)$ is isomorphic to $H^1(M, \R)/ H^1(M, \Z)$. 
It was proved in~\cite{FG} that the lift of the Teichm\"uller flow to the bundle $H^1_\kappa(M, \T)$
is ergodic, in fact even mixing, with respect to the canonical lift of the any of Masur--Veech measures on 
strata of the moduli space of Abelian differentials. 

It was proved in \cite{AvF07} that the horizontal translation flow of a translation surface
$(M, h)$ is weakly mixing  if the line $\R [\re(h)] \in H^1_h(M, \T)$ does not intersect the {\it weak
stable lamination} of the zero section of the bundle $H^1_\kappa(M, \T)$. (Note that in fact  in \cite{AvF07} the vertical flow
was considered, hence the condition was applied to the line $\R [\im(h)]$ instead of $\R [\re(h)]$. The two points of view are equivalent by rotation of the Abelian differentials). 

The weak stable lamination is defined as the set of all $c \in H^1_h(M, \T)$ such that the orbit of $c$ under the projected Kontsevich--Zorich cocycle converges to the zero section 
along all unbounded sequences of return times to certain  compact subsets of the space of zippered rectangles. It was then proved in~\cite{AvF07} by a ``linear elimination'' argument (see \cite{AvF07}, Appendix A) that the set of translation surfaces $(M,h)$ such that $\R [\re(h)]$ intersects the weak stable lamination has Hausdorff codimension at least $g-1$ for the Masur--Veech measures (in general, the Hausdorff codimension is equal to the number of non-tautological positive exponents of the Kontsevich--Zorich cocycle). It was also proved by a ``non-linear elimination'' argument that a similar property holds for Lebesgue almost all interval exchange transformations. By the Veech criterion, the authors derived that almost all interval exchange transformations and almost all translation flows are weakly mixing.

\smallskip
We introduce a version of the weak stable space.   Let $K\subset \mathcal H(\kappa)$ be a non-empty compact subset and let $U$ be any open neighborhood of the zero section  of the bundle $H^1_\kappa (M, \T)$, that is, the projection of a neighborhood of $H^1_\kappa (M, \Z) \subset H^1(M, \R)$. For every $h\in \mathcal H(\kappa)$, and every
$\epsilon>0$,  let $W^{s}_{K,U}(h, \epsilon) \subset H^1_h(M, \T)$ denote the set
$$
W^{s}_{K, U}(h, \epsilon) = \{ c \in H^1_h(M, \T)  \,\vert \,   \limsup_{t \to +\infty} \frac{ \int_0^t \chi_K(g_\tau (h))\chi_U (g_\tau(h,c) ) d\tau } { \int_0^t \chi_K(g_\tau (h)) d\tau  } \geq 1-\epsilon  \} \,.
$$
Let $W^s_K(h)$ denote the intersection of all sets $W^{s}_{K,U}(h, \epsilon)$ as $U$ varies over the family $\mathcal U$ of all neighborhoods of the zero section of the bundle $H^1_\kappa (M, \T)$ and $\epsilon \in (0,1)$:
$$
W^{s}_K(h) :=  \bigcap_{U\in \mathcal U}\bigcup_{\epsilon\in (0, 1)}   W^{s}_{K,U}(h, \epsilon)\,.
$$
The following lemma provides a simple but effective way to bound  the Hausdorff dimension of a set
defined as an upper limit.
\begin{lemma} \label{lemma:Hdim} Let $\{W_n\}$ be a sequence of subsets of $\R^d$ and let $W \subset \R^d$  be
the set defined as 
$$
W = \limsup_{n \to \infty} W_n= \cap_{n\in \N} \cup_{m\geq n} W_m\,.
$$
Assume that, for each $n\in \N$, the set $W_n$ can be covered by $N_n$ balls of radius $R_n$. Then 
the Hausdorff dimension $\text{\rm H-dim}(W)$ satisfies the upper bound
$$
\text{\rm H-dim}(W) \leq \inf \{ \delta >0 \vert  \lim_{n\to +\infty}    \sum_{m\geq n}  N_m  R_m^\delta=0\}\,.
$$
\end{lemma}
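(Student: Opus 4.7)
The plan is to deduce the Hausdorff dimension bound from the definition of Hausdorff measure applied to a natural family of covers of $W$ inherited from the assumed covers of the sets $W_m$. Specifically, fix any $\delta > 0$ such that $\sum_{m \geq n} N_m R_m^\delta \to 0$ as $n \to \infty$. I will show $\mathcal{H}^\delta(W) = 0$, which implies $\text{H-dim}(W) \leq \delta$; taking the infimum over all admissible $\delta$ then yields the stated bound.

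The first step is to observe that since the tail sum $\sum_{m\geq n} N_m R_m^\delta$ converges (to $0$), its general term $N_m R_m^\delta$ must tend to $0$. On indices $m$ for which $W_m \neq \emptyset$ we have $N_m \geq 1$, and therefore $R_m \to 0$ along this subsequence; empty $W_m$ contribute nothing to the covers or sums, so we may (and do) discard them. Consequently, for any prescribed mesh $\epsilon > 0$ there exists $n_0 = n_0(\epsilon)$ such that $R_m < \epsilon/2$ for all $m \geq n_0$ with $W_m$ non-empty.

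Next, for every $n \in \N$ we use the containment $W \subset \bigcup_{m \geq n} W_m$ (which is immediate from $W = \bigcap_n \bigcup_{m \geq n} W_m$), together with the hypothesized cover of each $W_m$ by $N_m$ balls of radius $R_m$, to produce a countable cover of $W$ consisting of at most $\sum_{m \geq n} N_m$ balls, each of diameter at most $2R_m$. Taking $n \geq n_0(\epsilon)$, every ball in this cover has diameter less than $\epsilon$, so this cover is admissible for $\mathcal{H}^\delta_\epsilon$, and
\begin{equation*}
\mathcal{H}^\delta_\epsilon(W) \;\leq\; \sum_{m \geq n} N_m (2 R_m)^\delta \;=\; 2^\delta \sum_{m \geq n} N_m R_m^\delta.
\end{equation*}
Letting $n \to \infty$ while keeping $\epsilon$ fixed gives $\mathcal{H}^\delta_\epsilon(W) = 0$ for every $\epsilon > 0$, whence $\mathcal{H}^\delta(W) = \lim_{\epsilon \to 0^+} \mathcal{H}^\delta_\epsilon(W) = 0$ and $\text{H-dim}(W) \leq \delta$.

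There is no serious obstacle here; the argument is essentially a direct unpacking of the definitions. The only minor subtlety is ensuring the cover has mesh tending to zero, which is why one first extracts from the hypothesis the decay $R_m \to 0$ along indices where $W_m$ is nonempty. Once that is in hand, the rest is a one-line estimate and a passage to the infimum over admissible exponents $\delta$.
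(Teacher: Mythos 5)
Your proof is correct and takes essentially the same route as the paper's: cover $W$ by the tail union $\bigcup_{m\geq n}$ of the assumed ball covers and bound the resulting $\delta$-sum by $2^\delta\sum_{m\geq n}N_mR_m^\delta$. Your extra care in verifying that the mesh tends to zero (via $N_mR_m^\delta\to 0$ forcing $R_m\to 0$ on indices with $W_m\neq\emptyset$) makes explicit a point the paper's appeal to ``the definition of outer measure'' leaves implicit.
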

\begin{proof}
Let $H^\delta$ denote the $\delta$-dimensional Hausdorff outer measure on $\R^d$.  Let $\{\Omega_m\}$
be a cover of the set $W_m$ by $N_m$ balls of radius $R_m$.  It follows that, for each $n\in \N$ we have
$$
W  \subset \Omega^{(n)} := \bigcup_{m\geq n} \Omega_m \,.
$$
By assumption we have
$$
\sum_{B \in \Omega^{(n)}}  \vert B \vert^\delta =  \sum_{m\geq n} \sum_{B \in \Omega_m}  \vert B \vert^\delta     = C_d^\delta \sum_{m\geq n}  N_m  R_m^\delta \,.
$$
By the definition of outer measure, it follows that
$$
H^{\delta} (W) \leq  C_d^\delta \lim_{n\to +\infty}    \sum_{m\geq n}  N_m  R_m^\delta \,.
$$
We conclude that $H^\delta (W)=0$ for any $\delta >0$ such that 
$$
 \lim_{n\to +\infty}    \sum_{m\geq n}  N_m  R_m^\delta =0\,, 
$$
 hence  $\text{\rm H-dim}(W) \leq \delta$ by the properties of Hausdorff dimension.
The argument is thus complete.
\end{proof}
We  generalize below to our setting the ``linear elimination'' argument of \cite{AvF07}.

\smallskip
Let $(t_n)$ a sequence of return times of the Teichm\"uller orbit $\{g_t (h) \vert t>0\}$  to the compact set $K\subset \mathcal H(\kappa)$. Let us define the sets
\begin{equation}
\label{eq:Wnsets}
W^{s}_{K,U,n}(h,\epsilon) = \{ c \in H^1_h(M, \T)  \,\vert \, \frac{\int_0^{t_n}
\chi_K(g_t(h))  \chi_U (g_t (h,c) ) dt }{ \int_0^{t_n}
\chi_K(g_t(h))  dt} \geq 1-\epsilon \}\,.
\end{equation}
Let $r_K>0$ be a radius such that, for all $h\in K$, the closed Hodge ball of radius $r_K$ in $H^1_h(M, \T)$ centered at the origin is isometric to the closed Hodge  ball of the same Hodge radius in $H^1_h(M,\R)$.  
Let $U(r)$ denote a neighborhood of radius equal to $r\in (0, r_K)$  (with respect to the Hodge metric) of the zero section of $H^1_\kappa(M,\T)$.

Let $h\in  \mathcal H(\kappa)$ be a Birkhoff generic point for the Teichm\"uller geodesic flow and Oseledets regular for the Kontsevich--Zorich cocycle on the Hodge bundle $H^1_\kappa(M, \R)$ with respect to the Masur--Veech measure. 
\begin{lemma}  
\label{lemma:covering}
There exist constants  $C_K>1$, $\nu>0$ and there exists a function $\epsilon_{K}:(0, r_K) \to (0,1)$ such that
$\lim_{r\to 0^+} \epsilon_K(r)=0$ such that the following holds. Let $V$ denote any affine subspace parallel to a subspace $V_0$ transverse to the central-stable space  $E^{cs}(h)$ and let $d_u := \text{ \rm dim }(V_0)$ the unstable dimension. The set  $W^{s}_{K,U(r),n}(h,\epsilon) \cap V$ is covered by at most $N_n (r,\epsilon)$ balls of Hodge radius at most  $R_n(r, \epsilon)$  so that the following bounds hold: 
\begin{equation}
\label{eq:est_claim}
\begin{aligned}
&\limsup_{n\to +\infty}  \frac{1}{t_n} \log R_n(r,\epsilon) \leq   - C^{-1}_K \mu_\kappa(K) \nu (1-\epsilon) \,; \\
&\limsup_{n\to +\infty} \frac{1}{t_n} \log N_n(r, \epsilon)  \leq  C_K d_u(\epsilon+\epsilon_K(r))\,.
\end{aligned} 
\end{equation}
\end{lemma}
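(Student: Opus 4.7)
The plan is to follow the ``linear elimination'' strategy of Appendix~A of \cite{AvF07}, generalised from the one-parameter transverse direction used there to the higher-dimensional unstable subspace $V_0$ and reformulated in toral coordinates. The first step is to set up Oseledets data at $h$: since $V_0$ is transverse to the central-stable subspace and $h$ is Oseledets regular, every nonzero $v\in V_0$ satisfies $\lim_{t\to\infty}t^{-1}\log\|g_tv\|\geq \nu$, where $\nu>0$ is the smallest positive Lyapunov exponent of the Kontsevich--Zorich cocycle along $V_0$. Fixing a small $\rho>0$, Oseledets' theorem provides a prefactor $C_h(\rho)>0$ with $\|g_tv\|\geq C_h(\rho)^{-1}e^{(\nu-\rho)t}\|v\|$ for all $v\in V_0$ and $t\geq 0$; equivalently, $g_{-t}|_{V_0}$ contracts by $e^{-(\nu-\rho)t}$ up to the prefactor.

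The second step is to lift everything to the universal cover. Choose a lift $\widetilde V\subset H^1_h(M,\R)$ of $V$ and a bounded fundamental domain $F\subset\widetilde V$. The constraint $g_\tau(h,c)\in U(r)$ then lifts to $\widetilde c \in \bigcup_{\omega\in H^1(M,\Z)}(g_\tau^{-1}(B(0,r))+\omega)\cap\widetilde V$, a union of convex ellipsoids, each of $V_0$-diameter at most $C_h(\rho)re^{-(\nu-\rho)\tau}$, indexed by those integer cohomology classes $\omega$ whose image under $g_\tau$ lies within $r$ of $g_\tau(\widetilde V)\cap g_\tau(F)$.

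The third step is the cascading cover. Enumerate the return times $0<\tau_1<\dots<\tau_{N(n)}\leq t_n$ of $h$ to $K$, with $N(n)/t_n\to\mu_\kappa(K)$ by the Birkhoff ergodic theorem. Build a cover of $W^s_{K,U(r),n}(h,\epsilon)\cap V$ by induction on $j$: at step $j$ each current piece $P$ branches into the alternatives ``$\tau_j$ bad'' (keep $P$ unchanged) and ``$\tau_j$ good'' (intersect $P$ with the ellipsoid-union above); after step $N(n)$ one discards every piece whose history has more than $\epsilon N(n)$ bad steps. A ``good'' subdivision at step $j$ bounds the $V_0$-diameter of the sub-pieces by $C_h(\rho)re^{-(\nu-\rho)\tau_j}$, and a lattice/volume count in the $r$-tube about the $d_u$-dimensional set $g_\tau(P)$ bounds the number of relevant $\omega$'s by a factor of the form $1+C_K(\epsilon_K(r)+\delta_j)$, where $\delta_j$ captures the expansion gap to the previous good step and $\epsilon_K(r)\to 0$ as $r\to 0^+$ encodes the sparsity of $H^1(M,\Z)$ at $r$-scales transverse to $g_\tau(V)$.

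Combining the estimates, the radius of any surviving piece is dominated by the contraction at its last good step; by pigeonhole this step occurs at time $\geq(1-\epsilon)t_n+o(t_n)$, yielding the claimed exponential bound on $R_n(r,\epsilon)$ (with at least the exponent $\mu_\kappa(K)\nu(1-\epsilon)/C_K$). The log-count $\log N_n(r,\epsilon)$ is the sum of per-step log-branching factors, each of size $O(\epsilon_K(r)+\delta_j)$; since the gaps $\delta_j$ are non-trivial only at ``catch-up'' good steps following bad runs, of which there are $O(\epsilon N(n))$, one obtains the claimed bound $C_Kd_u(\epsilon+\epsilon_K(r))t_n$. The \emph{main obstacle} I foresee is the uniform lattice-counting estimate underlying the per-step branching bound, and in particular the proof that $\epsilon_K(r)\to 0$ uniformly over $h\in K$: in the one-dimensional setting of \cite{AvF07} this count is essentially elementary, but in the present $d_u$-dimensional setting one must quantify the distribution of $\omega\in H^1(M,\Z)$ transverse to the $d_u$-dimensional image $g_\tau(V)$, uniformly in $h\in K$. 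A secondary technical point is that $\widetilde V\cap H^1(M,\Z)$ may be non-trivial, which modifies the bookkeeping of pieces through the choice of fundamental domain $F$ but does not alter the final rates.
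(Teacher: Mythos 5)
Your overall architecture coincides with the paper's: Oseledets contraction along a subspace $V_0$ transverse to $E^{cs}(h)$ controls the radii, a lattice-point count on the affine unstable slice controls the multiplicities, and the Birkhoff ergodic theorem controls the frequency of ``good'' times. The radius estimate is essentially correct, up to a minor overstatement: the last good time is only guaranteed to be $\geq (1-\epsilon)\mu_\kappa(K)\,t_n+o(t_n)$, not $(1-\epsilon)t_n$, since the defining condition constrains the fiber only along the density-$\mu_\kappa(K)$ set of $K$-return times; this is why the factor $\mu_\kappa(K)$ appears in \eqref{eq:est_claim}.

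The genuine gap is in the multiplicity count, and it is not where you locate it. The higher-dimensional lattice count is the easy part: by compactness of $K$ there is $r_K>0$ such that a Hodge ball of radius $re^{\vert I\vert}$ in $H^1_h(M,\R)$ contains at most $(r/r_K)^{d_u}e^{d_u\vert I\vert}$ lattice points, uniformly for $h\in K$; no finer equidistribution of $H^1(M,\Z)$ transverse to $g_\tau(V)$ is required. What your bookkeeping does not close is the contribution of excursions of the \emph{base} orbit outside $K$. The set $W^{s}_{K,U(r),n}(h,\epsilon)$ says nothing about the fiber at times when $g_t(h)\notin K$, and the total length of such times can be of order $\mu_K t_n$ with $\mu_K$ depending only on $K$, not on $\epsilon$ or $r$. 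During a gap of length $\tau_{j+1}-\tau_j$ between consecutive $K$-returns the cocycle expands by up to $e^{\tau_{j+1}-\tau_j}$, so a single piece can split into roughly $(r/r_K)^{d_u}e^{d_u(\tau_{j+1}-\tau_j)}$ lattice branches even when both endpoints are ``good''; summing these gaps contributes a term of order $d_u\mu_K t_n$ to $\log N_n$, which is not of the form $C_Kd_u(\epsilon+\epsilon_K(r))t_n$. Hence your claim that branching occurs only at the $O(\epsilon N(n))$ catch-up steps, and your attribution of $\epsilon_K(r)$ to lattice sparsity at scale $r$, do not suffice. The paper's resolution has two ingredients you are missing: first, by compactness there is $\delta_K(r)\to+\infty$ as $r\to0^+$ such that for $\vert t\vert\leq\delta_K(r)$ the image of the ball $B(0,r)$ still contains a single lattice point, so only excursions of length at least $\delta_K(r)$ produce any branching; second, the Birkhoff ergodic theorem shows that the total length $t_n(r)$ of those long excursions satisfies $\sup_{n}t_n(r)/t_n\to0$ as $r\to0^+$, and this supremum is precisely the definition of $\epsilon_K(r)$. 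Without these two steps the second bound in \eqref{eq:est_claim} does not follow.
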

\begin{proof}
The first estimate follows from the Birkhoff ergodic theorem and from the Oseledets theorem. 
For each $n\in \N$,  let $\tau_n\in [0,t_n]$ be defined as  
$$
\tau_n  :=  \inf_{c\in W^{s}_{K,U(r),n}(h, \epsilon)}  \sup \{  t\in [0, t_n]  \vert  g_{t} (h,c) \in U(r) \text{ and } g_t(h)\in K \}\,.
$$
Since $h$ is Birkhoff generic, by Birkhoff ergodic theorem and by the definition of the set $W^{s}_{K,U(r),n}(h, \epsilon)$, we have
 $$
\liminf_{n \to +\infty} \, \frac{\tau_n} {t_n} \geq  (1-\epsilon)\mu_\kappa (K) \,.
$$
By compactness and by the Oseledets theorem, there exists $\nu>0$ such that, for each $n\in \N$, every  connected component of the set
$W^{s}_{K, U(r),n}(h, \epsilon)\cap V$ is contained in a ball of radius $C_K r e^{-\nu \tau_n}$, hence the  estimate on the sequence $(R_n(r,\epsilon))$ holds, for all $r>0$. 

The second estimate, on the number $N_n(r,\epsilon)$ of connected components of the set $W^{s}_{K,U(r),n}(h,\epsilon) \cap V$, is proved by coding trajectories
over the time-interval $[0, t_n]$, as follows.   The total number of connected components will be estimated by the product of the number of words in the coding times
the number of connected components of the subset of the set $W^{s}_{K,U(r),n}(h,\epsilon) \cap V$ of trajectories with the same coding.

Let us describe the coding. Let $\pi: H^1_\kappa(M,\R) \to \mathcal H(\kappa)$ denote the bundle projection.
We code trajectories  according to whether they
are in~$U(r) \cap \pi^{-1}(K)$ (coded by the symbol $u$), in~$U(r)^c\cap \pi^{-1}(K)$ (coded by the symbol~$u'$) or the Teichm\"uller orbit is not 
in~$K$ (coded by the symbol~$K'$). In other terms, the coding is based on the following map $c$ from the toral cohomology bundle 
$H^1_\kappa (M, \T)$ to the alphabet $\{ u, u', K'\}$ defined as follows: 
$$
c (h, [\eta])  := \begin{cases}   u\,,   \quad  \text{ if }  (h, [\eta]) \in \pi^{-1}(K) \cap U(r)\,, \\
 u'     \quad  \text{ if }  (h, [\eta]) \in \pi^{-1}(K) \cap  \left( H^1_\kappa(M, \T) \setminus U(r)\right)\,, \\
  K'     \quad  \text{ if }  (h, [\eta]) \in  \pi^{-1} (\mathcal H(\kappa) \setminus K) \,,  \end{cases} 
$$
with the modifications described below.

Maximal trajectory arcs in $\pi^{-1}(K)$, but {\it outside} of the set  $U(r)\cap \pi^{-1}(K)$, have time length at least  $C^{-1}_K \vert \log r \vert$, since the maximal 
expansion rate  of the Kontsevich--Zorich cocycle at time $t>0$ with respect to the Hodge norm is  bounded above by $e^t$ and above the compact set 
$K \subset \mathcal H(\kappa)$ lattice points separation (with respect to the Hodge distance) is bounded below.  Hence it is enough to code trajectories in 
$K$ at time intervals equal to  $C^{-1}_K \vert \log r \vert/2$. In addition,  by compactness there exists a  
function $\delta_K: (0,1] \to \R^+$ (depending on the compact set  $K\subset \mathcal H(\kappa)$)  with $\lim_{r\to 0^+} \delta_K(r) =+\infty$, 
 such that, for all $h\in K$,  for all $r>0$ and for $\vert t \vert \leq \delta_K(r)$ the image  of the ball $B(0, r) \subset H^1_h(M, \R)$ (in the Hodge metric) 
 contains a single point (the origin) of the lattice $H^1_{g_t(h)} (M, \Z) \subset H^1_{g_t(h)} (M, \R)$.  Hence any trajectory arc which exits $\pi^{-1}(K)$, with 
 both endpoints in  $U(r) \cap \pi^{-1}(K)$, will still be coded by the letter $u$ unless it has time-length larger than $\delta_K(r)>0$.
 
 Let then $\delta'_K(r) = \min \{ C_K^{-1}\vert \log r \vert , \delta_K(r)\}$.  By the above remarks, it is enough to code trajectories as follows: 
 we divide each trajectory segment into segments of equal length $\delta'_K(r)$ 
(and a remainder which we neglect) and assign to each segment the label $u$  when it intersects  $ \pi^{-1}(K) \cap U(r)$, and it is entirely contained in 
$\pi^{-1}(K)$,  the label $u'$ when it is entirely contained in $\pi^{-1}(K) \cap  \left( H^1_\kappa(M, \T) \setminus U(r)\right)$ but not the complement of 
$\pi^{-1}(K)$, and finally $K'$ in the remaining case, when it intersects  the complement of $\pi^{-1}(K)$.




Let $t_n^K$ denote the total time that the trajectory spends in $\pi^{-1}(K)$ over the time interval $[0, t_n]$.  By the ergodic theorem, there exists $\mu_K>0$ (with $\mu_K\to 0^+$ 
as $\mu_\kappa (\mathcal H(\kappa) \setminus K) \to 0$)  such that $t_n -t_n^K \leq \mu_K t_n$.  
Since by assumption the total time that  the trajectory
spends in $\pi^{-1}(K) \cap U(r)$ is at least $(1-\epsilon) t_n^K$, the total number of different words is at most (by standard bounds on the binomial coefficients)
\begin{equation}
\label{eq:word_counting} 
 \binom { \frac{t^K_n}{\delta'_K(r)}  }  {\frac{ \epsilon t^K_n}{\delta'_K(r)}}   \binom { \frac{t_n}{\delta'_K(r)}  }  {\frac{ \mu_K t_n}{\delta'_K(r)}} 
  \leq  \left (\frac{e}{\epsilon} \right)^{\frac{\epsilon t^K_n}{\delta'_K(r)}} \left (\frac{e}{\mu_K} \right)^{\frac{\mu_K  t_n}{\delta'_K(r)}}  \,.
\end{equation}
For every word $w$, let $\Gamma'_w$ denote the set of arcs of trajectory of the cocycle in 
$U(r)^c \cap \pi^{-1}(K)$  with both endpoints in $U(r) \cap \pi^{-1}(K)$ , and let $\Gamma''_w$ denote the set of arcs of trajectory of the cocycle, with both endpoints in $U(r) \cap \pi^{-1}(K)$, which project to a Teichm\"uller arc not contained in $K$ and have time-length at least $\delta_K(r)>0$. In other terms, 
\begin{itemize}
\item  $\Gamma'_w$ is the set of orbit arcs  corresponding to strings  $w'= u' \dots u'$   such that $uw'u$  is a substring of $w$\,;
\item $\Gamma''_w$ is the set of orbit arcs  corresponding to strings  $w''= w_1 w_2 \dots w_\ell$, with $w_i \in \{u', K'\}$ for all $i\in\{1, \dots, \ell\}$,   such that $uw''u$  
is a substring of $w$.
\end{itemize} 
For every orbit  arc $\gamma \in \Gamma'_w \cup \Gamma''_w$, let $\tau (\gamma)$ denote its time length.

We claim that there exist  constants $C'_K>0$ and $r_K>0$ such that for any fixed word $w$ the number of different connected components with code $w$ is at most
\begin{equation}
\label{eq:comp_counting}
C'_K\prod_{\gamma'\in\Gamma'_w}  \max  (1, (r/r_K)  e^{\tau(\gamma')})^{d_u}\,  \prod_{\gamma''\in \Gamma''_w} 
  \max  (1, (r/r_K) e^{\tau(\gamma'')})^{d_u}    \,.
\end{equation}
This statement follows from the fact that the maximal expansion of the cocycle in a time $\tau>0$ with respect to the Hodge norm is equal to $e^\tau$, hence the bound follows by a volume estimate on the unstable space.
In fact, we argue as follows. Let $\mathcal C_{w,n}(h) \subset W^s_{K, U(r),n} \cap V$ denote the subset of all cohomology
classes which have a symbolic sequence equal to $w$ up to time $t_n>0$.  Every $c \in \mathcal C_{w,n}(h)$ 
can be labeled by the string $(m_1, \dots, m_k)$  of distinct lattice points in $H^1_h(M, \Z)$ such that $g_t (h,c)$ visits a ball $B_{g_{\tau_i}(h)}(m_i,r)$ in the Hodge metric on $H^1_{g_{\tau_i}h}(M, \Z)$ at a time $\tau_i$ for a sequence of times $0\leq \tau_1 < \tau_2 < \dots < \tau_k <\dots \leq t_n$.  Lattice points along the Teichm\"uller orbit $g_\R(h)$  can be identified by parallel transport.  On each subinterval $I=[a,b] \subset [0,t_n]$ such that $g_I (h) \subset 
U(r) \cap \pi^{-1}(K)$, by definition we have that   $g_a(h,c) \in B_{g_a(h)}(m,r)$ implies $g_b(h,c) \in B_{g_b(h)}(m,r)$, for any $c \in \mathcal C_{w,n}(h)$. Now on each maximal subinterval $I=[a,b] \subset [0,t_n]$ such that $g_I (h) \subset K^c$ or $g_I (h,c)\subset U(r)^c \cap \pi^{-1}( K)$ every Hodge ball $B_{g_a(h)}(m,r)$ is mapped by the cocycle into a subset of a Hodge ball of radius at most $r e^{\vert I \vert}$. Since $K$ is compact, there exists a constant $r_K>0$ such that, for any Abelian differential $h\in K$,  a Hodge ball of radius at most $re^{\vert I \vert}$ in $H^1_h(M, \R)$  contains at most $ (r/r_K)^{d_u} e^{d_u\vert I \vert}$ lattice points. It follows that for each such subinterval our upper bound on the number of connected components is multiplied times a factor $  (r/r_K)^{d_u}e^{d_u\vert I \vert}$.  The claim follows. 

Thus by formulas \eqref{eq:word_counting} and \eqref{eq:comp_counting},  for $r\leq r_K$ we have proved the estimate
$$
\begin{aligned}
\log N_n(r,\epsilon) &\leq  \log C'_K  + \{   \epsilon(1+\vert \log \epsilon \vert) + \mu_K (1+ \vert \log \mu_K \vert) \}  \frac{t_n}{ \delta'_K(r)}  \\ &+
 d_u \left( \sum_{\gamma'\in \Gamma'_w}   \max (0, \tau(\gamma') ) + \sum_{\gamma''\in \Gamma''_w}
 \max (0, \tau(\gamma'') )\right) \,.
\end{aligned}
$$
It remains to estimate the third and fourth term on the RHS of the above inequality.  For the third term,
since 
$c \in W^s_{K,U(r),n} (h, \epsilon)$ and $\Gamma'_w$ denote the set of arcs of 
trajectory of the cocycle in the complement of $U(r)$, which project to a Teichm\"uller arc in $K$, we have
$$
\sum_{\gamma'\in \Gamma'_w}  \max (0, \tau(\gamma') -\delta_K(r)) \, \leq \, \epsilon t_n\,.
$$
Finally, we estimate the fourth term.  We distinguish two cases: in case $(a)$ the total time-length of the part
of trajectory $\gamma''_w \in \Gamma''_w$ inside $U(r)^c \cap \pi^{-1} (K)$ is at least $\sigma_K\in (0,1)$ times the total time length of $\gamma''_w \in \Gamma''_w$;  in case $(b)$  the total time-length of the part
of trajectory inside $U(r)^c \cap \pi^{-1} (K)$ is at most $\sigma_K$ times the total time length of 
$\gamma''_w \in \Gamma''_w$, hence the time-length of the part of the Teichm\"uller trajectory outside $K$ in moduli space is at least $ 1-\sigma_K$ times the total time length of the arc $\gamma''_w$.

The total time-length of trajectories $\gamma''_w$ which are in case $(a)$ is bounded above by 
$\sigma_K^{-1}$ times the total  time that the trajectory spends in $U(r)^c \cap  \pi^{-1}(K)$.  

For case $(b)$, let 
$t_{n}(r)$ denote the total time-length of those Teichm\"uller trajectories, starting and ending in $K$, 
of length at least $\delta_K(r)$, which spend at least a fraction $1-\sigma_K$ of their time outside of $K$ 
up to time $t_n>0$.  Since  $\delta_K(r) \to +\infty$ as $r\to 0^+$, there exists $\sigma_K>0$ such that 
by the Birkhoff ergodic theorem, for any Birkhoff generic point for the Teichm\"uller flow, we have
$$
\lim_{r\to 0^+} \sup_{n\geq 0}   \frac{t_n(r)}{t_n}  =0 \,.
$$
We therefore define $\epsilon_{K}: (0, r_K) \to (0,1)$ as
$$
\epsilon_{K}(r):=  \sup_{n\in \N}    \frac{t_n(r)}{t_n} \,.
$$
Finally we have the estimate 
$$
\sum_{\gamma''\in \Gamma''_w}  \max (0, \tau(\gamma'') -\delta_K(r)) \leq  C_K (\epsilon +\epsilon_{K}(r) )t_n\,.
$$
\smallskip
The estimates claimed in formula \eqref{eq:est_claim}  are thus proved.

\end{proof}

\begin{theorem}   \label{thm:HD} Let $h \in \mathcal H (\kappa)$ be any Abelian differential which is forward Birkhoff generic for the Teichm\"uller flow and  Oseledets regular for the Kontsevich--Zorich cocycle.  For any affine subspace $V \subset H^1_h(M, \T)$, parallel to a linear subspace $V_{0}\subset H^1_h(M, \R)$ which is transverse to the central stable space $E^{cs}(h)$, the Hausdorff  dimension of the set $V \cap W^s_{K} (h)$ is equal to $0$. In fact, for any $\delta >0$ there exists an open neighborhood $U\subset H_\kappa^1(M, \T)$ of the zero section and $\epsilon>0$ such that the Hausdorff dimension of $V \cap W^s_{K,U} (h,\epsilon)$ is at most $\delta$. 
\end{theorem}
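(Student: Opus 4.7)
The plan is to realize each $V\cap W^s_{K,U(r)}(h,\epsilon)$ as the limsup of the finite-time sets in~\eqref{eq:Wnsets} and then apply Lemma~\ref{lemma:Hdim} to the explicit covers produced by Lemma~\ref{lemma:covering}. Since $(t_n)$ consists of return times of $g_t(h)$ to $K$ and the integrand $\chi_K\chi_{U(r)}$ in \eqref{eq:Wnsets} vanishes outside $K$, if $c\in V\cap W^s_{K,U(r)}(h,\epsilon)$ then for every $\epsilon'>\epsilon$ the time-$t_n$ ratio in \eqref{eq:Wnsets} exceeds $1-\epsilon'$ for infinitely many $n$, so that
$$
V\cap W^s_{K,U(r)}(h,\epsilon)\;\subset\;\limsup_{n\to\infty}\bigl(V\cap W^s_{K,U(r),n}(h,\epsilon')\bigr)\,.
$$
Writing $a(r,\epsilon'):=C_K d_u(\epsilon'+\epsilon_K(r))$ and $b(\epsilon'):=C_K^{-1}\mu_\kappa(K)\nu(1-\epsilon')$, the bounds \eqref{eq:est_claim} give $N_n(r,\epsilon')R_n(r,\epsilon')^\delta\leq e^{(a-\delta b+o(1))t_n}$; since Birkhoff genericity forces $t_n$ to grow linearly in $n$, the series $\sum_{m\geq n}N_m R_m^\delta$ converges whenever $\delta>a/b$, and Lemma~\ref{lemma:Hdim} then yields
$$
\text{\rm H-dim}\bigl(V\cap W^s_{K,U(r)}(h,\epsilon)\bigr)\;\leq\;a(r,\epsilon')/b(\epsilon')\,.
$$

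For the quantitative part of the theorem, given $\delta>0$ I would first pick $\epsilon'>0$ so small that $C_K^2 d_u\epsilon'/(\mu_\kappa(K)\nu(1-\epsilon'))<\delta/2$, and then $r\in(0,r_K)$ so small that $C_K^2 d_u\epsilon_K(r)/(\mu_\kappa(K)\nu(1-\epsilon'))<\delta/2$, which is possible because $\epsilon_K(r)\to 0^+$ as $r\to 0^+$. Taking $\epsilon<\epsilon'$ and $U:=U(r)$, the displayed estimate gives $\text{\rm H-dim}(V\cap W^s_{K,U}(h,\epsilon))\leq \delta$, as required. The vanishing $\text{\rm H-dim}(V\cap W^s_K(h))=0$ then follows immediately from the inclusion $W^s_K(h)\subset W^s_{K,U}(h,\epsilon)$, valid by the definition of $W^s_K(h)$ as the intersection of all $W^s_{K,U}(h,\epsilon)$: for the pair $(U,\epsilon)$ just produced one has $V\cap W^s_K(h)\subset V\cap W^s_{K,U(r)}(h,\epsilon)$ and hence Hausdorff dimension at most the arbitrary $\delta>0$.

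Essentially all of the substance is packaged inside Lemma~\ref{lemma:covering}: the combinatorial entropy exponent $a(r,\epsilon')$ counting coded trajectory segments that spend a $(1-\epsilon')$-fraction of their $K$-time inside $U(r)$ is linear in the small parameter $\epsilon'+\epsilon_K(r)$, while the Oseledets contraction exponent $b(\epsilon')$ along any direction transverse to $E^{cs}(h)$ is uniformly bounded below by the positivity of the unstable Lyapunov spectrum on $V_0$. Thus $a/b$ can be driven to zero and the theorem becomes a routine application of Lemma~\ref{lemma:Hdim}. The one point where some care is needed is the continuous-to-discrete passage in the first display: I would justify it by noting that the ratio defining $W^s_{K,U(r)}(h,\epsilon)$ changes only during $K$-visits and that $(t_n)$ can be taken to sample every $K$-visit, so the discretization error is absorbed by the arbitrarily small enlargement of $\epsilon$ to $\epsilon'$.
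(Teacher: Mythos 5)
Your proposal is correct and follows essentially the same route as the paper: include $V\cap W^s_{K,U(r)}(h,\epsilon)$ in the limsup of the finite-time sets of \eqref{eq:Wnsets}, feed the covers of Lemma~\ref{lemma:covering} into Lemma~\ref{lemma:Hdim} to get the dimension bound $C_K^2 d_u(\epsilon'+\epsilon_K(r))/(\mu_\kappa(K)\nu(1-\epsilon'))$, and drive this below $\delta$ by shrinking $\epsilon$ and $r$. Your explicit handling of the continuous-to-discrete passage (enlarging $\epsilon$ to $\epsilon'$) is, if anything, slightly more careful than the paper's, which asserts the corresponding inclusion directly.
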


\begin{proof}  Recall that for any sequence $({t}_n)$  and for any $\epsilon>0$,  the sets
$W^s_{K,U, n} (h, \epsilon)$ have been defined in formula \eqref{eq:Wnsets} as 
$$
W^{s}_{K,U, n}(h,\epsilon) = \{ c \in H^1_h(M, \T)  \,\vert \, \frac{\int_0^{t_n}
\chi_K(g_t(h))  \chi_U (g_t (h,c) ) dt }{ \int_0^{t_n}
\chi_K(g_t(h))  dt} \geq 1-\epsilon \}\,.
$$
By definition, there exists a diverging sequence $({t}_n)$  of return times of the forward  Teichm\"uller 
orbit  $\{g_t(h) \vert t>0\}$ to the compact subset $K \subset 
\mathcal H (\kappa)$, such that for any $\epsilon\in (0, 1)$, we have the inclusion
$$
W^{s}_{K,U}(h)  \subset \limsup_{n \to \infty} W^{s}_{K,U, n}(h, \epsilon) = \cap_{n\in \N} \cup_{m\geq n} 
W^{s}_{K,U,m}(h,\epsilon)\,.
$$
By Lemma~\ref{lemma:covering} there exists a function $\epsilon_K (r)$ with  $\lim_{r\to 0^+}  \epsilon_K (r)=0$ such that the set $W^{s}_{K,U(r),n}(h,\epsilon)\cap V$ can be covered by at most $N_n(r, \epsilon)$ balls of radius at most $R_n(r,\epsilon)$ such that $N_n(r, \epsilon)$ and $R_n(r,\epsilon)$ satisfy the bounds in formula \eqref{eq:est_claim}.
By those estimates we have
$$
\begin{aligned}
 \sum_{m\geq n}  N_m  R_m^\delta &\leq \sum_{m\geq n}  e^{C_K d_u (\epsilon +\epsilon_K(r)) {t}_m }
 e^{- C^{-1}_K \mu_\kappa (K) \delta (1-\epsilon) \nu t_m}  \\ &=   \sum_{m\geq n} 
 e^{- (C^{-1}_K \mu_\kappa(K) \delta (1-\epsilon) \nu  -C_Kd_u(\epsilon +\epsilon_K(r)) t_m}  \,.
\end{aligned} 
$$
Let then $\delta > C_K^2 \mu_\kappa (K)^{-1} (\epsilon +\epsilon_K(r))  d_u [(1-\epsilon) \nu]^{-1}$. Since it is possible to assume $t_n \geq n$ (for large $n\in\N$), under this assumption we have
\begin{multline*}
\limsup_{n\to +\infty}  \sum_{m\geq n} 
 e^{-  [ C^{-1}_K \mu_\kappa (K)\delta (1-\epsilon) \nu  -C_Kd_u(\epsilon+\epsilon_K(r) ]  t_m} \\ \leq \lim_{n\to +\infty} \sum_{m\geq n}  
 e^{- [C^{-1}_K \mu_\kappa(K) \delta (1-\epsilon) \nu  -C_Kd_u (\epsilon +\epsilon_K(r)) ] m}  \\ \leq  \lim_{n\to +\infty}  \frac{ e^{- [C^{-1}_K \mu_\kappa(K) \delta (1-\epsilon) \nu  -C_Kd_u (\epsilon +\epsilon_K(r))]  n}} { 1-  e^{- (C^{-1}_K\delta (1-\epsilon) \nu  -C_Kd_u (\epsilon+ \epsilon_K(r)) } }   = 0 \,.
\end{multline*}
By Lemma~\ref{lemma:Hdim} we derive  the following Hausdorff dimension bound 
$$\text{\rm H-dim}\left (W^{s}_{U(r)}(h,\epsilon) \cap V  \right ) \leq C_K^2 \mu_\kappa (K)^{-1} (\epsilon +\epsilon_K(r))  d_u [(1-\epsilon) \nu]^{-1}\,.$$  
For any given $\delta >0$ there exist $r>0$ and $\epsilon >0$ such that
\begin{multline*}
\text{\rm H-dim}\left (W^{s}(h) \cap V  \right )  \leq \text{\rm H-dim}\left (W^{s}_{U(r)}(h,\epsilon) \cap V  \right )\\ \leq C_K^2 \mu_\kappa (K)^{-1} (\epsilon +\epsilon_K(r))  d_u [(1-\epsilon) \nu]^{-1} < \delta \,,
\end{multline*}
hence the  Hausdorff dimension of  $W^{s}(h)\cap V$  is equal to zero, as stated.
\end{proof}
We conclude the section with growth estimates for the twisted cocycle. 

\smallskip
Let $\Lambda_\kappa: H^1_\kappa (M, \T)  \to [0,1)$ be the function defined in formula \eqref{eq:Lambda_kappa}. We recall that, by Lemma~\ref{lemma:growth_1}, the  ergodic integrals of
$\Lambda_\kappa$ bound the logarithm of the norms of the twisted cocycle. We also recall that 
$\Lambda_\kappa <1$ everywhere except on the zero section of the bundle $H^1_\kappa (M, \T)$
and it is continuous by Lemma \ref{lemma:Lambda_gap}.

\begin{lemma} \label{lemma:growth_2} Let $h \in \mathcal H (\kappa)$ be any Abelian differential which is forward Birkhoff generic for the Teichm\"uller flow and 
forward Oseledets regular for the Kontsevich--Zorich cocycle with respect to the Masur--Veech measure. There exists a set $\R W^s(h) \subset H^1(M, \R)$ of Hausdorff dimension 
$g+1$ (which depends only on $[\im(h)]\in H^1(M,\Sigma;\R)$) such that  if  $[\re (h) ]\not \in \R W^s(h)$ then  there exists a  constant $\alpha_h>0$,  and for all $\lambda \in \R\setminus \{0\}$  there exists a constant $C(h,\lambda)>0$, such that, for all $c\in H^1_{h,\lambda}(M, \C)$ and for all $t>0$ we have
$$
\exp\left(  \int_0^{t} \Lambda_\kappa ( g_\tau [h,\lambda \re(h)] ) d\tau \right) \leq  C(h,\lambda)    
e^{(1-\alpha_h)t}\,.
$$
\end{lemma}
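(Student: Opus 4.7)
The plan is to combine Theorem~\ref{thm:HD}, which controls the Hausdorff dimension of transverse slices of the weak stable sets $W^s_{K,U}(h,\epsilon)$, with the strict inequality $\Lambda_\kappa<1$ off the zero section from Lemma~\ref{lemma:Lambda_gap}. The exceptional set $\R W^s(h)$ will be an intersection of countably many unions of dilates of weak stable sets, chosen so that the condition $[\re(h)]\notin \R W^s(h)$ forces every scaled orbit $\{g_\tau[h,\lambda\re(h)]\}_{\tau\geq 0}$, $\lambda\neq 0$, to spend a uniform positive fraction of time in a compact set on which $\Lambda_\kappa$ is strictly less than~$1$; the integral bound in the statement then falls out by integrating.

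To set this up, I would fix a compact $K\subset\mathcal H(\kappa)$ with $\mu_\kappa(K)$ close to~$1$, and for each integer $n\geq 1$ apply Theorem~\ref{thm:HD} with $\delta=1/n$ to obtain an open neighborhood $U_n$ of the zero section and $\epsilon_n>0$ such that, for any affine subspace parallel to a linear complement of $E^{cs}(h)$, the corresponding slice of $W^s_{K,U_n}(h,\epsilon_n)$ has Hausdorff dimension at most $1/n$. For Masur--Veech-generic $h$ the central-stable space coincides with the stable space $E^s(h)$, which has real dimension~$g$ by the simplicity of the Kontsevich--Zorich spectrum, so the transverse direction has dimension $d_u=g$; an $\epsilon$-covering argument in the spirit of Lemma~\ref{lemma:covering} then upgrades the slice bound to $\dim_H W^s_{K,U_n}(h,\epsilon_n)\leq g+1/n$ in $H^1_h(M,\T)$. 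Writing $\pi:H^1(M,\R)\to H^1(M,\T)$ for the lattice quotient, I define
\[
\R W^s_n(h):=\bigcup_{\lambda\in\R\setminus\{0\}}\lambda^{-1}\,\pi^{-1}\bigl(W^s_{K,U_n}(h,\epsilon_n)\bigr),\qquad \R W^s(h):=\bigcap_{n\geq 1}\R W^s_n(h).
\]
Countability of the fibers of $\pi$ and the one-parameter dilation in $\lambda$ give $\dim_H \R W^s_n(h)\leq g+1+1/n$, so $\dim_H \R W^s(h)\leq g+1$. Since two Abelian differentials with the same $[\im(h)]\in H^1(M,\Sigma;\R)$ have forward Teichm\"uller orbits that differ by an exponentially decaying amount, each asymptotic set $W^s_{K,U_n}(h,\epsilon_n)$, and hence $\R W^s(h)$, depends only on $[\im(h)]$.

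Now assume $[\re(h)]\notin \R W^s(h)$. Then there is an index $n=n(h)$ with $[\re(h)]\notin \R W^s_n(h)$, i.e.\ $[\lambda\re(h)]\notin W^s_{K,U_n}(h,\epsilon_n)$ for every $\lambda\neq 0$. Unwinding the definition of $W^s_{K,U_n}(h,\epsilon_n)$ and using Birkhoff-genericity of $h$ to get $t^{-1}\int_0^t\chi_K(g_\tau h)\,d\tau\to\mu_\kappa(K)$, one obtains
\[
\liminf_{t\to\infty}\frac{1}{t}\int_0^t \chi_K(g_\tau h)\,\chi_{U_n^c}\bigl(g_\tau[h,\lambda\re(h)]\bigr)\,d\tau \,\geq\, \epsilon_n\,\mu_\kappa(K).
\]
The fibers of the toral bundle $H^1_\kappa(M,\T)$ are compact, so $\pi^{-1}(K)\setminus U_n$ is a compact subset disjoint from the zero section, and by Lemma~\ref{lemma:Lambda_gap} together with the continuity of $\Lambda_\kappa$ there exists $\sigma_n>0$ with $\Lambda_\kappa\leq 1-\sigma_n$ on this set. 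Splitting the integral $\int_0^t \Lambda_\kappa(g_\tau[h,\lambda\re(h)])\,d\tau$ according to whether the orbit lies in $\pi^{-1}(K)\setminus U_n$ or not, and using $\Lambda_\kappa\leq 1$ on the complement, I obtain
\[
\int_0^t \Lambda_\kappa\bigl(g_\tau[h,\lambda\re(h)]\bigr)\,d\tau \,\leq\, (1-\alpha_h)\,t + C(h,\lambda),
\]
with $\alpha_h:=\tfrac{1}{2}\sigma_n\,\epsilon_n\,\mu_\kappa(K)$ independent of~$\lambda$, and $C(h,\lambda)$ absorbing the $\lambda$-dependent rate of convergence in the Birkhoff limit. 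Exponentiating yields the claimed bound.

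The main obstacle I foresee is the passage from transverse-slice Hausdorff-dimension bounds to the ambient bound $\dim_H W^s_{K,U_n}(h,\epsilon_n)\leq g+1/n$: rather than invoking Marstrand-type slicing as a black box for these Borel sets, I expect to carry out the covering counting explicitly by combining the transverse covering from Lemma~\ref{lemma:covering} with a standard Hodge-ball covering of a $g$-dimensional complementary strip. A secondary subtlety is that the constants $\sigma_n,\epsilon_n$ depend on the index $n$, but once $h$ is fixed the relevant $n=n(h)$ is a single finite integer, yielding a single positive $\alpha_h$ that works uniformly for every $\lambda\neq 0$.
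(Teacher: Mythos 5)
Your proposal is correct and follows essentially the same route as the paper: the exceptional set is built from the weak stable sets $W^s_{K,U}(h,\epsilon)$, its Hausdorff dimension is bounded by $g+1$ via the transverse-slice bound of Theorem~\ref{thm:HD} (the paper asserts the slice-to-ambient upgrade by ``properties of Hausdorff dimension'' where you propose an explicit covering, and it takes the full intersection over all $U$ and $\epsilon$ where you use a countable exhaustion in $n$), and the integral bound then follows by combining Birkhoff genericity with the strict inequality $\Lambda_\kappa<1$ on the compact set $\pi^{-1}(K)\setminus U$ from Lemma~\ref{lemma:Lambda_gap}. No substantive gap.
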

\begin{proof} Let $K\subset \mathcal H (\kappa)$ be a non-empty compact subset and let $\R W^{s}(h)$ denote the
set defined as
$$
\R W^{s}(h):= \{ \R \cdot c  \vert  c \in W^{s}_{K}(h) \}\,.
$$
For all $h\in \mathcal H(\kappa)$, Osedelets regular with respect to the Masur--Veech measure, the central stable space
(which is equal to the stable space) has codimension equal to the genus $g\geq 2$. By Theorem~\ref{thm:HD}   we have that, for any unstable affine subspace $V$, the Hausdorff dimension of the set $W^{s}_{K}(h) \cap V$ is equal to zero, hence (by properties of Hausdorff dimension) it follows that $W^{s}_{K}(h)$ has Hausdorff dimension equal to $g$, and $\R W^{s}(h)$ has Hausdorff dimension equal to $g+1 <2g$. 

Let $\mathcal U$ denote, as above, the family of all neighborhoods of the zero section of the bundle $H^1(M, \T)$. Since  by definition 
$$
W^{s}_{K}(h)= \bigcap_{U \in \mathcal U}  \bigcap_{\epsilon\in (0,1)}  W^{s}_{K,U}(h, \epsilon)\,,
$$
it follows that for any $[\re (h)]\not \in \R W^{s} (h)$ there exists $U\in \mathcal U$
and $\epsilon>0$ such that $\lambda [\re(h)] \not\in W^s_{K,U}(h, \epsilon)$ for all $\lambda \in \R$.
  It follows that there exists a time $t_{h}(\lambda)>0$ such 
that, for all $t \geq t_h(\lambda)$, we have
$$
\frac{1}{t} \int_0^{t} \chi_U (g_\tau (h, \lambda [\re (h)]) ) d\tau \leq 1-\epsilon  \,.
$$
The statement then follows from Lemma~\ref{lemma:growth_1} and Lemma~\ref{lemma:Lambda_gap}.
\end{proof}

\begin{lemma}  
\label{lemma:small_meas}
There exist $r_0  \in (0, r_K)$, $\epsilon_0 \in (0, 1)$  such that the following holds. 
For every $r\in (0, r_0)$, every $\epsilon\in (0, \epsilon_0)$, there exists $\chi >0$, and 
for every forward Birkhoff generic and Oseledets regular $h\in \mathcal H(\kappa)$, there exists a constant 
$C_\kappa(\im (h), r, \epsilon)>0$ such that, for all $n\in \N$, 
$$
\text{ \rm vol} \left( W_{K, U(r),n} (h, \epsilon) \right)  \leq   C_\kappa(\im (h), r, \epsilon) e^{-\chi t_n}\,.
$$
\end{lemma}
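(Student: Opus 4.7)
The plan is to integrate the leaf-wise covering estimate of Lemma \ref{lemma:covering} in the transverse direction, and then tune $r_0$ and $\epsilon_0$ so that the expansion count $N_n(r,\epsilon)$ is outweighed by the contraction factor $R_n(r,\epsilon)^{d_u}$. Since $h$ is Oseledets regular for the Kontsevich--Zorich cocycle with respect to the Masur--Veech measure, the fiber $H^1_h(M, \R)$ decomposes as $E^u(h) \oplus E^{cs}(h)$ with real unstable dimension $d_u = g$ (by the Avila--Viana simplicity theorem together with the symmetry of the Kontsevich--Zorich spectrum, which makes all $g$ positive exponents simple). Reading off linear coordinates on the torus $H^1_h(M,\T)$ from this splitting and applying Fubini in the flat structure, Lebesgue measure factorizes as a product of Hodge volume along the affine leaves $V = x + E^u(h)$ and a transverse Hodge measure on a compact fundamental set whose total mass is bounded above by a constant $C_\kappa(\im (h))$ depending only on the angle between $E^u(h)$ and $E^{cs}(h)$ at $h$.

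For $r_0 \in (0, r_K)$ sufficiently small that Hodge balls of radius $r \leq r_0$ embed isometrically into the torus, Lemma \ref{lemma:covering} covers $W_{K,U(r),n}(h,\epsilon) \cap V$ on each affine leaf $V$ by at most $N_n(r,\epsilon)$ Hodge balls of radius at most $R_n(r,\epsilon) \leq r$, and these lifted covers project to honest volume covers on the torus leaf. Combining this leaf-wise bound with the Fubini factorization gives
$$
\text{\rm vol}\bigl( W_{K,U(r),n}(h,\epsilon) \bigr) \leq C_\kappa(\im(h)) \, c_{d_u} N_n(r,\epsilon) R_n(r,\epsilon)^{d_u} \,,
$$
where $c_{d_u}$ is the Hodge volume of the unit ball in dimension $d_u$. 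Plugging in the asymptotic estimates of Lemma \ref{lemma:covering} yields
$$
\limsup_{n \to \infty} \frac{1}{t_n} \log\bigl( N_n(r,\epsilon) R_n(r,\epsilon)^{d_u} \bigr) \leq d_u \bigl[ C_K (\epsilon + \epsilon_K(r)) - C_K^{-1} \mu_\kappa(K) \nu (1-\epsilon) \bigr] \,.
$$

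I will first choose $\epsilon_0 \in (0,1)$ small enough that $C_K^2 \epsilon < \tfrac{1}{3} \mu_\kappa(K) \nu (1-\epsilon)$ for all $\epsilon \in (0,\epsilon_0)$, and then, using $\lim_{r\to 0^+} \epsilon_K(r) = 0$, pick $r_0 \in (0, r_K)$ so that $C_K^2 \epsilon_K(r) < \tfrac{1}{3} \mu_\kappa(K) \nu (1-\epsilon)$ for all $r \in (0, r_0)$ and $\epsilon \in (0,\epsilon_0)$. For such parameters the right-hand side above is strictly negative, bounded by $-d_u \chi$ for some explicit $\chi > 0$, whence the claimed exponential volume bound follows after absorbing the subexponential prefactors and the $o(1)$ into the constant $C_\kappa(\im (h), r, \epsilon)$. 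The main --- and in fact only --- delicate point is verifying that the covers provided by Lemma \ref{lemma:covering}, which are naturally stated in the universal cover $H^1_h(M,\R)$, descend to volume-controlling covers on the torus leaf; this is automatic once $R_n(r,\epsilon) \leq r < r_K$ lies below the injectivity radius of the projection $H^1_h(M,\R) \to H^1_h(M, \T)$, after which the argument is a mechanical race between $N_n$ and $R_n^{d_u}$.
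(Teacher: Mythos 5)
Your proof is correct and follows essentially the same route as the paper's: apply the leaf-wise covering bounds of Lemma~\ref{lemma:covering} along affine unstable leaves, integrate transversally, and choose $\epsilon_0$ and $r_0$ so that the exponential rate $C_K d_u(\epsilon+\epsilon_K(r)) - d_u C_K^{-1}\mu_\kappa(K)\nu(1-\epsilon)$ is strictly negative. The paper's own proof is in fact terser --- it leaves the Fubini/disintegration step implicit and simply records the choice $\chi := \tfrac{1}{2}C_K^{-1}\mu_\kappa(K)(1-\epsilon_0)\nu - 2C_K(\epsilon_0+\sup_{0\leq r\leq r_0}\epsilon_K(r))>0$ --- so your explicit treatment of the projection to the torus and of the volume factorization is a welcome elaboration rather than a deviation.
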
 
\begin{proof}
By Lemma~\ref{lemma:covering} there exist constants  $C_K>1$, $\nu>0$ and there exists a function $\epsilon_{K}:(0, r_K) \to (0,1)$ 
with  $\lim_{r\to 0^+} \epsilon_K(r)=0$ such that the following holds.  For any affine unstable subspace $V$ and for $n$ large enough, 
the set $W_{K, U(r),n} (h, \epsilon) \cap V$ is covered by $N_n(r,\epsilon)$ balls of radius $R_n(r,\epsilon)$
with 
$$
R_n(r,\epsilon) \leq  e^{ -\frac{1}{2}  C^{-1}_K \mu_\kappa(K) (1-\epsilon)\nu   t_n}  \quad
\text{ and } \quad N_n(r,\epsilon) \leq e^{2C_K d_u(\epsilon +\epsilon_{K}(r))t_n   } \,.
$$
There exists $r_0\in (0, r_K)$ and  $\epsilon_0 \in (0,1)$  with 
$$
\chi :=   \frac{1}{2}  C^{-1}_K \mu_\kappa(K) (1-\epsilon_0)\nu   - 2C_K \left(\epsilon_0 + \sup_{0\leq r \leq r_0} \epsilon_{K}(r)\right) >0\,.
$$
The estimate claimed in the statement then holds for every $r\in (0, r_0)$ and every $\epsilon\in (0, \epsilon_0)$
with the above choice of $\chi>0$. The argument is therefore complete. 

\end{proof} 

\begin{lemma}   \label{lemma:growth_3} There exist constants $\alpha_\kappa$, $\alpha'_\kappa$
and $N_\kappa>0$ 
such that, for almost all Abelian differential $h \in \mathcal H (\kappa)$ with respect to the Masur--Veech
measure, there exists a constant $C_\kappa (h)>0$ such that, for all $n\in \N$ and for all 
$\lambda \in \R$ with $\vert \lambda \vert \geq e^{-\alpha'_\kappa t_n}$, we have
$$
\exp \left(  \int_0^{t_n} \Lambda_\kappa (g_t [h,\lambda \re(h)]  ) dt \right)\leq  C_\kappa (h)  
 (1+ \lambda^2)^{\frac{N_\kappa}{2} } e^{(1-\alpha_\kappa) t_n}\,.
$$
\end{lemma}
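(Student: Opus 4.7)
The plan is to combine the spectral-gap property of $\Lambda_\kappa$ (Lemma~\ref{lemma:Lambda_gap}) with the quantitative linear-elimination estimates of Lemmas~\ref{lemma:covering} and \ref{lemma:small_meas}, together with a Borel--Cantelli argument, to upgrade the Lebesgue-generic bound of Lemma~\ref{lemma:growth_2} into an estimate valid for every $\lambda$ with $|\lambda|\geq e^{-\alpha'_\kappa t_n}$, at the cost of the polynomial factor $(1+\lambda^2)^{N_\kappa/2}$. First, fix a compact $K\subset \mathcal{H}(\kappa)$ of large Masur--Veech mass and small $r,\epsilon>0$. By Lemma~\ref{lemma:Lambda_gap} and continuity there exists $\delta=\delta(r,K)>0$ such that $\Lambda_\kappa\leq 1-\delta$ on $\pi^{-1}(K)\setminus U(r)$, so for Birkhoff-generic $h$ and any $c\in H^1_h(M,\T)$,
\[
\int_0^{t_n}\Lambda_\kappa(g_s[h,c])\,ds \;\leq\; t_n \;-\; \delta\bigl(\mu_\kappa(K)\,t_n(1-o(1)) - \tau_n(c)\bigr),
\]
where $\tau_n(c):=|\{s\in[0,t_n]:g_s[h,c]\in \pi^{-1}(K)\cap U(r)\}|$. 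The proof thus reduces to showing $\tau_n(\lambda[\re(h)])\leq \epsilon\,t_n$ for every admissible $\lambda$, modulo an additive deficit of order $\log(1+|\lambda|)$ to be absorbed by the polynomial factor.

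To this end, apply Lemma~\ref{lemma:covering} to the line $V=\R[\re(h)]$, which for Oseledets regular $h$ lies in the top unstable Kontsevich--Zorich direction, hence is transverse to the central stable subspace with $d_u=1$. This covers $W_{K,U(r),n}(h,\epsilon)\cap V$ by $N_n(r,\epsilon)$ balls of Hodge radius $R_n(r,\epsilon)$ with $N_n R_n\leq e^{-\chi t_n}$ for $r,\epsilon$ small and some $\chi=\chi(r,\epsilon,K)>0$. Pulling back to the parameter line, the bad $\lambda$-set has Lebesgue measure exponentially small in $t_n$ on any bounded window. Partitioning the relevant range $|\lambda|\in[e^{-\alpha'_\kappa t_n},e^{\alpha_\kappa t_n/N_\kappa}]$ dyadically into $O(t_n)$ scale windows $|\lambda|\asymp 2^k$, and running a Borel--Cantelli argument over $(n,k)$ using Lemma~\ref{lemma:small_meas} for the measure estimate, one obtains a Masur--Veech full-measure set of $h$ and a constant $C_\kappa(h)>0$ such that, for each $n$ and each $\lambda$ in the range, either $\tau_n(\lambda[\re(h)])\leq \epsilon t_n$ (yielding the desired bound directly), or the scale factor $(1+\lambda^2)^{N_\kappa/2}\geq 2^{kN_\kappa}$ absorbs the remaining deficit from the corresponding dyadic window. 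The complementary range $|\lambda|>e^{\alpha_\kappa t_n/N_\kappa}$ is handled by the trivial bound $\Lambda_\kappa\leq 1$, since then $(1+\lambda^2)^{N_\kappa/2}e^{(1-\alpha_\kappa)t_n}\geq e^{t_n}$ by the choice of threshold.

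The main obstacle is making the bound pointwise uniform in $\lambda$ over the exponentially wide range $[e^{-\alpha'_\kappa t_n},e^{\alpha_\kappa t_n/N_\kappa}]$: the Hausdorff-dimension statement of Theorem~\ref{thm:HD} alone is too weak, and one needs the sharper quantitative decay of Lemmas~\ref{lemma:covering}--\ref{lemma:small_meas} carefully balanced against both the number of dyadic windows (of order $t_n$) and the target exponent $N_\kappa$. The lower bound $|\lambda|\geq e^{-\alpha'_\kappa t_n}$ arises because the Kontsevich--Zorich expansion requires time $\log(1/|\lambda|)$ to push $\lambda[\re(h)]$ out of any fixed zero-section neighborhood, an initial segment which contributes a trivial (not gap-improved) term to the integral and must be absorbed into the choice of $\alpha'_\kappa$.
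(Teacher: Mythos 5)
Your architecture is the paper's: the spectral gap of Lemma~\ref{lemma:Lambda_gap}, the quantitative elimination estimates, a Borel--Cantelli argument in $h$ over a window of $\lambda$ of width $e^{O(t_n)}$, the trivial bound $\Lambda_\kappa\leq 1$ plus the polynomial factor for $|\lambda|$ beyond that window, and a lower threshold $e^{-\alpha'_\kappa t_n}$. The genuine gap is in the step you compress into ``running a Borel--Cantelli argument over $(n,k)$ using Lemma~\ref{lemma:small_meas} for the measure estimate.'' Lemma~\ref{lemma:small_meas} bounds the volume of $W_{K,U(r),n}(h,\epsilon)$ \emph{inside the fiber} $H^1_h(M,\T)$ for a fixed $h$; to sum over $n$ you must bound the Masur--Veech measure of the set of differentials $h$ for which \emph{some} admissible $\lambda$ sends $\lambda[\re(h)]$ into that fiberwise bad set. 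This requires disintegrating $\mu_\kappa$ in period coordinates (varying $\re(h)$ with $\im(h)$ fixed) and estimating the measure of $\bigcup_{|\lambda|\geq e^{-\alpha'_\kappa t_n}}\lambda^{-1}\,W_{K,U(r),n}(h,\epsilon)$, which picks up a Jacobian factor of order $|\lambda|^{-2g}\leq e^{2g\alpha'_\kappa t_n}$ from the smallest admissible $|\lambda|$. The summability condition is therefore $2g\,\alpha'_\kappa<\chi$, and this --- not the escape-time heuristic in your last paragraph --- is what fixes $\alpha'_\kappa$ in the paper: once $\lambda[\re(h)]$ is known to lie outside $W_{K,U(r),n}(h,\epsilon)$, the defining inequality already guarantees a proportion $\epsilon$ of the $K$-time is spent outside $U(r)$, so the initial segment of length $\log(1/|\lambda|)$ needs no separate treatment. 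Without this fiber-to-base conversion the full-measure set of $h$ and the specific threshold $e^{-\alpha'_\kappa t_n}$ are not justified.

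A smaller point: the dichotomy ``either $\tau_n\leq\epsilon t_n$ or $(1+\lambda^2)^{N_\kappa/2}\geq 2^{kN_\kappa}$ absorbs the deficit'' at each dyadic scale does not work as stated, since for $2^k<e^{\alpha_\kappa t_n/N_\kappa}$ the factor $2^{kN_\kappa}$ is strictly smaller than the deficit $e^{\alpha_\kappa t_n}$ it would need to cover. The correct split is the one your final sentence already contains: on $|\lambda|\leq e^{\alpha_\kappa t_n/N_\kappa}$ the elimination argument must succeed outright with no help from the polynomial factor, and only on $|\lambda|>e^{\alpha_\kappa t_n/N_\kappa}$ does one fall back on $\Lambda_\kappa\leq 1$, with $(1+\lambda^2)^{N_\kappa/2}\geq e^{\alpha_\kappa t_n}$ covering the loss; this is exactly how the paper chooses $N_\kappa=\alpha_\kappa/\alpha'_\kappa$ (with the two thresholds taken equal).
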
 

\begin{proof} 
Let us fix $r>0$ and $\epsilon>0$ such that Lemma~\ref{lemma:small_meas} holds: there exists
$\chi >0$ and $C_\kappa(\im (h), r, \epsilon)$ such that, for all $n\in \N$,
$$
\text{ \rm vol} \left( W_{K, U(r),n} (h, \epsilon) \right)  \leq   C_\kappa(\im (h), r, \epsilon) e^{-\chi t_n}\,.
$$
Let $\chi' \in (0, \chi/2g)$ and let $\mathcal B_n$ denote the set of Abelian differentials 
$h  \in \mathcal H(\kappa)$, such that $h$ is forward Birkhoff regular and Oseledets generic, and 
in addition 
$$
\{ \lambda [\re(h)] \in H^1(M, \T) \vert  \lambda \in [e^{-\chi' t_n}, e^{\chi' t_n}] \} \, \cap  \, W_{K, U(r),n} (h, \epsilon) =\emptyset\,. 
$$
By definition, there exists $\alpha_\kappa:= \alpha(r, \epsilon)>0$  such that, whenever $h$ is forward Birkhoff regular and Oseledets generic, but $h\not\in \cup_{m\geq n} \mathcal B_m$, for all $m\geq n$ and for $\vert \lambda \vert  \in [e^{-\chi' t_m}, e^{\chi' t_m}]$, the class $\lambda [\re(h)]$ does not belong to $W_{K, U(r),m} (h, \epsilon)$, hence 
by Lemma~\ref{lemma:growth_1} and Lemma~\ref{lemma:Lambda_gap} (as in the proof of Lemma~\ref{lemma:growth_2}) we derive the bound
$$
\exp\left(  \int_0^{t_m} \Lambda_\kappa ( g_t [h,\lambda \re(h)] ) dt \right) \leq  C(h,\lambda)    e^{(1-\alpha_\kappa) {t_m}}\,.
$$
There exists therefore a constant $C_\kappa(\im (h))>0$ such that,
for all $m\geq n$ and for $\lambda \in \R$  with $\vert \lambda \vert \geq e^{-\chi' t_m}$ we have 
$$
\exp\left(  \int_0^{t_m} \Lambda_\kappa ( g_t [h,\lambda \re(h)] ) dt \right)  \leq 
C_\kappa(\im (h)) (1+ \lambda^2)^{ \frac{\alpha_\kappa}{2\chi'}} 
e^{(1-\alpha_\kappa) t_m}\,.
$$
In addition, for all $n\in \N$,  let $[\re (\mathcal B_n)] :=\{ [\re (h)] \in H^1(M, \T) \vert  h\in \mathcal B_n\}$. We have the following volume estimate 
$$
\text{ \rm vol} \left(\cup_{m\geq n} [\re (\mathcal B_m)] \right) \leq C_\kappa (\im (h),r,\epsilon) e^{ (2g \chi' -\chi) n}\,,
$$
hence the set $\mathcal B  = \cap_{n\in \N}  \cup_{m\geq n} \mathcal B_m$ has Masur--Veech measure zero.

\end{proof}

\section{Transfer cocycles}

In this section we prove a ``spectral gap'' result for the extension of the twisted cocycle to a bundle
of $1$-currents. The argument follows closely that given in  \cite{AtF08}, section 4.2, where a similar result was proved for the extension of the Kontsevich--Zorich cocycle to $1$-currents.

For any Abelian differential $h\in \mathcal H(\kappa)$ and for any real closed $1$-form $\eta\in Z^1(M,\R)$, let  $Z^{-1}_{h,\eta} (M)= Z^{-1}_{\eta} (M)$ denote the
subspace of $d_\eta$-closed  $1$-currents, that is, the space of $1$-currents $C\in \Omega^1 H^{-1}_h(M)$
such that $d_\eta C=0$. Let $E^{-1}_{h,\eta} (M)$ denote the subspace of $d_\eta$-exact currents, that
is, currents $C$  such that there exists $U \in L^2_h(M)$ with $C=d_\eta U$.  Let $ \Omega^1 H^{-1}_\kappa(M)$ denote the bundle with fiber at any $[h, \eta]\in H^1_\kappa(M, \T) $ the space $\Omega^1 H^{-1}_h(M)$ of $1$-currents. Let $\mathcal Z^{-1}_\kappa (M)$ and  $\mathcal E^{-1}_{\kappa} (M) \subset \mathcal Z^{-1}_\kappa (M)$ denote
the sub-bundles of twisted closed and twisted exact currents with fiber at $[h,\eta]$ the spaces $Z^{-1}_{h,\eta} (M)$ and $E^{-1}_{h,\eta} (M)$, respectively.

The Teichm\"uller flow extends to a cocycle on the bundle $\mathcal Z^{-1}_\kappa (M)$ over 
$H^1_\kappa(M, \T)$. The cocycle is defined by parallel transport with respect to the projection of the trivial connection on the product bundle 
$$
\hat {\mathcal H}(\kappa) \times  \{ (\eta, C) \vert  C \in Z^{-1}_\eta(M)  \}\,.
$$
By definition of the de Rham cohomology, the quotient bundle   
$$\mathcal Z^{-1}_\kappa (M)/ \mathcal E^{-1}_\kappa (M) \,,$$
is isomorphic to the twisted cohomology bundle $\mathcal T^1_\kappa(M, \C)$, hence
 the transfer cocycle over the Teichm\"uller flow on the bundle $\mathcal Z^{-1}_\kappa (M)$
projects to the twisted cocycle on the twisted cohomology bundle $\mathcal T^1_\kappa(M, \C)$. 
It follows that the set of Lyapunov exponents of the transfer cocycle on $\mathcal Z^{-1}_\kappa (M)$
is the union of the set of Lyapunov exponents of the twisted cocycle on $\mathcal T^1_\kappa (M, \C)$ with that
of the restriction of the transfer cocycle to the sub-bundle of twisted exact currents $\mathcal E^{-1}_\kappa (M)$.

\begin{lemma} 
\label{lemma:L_exact}
The  restriction of the transfer cocycle  to the subbundle 
$\mathcal E^{-1}_\kappa (M)$ of twisted exact currents has a continuous invariant norm $\mathcal L_\kappa$, hence  the unique Lyapunov exponent of the cocycle is equal to $0$ and has infinite multiplicity. In addition, for all 
$(h, \eta) \in \mathcal H^1_\kappa(M, \T)$ and for all  $C \in \mathcal E^{-1}_{h, \eta}  (M)$ we have
$$
\mathcal L_\kappa (C) \leq  (1+ \vert \eta \vert_{\Omega^1 L^2_h(M)})    \vert C \vert_{\Omega^1H^{-1}_h(M)} 
$$
\end{lemma}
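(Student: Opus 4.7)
The plan is to construct $\mathcal{L}_\kappa$ as the $L^2$-norm of a canonical primitive. Given $C \in E^{-1}_{h,\eta}(M)$, by definition there exists $U \in L^2_h(M)$ with $d_\eta U = C$, and by Lemma~\ref{lemma:invariants} the kernel of $d_\eta \colon L^2_h(M) \to \Omega^1 H^{-1}_h(M)$ coincides with $H^0_\eta(M, \C)$, whose complex dimension is at most one (zero unless $[\eta] \in H^1(M, \Z)$). Hence there is a unique primitive $U_C$ orthogonal to $\ker(d_\eta)$ in $L^2_h(M)$, and I would set
\[
\mathcal{L}_\kappa(C) := \vert U_C \vert_0\,.
\]
This defines a Hilbert norm on each fiber $E^{-1}_{h,\eta}(M)$, and continuity on the bundle $\mathcal{E}^{-1}_\kappa(M)$ reduces to continuity of the orthogonal projection onto $\ker(d_\eta)^\perp$ as $[h,\eta]$ varies, which follows from the ellipticity of the twisted complex and the uniform bound on the kernel dimension.

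Invariance under the transfer cocycle is essentially the area-preserving property of the $SL(2, \R)$-action. Under $g_t$ the pair $(\eta, C)$ is parallel-transported trivially on $M$, while only the complex structure evolves, $h \mapsto g_t h$. Since $\omega_{g_t h} = \omega_h$, the Hilbert structure of $L^2_h(M)$ is preserved, and since the differential operator $d_\eta$ and its kernel $H^0_\eta(M, \C)$ are defined independently of $h$, the minimal primitive $U_C$ and its norm $\mathcal{L}_\kappa(C)$ are unchanged along every orbit. Combined with the comparison inequality of the lemma (proved below), this pins down the Lyapunov spectrum: invariance of $\mathcal{L}_\kappa$ gives the lower bound $\vert g_t C\vert_{-1} \geq \mathcal{L}_\kappa(C) / (1 + \vert \eta\vert_{\Omega^1 L^2_{g_t h}(M)})$, while the dual inequality $\vert d_\eta U_C\vert_{-1} \leq (1 + \vert \eta\vert_{\Omega^1 L^2_{g_t h}(M)}) \vert U_C\vert_0$ gives the upper bound; since $\vert \eta\vert_{\Omega^1 L^2_{g_t h}(M)}$ grows at most linearly in $e^{|t|}$, we obtain $t^{-1}\log \vert g_t C\vert_{-1} \to 0$. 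Infinite multiplicity is then clear from $\dim_\C E^{-1}_{h,\eta}(M) = \infty$.

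For the bound itself, the plan is to argue by duality: for any test 1-form $\alpha \in \Omega^1 H^1_h(M)$,
\[
C(\alpha) = \langle d_\eta U_C, \alpha \rangle_h = \langle U_C, d_\eta^{\ast} \alpha \rangle_h\,,
\]
where $d_\eta^{\ast} = d^{\ast} - 2\pi\imath\,\iota_\eta$ is the formal $L^2$-adjoint of $d_\eta$. Applying elliptic regularity for the twisted Hodge Laplacian $d_\eta^{\ast} d_\eta$ on $L^2_h(M)$, one can solve $d_\eta^{\ast} \alpha = U_C$ with $\alpha = \alpha_C \in \Omega^1 H^1_h(M)$ satisfying
\[
\vert \alpha_C\vert_1 \leq (1 + \vert \eta\vert_{\Omega^1 L^2_h(M)})\,\vert U_C\vert_0\,,
\]
the factor $1 + \vert \eta\vert_{L^2}$ arising from the zeroth-order perturbation $\iota_\eta$ in the codifferential. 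Substituting then gives
\[
\vert U_C\vert_0^2 = C(\alpha_C) \leq \vert C\vert_{\Omega^1 H^{-1}_h(M)}\,\vert \alpha_C\vert_1 \leq (1 + \vert \eta\vert_{\Omega^1 L^2_h(M)})\,\vert C\vert_{-1}\,\vert U_C\vert_0\,,
\]
and dividing by $\vert U_C\vert_0$ yields the stated inequality. The delicate step, and the main obstacle, is realizing the elliptic estimate with the precise constant $1 + \vert \eta\vert_{\Omega^1 L^2_h(M)}$: in dimension two the embedding $H^1 \hookrightarrow L^\infty$ fails, so the $\iota_\eta$ contribution must be absorbed via the twisted Hodge decomposition and a Cauchy--Schwarz pairing against $U_C$ rather than by a crude $L^\infty$ bound; working with a suitable smooth representative of $[\eta]$ in its cohomology class, or with the self-adjoint resolvent of $d_\eta^\ast d_\eta$, should give the needed estimate along the lines of \cite{AtF08}, \S 4.2.
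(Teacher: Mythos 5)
Your construction of the norm (the $L^2$ norm of the canonical primitive $U_C$, normalized against the at most one-dimensional kernel $H^0_\eta(M,\C)$) and your invariance argument (the area form, hence the $L^2$ structure on functions, is $SL(2,\R)$-invariant, and $d_\eta$ does not see $h$) are exactly the paper's. The gap is in the quantitative bound. You set out to prove the inequality in the direction literally printed, $\mathcal L_\kappa(C)\le(1+\vert\eta\vert_{\Omega^1L^2_h})\vert C\vert_{-1}$, by solving $d_\eta^{\ast}\alpha_C=U_C$ with $\vert\alpha_C\vert_1\le(1+\vert\eta\vert_{\Omega^1L^2_h})\vert U_C\vert_0$. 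That elliptic estimate cannot hold with a constant independent of $h$: already for $\eta=0$ it amounts to $\vert U\vert_0\lesssim\vert dU\vert_{\Omega^1H^{-1}_h}$ for zero-average $U$, i.e.\ to a lower bound on the first nonzero eigenvalue of the flat Laplacian that is uniform over the stratum, and this degenerates (like $\delta(h)^2$) as the systole shrinks. Any coercivity estimate of this type must carry a factor depending on $\delta(h)$, so the step you yourself flag as "delicate'' is in fact the step that fails.

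What the paper actually proves (and what its one-displayed computation gives) is the \emph{reverse} inequality: for $C=d_\eta U_C$ and any test form $\alpha$, one has $\vert\<C,\alpha\>\vert=\vert\<U_C,d_\eta\alpha\>\vert\le\vert U_C\vert_0\,\vert d\alpha+2\pi\imath\,\eta\wedge\alpha\vert_{L^2}\le\mathcal L_\kappa(C)\,(1+\vert\eta\vert_{\Omega^1L^2_h})\,\vert\alpha\vert_{\Omega^1H^1_h}$, hence $\vert C\vert_{\Omega^1H^{-1}_h(M)}\le(1+\vert\eta\vert_{\Omega^1L^2_h(M)})\,\mathcal L_\kappa(C)$. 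This elementary Cauchy--Schwarz direction is the one used later (in formula \eqref{eq:finalest} of Lemma~\ref{lemma:iterativest}) to convert Lyapunov-norm bounds on exact currents back into $\vert\cdot\vert_{-1}$ bounds; the printed statement of the lemma appears to have the two sides of the inequality transposed. So: replace your elliptic-regularity argument by this one-line duality, and state the inequality with $\vert C\vert_{-1}$ on the left. A further small caution on your Lyapunov-exponent sandwich: $\vert\eta\vert_{\Omega^1L^2_{g_th}}$ grows like $e^{\vert t\vert}$ along the Teichm\"uller flow, so the comparison constants between $\mathcal L_\kappa$ and $\vert\cdot\vert_{-1}$ are only uniform along returns to a compact subset of the base $H^1_\kappa(M,\T)$; the exponent is zero because a continuous invariant norm exists, evaluated along such returns, not because the comparison is globally uniform.
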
 
\begin{proof}  By definition, for any $h\in \mathcal H(\kappa)$, for any $\eta \in H^1_h(M,\T)$ and for any twisted exact $1$-current $C\in E^{-1}_{h, \eta} (M)$ there exists a unique function $U_C\in L^2_h(M)$ of zero average such that $C =d_\eta U_C$.  
The function ${\mathcal L}_\kappa: \mathcal E^{-1}_\kappa (M) \to \R^+$ defined as
$$
{\mathcal L}_\kappa ([h,\eta, C]) = \vert  U_C\vert_{L^2_h(M)} \,, \quad \text{ for all }  C \in 
\mathcal E^{-1}_\kappa (M)\,, $$
 is invariant under the twisted cocycle, hence it defines a continuous Lyapunov norm on $\mathcal E^{-1}_\kappa (M)$.
 \begin{footnote} {A Lyapunov norm is a norm which varies exponentially in time at a rate approximately equal to the Lyapunov exponent. 
 For a zero Lyapunov exponent, it is by definition an invariant norm.}
  \end{footnote}
In fact, the cocycle is defined by parallel transport with respect to the projection of the trivial connection on
the product bundle 
$$
\hat {\mathcal H}(\kappa) \times \{ (\eta, C) \vert (\eta, C) \in H^1_\kappa(M, \T) 
\times Z^{-1}_\eta (M) \}
$$
and the norm ${\mathcal L}_\kappa $ is the projection of a $\Gamma_g$-equivariant norm on the product bundle,
which is invariant under the $SL(2, \R)$-action on $\hat {\mathcal H}(\kappa)$.

Finally, for any $C = d_\eta U_C \in E^{-1}_{h,\eta} (M)$ and all $\alpha \in \Omega^1 H^\infty (M)$ we have
$$
\begin{aligned}
\vert \<C, \alpha\>\vert  &= \vert \<U_C,  d_\eta \alpha\>\vert  = \vert \<U_C, d\alpha + \eta \wedge \alpha\>\vert
\\ &\leq  \mathcal L_\kappa (C) (1+ \vert \eta \vert_{\Omega^1 L^2_h(M)})  \vert \alpha \vert_{\Omega^1 H^1_h(M)}\,,
\end{aligned}
$$
thereby completing the argument.
\end{proof}

Let $\zeta^{-1}_\kappa: \Omega^1 H^{-1}_\kappa(M) \to \R$ be the (continuous) distance functions to the Hilbert 
sub-bundle $\mathcal Z^{-1}_\kappa(M)$ of twisted closed currents defined as follows:  for each $h \in \mathcal H(\kappa)$ and all $\eta \in H^1_h (M, \T)$, the restriction $\zeta^{-1}_\kappa \vert  \Omega^1 H^{-1}_h(M)$ is equal to the distance function from the closed subspace $Z^{-1}_{h, \eta} (M) \subset  \Omega^1 H^{-1}_h(M)$ with respect to the Hilbert space metric on $ \Omega^1 H^{-1}_h(M)$.  

For any compact set 
$K\subset \mathcal H(\kappa)$  and any $\zeta >0$, we introduce the following closed, $g_\R$-invariant subsets 
$\mathcal A_K(\zeta)$ of the bundle $ \Omega^1 H^{-1}_\kappa (M)$.   For every Abelian differential $h\in {\mathcal H}(\kappa)$, the intersection of the set
$\mathcal A_K(\zeta)$ with the fiber $ \Omega^1 H^{-1}_h (M)$ of the bundle $ \Omega^1 H^{-1}_\kappa (M)$ of $1$-currents is defined as follows:
\begin{equation}
\label{eq:GammaCdelta}
\mathcal A_K(\zeta) \cap  \Omega^1 H^{-1}_h (M) =\{ A \in   \Omega^1 H^{-1}_h(M) \,\vert \,
g_t (h) \in K \Rightarrow  \zeta^{-1}_\kappa \left( g_t (A) \right) \leq \zeta \} \,.
\end{equation}

In other terms, the fibered subset $\mathcal A_K(\zeta)$ contains all currents which stay at bounded distance ($\leq \zeta$) from the sub-bundle of twisted closed currents for all returns of the Teichm\"uller orbit to a given compact set $K\subset \mathcal H(\kappa)$. The relevant examples of non-closed currents in 
$\mathcal A_K(\zeta)$ are given by currents of twisted integration along orbits of the horizontal translation flow 
in $(M,h)$. In fact, as we have proved in section~\ref{sec:Twist_Int}, for any compact set $K\subset \mathcal H(\kappa)$ there exists $\zeta_K >0$ such that any  current represented by a twisted integral along an orbit of the horizontal translation flow in $(M,h)$ belongs to $\mathcal A_K(\zeta)$ for 
$\zeta\geq \zeta_K$ (up to projection on the codimension one sub-bundle of currents perpendicular to the sub-bundle $\{\C\im(h) \vert h\in \mathcal H(\kappa)\}$)  .

The core technical result of this paper is the following `spectral gap'  lemma for the restriction of
the distributional cocycle $\{g_t \vert t\in \R\}$ to any invariant set $\mathcal A_K(\zeta)
\subset  \Omega^1 H^{-1}_\kappa(M)$.

 For any $h\in\mathcal H(\kappa)$, let $t_0=0$ and let $\{{t}_n \vert n\in\N\}$ denote a non-decreasing sequence of visiting times of the forward orbit $\{g_{t}(h) \vert t \geq 0\}$ to a given compact set $K\subset \mathcal H(\kappa) $. 
 We will regard any current  $A \in \Omega^1 H^{-1}_h(M) $ as  an element of the vector bundle  $\Omega^1 H^{-1}_\kappa(M)$ of currents over the moduli space of Abelian differentials. 
 
 In particular we have
$$
\vert A \vert _{-1} =  \vert A \vert _{ H^{-1}_h(M)} \,.
$$
\begin{lemma} 
\label{lemma:iterativest}
For any compact set $K\subset \mathcal H(\kappa)$, there exists a constant $C_K>1$ such that, for any $\zeta>0$,
for any $[h, \eta] \in H^1_\kappa(M, \T)$, for any $A \in  \mathcal A_K (\zeta)$ and for all $n\in \N$, the following estimate holds:
\begin{equation}
\begin{aligned}
\label{eq:iterativest}
\vert A \vert_{-1} \leq & C_K \, (1+\zeta) 
 (1+ \vert  g_{t_n} (A) \vert_{-1} ) \\  &\times \exp \left(\int_0^{{t}_n} \Lambda_\kappa (g_t [h, \eta] )dt \right)
   \left( \sum_{j=0}^{n-1}  e^{2({t}_{j+1}-{t}_j)} \right)^3\,.
   \end{aligned}
 \end{equation}
\end{lemma}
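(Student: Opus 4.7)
\smallskip
\noindent \textbf{Proof proposal.}
The plan is to iterate a one-step estimate across the visiting times $t_0 < t_1 < \dots < t_n$, exploiting the three-fold structure of the cocycle on $1$-currents: on exact currents it has an invariant Lyapunov norm (Lemma~\ref{lemma:L_exact}); on the cohomology quotient it grows at rate $\exp\!\bigl(\int_0^t \Lambda_\kappa \,d\tau\bigr)$ (Lemma~\ref{lemma:growth_1}); and on the full bundle of $1$-currents the operator norm of $g_s$ admits a trivial geometric bound of the form $C_K e^{2s}$ (coming from the fact that the frame $\{S,T\}$ is rescaled by $e^{\pm s}$ under the Teichm\"uller flow, so that the $H^{-1}_h$ norm is distorted at most by $e^{2s}$ over visits to the compact set~$K$). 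The role of the compact set $K$ is to force any return to it from above to land back in a region of uniformly bounded geometry, so that all constants appearing can be taken independent of $h$.

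First I would set $C_j := g_{t_j}(A)$ for $j=0,\dots,n$, and use the hypothesis $A \in \mathcal A_K(\zeta)$ to select twisted closed currents $Z_j \in Z^{-1}_{g_{t_j}[h,\eta]}(M)$ minimizing the Sobolev distance, so that $N_j := C_j - Z_j$ satisfies $|N_j|_{-1} \leq \zeta$. Combining the identity $C_{j+1} = g_{t_{j+1}-t_j}(Z_j) + g_{t_{j+1}-t_j}(N_j)$ with the decomposition $C_{j+1} = Z_{j+1}+N_{j+1}$ yields
\begin{equation*}
|Z_{j+1} - g_{t_{j+1}-t_j}(Z_j)|_{-1} \;\leq\; \zeta\bigl(1 + C_K e^{2(t_{j+1}-t_j)}\bigr),
\end{equation*}
which is the key link between consecutive closed representatives.

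Next, on the closed subspace apply the twisted Hodge decomposition $Z_j = E_j + H_j$, with $E_j$ exact and $H_j$ a (twisted) harmonic representative of the cohomology class $[Z_j] \in H^1_\eta(M,\C)$. Since both $\mathcal Z^{-1}_\kappa$ and $\mathcal E^{-1}_\kappa$ are $g_t$-invariant, the cohomology class evolves under the twisted cocycle of Section~\ref{sec:Twist_Co}, and its Hodge norm satisfies the bound of Lemma~\ref{lemma:growth_1}, which gives in backward form
\begin{equation*}
\| [Z_0] \|_{\mathrm{Hodge}} \;\leq\; \| [Z_n] \|_{\mathrm{Hodge}}\, \exp\!\Bigl(\int_0^{t_n} \Lambda_\kappa(g_\tau [h,\eta])\, d\tau\Bigr),
\end{equation*}
while by Lemma~\ref{lemma:L_exact} the invariant norm $\mathcal L_\kappa$ is preserved on the exact part. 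The one-step error of size $\zeta(1+C_K e^{2(t_{j+1}-t_j)})$ computed above must be propagated through both summands of the decomposition; each time this error is decomposed again as cohomological $+$ exact, it is multiplied either by the factor $\exp\!\bigl(\int \Lambda_\kappa\bigr)$ along the subsequent gaps (for the cohomological part) or by an absolute constant (for the exact part), producing, upon telescoping, a factor linear in $\sum_{j=0}^{n-1} e^{2(t_{j+1}-t_j)}$ after each of these three accumulation rounds.

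Finally, combine the trivial bound $|A|_{-1} \leq |Z_0|_{-1} + \zeta$ with the estimates on $|H_0|_{-1}$ and $|E_0|_{-1}$ (using $\|Z_n\|_{-1} \leq C_K(|g_{t_n}(A)|_{-1}+\zeta)$ to express the right-hand side in terms of $|g_{t_n}(A)|_{-1}$). The main obstacle, and the source of the cubic power $\bigl(\sum e^{2(t_{j+1}-t_j)}\bigr)^3$ rather than a single power, is precisely that the Hodge splitting of $Z_{j+1}$ is \emph{not} the $g_{t_{j+1}-t_j}$-push-forward of the Hodge splitting of $Z_j$: the one-step closed error $Z_{j+1}-g_{t_{j+1}-t_j}(Z_j)$ mixes the harmonic and exact components, forcing three successive rounds of decomposition (once to bound the full current by closed plus $\zeta$, once to separate the closed current into exact plus harmonic, and once more to re-project the accumulated exact error back onto the moving harmonic subspace along the trajectory), each of which contributes one factor of $\sum e^{2(t_{j+1}-t_j)}$ to the final estimate.
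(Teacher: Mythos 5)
Your proposal follows essentially the same route as the paper's proof: the orthogonal splitting of $g_{t_j}(A)$ into a twisted closed current plus a remainder of norm at most $\zeta$, the recursive comparison of consecutive closed representatives with error $O(\zeta e^{2\tau_j})$ via the crude bound $\Vert g_t\Vert \le e^{2\vert t\vert}$, and the separate treatment of the harmonic part via Lemma~\ref{lemma:growth_1} and of the exact part via the invariant Lyapunov norm of Lemma~\ref{lemma:L_exact}. The only cosmetic differences are that the paper iterates backward (writing $Z_j = g_{-\tau_j}(Z_{j+1}) + \pi_j\circ g_{-\tau_j}(R_{j+1})$) and accounts for the three factors of $\sum_j e^{2(t_{j+1}-t_j)}$ a little more explicitly than you do (one from the reverse cohomological iteration, one from summing the exact-part errors $F_j$, which each carry the cohomological bound times $e^{2\tau_j}$, and one from the final step $\vert A\vert_{-1}\le e^{2t_1}\vert g_{t_1}(A)\vert_{-1}$), but the argument is the same.
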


\begin{proof} The argument follows closely the proof of Lemma 4.5 in~\cite{AtF08}.  

For all $n\in \N$, let  $[h_n, \eta_n] = g_{{t}_n} [h, \eta]$ with
$h_n = g_{{t}_n} (h) \in K$. For each $j \in \N$, since $Z^{-1}_{h_j, \eta_j} (M)$ is closed in 
$\Omega^1 H^{-1}_{h_j} (M)$, there exists an orthogonal decomposition,
\begin{equation}
\label{eq:gammasplit}
 g_{{t}_j} (A) = Z_j \, + \, R_j  \,, \quad  \text{ \rm with } \, Z_j \in Z^{-1}_{h_j, \eta_j}(M) \,,\, \, R_j \perp Z^{-1}_{h_j, \eta_j}(M)\,,
\end{equation}
and, since $A  \in \mathcal A_K(\zeta)$ and $h_j\in K$, the following bound holds:
\begin{equation}
\label{eq:Gammadeltabound}
\vert R_j \vert_{-1} \leq  \zeta \,.
\end{equation}
For each $j\in \N$, let $\pi_j: \Omega^1 H^{-1}_{h_j} (M) \to Z^{-1}_{h_j,\eta_j} (M)$ denote the orthogonal projection  and let $\tau_j = {t}_{j+1} -{t}_{j}$. By~\eqref{eq:gammasplit} and by orthogonal projection on the $g_t$-invariant bundle $\mathcal Z^{-1}_\kappa(M)$ the following recursive identity holds:
\begin{equation}
\label{eq:recid1}
Z_{j} = g_{-\tau_j} (Z_{j+1}) \, + \,    \pi_{j}\circ  g_{-\tau_j}(R_{j+1})  \in 
Z^{-1}_{h_j,  \eta_j}(M)\,.
\end{equation}
By definition of the Sobolev norms and by the Teichm\"uller invariance of the $L^2$ norms, it is immediate to prove (see for instance \cite{AtF08}, formula $(3.24)$)  that
\begin{equation}
\label{eq:cocycleestimate}
\Vert   g_t \vert _{\Omega^1 H^{-1}_h} \Vert  \leq  e^{2\vert t \vert}  \,, \quad \text{\rm for all } \, (h,t) \in \mathcal H(\kappa) \times \R \,.
\end{equation}
Thus by the bound in formula~\eqref{eq:Gammadeltabound}, it follows that
\begin{equation}
\label{eq:remainderbound1}
\vert \pi_{j}\circ  g_{-\tau_j}(R_{j+1}) \vert_{-1}  \leq 
\vert g_{-\tau_j}(R_{j+1}) \vert_{-1}  \leq e^{2\tau_j} \, \zeta \,.
\end{equation}
By projection on the twisted cohomology bundle $\mathcal T^1_\kappa(M,\C)$ and by compactness, we derive from the identity \eqref{eq:recid1} and from the bound \eqref{eq:remainderbound1} that there exists $C^{(1)}_K >1$ such that, with respect to the Hodge norm,
\begin{equation}
\label{eq:recbound1}
\Vert [Z_j] - g_{-\tau_j} ([Z_{j+1}]) \Vert_{h_j,\eta_j}  \leq C^{(1)}_K \, \zeta\,e^{2\tau_j} \,.
\end{equation}
 By Lemma \ref{lemma:growth_1} and by formula~\eqref{eq:recbound1}  we have
\begin{equation}
\label{eq:recbound2}
 \Vert   [Z_j] \Vert_{h_j, \eta_j}  \leq   \Vert   [Z_{j+1}] \Vert_{h_{j+1}, \eta_{j+1} } 
 \exp \left(\int_{{t}_j}^{{t}_{j+1}}
 \Lambda_\kappa (g_t[h, \eta] ) dt  \right)
   \, +\,   C^{(1)}_K \, \zeta \,e^{2\tau_j} \,.
\end{equation}
For each $\ell \in \N$, it follows by reverse iteration on $1\leq j < \ell$ that
$$
\Vert   [Z_j] \Vert_{h_j, \eta_j} \leq \left ( \Vert   [Z_\ell] \Vert_{h_\ell, \eta_\ell} + C^{(1)}_K \, \zeta  \sum_{i=j}^{\ell-1}  \, e^{2\tau_i- \int_{{t}_i}^{{t}_{\ell}} \Lambda_\kappa (g_t[h, \eta] ) dt   } 
      \right)   \exp \left(\int_{{t}_j}^{{t}_{\ell}} \Lambda_\kappa (g_t[h, \eta] ) dt  \right) \,,
$$
which, since $\Lambda_\kappa \geq 0$ and $\tau_i\geq 0$ for all $i\in \N$, implies the estimate
\begin{equation}
\label{eq:Hbound}
 \Vert   [Z_j] \Vert_{h_j, \eta_j}  \leq  C^{(1)}_K  (1+\zeta) (1+ \Vert  [Z_\ell] \Vert_{h_\ell, \eta_\ell}  )  \exp \left(\int_{{t}_j}^{{t}_{\ell}}
 \Lambda_\kappa (g_t[h, \eta] ) dt  \right)  \sum_{i=j}^{\ell-1}  \,e^{2\tau_i} \,.
\end{equation}
By the definition of the Hodge norm, for each $j\in \N$, there exists a twisted harmonic form $\omega_j \in
Z^1_{h_j, \eta_j}(M)$ such that 
\begin{equation}
\label{eq:hrepr}
E_j = Z_j -\omega_j  \in E^{-1}_{h_j, \eta_j} (M)  \quad \text{ and }   \quad 
\vert \omega_j \vert_{-1} \leq \Vert   [Z_j] \Vert_{h_j, \eta_j} \,.
\end{equation}
 For each $j\in \N$, let us define
 \begin{equation}
 \label{eq:Eid1}
 F_j= E_j - g_{-\tau_j} (E_{j+1})  \,\, \in \,\, E^{-1}_{h_j, \eta_j}(M)\,.
 \end{equation}
By the recursive identity~\eqref{eq:Eid1}  the following bound holds with respect to the Lyapunov norm 
${\mathcal L}_\kappa$ on the bundle of twisted exact currents:
\begin{equation}
\label{eq:recbound3}
{\mathcal L}_{h_j} ( E_j ) \leq  {\mathcal L}_{h_{j+1}} (E_{j+1})  +  
{\mathcal L}_{h_j} (  F_j )\,.
\end{equation}
In fact, the restriction of the distributional cocycle $\{g_t\vert t\in\R\}$ to the bundle $\mathcal E^{-1}_\kappa(M)$
of twisted exact currents is isometric with respect to the norm ${\mathcal L}_\kappa$.  For each $\ell\in \N$, we derive from 
\eqref{eq:recbound3}  by (reverse) induction on $1\leq j < \ell$ that
\begin{equation}
\label{eq:Ebound}
{\mathcal L}_{h_1} ( E_1)  \leq  {\mathcal L}_{h_\ell}  (E_\ell) + \sum_{j=1}^{\ell-1}
{\mathcal L}_{h_j} ( F_j ) \,.  
\end{equation}
By the splitting \eqref{eq:gammasplit}  it follows that 
$$
g_{-\tau_j} (Z_{j+1} + R_{j+1}) = g_{-\tau_j} g_{t_{j+1}} (A) = g_{t_j}(A) = Z_j + R_j\,,
$$
hence by the identity \eqref{eq:hrepr} 
$$
g_{-\tau_j} (E_{j+1}+ \omega_{j+1} + R_{j+1}) = E_j + \omega_j + R_j\,.
$$
Thus, by the definition in formula~\eqref{eq:Eid1}, we conclude  that
 \begin{equation}
 \label{eq:Eid2}
 F_j=  g_{-\tau_j} (\omega_{j+1}+ R_{j+1}) - (\omega_j +R_j)\,,
\end{equation}
hence by compactness, be Lemma \ref{lemma:L_exact},  and by formulas~\eqref{eq:Gammadeltabound} and~\eqref{eq:hrepr}, there exists a constant  $C^{(2)}_K>1$ such that 
$$
\begin{aligned} 
{\mathcal L}_{h_j} ( F_j  ) & \leq   C^{(2)}_K \left( \vert g_{-\tau_j} (\omega_{j+1}+ R_{j+1}) \vert_{-1}  
+\vert \omega_j +R_j\vert_{-1} \right)  \\ &\leq   C^{(2)}_K \left( e^{2\tau_j} ( \Vert [Z_{j+1}] \Vert_{h_j, \eta_j} + \zeta) +   \Vert [Z_{j}] \Vert_{h_j, \eta_j} + \zeta\right) \,,
\end{aligned}
$$
hence, by \eqref{eq:Hbound}, there exists a constant $C^{(3)}_K>0$ such that, for all $\ell>1$, we have 
\begin{equation}
\label{eq:remainderbound2}
\begin{aligned}
\sum_{j=1}^{\ell-1} {\mathcal L}_{h_j} ( F_j  ) \leq C^{(3)}_K\, (1+\zeta)  &(1+\Vert  [Z_\ell] \Vert_{h_\ell, \eta_\ell}) \\ &\times \exp \left(\int_{{t}_1}^{{t}_\ell}  \Lambda_\kappa (g_t[h, \eta] ) dt  \right) \left( \sum_{j=1}^{\ell-1}  \,e^{2\tau_j} \right)^2\,.
\end{aligned}
\end{equation}
By the splitting \eqref{eq:gammasplit} and by formulas~\eqref{eq:Gammadeltabound}, \eqref{eq:Hbound}, \eqref{eq:hrepr}, \eqref{eq:Ebound} and \eqref{eq:remainderbound2}, there exists a constant $C^{(4)}_K >0$
such that for all $\ell>1$,
\begin{equation}
\label{eq:finalest}
\begin{aligned}
\vert g_{t_1}(A) \vert_{-1} \leq C^{(4)}_K\, (1+\zeta) \, 
&(1+\vert g_{t_\ell}(A) \vert_{-1}) \,  \\ &\times \exp\left( \int_{{t}_1}^{{t}_\ell} \Lambda_\kappa(g_t[h,\eta]) \,dt\right)  
 \left ( \sum_{j=1}^{\ell-1}  \,e^{2\tau_j} \right)^2\,.
\end{aligned}
 \end{equation}
Finally, by the bound~\eqref{eq:cocycleestimate}, since ${t}_0=0$, 
\begin{equation}
\vert A \vert_{-1} \leq e^{2 {t}_1} \, \vert g_{t_1}(A) \vert_{-1}  \,. 
\end{equation}
\end{proof}

\section{Proof of the Main Results} 
\label{sec:Proofs}

In this section we complete the proof of the main results stated in the Introduction.

For any $h\in\mathcal H(\kappa)$, let $t_0=0$ and let $\{{t}_n \vert n\in\N\}$ denote, as above, a non-decreasing sequence of visiting times of the orbit $\{g_{t}(h) \vert t \geq 0\}$ to a compact set $K\subset 
 \mathcal H(\kappa) $. 

\begin{lemma} 
\label{lemma:twist_integral_bound}
There exist constants $\alpha_\kappa$, $\alpha'_\kappa$ and $N_\kappa>0$ 
such that, for almost all Abelian differential $h \in \mathcal H (\kappa)$ with respect to the Masur--Veech
measure, there exists a constant $C_\kappa (h)>0$ such that the following holds. For any $x\in M$  with  forward regular horizontal trajectory,  
for all $\lambda \in \R\setminus \{0\}$, for  all $n\in \N$ and for all  functions $f \in H^1_h(M)$ of zero average, we have
$$
\left\vert   \int_0^{e^{t_n} }  e^{ 2\pi \imath \lambda t}  f \circ \phi^S_t (x)  \,dt  \right\vert
\leq C_\kappa (h)  \vert f \vert_{1} \frac{(1+ \lambda^2)^{\frac{N_\kappa}{2} }}{\vert \lambda \vert^{\frac{\alpha_\kappa}{\alpha'_\kappa}}} e^{(1-\alpha_\kappa) t_n}  \left( \sum_{j=0}^{n-1}  e^{2({t}_{j+1}-{t}_j)} \right)^3.
$$
\end{lemma}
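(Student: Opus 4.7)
The plan is to represent the twisted ergodic integral as the action of the twisted $1$-current $A^{\#}_{h,\lambda}(x, e^{t_n})$ from \eqref{eq:A_diesis} on a suitable $1$-form, and then to bound $\vert A^{\#}\vert_{-1}$ by combining the iterative estimate of Lemma~\ref{lemma:iterativest} with the exponential growth estimate of Lemma~\ref{lemma:growth_3}, together with a case analysis on $\vert\lambda\vert$.

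First I would test $A^{\#}_{h,\lambda}(x, e^{t_n})$ on the one-form $\alpha := f\,\eta_T$. Because $\imath_S\eta_T=1$ and $f$ has zero $\omega_h$-average, the correction term in \eqref{eq:A_diesis} vanishes and
\[
A^{\#}_{h,\lambda}(x, e^{t_n})(f\,\eta_T) \,=\, \int_0^{e^{t_n}} e^{2\pi \imath \lambda t}\, f\circ\phi^S_t(x)\, dt\,.
\]
By the definition of the Sobolev norm on $1$-forms recalled in Section~\ref{sec:TFlows} we have $\vert f\,\eta_T\vert_{\Omega^1 H^1_h(M)} = \vert f\vert_1$, so the left-hand side of the claimed inequality is at most $\vert A^\#\vert_{-1}\cdot\vert f\vert_1$, and it suffices to control $\vert A^\#\vert_{-1}$.

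Next I would choose a compact set $K\subset\mathcal H(\kappa)$ of positive Masur--Veech measure on which the systole satisfies $\delta(h)\geq\delta_K>0$, and let $(t_n)$ be the sequence of return times of the Teichm\"uller orbit of $h$ to $K$. The $SL(2,\R)$-covariance of the construction of $A^\#$ shows that for each $t\in[0,t_n]$ the transported current $g_t(A^\#)$ is, up to the natural rescaling of time and twist, the current $A^{\#}_{h_t,\lambda e^t}(x, e^{t_n-t})$ at $h_t=g_t(h)$; applying Lemma~\ref{lemma:dist_closed} at every return $h_t\in K$ places $A^\#$ in the invariant set $\mathcal A_K(\zeta_K)$ with $\zeta_K:=C_\kappa/\delta_K^2$. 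At the terminal time, $g_{t_n}(A^\#)=A^{\#}_{h_n,\lambda e^{t_n}}(x,1)$ is the twisted integration along a unit-length horizontal arc at $h_n\in K$ plus a correction term of magnitude at most $1$ (since $\vert(e^{2\pi\imath\mu}-1)/(2\pi\imath\mu)\vert\leq 1$), so by the Sobolev trace theorem (Lemma~\ref{lemma:Sob_trace}) we obtain $\vert g_{t_n}(A^\#)\vert_{-1}\leq C_K$. Feeding these bounds into Lemma~\ref{lemma:iterativest} yields
\[
\vert A^\#\vert_{-1} \,\leq\, C'_K \,\exp\!\left(\int_0^{t_n} \Lambda_\kappa(g_s[h, \lambda\,\re(h)])\, ds\right)\left(\sum_{j=0}^{n-1} e^{2(t_{j+1} - t_j)}\right)^3.
\]

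Finally I would control the exponential factor by a dichotomy in $\vert\lambda\vert$. If $\vert\lambda\vert\geq e^{-\alpha'_\kappa t_n}$, Lemma~\ref{lemma:growth_3} gives the bound $C_\kappa(h)(1+\lambda^2)^{N_\kappa/2}e^{(1-\alpha_\kappa)t_n}$. If on the contrary $\vert\lambda\vert<e^{-\alpha'_\kappa t_n}$, then the uniform bound $\Lambda_\kappa\leq 1$ from Lemma~\ref{lemma:Lambda_gap} gives $\exp(\int_0^{t_n}\Lambda_\kappa)\leq e^{t_n}\leq\vert\lambda\vert^{-\alpha_\kappa/\alpha'_\kappa}e^{(1-\alpha_\kappa)t_n}$, the last inequality because $\vert\lambda\vert^{-\alpha_\kappa/\alpha'_\kappa}>e^{\alpha_\kappa t_n}$ in this regime. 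After enlarging $N_\kappa$, if necessary, by $\alpha_\kappa/\alpha'_\kappa$ so that $(1+\lambda^2)^{N_\kappa/2}/\vert\lambda\vert^{\alpha_\kappa/\alpha'_\kappa}$ dominates both case-specific expressions for every $\lambda\neq 0$, one arrives at the stated inequality.

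The main obstacle is the middle step: one has to verify that the $SL(2,\R)$-covariance truly identifies $g_t(A^\#)$ with $A^{\#}_{h_t,\lambda e^t}(x,e^{t_n-t})$ so that Lemma~\ref{lemma:dist_closed} supplies a uniform constant $\zeta_K$, and that $\vert g_{t_n}(A^\#)\vert_{-1}$ is bounded independently of $\lambda$. The latter is delicate because the correction term in the definition of $A^\#$ carries a factor $1/\lambda'$ with $\lambda'=\lambda e^{t_n}$, but it is tamed by the elementary identity $(e^{2\pi\imath\mu}-1)/(2\pi\imath\mu)=\int_0^1 e^{2\pi\imath\mu s}\,ds$, valid for all $\mu\in\R$.
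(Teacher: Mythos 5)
Your proposal is correct and follows essentially the same route as the paper: represent the twisted integral as $A^{\#}_{h,\lambda}(x,e^{t_n})(f\,\eta_T)$, use Lemma~\ref{lemma:dist_closed} to place $A^{\#}$ in $\mathcal A_K(\zeta_K)$ and the Sobolev trace theorem to bound the terminal current $g_{t_n}(A^{\#})$ uniformly in $\lambda$, then combine Lemma~\ref{lemma:iterativest} with Lemma~\ref{lemma:growth_3} and a dichotomy in $\vert\lambda\vert$ at the threshold $e^{-\alpha'_\kappa t_n}$. The only immaterial difference is in the regime $\vert\lambda\vert< e^{-\alpha'_\kappa t_n}$, where the paper abandons the current machinery and bounds the integral directly by $C'_\kappa(h)\vert f\vert_1 e^{t_n}$ via the trace theorem, whereas you reach the same numerics by inserting the trivial bound $\Lambda_\kappa\leq 1$ into the iterative estimate.
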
 
\begin{proof}
Let $A:= A_{h, \lambda} (x, e^{t_n})$ denote the current defined, for any $1$-form $\beta$ on $M$, as
$$
A(\beta) := \int_0^{e^{t_n} }  e^{ 2\pi \imath \lambda t} \imath_S \beta  \circ \phi^S_t (x)  \,dt  \,.
$$
Let then $A^{\#}$ denote the current
$$
A^{\#} := A+ A(\eta_T) \eta_S\,.
$$
By definition, for any $1$-form $\beta$ on $M$, we have
$$
A^{\#}(\beta) := \int_0^{e^{t_n} }  e^{ 2\pi \imath \lambda t} \imath_S \alpha  \circ \phi^S_t (x)  \,dt -\,\, \frac{e^{ 2\pi \imath \lambda e^{t_n}}  -1  } { 2\pi \imath \lambda }  \int_M  \imath_S \beta \,\omega_h\,.
$$
We prove below a bound for the Sobolev norm of the current $A^{\#}$ in $\Omega^1H_h^{-1}(M)$ and derive the result for large $\lambda\in \R$ from such an estimate applied 
to $1$-forms $\beta_f = f \eta_T$ for which, whenever $f \in H^1_h(M)$ has zero average, we have
$$
A^{\#} (\beta_f) = A(\beta_f) =  \int_0^{e^{t_n} }  e^{ 2\pi \imath \lambda t}f \circ \phi^S_t (x)  \,dt  \,.
$$ 
Let $h_n := g_{t_n} (h)$ and let $(S_n, T_n)$ denote its horizontal and vertical vector fields.
By definition, the current $g_{t_n} (A)$ is given by the formula
$$
g_{t_n} (A) (\beta)=   \int_0^{1 }  e^{ 2\pi \imath e^{t_n}\lambda t} \imath_{S_n} \beta \circ \phi^{S_n}_t (x)  \,dt  \,,
$$
while the current $g_{t_n} (A^{\#})$ is given by the formula
$$
g_{t_n} (A^{\#}) (\beta)=   g_{t_n} (A)^{\#} (\beta)  =  \int_0^{1 }  e^{ 2\pi \imath e^{t_n}\lambda t} \imath_{S_n} \beta \circ \phi^{S_n}_t (x)  \,dt 
-\,\, \frac{e^{ 2\pi \imath  e^{t_n} \lambda}  -1  } { 2\pi \imath e^{t_n} \lambda   }  \int_M  \imath_{S_n} \beta \,\omega_h\,.
$$
Since $h_n \in K$, by the Sobolev trace  theorem (see Lemma~\ref{lemma:Sob_trace})  there exists a constant $C_K>0$ such that,
for all $n\in \N$, we have
$$
\vert  g_{t_n} (A^{\#}) (\beta) \vert   \leq   \vert g_{t_n} (A) (\beta)\vert \leq  \int_0^{1 }  \vert  \imath_{S_n} \beta \circ \phi^{S_n}_t (x)\vert  \,dt
\leq  C_K \vert \beta \vert_{H^{-1}_{h_n}(M)} \,,
$$
hence 
$$
\vert g_{t_n} (A^{\#})\vert_{-1} \leq  \vert g_{t_n} (A)\vert_{-1} \leq  C_K \,.
$$
 By  definition and by Lemma~\ref{lemma:dist_closed}, there exists a constant
$\zeta_K>0$ such that, for any $t>0$ with $g_{t} (h) \in K$ there exists $Z \in 
Z^{-1}_{g_{t}(h, \eta)}(M)$ such that 
$$
\vert  g_{t} (A^{\#}) -Z \vert_{-1}  \leq \zeta_K\,,
$$
hence $A^{\#} \in \mathcal A_K (\zeta_K)$. By Lemma~\ref{lemma:iterativest} with  $\eta = \lambda \re ( h)$ there exists a constant $C'_K>0$ such that we have the estimate
$$
\vert A^{\#} \vert_{-1} \leq  C'_K \exp \left(\int_0^{{t}_n} \Lambda_\kappa (g_t [h, \lambda\re(h)] )dt \right)
   \left( \sum_{j=0}^{n-1}  e^{2({t}_{j+1}-{t}_j)} \right)^3
$$
and, by Lemma \ref{lemma:growth_3} for almost all $h\in \mathcal H(\kappa)$, there exist constants $\alpha_\kappa$, $\alpha'_\kappa$
and $N'_\kappa>0$ such that, for almost all Abelian differential $h \in \mathcal H (\kappa)$ with respect to the Masur--Veech measure there exists a constant $C_\kappa (h)>0$ such that, for all $n\in \N$, for all 
$\lambda \in \R$ with $\vert \lambda \vert \geq e^{-\alpha'_\kappa t_n}$, we have
$$
\exp \left(  \int_0^{t_n} \Lambda_\kappa (g_t [h,\lambda \re(h)]  ) dt \right)\leq  C_\kappa (h)  
 (1+ \lambda^2)^{\frac{N'_\kappa}{2} } e^{(1-\alpha_\kappa) t_n}\,.
$$
For $\vert \lambda \vert \leq e^{-\alpha'_\kappa t_n}$, by the Sobolev trace theorem (see Lemma~\ref{lemma:Sob_trace}) there exists a constant $C'_\kappa (h) >0$ 
such that we have
$$
\begin{aligned}
\left\vert   \int_0^{e^{t_n} }  e^{ 2\pi \imath \lambda t}  f \circ \phi^S_t (x)  \,dt  \right\vert
&\leq  C'_\kappa (h)  \vert f \vert_1 e^{t_n}  \\ & = C'_\kappa (h)  \vert f \vert_1 e^{\alpha_\kappa t_n}   e^{(1-\alpha_\kappa)t_n}
\leq   C'_\kappa (h)  \vert f \vert_1 \vert \lambda \vert^{- \frac{\alpha_\kappa}{\alpha'_\kappa} }  e^{(1-\alpha_\kappa)t_n}\,.
\end{aligned}
$$
The argument is therefore concluded.
\end{proof}

To conclude the proof of our main results we recall a decomposition lemma from \cite{AtF08} (Lemma 5.1). The point of this decomposition lemma is that it allows to reduce
the proof of upper bounds for (twisted) integrals along arbitrary orbit arcs to upper bounds on integrals along special ``best returns'' orbit arcs, generated by the action of the renormalization (the Teichm\"uller flow), and for which  the desired bounds were derived above in Lemma~\ref{lemma:twist_integral_bound}.

\begin{lemma}
\label{lemma:chop} 
Let $h\in {\mathcal H}(\kappa)$ and let $\{t_n\}_{n \in \N}$ be any non-decreasing divergent sequence of positive real numbers. For any $(x,\mathcal T) \in M \times \R^+$ such that $x\in M$ has forward regular horizontal trajectory, the horizontal orbit segment $\gamma_{h,x}(\mathcal T)$ has a decomposition into consecutive sub-segments,
\begin{equation}
\label{eq:chop} 
\gamma_{h, x}(\mathcal T) = \sum_{\ell=1}^n \sum_{m=1}^{m_\ell} \gamma_{h, x_{\ell,m}}(\mathcal T_\ell)\,\, + \,\, 
\gamma_{h,y}(\tau) \,, 
\end{equation}
such that $n:= \max \{ \ell \in \N \vert  \mathcal T_\ell \leq \mathcal T\}$ and, for all $1\leq \ell \leq n$, 
\begin{equation}
\label{eq:chopest}
m_\ell \leq e^{t_{\ell+1} - t_\ell}\,,  \quad \mathcal T_\ell = e^{t_\ell} \quad \text{\rm and} \quad \tau \leq e^{t_1}\,.
\end{equation}
\end{lemma}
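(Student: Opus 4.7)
The plan is a greedy peeling procedure, which amounts to a mixed-base expansion of $\mathcal T$ with ``digit lengths'' $\{e^{t_\ell}\}_{\ell\in \N}$. Since the sequence $\{t_\ell\}$ is non-decreasing and diverges to $+\infty$, the integer $n := \max\{\ell \in \N : e^{t_\ell} \leq \mathcal T\}$ is well-defined, and by maximality $e^{t_{n+1}} > \mathcal T$.

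Define recursively $\tau_n := \mathcal T$ and, for $\ell$ descending from $n$ down to $1$,
$$
m_\ell := \lfloor \tau_\ell / e^{t_\ell} \rfloor \in \N \,, \qquad \tau_{\ell-1} := \tau_\ell - m_\ell \, e^{t_\ell} \in [0, e^{t_\ell})\,,
$$
and set $\tau := \tau_0$. Telescoping yields the numerical identity $\mathcal T = \sum_{\ell=1}^n m_\ell \, e^{t_\ell} + \tau$, with $\tau < e^{t_1}$ as required. To verify the bound on the multiplicities, note that for $\ell = n$ the maximality of $n$ gives $m_n e^{t_n} \leq \mathcal T < e^{t_{n+1}}$, hence $m_n < e^{t_{n+1}-t_n}$; for $\ell < n$, a simple downward induction shows $\tau_\ell < e^{t_{\ell+1}}$ (the base case is $\ell = n$, and the inductive step uses $\tau_{\ell-1} < e^{t_\ell}$ together with monotonicity of $\{t_\ell\}$), so $m_\ell \, e^{t_\ell} \leq \tau_\ell < e^{t_{\ell+1}}$ and therefore $m_\ell < e^{t_{\ell+1}-t_\ell}$.

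To realize this numerical identity as a concatenation of orbit sub-arcs as in \eqref{eq:chop}, order the sub-arcs as indicated by the double sum in \eqref{eq:chop} (first the $m_1$ copies of length $e^{t_1}$, then the $m_2$ copies of length $e^{t_2}$, and so on, with the remainder of length $\tau$ last), and define $x_{\ell,m}$ (respectively $y$) as the image of $x$ under $(\phi^S_t)$ at the accumulated time offset from the start of the orbit segment corresponding to that sub-arc (respectively at time $\mathcal T-\tau$). Since $x$ has forward regular horizontal trajectory up to time $\mathcal T$ by hypothesis, all base points $x_{\ell,m}$ and $y$ are well-defined and each sub-arc has the claimed length. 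The proof is essentially combinatorial bookkeeping with no analytic content; the main (easy) step is the inductive verification that $\tau_\ell < e^{t_{\ell+1}}$ for every $\ell \in \{1,\dots,n\}$, and the only mild subtlety is to match the specific ordering of the double sum in the statement, which is a matter of convention.
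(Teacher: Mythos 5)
Your greedy peeling argument is correct: the downward recursion $\tau_{\ell-1}=\tau_\ell-\lfloor\tau_\ell/e^{t_\ell}\rfloor e^{t_\ell}$ gives $\tau_{\ell-1}<e^{t_\ell}$, hence $m_\ell e^{t_\ell}\leq \tau_\ell<e^{t_{\ell+1}}$ by monotonicity of $(t_\ell)$, and the numerical identity transfers to a concatenation of orbit arcs exactly as you describe. Note that the paper itself offers no proof of this lemma (it simply cites Lemma~5.1 of~\cite{AtF08}); your argument is the standard mixed-base decomposition used there, so you have supplied precisely the omitted proof rather than an alternative route.
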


We are finally ready to complete the proof of our main theorem, stated as Theorem~\ref{thm:Twist_Erg} in the Introduction. We state it again below for the convenience of the reader.

\begin{theorem} 
\label{thm:twist_integral_bound}
There exist constants $\alpha_\kappa$, $\beta_\kappa$ and $N_\kappa>0$ 
such that, for almost all Abelian differential $h \in \mathcal H (\kappa)$ with respect to the Masur--Veech
measure there exists a constant $C_\kappa (h)>0$ such that for all $\lambda \in \R\setminus \{0\}$,  for
all $(x, \mathcal T) \in M\times \R^+$, such that $x\in M$ has forward regular horizontal trajectory, and for all  $f \in H^1_h(M)$ of zero average, 
we have the estimate
$$
\left\vert   \int_0^{\mathcal T}  e^{ 2\pi \imath \lambda t}  f \circ \phi^S_t (x)  \,dt  \right\vert
\leq C_\kappa (h)  \vert f \vert_{1} \frac{(1+ \lambda^2)^{\frac{N_\kappa}{2} }}{\vert \lambda \vert^{\beta_\kappa}} \, \mathcal T^{1-\alpha_\kappa}.
$$
\end{theorem}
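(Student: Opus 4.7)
The plan is to reduce the estimate for arbitrary $\mathcal{T} > 0$ to the renormalized-time estimate at $\mathcal{T}_\ell = e^{t_\ell}$ of Lemma~\ref{lemma:twist_integral_bound} by means of the chopping Lemma~\ref{lemma:chop}, and then sum over scales. The sequence $(t_n)$ will be a sequence of visit times of the Teichm\"uller orbit to a compact set $K \subset \mathcal{H}(\kappa)$ chosen large enough for Lemmas~\ref{lemma:growth_3} and~\ref{lemma:iterativest} to apply; by the Birkhoff ergodic theorem, for $\mu_\kappa$-a.e.\ $h$ this orbit visits $K$ with positive frequency.

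Fix such $K$ and a $\mu_\kappa$-generic $h$. For $(x, \mathcal{T})$ with $x$ forward regular, choose $n = n(h, \mathcal{T})$ with $e^{t_n} \leq \mathcal{T} < e^{t_{n+1}}$, so that $n$ is $O(\log \mathcal{T})$ with a constant depending on $h$. Apply Lemma~\ref{lemma:chop} to decompose $\gamma_{h,x}(\mathcal{T})$ into sub-arcs $\gamma_{h, x_{\ell,m}}(e^{t_\ell})$, with $1 \leq \ell \leq n$ and $1 \leq m \leq m_\ell \leq e^{t_{\ell+1}-t_\ell}$, plus a boundary arc of length $\tau \leq e^{t_1}$. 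For each sub-arc supported on $[a, a+e^{t_\ell}] \subset [0, \mathcal{T}]$, the factorisation $e^{2\pi \imath \lambda t} = e^{2\pi \imath \lambda a} \cdot e^{2\pi \imath \lambda (t-a)}$ pulls out the constant phase and reduces the twisted integral over that sub-arc to a unit-modulus scalar times a twisted integral over $[0, e^{t_\ell}]$ from the new base point $x_{\ell,m} = \phi^S_a(x)$. Lemma~\ref{lemma:twist_integral_bound} then applies to each such piece, while the boundary arc contributes at most a constant (depending on $h$) times $|f|_1$ by the Sobolev embedding of Lemma~\ref{lemma:Sob_embed} and can be absorbed.

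The triangle inequality yields
\[
\left|\int_0^{\mathcal T} e^{2\pi \imath \lambda t} f \circ \phi^S_t(x) \, dt\right| \leq C_\kappa(h) \, |f|_1 \, \frac{(1+\lambda^2)^{N_\kappa/2}}{|\lambda|^{\alpha_\kappa/\alpha'_\kappa}} \sum_{\ell=1}^{n} m_\ell\, e^{(1-\alpha_\kappa)t_\ell} \Big(\sum_{j=0}^{\ell-1} e^{2(t_{j+1}-t_j)}\Big)^{\!3}
\]
up to a negligible boundary term. For $\mu_\kappa$-a.e.\ $h$ one can arrange that the gaps $t_{j+1}-t_j$ grow at most logarithmically in $j$ (by a Borel--Cantelli argument on large-gap events, relying on the mixing of the Teichm\"uller flow), so the inner sum is polynomial in $\ell$, $m_\ell$ is likewise polynomial in $\ell$, and the outer sum over $\ell$ is dominated by its final term $e^{(1-\alpha_\kappa)t_n}$ up to a geometric factor. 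All polynomial-in-$n$ factors become polylogarithmic in $\mathcal{T}$ after using $n = O(\log \mathcal{T})$ and are absorbed by a slight reduction of $\alpha_\kappa$. Setting $\beta_\kappa := \alpha_\kappa/\alpha'_\kappa$ (the exponents of Lemma~\ref{lemma:twist_integral_bound}) and using $e^{t_n} \leq \mathcal{T}$ gives the stated estimate.

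The main obstacle is the quantitative control of the return-time gaps $t_{j+1}-t_j$ required to keep $\bigl(\sum_j e^{2(t_{j+1}-t_j)}\bigr)^{\!3}$ subpolynomial in $\mathcal{T}$: this is what dictates the choice of $K$ and rests on the ergodic (indeed mixing) properties of the Teichm\"uller flow on $\mathcal{H}(\kappa)$.
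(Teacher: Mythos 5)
Your proposal follows essentially the same route as the paper: choose a Birkhoff/Oseledets-generic $h$, take $(t_n)$ to be return times to a compact set $K$, decompose $\gamma_{h,x}(\mathcal T)$ via Lemma~\ref{lemma:chop}, pull out the constant phase $e^{2\pi\imath\lambda\tau_{\ell,m}}$ on each sub-arc, apply Lemma~\ref{lemma:twist_integral_bound} at each scale, and sum; the small-$\lambda$ regime is already handled inside Lemma~\ref{lemma:twist_integral_bound}, so carrying the factor $\vert\lambda\vert^{-\alpha_\kappa/\alpha'_\kappa}$ through and setting $\beta_\kappa=\alpha_\kappa/\alpha'_\kappa$ is exactly what the paper does.

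The one place where you diverge is the control of the return-time gaps, and there your justification is both stronger than needed and not actually supplied by the tool you cite. You claim that for a.e.\ $h$ the gaps $t_{j+1}-t_j$ are $O(\log j)$ ``by Borel--Cantelli, relying on the mixing of the Teichm\"uller flow.'' Mixing alone does not give a summable bound on $\mu_\kappa\{h:\ g_s(h)\notin K\ \text{for all}\ s\in[0,T]\}$; to run that Borel--Cantelli argument you would need a quantitative recurrence or large-deviations estimate for the Teichm\"uller flow, which is true but is a substantially deeper input. The paper avoids this entirely: since $t_n/n\to\mu>0$ by Birkhoff, for every $\eta>0$ one has $(\mu-\eta)n\le t_n\le(\mu+\eta)n$ eventually, hence $t_{j+1}-t_j\le 2\eta j+O(1)$ and
$$
\sum_{j=0}^{n-1}e^{2(t_{j+1}-t_j)}\le C(\mu,\eta,h)\,e^{4\eta n}\le C\,\mathcal T^{\,4\eta/(\mu-\eta)}\,,
$$
which is absorbed by replacing $\alpha_\kappa$ with $\alpha_\kappa/2$ once $\eta$ is chosen small. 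I recommend you replace the logarithmic-gap claim with this elementary sub-linear gap bound; otherwise the argument is sound.
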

\begin{proof}  Let $(t_n)$ denote a sequence of return times of the orbit $\{g_t(h)\}$ to a compact set
$K \subset \mathcal H(\kappa)$ such that $\lim_{n\to +\infty} t_n/n = \mu \not =0$. It follows
that for any $\eta\in (0, \mu)$ there exists $n_\eta\in \N$ such that  we have
$$
(\mu -\eta) n \leq  t_n  \leq  (\mu +\eta) n   \,, \quad \text{ for all } n\geq n_\eta\,.
$$
It follows in particular that there exists a constant $C(\mu,\eta,h)>0$ such that
$$
\sum_{j=0}^{n-1} e^{2(t_{j+1}-t_j)}     \leq   C(\mu,\eta,h) \,   e^{4 \eta n}  \,.
$$
In fact, for $n>n_\eta$, by geometric summation we have
$$
\begin{aligned}
\sum_{j=0}^{n-1} e^{2(t_{j+1}-t_j)}  &\leq  \sum_{j=0}^{n_\eta-1} e^{2(t_{j+1}-t_j)}   + e^{2(\mu+\eta)}  \sum_{j=n_\eta}^{n-1} e^{4\eta n}  
\\ &\leq   \sum_{j=0}^{n_\eta-1} e^{2(t_{j+1}-t_j)}   + e^{2(\mu+\eta) }   \frac{  e^{4\eta n } - e^{4n_\eta}}{e^{4\eta}-1}\,,
\end{aligned}
$$
hence the above bound holds with
$$
C(\mu,\eta,h):=  \sum_{j=0}^{n_\eta-1} e^{2(t_{j+1}-t_j)}   +    \frac{  e^{2(\mu+\eta) }}{e^{4\eta}-1}\,.
$$
From Lemma~\ref{lemma:twist_integral_bound} we derive, for all $\ell\in \{1, \dots, n\}$, the bounds
$$
\left\vert   \int_0^{\mathcal T_\ell }  e^{ 2\pi \imath \lambda t}  f \circ \phi^S_t (x_{\ell,m})  \,dt  \right\vert
\leq C_\kappa (\mu, \eta, h)  \vert f \vert_{1} \frac{(1+ \lambda^2)^{\frac{N_\kappa}{2} }}{\vert \lambda \vert^{\frac{\alpha_\kappa}{\alpha'_\kappa}}}   e^{(1-\alpha_\kappa)t_\ell + 12\eta \ell}   \,\,.
$$
Given the sequence $(t_n)$ and any $(x, \mathcal T) \in M\times \R^+$ such that $x\in M$ has forward regular horizontal trajectory, we consider the induced decomposition of a horizontal orbit segment $\gamma_{h,x}(\mathcal T)$ given by Lemma~\ref{lemma:chop}.
For all $\ell\in \{1, \dots, n\}$ and $m\in \{1, \dots, m_\ell\}$, let $\tau_{\ell,m}$ denote the time of the
point $x_{\ell,m}$ along the orbit. By the definitions $\tau_{\ell,m}=\sum_{j=1}^{\ell-1} m_j\mathcal T_j + (m-1)\mathcal T_\ell$.  Since by Lemma~\ref{lemma:chop} we have a decomposition
$$
\begin{aligned}
 \int_0^{\mathcal T}  e^{ 2\pi \imath \lambda t}  f \circ \phi^S_t (x)  \,dt  &=
 \sum_{\ell=1}^n \sum_{m=1}^{m_\ell} 
 e^{ 2\pi \imath \lambda \tau_{\ell,m}  }   \int_0^{\mathcal T_\ell}  e^{ 2\pi \imath \lambda t}  f \circ \phi^S_t (x_{\ell,m})  \,dt \\ &+ \,\,  \int_{\mathcal T-\tau} ^{\mathcal T}  e^{ 2\pi \imath \lambda t}  f \circ \phi^S_t (x)  \,dt  \,,
 \end{aligned}
 $$
 we derive the bound
$$
\left\vert  \int_0^{\mathcal T}  e^{ 2\pi \imath \lambda t}  f \circ \phi^S_t (x)  \,dt \right\vert 
\leq C_\kappa (\mu, \eta, h)  \vert f \vert_{1} \frac{(1+ \lambda^2)^{\frac{N_\kappa}{2} }}{\vert \lambda \vert^{\frac{\alpha_\kappa}{\alpha'_\kappa}}} \left( \sum_{\ell=1}^n m_\ell  e^{(1-\alpha_\kappa)t_\ell + 12\eta \ell}  + \tau \right)\,.
$$
Finally we have, since by construction $\mathcal T_n = e^{t_n} \leq \mathcal T$,  hence $n \leq  (\mu-\eta)^{-1} \log {\mathcal T}$,
\begin{multline*}
\sum_{\ell=1}^n m_\ell  e^{(1-\alpha_\kappa)t_\ell + 12\eta \ell}   \leq 
C'_\eta (\mu, \eta, h) \sum_{\ell=1}^n e^{(1-\alpha_\kappa)(\mu-\eta)\ell  + 16\eta \ell}   \\  \leq C''_\eta(\mu, \eta,h) 
e^{ [(1-\alpha_\kappa)(\mu-\eta) +16\eta] n } \leq C^{(3)}_\eta(\mu, \eta, h) \mathcal T ^{1-\alpha_\kappa + (\mu-\eta)^{-1}16\eta }\,,
\end{multline*}
which, by taking $\eta >0$ such that $(\mu-\eta)^{-1} 32\eta < \alpha_\kappa$,  implies the estimate
$$
\left\vert  \int_0^{\mathcal T}  e^{ 2\pi \imath \lambda t}  f \circ \phi^S_t (x)  \,dt \right\vert 
\leq C_\kappa (h)  \vert f \vert_{1} \frac{(1+ \lambda^2)^{\frac{N_\kappa}{2} }}{\vert \lambda \vert^{\frac{\alpha_\kappa}{\alpha'_\kappa}}} \mathcal T ^{ 1-\frac{\alpha_\kappa}{2}}\,.
$$
The argument is therefore complete. (Note that the exponent $\alpha_\kappa$, $\beta_\kappa$, and $N_\kappa$ in the statement are respectively equal to $\alpha_\kappa/2$,
$\alpha_\kappa/\alpha'_\kappa$ and $N_\kappa$ in terms of the positive constants  $\alpha_\kappa$, $\alpha'_\kappa$ and $N_\kappa$ of Lemma~\ref{lemma:twist_integral_bound}).\end{proof}

The remaining main results stated in the Introduction are easily derived (see below) from the above Theorem
\ref{thm:twist_integral_bound} (Theorem~\ref{thm:Twist_Erg} in the Introduction) and from the general 
results of section~\ref{sec:spectral_dim} below. 

\begin{proof}[Proof of Theorem~\ref{thm:Effect_Erg}]
We have a Fourier decomposition 
$$
F(x, \theta) = \sum_{n\in \Z}  f_n (x) e^{2\pi \imath n \theta} \,, \quad \text{ for } (x, \theta) \in M\times \T\,.
$$
By the Fourier decomposition we have
$$
\int_0^{\mathcal T} F\circ \Phi^{S, \lambda}(x, \theta) dt =  \sum_{n\in \Z}  e^{2\pi \imath \lambda n \theta}\int_0^{\mathcal T} e^{2\pi \imath \lambda n t}  f_n\circ \phi^S_t (x) dt\,.
$$
By Theorem~\ref{thm:twist_integral_bound} (Theorem~\ref{thm:Twist_Erg}) we have, for $n\not=0$, 
$$
\left\vert \int_0^{\mathcal T} e^{2\pi \imath \lambda n t}  f_n\circ \phi^S_t (x) dt \right\vert
\leq C_\lambda(h) (1+ n^2)^{\frac{N_\kappa-\beta_\kappa}{2}} \vert f_n \vert_1 {\mathcal T}^{1-\alpha'_\kappa}\,.
$$ 
For $n=0$, by Theorem~\ref{thm:AtF08} (see \cite{AtF08}) or, in fact, for almost all $h\in \mathcal H(\kappa)$ already by the results of~\cite{F02}, we have 
$$
\left\vert \int_0^{\mathcal T}   f_0\circ \phi^S_t (x) dt  - \mathcal T \int_{M\times \T} F d\omega_h d\theta  \right\vert
\leq C_\lambda(h)  \vert f_0 \vert_1 {\mathcal T}^{1-\alpha_\kappa}\,.
$$
By definition, for any $s \geq 0$, the Sobolev space $H^s(\T, H^1(M))$ is the completion of the space $C^\infty (M\times \T ) $ with respect to the norm 
$$
\Vert  F \Vert_{H^s(\T, H^1(M))}:= \left( \sum_{n\in \Z}   (1+ n^2)^{s} \vert f_n \vert^2_{H^1(M)}  \right)^{1/2}\,.
$$
Since, by H\"older inequality, we  have
$$
\sum_{n\in \Z} (1+ n^2)^{\frac{N_\kappa-\beta_\kappa}{2}} \vert f_n \vert_1 
\leq \left( \sum_{n\in \Z} (1+ n^2)^{N_\kappa-\beta_\kappa -s}  \right)^{1/2}  \Vert  F \Vert_{H^s(\T, H^1(M))}\,,
$$
it follows that for $s > N_\kappa-\beta_\kappa +1$ there exists a constant $C_{\kappa,s}>0$  such that
$$
\left\vert \int_0^{\mathcal T} F\circ \Phi^{S, \lambda}(x, \theta) dt   - \mathcal T \int_{M\times \T} F d\omega_h d\theta\right\vert \leq C_{\kappa,s} C_\lambda(h)
 \Vert  F \Vert_{H^s(\T, H^1(M))} {\mathcal T}^{1-\alpha''_\kappa}\,,
$$
for any $\alpha''_\kappa \leq \min (\alpha_\kappa, \alpha'_\kappa)$, which completes the proof of the theorem.
\end{proof}

\begin{proof}[Proof of Corollary~\ref{cor:spectral}]  It is an immediate consequence of Theorem~\ref{thm:twist_integral_bound} (Theorem~\ref{thm:Twist_Erg} in the Introduction) and of Lemma~\ref{lemma:spectral_1} below, which derives a lower bound on spectral dimensions from an upper bound on twisted ergodic integrals. 
\end{proof}

Corollary~\ref{cor:Eff_WM} follows from Theorem~\ref{thm:twist_integral_bound} 
(Theorem~\ref{thm:Twist_Erg} in the Introduction), the quantitative equidistribution result for translation flows stated in Theorem~\ref{thm:AtF08} (see also \cite{F02}) and Lemma~\ref{lemma:Eff_WM}, which derives
a bound on the speed of weak mixing from bounds on twisted ergodic integrals. 
\begin{proof}[Proof of Corollary~\ref{cor:Eff_WM}]
By integration by parts we have
$$
\begin{aligned}
 \int_0^{\mathcal T}  e^{2\pi \imath \lambda t}  f \circ \phi^S_t  dt &=   \int_0^{\mathcal T}  
\frac{1}{2\pi\imath \lambda}  (\frac{d}{dt} e^{2\pi \imath \lambda t})  f \circ \phi^S_t  dt 
\\ &=  \frac{1}{2\pi\imath \lambda} \left( e^{2\pi \imath \lambda {\mathcal T}}  f \circ \phi^S_{\mathcal T} - f - 
\int_0^{\mathcal T}   e^{2\pi \imath \lambda t} Sf \circ \phi^S_t  dt                    \right) 
\end{aligned}
$$
hence for all $\lambda \not=0$, we have
$$
\begin{aligned}
\vert \int_0^{\mathcal T}   e^{2\pi \lambda t} f \circ \phi^S_t dt   \vert_{L^2_h(M)}  &\leq 
 \frac{1}{\pi  \lambda}  \vert f \vert_{L^2_h(M)} \\ &+  \frac{1}{2\pi  \lambda}  \vert \int_0^{\mathcal T}   e^{2\pi \imath \lambda t} Sf \circ \phi^S_t  dt \vert_{L^2_h(M)}
\end{aligned}
$$
By iterating the integration by parts (for $\vert \lambda \vert \geq 1$) we derive the bound
$$
\begin{aligned}
\vert \int_0^{\mathcal T}   e^{2\pi \lambda t} f \circ \phi^S_t dt   \vert_{L^2_h(M)}  &\leq 
\sum_{j=0}^{k-1}  \frac{1}{\vert \pi  \lambda\vert^{j+1}}  \vert S^j f \vert_{L^2_h(M)} \\ &+  \frac{1}{\vert 2\pi  \lambda\vert^k}  \vert \int_0^{\mathcal T}   e^{2\pi \imath \lambda t} S^k f \circ \phi^S_t  dt \vert_{L^2_h(M)}
\end{aligned}
$$
It follows that under the assumption that $S^j f \in L^2_h(M)$, for all $j\in \{0, \dots, N_\kappa\}$, and that
$f$ and $S^{N_\kappa} f \in H^1_h(M)$, the hypothesis of Lemma~\ref{lemma:Eff_WM}, for the part concerning
the bound on twisted integrals, are a consequence of Theorem~\ref{thm:twist_integral_bound}. The
hypothesis  of Lemma~\ref{lemma:Eff_WM}, for the part concerning the bounds on ergodic integrals
($\lambda =0$), follows from Theorem~\ref{thm:AtF08}  for functions of zero average. The corollary
is therefore proved.

\end{proof}

\section{Spectral dimension and effective weak mixing}
\label{sec:spectral_dim}

The content of this section is standard. We reproduce it here for the convenience of the reader.
We recall that for any measure $\sigma$ on $\R$ we can defined the lower and upper lower local
dimension, $\underline{d}_\sigma(\lambda)$ and $\overline{d}_\sigma(\lambda) $, at $\lambda\in \R$,
as follows:
$$
\begin{aligned}
&\underline{d}_\sigma(\lambda):= \underline{\lim}_{r\to 0^+} \frac{ \log \sigma ([\lambda -r, \lambda +r])}{\log r}\,, \\
& \overline{d}_\sigma(\lambda):= \overline{\lim}_{r\to 0^+} \frac{ \log \sigma ([\lambda -r, \lambda +r])}{\log r}\,.
\end{aligned}
$$
Let $\sigma_f$ denote the spectral measure of a function $f\in L^2(M, \mu)$ for
a flow $(\phi_\R)$ which preserves the probability measure $\mu$ on $M$. The measure $\sigma_f$ is a complex measure on $\R$ of finite total mass equal to
$\Vert f \Vert^2$.  Let $\underline{d}_f(\lambda)$ and $\overline{d}_f(\lambda)$ denote the
lower and upper local dimensions of the measure $\sigma_f$ at $\lambda \in \R$. 

\begin{lemma} \label{lemma:spectral_1} Let us assume that given $\lambda \in \R$ and a function $f\in L^2(M, \mu)$ there exist constants $C_f(\lambda)>0$ and  $0\leq \alpha_- \leq \alpha_+<1$ such that, for all $\mathcal T\geq {\mathcal T}_0>0$, 
$$
C_f(\lambda)^{-1} \mathcal T^{1-\alpha_+} \leq \Vert  \int_0^{\mathcal T} e^{-2\pi \imath \lambda t} f\circ \phi_t \,dt  \Vert_{L^2(M,\mu)}  \leq  C_f(\lambda) {\mathcal T}^{1-\alpha_-}  \,.
$$ 
Then there exist constants $C'_f(\lambda)$ and $r_0>0$ such that for all $r\in (0, r_0)$ we have
$$
C'_f(\lambda)^{-1}  r ^{\frac{2\alpha_+}{1-\alpha_+} }  \leq \sigma_f([\lambda -r, \lambda +r]  )  \leq     8 C_f(\lambda)  r ^{2\alpha_-}\,.
$$
In particular we derive
$$
 2 \alpha_- \leq \underline{d}_f(\lambda) \leq \overline{d}_f(\lambda)   \leq    \frac{2\alpha_+}{1-\alpha_+}\,.
$$
\end{lemma}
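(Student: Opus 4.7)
The plan is to apply the spectral theorem to convert the hypothesized bounds on twisted $L^2$ ergodic integrals into bounds on the mass of $\sigma_f$ on small intervals via an integral representation involving a Fej\'er-type kernel.  First I would compute, using $\< U_t f, f\>_{L^2(M,\mu)} = \int_{\R} e^{2\pi \imath \xi t} d\sigma_f(\xi)$ and Fubini, that
$$
\Bigl\Vert \int_0^{\mathcal T} e^{-2\pi \imath \lambda t} f\circ \phi_t\, dt \Bigr\Vert_{L^2(M,\mu)}^2 \;=\; \int_{\R} K_{\mathcal T}(\xi - \lambda)\, d\sigma_f(\xi),
$$
where $K_{\mathcal T}(x) := \sin^2(\pi x \mathcal T)/(\pi x)^2 = \mathcal T^2 \mathrm{sinc}^2(\pi x \mathcal T)$ satisfies $K_{\mathcal T}(0)=\mathcal T^2$ and $K_{\mathcal T}(x) \leq 1/(\pi x)^2$ globally.

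For the upper bound on $\sigma_f([\lambda-r,\lambda+r])$, I would use the elementary inequality $\sin y \geq 2y/\pi$ on $[0,\pi/2]$ to get $K_{\mathcal T}(x) \geq (4/\pi^2)\mathcal T^2$ on $|x|\leq 1/(2\mathcal T)$. Choosing $\mathcal T = 1/(2r)$, this gives
$$
\tfrac{4}{\pi^2} \mathcal T^2 \sigma_f([\lambda-r,\lambda+r]) \leq C_f(\lambda)^2 \mathcal T^{2(1-\alpha_-)},
$$
from which $\sigma_f([\lambda-r,\lambda+r]) \leq \mathrm{const}\cdot C_f(\lambda)^2 r^{2\alpha_-}$ follows.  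For the lower bound, the idea is to split the spectral integral into the ``near'' contribution $|\xi-\lambda|\leq r$, bounded above by $\mathcal T^2 \sigma_f([\lambda-r,\lambda+r])$ (using $K_{\mathcal T} \leq \mathcal T^2$), and the ``tail'' $|\xi-\lambda|>r$, bounded above by $\Vert f\Vert^2/(\pi^2 r^2)$ (using $K_{\mathcal T}(x)\leq 1/(\pi x)^2$). The hypothesis then yields
$$
C_f(\lambda)^{-2} \mathcal T^{2(1-\alpha_+)} \;\leq\; \mathcal T^2 \sigma_f([\lambda-r,\lambda+r]) + \frac{\Vert f\Vert^2}{\pi^2 r^2}.
$$

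The key step, and the only real piece of optimization in the proof, is to tune $\mathcal T$ as a function of $r$ so that the tail term is absorbed into (say) one half of the left-hand side. This forces $\mathcal T^{2(1-\alpha_+)} \gtrsim r^{-2}$, i.e.\ the natural choice $\mathcal T \sim r^{-1/(1-\alpha_+)}$, which in turn gives
$$
\sigma_f([\lambda-r,\lambda+r]) \;\geq\; c\, \mathcal T^{-2\alpha_+} \;\sim\; c\, r^{2\alpha_+/(1-\alpha_+)}.
$$
The exponent $2\alpha_+/(1-\alpha_+)$ in the statement arises precisely from this balancing; no sharper tail estimate (such as a dyadic annular decomposition of the spectrum) is needed since only the exponent matters for the local-dimension conclusion. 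Finally, the bounds on $\underline d_f(\lambda)$ and $\overline d_f(\lambda)$ are read off by taking $\liminf_{r\to 0^+}$ and $\limsup_{r\to 0^+}$ of $\log \sigma_f([\lambda-r,\lambda+r])/\log r$ in the two estimates. The main (minor) obstacle is purely one of bookkeeping, namely confirming that the absorption of the tail term indeed proceeds uniformly for $r$ below some explicit threshold $r_0$ depending on $\Vert f\Vert$ and $C_f(\lambda)$.
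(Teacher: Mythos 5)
Your proposal is correct and follows essentially the same route as the paper: the identity $\Vert\int_0^{\mathcal T}e^{-2\pi\imath\lambda t}f\circ\phi_t\,dt\Vert^2=\int_\R K_{\mathcal T}(\xi-\lambda)\,d\sigma_f(\xi)$ with the Fej\'er-type kernel, a lower bound on the kernel on $|\xi-\lambda|\le 1/(2\mathcal T)$ with $\mathcal T=1/(2r)$ for the upper bound on $\sigma_f$, and a near/tail splitting with the tail controlled by $\Vert f\Vert^2/(\pi^2 r^2)$ and $\mathcal T\sim r^{-1/(1-\alpha_+)}$ chosen to absorb it for the lower bound. The local-dimension conclusions then follow exactly as you indicate.
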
 
\begin{proof}  By spectral theory we have
\begin{equation}
\label{eq:spectral}
\begin{aligned}
\Vert  &\int_0^{\mathcal T} e^{-2\pi \imath \lambda t} f\circ \phi_t(\cdot) \,dt  \Vert^2_{L^2(M,\mu)} =
\Vert  \int_0^{\mathcal T} e^{-2\pi \imath (\lambda -\xi)  t }dt  \Vert^2_{L^2(\R,d\sigma_f(\xi))} \\ & =
\int_\R \vert  \frac{ e^{-2\pi \imath (\lambda -\xi) {\mathcal T}} -1}{ 2\pi \imath (\lambda -\xi)} 
\vert^2 d\sigma_f(\xi)  = {\mathcal T}^2 \int_\R \vert  \frac{ e^{-2\pi \imath (\lambda -\xi) {\mathcal T}} -1}{ 2\pi \imath (\lambda -\xi){\mathcal T}} \vert^2 d\sigma_f(\xi)  \,.
\end{aligned}
\end{equation}
Let $\chi:\R \to \R^+$ denote the function
$$
\chi (x) :=  \left\vert \frac{ e^{-2\pi \imath x} -1}{ 2\pi \imath x} \right\vert^2 \,.
$$
Let $c>0$ be the strictly positive constant defined as
$$
c:= \min_{x\in [-1/2, 1/2] }  \chi (x) \,\geq\, \frac{1}{2} \,.
$$
It follows that 
$$
c {\mathcal T}^2 \sigma_f( [\lambda-\frac{1}{2{\mathcal T}}, \lambda + \frac{1}{2{\mathcal T}}]) \leq  \Vert  \int_0^{\mathcal T} e^{-2\pi \imath \lambda t} f\circ \phi_t \,dt  \Vert^2_{L^2(M,\mu)}\,,
$$
which is equivalent to the estimate
$$
 \sigma_f( [\lambda-r, \lambda + r]) \leq  4 c^{-1}  r^2 \Vert  \int_0^{\frac{1}{2r}} e^{-2\pi \imath \lambda t} f\circ \phi_t \,dt  \Vert^2_{L^2(M,\mu)}\,,
$$
Under the hypothesis we have
$$
 \sigma_f( [\lambda-r, \lambda + r]) \leq  4 c^{-1}  C_f(\lambda) r^2  r^{-2(1-\alpha_-)} =
 4 c^{-1}  C_f(\lambda)  r^{2 \alpha_-}\,.
$$
For the lower bound we write
$$
\begin{aligned}
 \int_\R &\vert  \frac{ e^{-2\pi \imath (\lambda -\xi) {\mathcal T}} -1}{ 2\pi \imath (\lambda -\xi){\mathcal T}} 
\vert^2 d\sigma_f(\xi)=  \int_{\vert \lambda -\xi \vert \leq r}  \vert \frac{ e^{-2\pi \imath (\lambda -\xi) {\mathcal T}} -1}{ 2\pi \imath (\lambda -\xi){\mathcal T}} \vert^2 d\sigma_f(\xi)  \\ &+   \int_{\vert \lambda -\xi \vert \geq r}   \vert \frac{ e^{-2\pi \imath (\lambda -\xi) {\mathcal T}} -1}{ 2\pi \imath (\lambda -\xi){\mathcal T}} \vert^2 d\sigma_f(\xi)\,.
\end{aligned}
$$
We have the following bounds: there exists $C>0$ such that
$$
\begin{aligned}
\int_{\vert \lambda -\xi \vert \leq r}  \vert \frac{ e^{-2\pi \imath (\lambda -\xi) {\mathcal T}} -1}{ 2\pi \imath (\lambda -\xi){\mathcal T}} \vert^2 d\sigma_f(\xi) &\leq   C \sigma_f([\lambda -r , \lambda + r ])\,, \\
\int_{\vert \lambda -\xi \vert \geq r}  \vert \frac{ e^{-2\pi \imath (\lambda -\xi) {\mathcal T}} -1}{ 2\pi \imath (\lambda -\xi){\mathcal T}} \vert^2 d\sigma_f(\xi) &\leq   \frac{C \Vert f \Vert^2}{r^2 {\mathcal T}^2} \,,
\end{aligned}
$$
hence we derive the lower bound
$$
\sigma_f([\lambda -r , \lambda + r ]) \geq \frac{C^{-1}}{{\mathcal T}^2}  \Vert  \int_0^{\mathcal T} e^{-2\pi \imath \lambda t} f\circ \phi_t \,dt  \Vert^2_{L^2(M,\mu)}  - \frac{\Vert f \Vert^2}{r^2 {\mathcal T}^2} \,.
$$
Finally, under the assumption that we have a lower bound
$$
 \Vert  \int_0^{\mathcal T} e^{-2\pi \imath \lambda t} f\circ \phi_t \,dt  \Vert_{L^2(M,\mu)}  \geq  C_f(\lambda)^{-1} {\mathcal T}^{1-\alpha_+}\,,
$$
we derive that there exists $C'_f(\lambda)>0$ such that 
$$
\sigma_f([\lambda -r , \lambda + r ]) \geq  C'_f (\lambda) {\mathcal T}^{-2\alpha_+}  - \frac{ \Vert f \Vert^2 }{r^2 {\mathcal T}^2} 
$$
then, by taking ${\mathcal T} = \left(\frac{2\Vert f \Vert}{[C'_f(\lambda)]^{1/2} r}\right)^{\frac{1}{1-\alpha_+}}$,  there exists a constant $C_f^{(\alpha)}>0$ such that
$$
\sigma_f([\lambda -r , \lambda + r ]) \geq  C_f^{(\alpha)} r^{\frac{2 \alpha_+}{1-\alpha_+}} \,.
$$
\end{proof} 

\begin{lemma} Let us assume that given $\lambda \in \R$ and a function $f\in L^2(M, \mu)$ there exist constants $C_f(\lambda)>0$, $r_0>0$ and  $0\leq \beta_- \leq \beta_+ \leq 1$ such that, for all $0<r\leq r_0$, 
$$
C_f(\lambda)^{-1}  r ^{2\beta_+ }  \leq \sigma_f([\lambda -r, \lambda +r]  )  \leq     C_f(\lambda)  r ^{2\beta_-}\,.
$$
Then there exist constants $C'_f(\lambda)$ and $ \mathcal T_0>0$ such that for all ${\mathcal T}\geq {\mathcal T}_0\geq e$ we have
$$
C'_f(\lambda)^{-1} {\mathcal T}^{1-\beta_+} \leq \Vert  \int_0^{\mathcal T} e^{-2\pi \imath \lambda t} f\circ \phi_t \,dt  \Vert_{L^2(M,\mu)}  \leq  C'_f(\lambda) \max\{{\mathcal T}^{1-\beta_-} , (\log {\mathcal T})^{1/2}\} \,.
$$ 
\end{lemma}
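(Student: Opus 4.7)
The plan is to start from the spectral-theoretic identity already exploited in the previous lemma, namely
$$
\Bigl\|\int_0^{\mathcal T}e^{-2\pi\imath\lambda t}f\circ\phi_t\,dt\Bigr\|_{L^2(M,\mu)}^2
={\mathcal T}^2\int_{\R}\chi\bigl((\lambda-\xi){\mathcal T}\bigr)\,d\sigma_f(\xi),
$$
where $\chi(x)=\bigl|(e^{-2\pi\imath x}-1)/(2\pi\imath x)\bigr|^2$ satisfies $\chi(x)\leq\min\{1,(\pi x)^{-2}\}$ and $\chi(x)\geq c>0$ on $|x|\leq 1/2$. The upper and lower bounds will come from, respectively, integrating the kernel against the Hölder upper bound and restricting the integral to the peak of the kernel and using the Hölder lower bound. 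These are essentially dual to the arguments in Lemma~\ref{lemma:spectral_1}.

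For the upper bound, I would split $\R$ as the union of the central interval $I_0=[\lambda-1/{\mathcal T},\lambda+1/{\mathcal T}]$, the dyadic annuli $A_k=\{\xi : 2^k/{\mathcal T}\leq|\xi-\lambda|<2^{k+1}/{\mathcal T}\}$ for $0\leq k\leq k_{\max}$ with $k_{\max}=\lfloor\log_2(r_0{\mathcal T})\rfloor$, and the far tail $\{|\xi-\lambda|\geq r_0\}$. On $I_0$ the kernel is bounded by $1$ and the Hölder upper bound contributes at most $C_f(\lambda){\mathcal T}^{-2\beta_-}$. On $A_k$ the kernel is bounded by $(2^k)^{-2}$ and $\sigma_f(A_k)\leq C_f(\lambda)(2^{k+1}/{\mathcal T})^{2\beta_-}$, giving a contribution of order $4^{\beta_-}C_f(\lambda){\mathcal T}^{-2\beta_-}2^{k(2\beta_--2)}$. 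The tail contributes at most $\|f\|^2/(r_0{\mathcal T})^2$. Summing over $k$ and multiplying by ${\mathcal T}^2$ yields a geometric series whose behavior bifurcates: when $\beta_-<1$ the series converges to a constant and the total is $O({\mathcal T}^{2-2\beta_-})$; when $\beta_-=1$ each dyadic level contributes a constant and summing over $k_{\max}\sim\log{\mathcal T}$ gives $O(\log{\mathcal T})$. Taking square roots gives the upper estimate in the statement.

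For the lower bound, I would simply restrict the integral to the small interval $\{|\xi-\lambda|\leq 1/(2{\mathcal T})\}$, on which the kernel is uniformly bounded below by some $c>0$. The Hölder lower bound gives
$$
\Bigl\|\int_0^{\mathcal T}e^{-2\pi\imath\lambda t}f\circ\phi_t\,dt\Bigr\|_{L^2}^2
\;\geq\;c{\mathcal T}^2\,\sigma_f\bigl([\lambda-1/(2{\mathcal T}),\lambda+1/(2{\mathcal T})]\bigr)
\;\geq\;c\,C_f(\lambda)^{-1}(2{\mathcal T})^{-2\beta_+}{\mathcal T}^2,
$$
valid for ${\mathcal T}\geq 1/(2r_0)$, yielding the claimed lower bound after taking square roots.

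The only mild subtlety is the borderline case $\beta_-=1$, which is precisely what produces the logarithmic factor and accounts for the $\max\{\cdots,(\log{\mathcal T})^{1/2}\}$ in the statement; handling this case requires tracking the length of the dyadic range up to the cutoff at scale $r_0$ and using that the Hölder upper bound is only assumed on scales $\leq r_0$, with the tail treated separately using the finite total mass $\sigma_f(\R)=\|f\|^2$. No new analytic input beyond the dyadic decomposition and the standard kernel bounds on $\chi$ is needed.
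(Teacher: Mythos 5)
Your proposal is correct and follows essentially the same route as the paper: the spectral identity, a dyadic decomposition of the frequency variable around $\lambda$ combined with the kernel bounds $\chi(x)\leq\min\{1,(\pi x)^{-2}\}$ and $\chi\geq c$ near the origin, a geometric series that converges for $\beta_-<1$ and yields the $\log\mathcal T$ factor at the borderline $\beta_-=1$, and a lower bound obtained by restricting to the central window $|\xi-\lambda|\leq 1/(2\mathcal T)$. Your explicit truncation of the dyadic range at the scale $r_0$ (with the tail controlled by the total mass $\|f\|^2$) is, if anything, slightly more careful than the paper's choice of cutoff $m=[\log\mathcal T/\log 2]$, but the argument is the same.
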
 
\begin{proof}  For fixed $\lambda \in \R$ and ${\mathcal T}>0$ and for all $n\in \N$ we let $I_n (\lambda)\subset \R$ denote the set defined as follows:
$$
I_n :=  \{ \xi \in \R :   {\mathcal T}\vert \xi -\lambda \vert  \leq 2^{n-2} \}.
$$
By formula \eqref{eq:spectral}, we then write (for  $m>1$ to be chosen later) 
$$
\begin{aligned}
\Vert  &\int_0^{\mathcal T} e^{-2\pi \imath \lambda t} f\circ \phi_t \, dt  \Vert^2_{L^2(M,\mu)}  
\leq {\mathcal T}^2   \int_{I_0} \vert  \frac{ e^{-2\pi \imath (\lambda -\xi) {\mathcal T}} -1}{ 2\pi \imath (\lambda -\xi){\mathcal T}} \vert^2 d\sigma_f(\xi)  \\    &+ {\mathcal T}^2 \sum_{n= 1}^m \int_{I_n\setminus I_{n-1}} \vert  \frac{ e^{-2\pi \imath (\lambda -\xi) {\mathcal T}} -1}{ 2\pi \imath (\lambda -\xi){\mathcal T}} \vert^2 d\sigma_f(\xi)  + {\mathcal T}^2 2^{-2(m-2)} \sigma_f (\R\setminus I_m)\,.
\end{aligned}
$$
Let us denote below by $C$, $C_f(\lambda)$, $C'_f(\lambda)$ positive constants, independent of $\mathcal T \in \R$, but possibly dependent on the function $f\in L^2(M, \mu)$ or on the phase constant $\lambda \in \R$,  which may vary from line to line throughout the argument.

\smallskip
Since there exists a (universal) constant $C>0$ such that
$$
C^{-1} \leq  \vert  \frac{ e^{-2\pi \imath (\lambda -\xi) {\mathcal T}} -1}{ 2\pi \imath (\lambda -\xi){\mathcal T}} \vert \leq C\,,
\quad \text{ for all } \xi \in I_0 \,,
$$
it follows by the assumptions that there exists a constant $C_f(\lambda)>0$ such that
$$
C_f(\lambda)^{-1} {\mathcal T}^{-2\beta_+} \leq    \int_{I_0} \vert  \frac{ e^{-2\pi \imath (\lambda -\xi) {\mathcal T}} -1}{ 2\pi \imath (\lambda -\xi){\mathcal T}} \vert^2 d\sigma_f(\xi)  \leq C_f(\lambda) {\mathcal T}^{-2\beta_-} \,.
$$
Then from the hypothesis and the inequality
$$
\vert  \frac{ e^{-2\pi \imath (\lambda -\xi) {\mathcal T}} -1}{ 2\pi \imath (\lambda -\xi){\mathcal T}} \vert  \leq \frac{2^{-(n-3)} }{\pi } \,, \quad \text{ for all } \xi \not \in I_{n-1}\,,
$$
it follows that there exist  constant $C_f(\lambda), C'_f(\lambda)>0$ such that
$$
\begin{aligned}
  \int_{I_n\setminus I_{n-1}} \vert  \frac{ e^{-2\pi \imath (\lambda -\xi) {\mathcal T}} -1}{ 2\pi \imath (\lambda -\xi){\mathcal T}} \vert^2 d\sigma_f(\xi)  &\leq    C_f(\lambda) \frac{  2^{-2(n-3)} }{\pi^2  }  
   \left (\frac{2^{n-2} }{ {\mathcal T}} \right)^{2\beta_-}  \\ &=   \frac{C'_f(\lambda) }{16^{\beta_-} }  {\mathcal T}^{-2\beta_-}  \, 2^{-(2-2\beta_-)n}   \,.
\end{aligned}
$$
It then follows that whenever $\beta_- <1$ there exists a constant $C^{(\beta_-)}_f(\lambda)>0$ such that, for all ${\mathcal T} >{\mathcal T}_0$,  we have
$$
 \sum_{n= 1}^{+\infty}  \int_{I_n\setminus I_{n-1}} \vert  \frac{ e^{-2\pi \imath (\lambda -\xi) {\mathcal T}} -1}{ 2\pi \imath (\lambda -\xi){\mathcal T}} \vert^2 d\sigma_f(\xi) \leq  C^{(\beta_-)}_f(\lambda)  {\mathcal T}^{-2\beta_-} \,,
 $$
hence the argument is completed in this case.  For $\beta_-=1$ we have 
$$
 \sum_{n= 1}^m \int_{I_n\setminus I_{n-1}} \vert  \frac{ e^{-2\pi \imath (\lambda -\xi) {\mathcal T}} -1}{ 2\pi \imath (\lambda -\xi){\mathcal T}} \vert^2 d\sigma_f(\xi) \leq  \frac{ C_f(\lambda) }{4}  {\mathcal T}^{-2}  m\,,
$$
hence, by taking $m= [ \frac{\log {\mathcal T}}{\log 2}]$ we derive that
$$
 \sum_{n= 1}^m \int_{I_n\setminus I_{n-1}} \vert  \frac{ e^{-2\pi \imath (\lambda -\xi) {\mathcal T}} -1}{ 2\pi \imath (\lambda -\xi){\mathcal T}} \vert^2 d\sigma_f(\xi)  + 2^{-2(m-2)} \Vert f \Vert^2 \leq 
 C'_f(\lambda) {\mathcal T}^{-2 } \log {\mathcal T} \,,
$$
thereby completing the argument in all cases.
\end{proof}

We conclude the section with a general lemma on effective weak mixing.

\begin{lemma} 
\label{lemma:Eff_WM}
Let $\phi_\R$ be a flow on a probability space $(M,\mu)$ and let  $f\in L^2(M, \mu)$. Let us assume that there exists $\alpha$, $\beta>0$ such that  there exists a constant $I(f)>1$ such that, for all  $\lambda \in \R$ and for all ${\mathcal T}>1$,  we have
$$
\begin{aligned}
\Vert &\int_0^{\mathcal T}   e^{2\pi \imath \lambda t}  f \circ \phi_t \,dt   \Vert_{L^2(M,\mu)}  \leq I(f)
\vert \lambda \vert^{-\beta}  {\mathcal T}^{1-\alpha} \,, \\ 
&\text{ and } \quad \Vert \int_0^{\mathcal T}    f \circ \phi_t \,dt   \Vert_{L^2(M,\mu)}  \leq I(f)   {\mathcal T}^{1-\alpha}\,.
\end{aligned}
$$
Then there exist constants $\alpha' := \alpha' (\alpha, \beta)>0$ and  $C>0$ such that
the following effective weak mixing  bound holds. For all $g\in L^2(M, \mu)$ and for ${\mathcal T}>1$ we have
$$
\frac{1}{{\mathcal T}} \int_0^{\mathcal T}   \vert \langle f\circ \phi_t, g\rangle_{L^2(M, \mu)}\vert^2 dt  \leq   C I(f) \Vert f \Vert_{L^2(M, \mu)}  \Vert g \Vert^2_{L^2(M, \mu)} {\mathcal T}^{-\alpha'} \,.
$$
\end{lemma}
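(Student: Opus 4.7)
The plan is to use Plancherel to convert $\int_0^{\mathcal T}|\langle f\circ\phi_t,g\rangle|^2\, dt$ into a frequency integral, apply the hypothesis pointwise at each frequency, and then split and optimize. Setting $c(t) := \langle f\circ\phi_t,g\rangle_{L^2(M,\mu)}$ and $h(t) := \chi_{[0,\mathcal T]}(t)\, c(t)$, Plancherel yields
$$
\int_0^{\mathcal T}|c(t)|^2\, dt = \int_{\R}|\hat h(\xi)|^2\, d\xi, \qquad \hat h(\xi) = \left\langle \int_0^{\mathcal T} e^{-2\pi i \xi t} f\circ\phi_t\, dt,\, g\right\rangle.
$$
Cauchy--Schwarz together with the two hypotheses (used at $\lambda = -\xi$ for $\xi\neq 0$ and at $\lambda = 0$ for $\xi = 0$) then gives the pointwise bounds
$$
|\hat h(\xi)| \leq I(f)|\xi|^{-\beta}\mathcal T^{1-\alpha}\Vert g\Vert \quad (\xi\neq 0), \qquad |\hat h(\xi)|\leq \Vert f\Vert\, \Vert g\Vert\, \mathcal T.
$$

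Next I would split the frequency integral at the crossover
$$
L_\ast := (I(f)/\Vert f\Vert)^{1/\beta}\, \mathcal T^{-\alpha/\beta},
$$
where the two bounds agree. On $|\xi|\leq L_\ast$ the trivial bound contributes at most $2L_\ast\Vert f\Vert^2\Vert g\Vert^2\mathcal T^2$, and on $|\xi|>L_\ast$ the twisted decay contributes a term of the same order, provided $\beta>1/2$ so that the tail integral $\int_{L_\ast}^\infty\xi^{-2\beta}\, d\xi$ converges. Both pieces amount to $I(f)^{1/\beta}\Vert f\Vert^{2-1/\beta}\Vert g\Vert^2\mathcal T^{2-\alpha/\beta}$, so that after dividing by $\mathcal T$ one obtains
$$
\frac{1}{\mathcal T}\int_0^{\mathcal T}|c(t)|^2\, dt \;\lesssim\; I(f)^{1/\beta}\Vert f\Vert^{2-1/\beta}\Vert g\Vert^2\, \mathcal T^{1-\alpha/\beta},
$$
which is the stated effective weak mixing bound with $\alpha' := \alpha/\beta - 1$, positive when $\alpha>\beta$. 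In the borderline case $\beta = 1$ the prefactor reduces to $I(f)\Vert f\Vert$ exactly as in the statement, and for other $\beta$ it is absorbed into the constant $C$.

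The main obstacle is covering the full range $\alpha, \beta > 0$ without the auxiliary restrictions $\beta > 1/2$ and $\alpha > \beta$ used above. A naive interpolation between the twisted and trivial pointwise bounds leaves the optimized exponent $\mathcal T^{2-\alpha/\beta}$ unchanged, so in the complementary regimes I would instead pass to the spectral measure formulation, writing
$$
\tfrac{1}{\mathcal T}\int_0^{\mathcal T}|c(t)|^2\, dt = \iint K_{\mathcal T}(\lambda-\mu)\, d\sigma(\lambda)\, d\bar\sigma(\mu)
$$
with $\sigma$ the cross spectral measure of $(f,g)$ and $K_{\mathcal T}$ the associated Fej\'er-type kernel, use the H\"older estimate on $\sigma_f$ from Lemma~\ref{lemma:spectral_1} together with the Cauchy--Schwarz inequality $|\sigma_{f,g}|\leq \sigma_f^{1/2}\sigma_g^{1/2}$, and manage the blow-up of the H\"older constant $C_f(\lambda) = I(f)|\lambda|^{-\beta}$ near $\lambda=0$ by truncating the outer integration in a small neighborhood of $0$ and optimizing the cutoff. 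This should produce a positive decay exponent $\alpha'(\alpha,\beta)$ expressible as an elementary function of $\alpha$ and $\beta$, valid for all $\alpha, \beta > 0$.
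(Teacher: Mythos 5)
Your first argument (Plancherel applied to $h=\chi_{[0,\mathcal T]}\,\langle f\circ\phi_t,g\rangle$, pointwise bounds on $\hat h$, and a crossover split at $L_\ast$) is correct as far as it goes, but as you yourself note it only yields the conclusion under the extra restrictions $\beta>1/2$ (needed for $\int_{L_\ast}^{\infty}\xi^{-2\beta}\,d\xi<\infty$) and $\alpha>\beta$ (needed for $\alpha'=\alpha/\beta-1>0$). The lemma is asserted for \emph{all} $\alpha,\beta>0$, and in the intended applications there is no reason for $\alpha>\beta$ to hold. The complementary regime is addressed only in your final paragraph, which names plausible ingredients (cross spectral measure, the Cauchy--Schwarz inequality $|\sigma_{f,g}|\leq\sigma_f^{1/2}\sigma_g^{1/2}$, Lemma~\ref{lemma:spectral_1}, a cutoff near $\lambda=0$) but does not carry out the estimate or exhibit a positive exponent; ``this should produce a positive decay exponent'' is where the proof actually has to happen. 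As written, the proposal therefore has a genuine gap.

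The fix is close to your sketch but structurally simpler, and it is the route the paper takes: instead of squaring via Plancherel, write $\int_0^{\mathcal T}|\langle f\circ\phi_t,g\rangle|^2\,dt=\int_{\R}\bigl(\int_0^{\mathcal T}e^{2\pi \imath\lambda t}\langle f\circ\phi_t,g\rangle\,dt\bigr)\,d\bar\sigma_{f,g}(\lambda)$, so the twisted integral enters \emph{linearly} and is paired against the finite-mass measure $\sigma_{f,g}$ rather than against Lebesgue measure; this removes the $\beta>1/2$ issue entirely. Split the outer integral at $|\lambda|=\mathcal T^{-\eta}$ with $\eta>0$ chosen so small that $\beta\eta<\alpha$. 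For $|\lambda|\geq\mathcal T^{-\eta}$ the twisted hypothesis and Cauchy--Schwarz give $|\int_0^{\mathcal T}e^{2\pi\imath\lambda t}\langle f\circ\phi_t,g\rangle\,dt|\leq I(f)\Vert g\Vert\,|\lambda|^{-\beta}\mathcal T^{1-\alpha}\leq I(f)\Vert g\Vert\,\mathcal T^{1-\alpha+\beta\eta}$, and integrating against the total mass $\Vert f\Vert\,\Vert g\Vert$ of $\sigma_{f,g}$ yields $I(f)\Vert f\Vert\,\Vert g\Vert^2\,\mathcal T^{1-\alpha+\beta\eta}$. For $|\lambda|\leq\mathcal T^{-\eta}$ use the trivial bound $\mathcal T\Vert f\Vert\,\Vert g\Vert$ on the inner integral; the gain comes from the mass of $\sigma_{f,g}$ near zero, namely $\sigma_{f,g}(-r,r)\leq\Vert g\Vert\,\sigma_f(-r,r)^{1/2}\leq[8I(f)]^{1/2}\Vert g\Vert\, r^{\alpha}$ with $r=\mathcal T^{-\eta}$, where the bound on $\sigma_f$ near $0$ follows from Lemma~\ref{lemma:spectral_1} applied at $\lambda=0$ using only the \emph{untwisted} hypothesis --- so there is no blow-up of the H\"older constant to manage, contrary to what your sketch anticipates. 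Dividing by $\mathcal T$ gives $\alpha'=\min(\alpha-\beta\eta,\,\alpha\eta)>0$ for every $0<\eta<\alpha/\beta$, valid for all $\alpha,\beta>0$.
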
 
\begin{proof}
Let $\sigma_{f,g}$ denote the spectral measure of the pair $f, g\in L^2(M,\mu)$. By definition, the measure
$\sigma_{f,g}$ is the Fourier transform of the absolutely continuous measure $\langle f \circ \phi_t , g\rangle dt$.
By properties of the Fourier transform, we can write
$$
\begin{aligned}
\int_0^{\mathcal T} \vert \langle f \circ \phi_t , g\rangle \vert^2  dt &= \int_\R \chi_{[0,{\mathcal T}]} \langle f \circ \phi_t , g\rangle 
\overline{ \langle f \circ \phi_t , g\rangle}  dt  \\ &= \int_\R \left ( \int_0^{\mathcal T} 
e^{2\pi \imath \lambda t} \langle f \circ \phi_t , g\rangle  dt \right)  d \bar \sigma_{f,g}(\lambda)
\end{aligned}
$$
Let $\eta>0$ such that $\beta \eta <\alpha$. Since by H\"older inequality
$$
\vert \int_0^{\mathcal T} 
e^{2\pi \imath \lambda t} \langle f \circ \phi_t , g\rangle  dt \vert  \leq  \Vert  g  \Vert_{L^2(M,\mu)} \times \Vert \int_0^{\mathcal T}   e^{2\pi \imath \lambda t}  f \circ \phi_t \,dt   \Vert_{L^2(M,\mu)}    \,,
$$
it follows that, for $\vert \lambda \vert \geq \mathcal {\mathcal T}^{-\eta}$, we have
$$
\begin{aligned}
 \int_{\vert \lambda\vert \geq {\mathcal T}^{-\eta}} & \left ( \int_0^{\mathcal T} 
e^{2\pi \imath \lambda t} \langle f \circ \phi_t , g\rangle  dt \right)  d \bar \sigma_{f,g}(\lambda)
\\ &\leq  I(f)  \mathcal T^{1-\alpha + \beta \eta}  \Vert  f  \Vert_{L^2(M,\mu)} \Vert  g  \Vert^2_{L^2(M,\mu)} \,.
\end{aligned}
$$
Finally, we claim that  we have
\begin{equation}
\label{eq:near_zero}
 \sigma_{f,g}(-{\mathcal T}^{-\eta}, {\mathcal T}^{-\eta}) \leq  [8 I(f)]^{1/2} \Vert g \Vert_{L^2(M,\mu)}  {\mathcal T}^{-\alpha \eta}\,.
\end{equation}
In fact, by the hypothesis that 
$$
\Vert \int_0^{\mathcal T}    f \circ \phi_t \,dt   \Vert_{L^2(M,\mu)}  \leq I(f)   {\mathcal T}^{1-\alpha}\,,
$$
and by Lemma~\ref{lemma:spectral_1} (with $\lambda=0$ and $r={\mathcal T}^{-\eta}$), we derive the bound
$$
\sigma_f (-{\mathcal T}^{-\eta}, {\mathcal T}^{-\eta}) \leq 8 I(f) {\mathcal T}^{-2 \alpha \eta}\,,
$$
then the claim follows from the general inequalities for spectral measures:
$$
\sigma_{f,g} \leq  \sigma_f^{1/2} 
\sigma_g^{1/2} \leq   \Vert g \Vert_{L^2(M,\mu)}  \sigma_f^{1/2} \,.
$$
By the bound in formula~\eqref{eq:near_zero}  we then conclude that
$$
\begin{aligned}
 \int_{\vert \lambda\vert \leq {\mathcal T}^{-\eta}}  \left ( \int_0^{\mathcal T} 
e^{2\pi \imath \lambda t} \langle f \circ \phi_t , g\rangle  dt \right)&  d \bar \sigma_{f,g}(\lambda)
 \\ &\leq [8 I(f )]^{1/2} \Vert f \Vert_{L^2(M,\mu)} \Vert g \Vert^2_{L^2(M,\mu)}  {\mathcal T}^{1-\alpha \eta} \,.
\end{aligned}
$$
The statement of the lemma follows.

\end{proof}


\begin{thebibliography}{20}

\bibitem{Ad}[Ad] R. A. Adams, \emph{Sobolev Spaces}, Pure and Applied Mathematics, v. 65, Academic Press, New York-London, 1975.
\bibitem[AtF08]{AtF08} J.~Athreya and G.~Forni, Deviation of ergodic averages for rational polygonal billiards, {\it Duke Math. J.} 
{\bf 144} (2) (2008), 285--319.
\bibitem[AD16]{AD16} A.~Avila and V.~Delecroix, Weak mixing directions in non-arithmetic Veech surfaces, {\it J. Amer. Math. Soc.}  \textbf{29} (2016), 1167--1208.
\bibitem[AvF07]{AvF07} A.~Avila and G.~Forni,  Weak mixing for interval exchange transformations and translation 
flows, {\it Ann. of Math.}   \textbf{165}(2) (2007), 637--664 . 
\bibitem[AL]{AL} A.~Avila and M.~Leguil,  Weak mixing properties of interval exchange transformations and translation flows, {\it Bull. Soc. Math. France} \textbf{146} (2) (2018), 391--426.
\bibitem[AV07]{AV07} A.~Avila and M.~Viana, Simplicity of Lyapunov spectra: proof of the Zorich--Kontsevich conjecture, {\it Acta Math.}  {\bf 198} (1) (2007), 1--56.
\bibitem[BS14]{BS14} A.~I.~Bufetov and B.~Solomyak, On the modulus of continuity for spectral measures in substitution dynamics,  {\it Adv. Math.} {\bf 260} (2014), 84--129.
\bibitem[BS18a]{BS18a} \bysame, The H\"older property for the spectrum of translation flows in genus two, {\it Israel J. Math.}  {\bf 223} (1) (2018), 205--259.
\bibitem[BS18b]{BS18b} \bysame, On ergodic averages for parabolic product flows, {\it Bull. Soc. Math. France}
\textbf{146} (4) (2018), 675--690.
\bibitem[BS18c]{BS18c} \bysame, A spectral cocycle for substitution systems and translation flows, preprint, arXiv:1802.04783.
\bibitem[BS19]{BS19} \bysame, H\"older regularity for the spectrum of translation flows, preprint, arXiv:1908.09347.
\bibitem[C69]{C69} R.~V.~Chacon, Weakly mixing transformations which are not strongly mixing, {\it Proc. Amer. Math. Soc.} \textbf{22} (1969), 559--562.
\bibitem[Fi17]{Fi17} S.~Filip,  Zero Lyapunov exponents and monodromy of the Kontsevich--Zorich cocycle, 
{\it Duke Math. J.} \textbf{166} (2017) (4), 657--706. 
\bibitem[FlaFo06]{FlaFo06} L.~Flaminio and G. Forni, Equidistribution of nilflows and applications to theta sums, 
{\it Ergodic Theory Dynam. Systems}~\textbf{26} (2) (2006), 409--433.
\bibitem[FlaFo14]{FlaFo14} \bysame, On effective equidistribution for higher step nilflows, preprint,  arXiv:1407.3640v1.
\bibitem[FFT16]{FFT16}  L.~Flaminio, G.~Forni and J.~Tanis, Effective equidistribution of twisted horocycle flows and horocycle maps, {\it Geometric and Functional Analysis} \textbf{26 (5)},1359--1448.
\bibitem[F97]{F97} G.~Forni, Solutions of the Cohomological Equation for Area-Preserving Flows on Compact Surfaces of Higher Genus,
 {\it Ann. of Math.} \textbf{146}(2) (1997), 295--344. 
 \bibitem[F02]{F02}
\bysame, Deviation of ergodic averages for area-preserving flows on
  surfaces of higher genus, {\it Ann. of Math.} (2) \textbf{155} (2002), no.~1,
  1--103.
 \bibitem[F07]{F07} \bysame, Sobolev regularity of solutions of the cohomological equation,  {\it Erg. Th. Dynam. Sys.},  online at https://doi.org/10.1017/etds.2019.108
 (arXiv:0707.0940v2).
 \bibitem[F11]{F11}  \bysame, A geometric criterion for the nonuniform hyperbolicity of the Kontsevich--Zorich cocycle, {\it J. Mod. Dynam.}   \textbf{5} (2) (2011), 355--395.
 \bibitem[FG]{FG}  G.~Forni and W.~Goldman,   Mixing Flows on Moduli Spaces of Flat Bundles over Surfaces,
 in {\it Geometry and Physics: Volume II},  A Festschrift in honour of Nigel Hitchin. Edited by A.~Dancer, J.~ E.~Andersen, and O.~Garc\'ia-Prada.  Oxford University Press, October 2018.
  \bibitem[G84]{G84}  W.~Goldman,  The symplectic nature of fundamental groups of surfaces, {\it Advances in Mathematics} {\bf 54} (2) (1984), 200--225.
  \bibitem[GX08]{GX08} W.~Goldman and E.~Xia, Rank One Higgs Bundles and Representations of Fundamental Groups of Riemann Surfaces,  Memoirs of the American Mathematical Society, 
2008.
 \bibitem[GT12]{GT12} B.~Green and T.~Tao, The quantitative behaviour of polynomial orbits on nilmanifolds, {\it Ann. of Math.} \textbf{175} (2012), 465--540.
\bibitem[GK88]{GK88} E.~Gutkin and A.~B.~Katok, Weakly mixing billiards, in {\it Holomorphic dynamics} (Mexico, 1986), Lecture Notes in Math. 1345, {\bf 163--176}, Springer-Verlag, New York, 1988.
\bibitem[Ka80]{Ka80} A.~B.~Katok, Interval exchange transformations and some special flows are not
mixing, {\it  Israel J. Math.} {\bf  35} (1980), 301--310.
\bibitem[KS67]{KS67} A.~B.~Katok and A.~M.~Stepin, Approximations in ergodic theory, {\it Uspehi Mat. Nauk} {\bf 22} (1967), 81--106.
\bibitem[KMS86]{KMS86} S.~Kerckhoff, H.~Masur and J.~Smillie,
Ergodicity of Billiard Flows and Abelian Differentials, {\it Ann. of Math.} {\bf 124}(1986), 293--311.
\bibitem[Kn98]{Kn98}  O.~Knill, Singular Continuous Spectrum and Quantitative Rate of Mixing, {\it Disc. Cont. Dynam. Sys.} {\bf 4} (1)  (1998),   33--42.
\bibitem[Ko97]{Ko97} M.~Kontsevich, Lyapunov exponents and Hodge theory, in {\it The Mathematical Beauty of Physics} (Saclay, 1996), Adv. Ser. Math. Phys. {\bf 24}, 318--332, World Sci. Publ.,
River Edge, NJ, 1997.
\bibitem[Lu98]{Lu98} I.~Lucien, M\'elange faible topologique des flots sur les surfaces, 
{\it Ergodic Theory Dynam. Systems} {\bf 18} (1998), 963--984.
\bibitem[Ma82]{Ma82} H.~Masur, Interval exchange transformations
and measured foliations, {\it Ann. of Math.} {\bf 115}(1982), 168-200.
\bibitem[MS91]{MS91} H.~Masur and J.~Smillie, Hausdorff dimension of 
sets of non-ergodic measured foliations, {\it Ann. of Math.} {\bf 134},
(1991), 455-543.
\bibitem[Ne59]{Ne59} E.~Nelson, Analytic Vectors, {\it Ann. of Math.} {\bf 70} (1959),
572--615.
\bibitem[NR97]{NR97} A.~Nogueira and D.~Rudolph, Topological weak-mixing of interval exchange maps, {\it Ergodic Theory Dynam. Systems} {\bf 17} (1997), 1183--1209.
\bibitem[Pe83]{Pe83} K.~Petersen, {\it Ergodic Theory}, Cambridge Univ. Press, Cambridge, 1983.
\bibitem[TV15]{TV15} J.~Tanis \& P.~Vishe, Uniform bounds for period integrals and
    sparse equidistribution, {\it International Mathematics Research
    Notices} \textbf{2015} (24) (2015), 13728--13756 (https://doi.org/10.1093/imrn/rnv115).
\bibitem[Ve82]{Ve82} W.~A.~Veech, Gauss measures for transformations on the
space of interval exchange maps, {\it Ann. of Math.} \textbf {115}(1982),
201--242.
\bibitem[Ve84]{Ve84} \bysame, The metric theory of interval exchange transformations. I. Generic spectral properties, {\it Amer. J. Math.} {\bf 106} (1984), 1331--1359.
\bibitem[V10]{V10} A.~Venkatesh.  Sparse equidistribution problems, period bounds and subconvexity,  
{\it Ann. of Math.} \textbf{172} (2010), 989--1094.
\bibitem[We80]{We80} R.~O.~Wells, {\it Differential Analysis on Complex Manifolds},  
Springer-Verlag, New York, 1980.
\bibitem[Wri15]{Wri15} A. Wright, Cylinder deformations in orbit closures of translation surfaces, {\it Geom. Topol.}  \textbf{19} (1) (2015), 413--438.
\bibitem[Zo97]{Zo97} A.~Zorich, Deviation for interval exchange transformations, {\it Ergodic Theory Dynam. Systems} {\bf 17} (1997), 1477--1499.
\end{thebibliography}
\end{document}